\renewcommand{\headrulewidth}{0pt}
\newtheorem{thm}{Theorem}[section]
\newtheorem{cor}[thm]{Corollary}
\newtheorem{prop}[thm]{Proposition}
\newtheorem{lem}[thm]{Lemma}
\newtheorem{quest}[thm]{Question}
\newtheorem{defx}{Definition}
\theoremstyle{definition}
\newtheorem{defn}[thm]{Definition}
\newtheorem{exmp}[thm]{Example}
\newtheorem{remark}[thm]{Remark}
\tikzset{
  trim node/.default=1cm,
  trim node/.style={
    overlay,
    append after command={
      ([xshift={+#1}]\tikzlastnode.north west)
      ([xshift={+-#1}]\tikzlastnode.south east)}},
  down and trim/.default=1cm,
  down and trim/.style={
    yshift=-(\pgfmatrixcurrentcolumn-1)*1.5\baselineskip,
    trim node={#1}},
  downup and trim/.default=1cm,
  downup and trim/.style={
    yshift=iseven(\pgfmatrixcurrentcolumn) ? -1.5\baselineskip : 0pt,
    trim node={#1}},
  -|/.style={to path={-|(\tikztotarget)\tikztonodes}},
  |-/.style={to path={|-(\tikztotarget)\tikztonodes}},
  -| sl/.style={-|, xslant=-1},
  |- sl/.style={|-, xslant= 1},
  center picture/.style={
    trim left=(current bounding box.center),
    trim right=(current bounding box.center)}}
\theoremstyle{remark}
\newcommand{\R}{\mathbb{R}}                           
    \newcommand{\C}{\mathbb{C}}
       \newcommand{\Z}{\mathbb{Z}}
          \newcommand{\N}{\mathbb{N}}
          \newcommand{\U}{\mathcal{U}}
          \newcommand{\g}{\mathfrak{g}}
                \newcommand{\D}{\mathcal{D}}
                   \newcommand{\K}{\mathcal{K}}
                \newcommand{\G}{\mathcal{G}}
                \newcommand{\RKK}{\mathcal{R}KK}
                \newcommand{\hotimes}{\hat{\otimes}}
\title{Index theory for Heisenberg elliptic and transversally Heisenberg elliptic operators from $KK$-theoretic viewpoint}
\author{Minjie Tian}
\date{\today}
\begin{document}
\pagenumbering{roman} 

\begin{titlepage}
    \begin{center}
        \vspace*{1cm}
        
        \Huge
        \textbf{Index theory for Heisenberg elliptic and transversally Heisenberg elliptic operators from $KK$-theoretic viewpoint}
        
        \vspace{0.5cm}
        \LARGE
        
        by\\
    
        \textbf{Minjie Tian}
        
        \vfill

        Geometry, Mathematics\\

        \vspace{1.8cm}

        \Large
        Kyoto University\\
        2024\\
        \vspace{1.0cm}

    \end{center}
    
\end{titlepage}

\fancyhf{} 
\fancyhead[RO,R]{\thepage} 
\renewcommand{\headrulewidth}{0pt}

\begin{center}
    \Large
    \textbf{Index theory for Heisenberg elliptic and transversally Heisenberg elliptic operators from $KK$-theoretic viewpoint}
    
    \vspace{0.4cm}
    \large

    \textbf{Minjie Tian}
    
    \vspace{0.9cm}
    \textbf{Abstract}
\end{center}
This research comprehensively describes the basic theory of transversally Heisenberg elliptic operators, and investigates the index theory of Heisenberg elliptic and transversally Heisenberg elliptic operators from the perspective of $KK$-theory, applying Kasparov's methodology. Moreover, the analysis methodically examines specific conditions, with a focus on the Fourier transform of the nilpotent group $C^{\ast}$-algebra. We demonstrate enhanced methods for analyzing the hypoellipticity of operators, presenting a robust framework for defining and understanding transversal Heisenberg ellipticity in a $KK$-theoretic context. This work provides a solid foundation for future research into the properties of hypoelliptic differential operators in complicated manifolds.
\\\\
\noindent \textbf{Keywords.} Heisenberg elliptic, transversally Heisenberg elliptic, index theory, $KK$-theory\\\\

\noindent \textbf{e-mail:} \ \ \ \href{mailto:tian.minjie.23z@st.kyoto-u.ac.jp}{tian.minjie.23z@st.kyoto-u.ac.jp}\\

\chapter*{Dedication}
This thesis is dedicated to my parents, whose unwavering support and encouragement have been my guiding lights throughout this academic journey. To my friends, your camaraderie and understanding made the challenges more manageable and the successes more joyful.

\chapter*{Declaration}
I hereby declare that this thesis is entirely my own work. I affirm the following:
\begin{itemize}
    \item[(a)] This work has not been submitted for any other degree or academic qualification.
      \item[(b)] All sources of information and ideas used in this thesis are properly acknowledged.
        \item[(c)] Contributions made by others in the form of ideas, guidance, or any other support are duly recognized and appropriately cited.
          \item[(d)]  The research presented in this thesis adheres to the ethical standards and guidelines of academic integrity.
\end{itemize}

\chapter*{Acknowledgements}

I would like to express my sincere gratitude to my supervisor, Professor Kato Tsuyoshi, for his invaluable ideas and guidance for this research. Professor Gennadi Kasparov's patient leadership and Professor Nigel Higson’s valuable advice were instrumental in shaping this research. 

Special thanks to the Graduate Education Support Organization Program / Science and Technology Innovation Creation Fellowship for their generous support, enhancing the depth and scope of this work.

\tableofcontents

\doublespacing
\chapter*{Introduction}
\pagenumbering{arabic} 
Let $M$ be a smooth and closed manifold, $E, F \to M$  be vector bundles, and $P: C^{\infty}(M; E) \to C^{\infty}(M; F)$ be a differential operator over $M$. For elliptic operators, the spaces $\text{Ker}(P)$ and $\text{Ker}(P^{\ast})$ are both finite-dimensional, and the analytical index is defined as $\text{Ind}_a(P) = \dim \text{Ker}(P) - \dim \text{Coker}(P)\in \Z$. When a compact Lie group $G$ acts on $M$ and the operator $P$ is elliptic, the spaces $\text{Ker}(P)$ and $\text{Coker}(P)$ can be viewed as elements of $R(G)$, with the analytical index defined as $\text{Ind}_a(P) = [\text{Ker}(P)] - [\text{Coker}(P)] \in R(G)$. The Atiyah-Singer index theorem \cite{atiyah1968index2}\cite{atiyah1968index1}  states that in each case, the analytical index defined above is equal to the topological index, which only depends on the topological characteristics of $P$ and $M$.

For the case of transversally elliptic operators, Atiyah \cite{atiyah2006elliptic} introduced the idea of defining the analytical index as a distribution over the Lie group $G$. Berline and Vergne \cite{berline1996chern} introduced a cohomological index formula, equating the analytical index to the cohomological index for transversally elliptic operators.

 Kasparov \cite{kasparov2017elliptic}\cite{kasparov2022k} generalized it for locally compact manifolds under properly supported locally compact Lie group actions, framing a comprehensive theory on the index of elliptic and transversally elliptic operators through $KK$-theory. In this theory both the symbol class $[\sigma(P)]$ and the index class $[P]$ are represented by $KK$-classes, providing the index theorem through $KK$-products.

The study of index theory for hypoelliptic operators is challenging due to the simple condition. Additional conditions are required to define the analytical index accurately.

 Boutet de Monvel \cite{de1978index} gave the topological formula for the analytical index of Toeplitz operators. van Erp \cite{van2010atiyah1}\cite{van2010atiyah2} generalized it to arbitrary contact manifolds, based on the work by Epstein and Melrose \cite{epstein2004heisenberg}, for Heisenberg elliptic operators without vector bundle coefficients. Later van Erp and Baum \cite{baum2014k} generalized van Erp’s formula to allow vector bundle coefficients on contact manifolds. Their formula allows us to compute the index of some differential operators of H\"{o}rmander’s sum of squares of rank $2$. Androulidakis, Mohsen, and Yuncken \cite{androulidakis2022pseudodifferential} generalized the classical regularity theorem of
elliptic operators to maximally hypoelliptic differential operators, and  Mohsen \cite{mohsen2022index} established an index formula for $\ast$-maximally hypoelliptic differential operators on closed manifolds.

 On a closed manifold with filtration, or equivalently on a closed manifold endowed with a regular foliation, the index theorem for Heisenberg elliptic (later on, H-elliptic) operators can be reduced as a special case of Mohsen's maximally hypoelliptic index theorem. In the regular foliation case, the characteristic set defined in \cite{androulidakis2022pseudodifferential} is the entire groupoid $Gr(H)$. Hence, the $\ast$-maximally hypoelliptic condition is consistent with the H-ellipticity, which requires for each nontrivial unitary irreducible representation $\pi$, the action on the H-cosymbol $\sigma_H(P,x,\pi)$ defined by the filtration is bijective. We will reformulate the index theorem for H-elliptic operators from the $KK$-theoretic viewpoint in Section \ref{sec3.2} and \ref{sec3.3}.

In this research, we introduce transversally H-elliptic operators on filtered manifolds with $G$-action. The definition is inspired by Kasparov's tangent-Clifford symbol class $[\sigma^{tcl}(P)]$ in transversally elliptic index theory \cite{kasparov2022k}. Transversal H-ellipticity requires the Rockland condition's fulfillment in conjunction with a leafwise elliptic symbol class. Concretely, We define that:
\begin{defn}\label{def2}
    Let $M$ be a complete filtered Riemannian manifold and $G$ be a locally compact Lie group acting on $M$ properly, isometrically, and preserving the filtered structure. A pseudo-differential operator $P$ of $0$-order is called transversally H-elliptic if at each point $x\in M$, the multiplier term 
    $$(1-\sigma_H^2(P,x))(1+\sigma_H(\Delta_{G},x))^{-1}\in \mathcal{M}(E_x\otimes C^{\ast}(Gr(H)_x)),$$
    as an operator on $C^{\ast}(Gr(H)_x)$-module is compact, where $\Delta_G$ is the sub-Laplacian operator in $G$-orbit directions.
\end{defn}
In Chapter \ref{chap4}, We will prove that Definition \ref{def2} produces an appropriate tangent-Clifford symbol class $[\sigma^{tcl}_H(P)]$ and an index class $[P]$ for each transversally H-elliptic operator $P$. When $P$ is a transversally H-elliptic operator of $0$-order, there is an index formula in $KK$-homology in which $[\sigma^{tcl}_H(P)]$ and $[P]$ can be related using $KK$-products. Here is the main theorem of this paper:
\begin{thm}
Let $M$ be a complete filtered Riemannian manifold and $G$ be a locally compact Lie group isometrically and properly acting on $M$ and preserving the filtered structure. Let $P$ be a $G$-invariant transversally H-elliptic operator on $M$ of order $0$, then we have
 $$[P]=j^G([\sigma^{tcl}_H(P)])\otimes_{C^{\ast}(G,C^{\ast}(Gr(H))\hotimes_{C_0(M)}Cl_{\Gamma}(M))} [\mathcal{D}_{M,{\Gamma}}^{cl'}],$$
    in which $[\mathcal{D}_{M,{\Gamma}}^{cl'}]\in K^0(C^{\ast}(G,C^{\ast}(Gr(H))\hotimes_{C_0(M)}Cl_{\Gamma}(M)))$ indicates the Clifford-Dolbeault element which will be defined in Chapter \ref{chap5}, and the $KK$-product on the right-hand side is considered over
    \begin{equation*}
        \begin{aligned}
            KK(C^{\ast}(G, C_0(M)),C^{\ast}(G,C^{\ast}(Gr(H))\hotimes_{C_0(M)} Cl_{{\Gamma}}(M))\\
            \otimes_{C^{\ast}(G,C^{\ast}(Gr(H))\hotimes_{C_0(M)}Cl_{\Gamma}(M))} KK(C^{\ast}(G,C^{\ast}(Gr(H))\hotimes_{C_0(M)}Cl_{\Gamma}(M)),\C).
        \end{aligned}
    \end{equation*}
    \end{thm}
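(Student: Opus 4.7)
The plan is to adapt Kasparov's proof strategy for the analogous identity in the transversally elliptic setting (\cite{kasparov2022k}) by systematically replacing the classical tangent bundle with the Heisenberg tangent groupoid $Gr(H)$ and ordinary principal symbols with Heisenberg symbols. The argument naturally splits into three parts: explicit realization of both factors as (unbounded) Kasparov bimodules, computation of their Kasparov product via Kucerovsky's criterion, and identification of the product with a cycle representing $[P]$.

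First, I would make the two factors on the right-hand side completely concrete. From the transversally H-elliptic cosymbol $\sigma_H(P)$ together with a Clifford action in the directions transverse to $G$-orbits, one assembles a Kasparov bimodule representing $[\sigma_H^{tcl}(P)]$; the compactness built into Definition~\ref{def2} is exactly what is needed for this to be a genuine Kasparov cycle once the Clifford factor $Cl_{\Gamma}(M)$ is adjoined. Applying Kasparov's descent $j^G$ then yields the desired class over the crossed-product algebra $C^{\ast}(G,C_0(M))$. The Clifford-Dolbeault element $[\mathcal{D}_{M,\Gamma}^{cl'}]$ will be realized, following Chapter~\ref{chap5}, by a sub-Dolbeault operator along the transverse directions $\Gamma$, and plays the role of a Poincaré-duality cycle for the Clifford algebra bundle $Cl_{\Gamma}(M)$.

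Second, I would compute the Kasparov product using the unbounded picture. Pick unbounded representatives $S$ of $j^G([\sigma_H^{tcl}(P)])$ and $T$ of $[\mathcal{D}_{M,\Gamma}^{cl'}]$, equip the interior tensor product module with a suitable connection, and verify Kucerovsky's conditions (self-adjointness, the domain condition, and positivity of the graded commutator modulo compacts). The technical core is the positivity condition, which reduces to fiberwise commutator estimates between the Heisenberg symbol and the transverse Dolbeault operator; these can be analyzed through the Plancherel decomposition of $C^{\ast}(Gr(H)_x)$, using the Fourier-analytic machinery for nilpotent group $C^{\ast}$-algebras developed in the earlier chapters. Once positivity is established, the product cycle is constructed canonically and lives in $KK(C^{\ast}(G,C_0(M)),\C)$ as required.

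Third, I would identify the resulting product cycle with a cycle for $[P]$, exhibiting a homotopy of Kasparov cycles between the product module and the canonical representative of $[P]$ built from $P$ acting on $L^{2}(M;E)$ with its natural $C^{\ast}(G,C_0(M))$-module structure. The essential input is that the Heisenberg symbol coupled with the transverse Dolbeault operator reassembles a parametrix for $P$ up to a Heisenberg-order-$(-1)$ term, which is negligible in $KK$. The main obstacle I expect lies in the second stage: verifying Kucerovsky's conditions in the simultaneous presence of (i) the $G$-equivariance forced by the descent $j^G$, (ii) the non-commutative, non-elliptic nature of the Heisenberg calculus (where the Rockland condition through $\sigma_H(\Delta_G)$ must replace classical ellipticity), and (iii) the Clifford multipliers in the transverse directions. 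Controlling the cross-terms between the leafwise Heisenberg directions and the transverse Clifford directions while preserving the bounded-below-modulo-compacts condition is precisely where the full strength of the transversal H-ellipticity hypothesis in Definition~\ref{def2}, specifically the compactness of $(1-\sigma_H^2(P,x))(1+\sigma_H(\Delta_G,x))^{-1}$, will be used.
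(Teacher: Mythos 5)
Your high-level three-stage outline (realize both factors concretely, compute the product, identify it with $[P]$) is in the right spirit, but the mechanism you propose for the crucial identification step is different from what the paper uses, and as stated it does not close the argument. The paper does not identify the product cycle with $[P]$ by arguing that the cosymbol and Dolbeault operator "reassemble a parametrix for $P$" — indeed $P$ is not Fredholm on $L^2(E)$ in the transversal setting, so there is no genuine parametrix; $[P]$ lives in $K^0(C^{\ast}(G,C_0(M)))$, not $\Z$. The key device the paper relies on, and which is absent from your proposal, is the local dual Dirac element $[\Theta_M]\in KK^G(C_0(M), C_0(M)\otimes Cl_{\tau}(M))$ together with the Bott--Dirac identity $[\Theta_M]\otimes_{Cl_{\tau}(M)}[d_M] = 1_M$ (Kasparov's Theorem 4.8). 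The proof proceeds by showing through explicit homotopies — the local homeomorphisms between $\mathcal{V}_x\subset Gr(H)_x$ and $U_x\subset M$, the operator-integration homotopy taking the frozen-coefficient operator $P(x)$ to $P$, and a rotation homotopy that switches the $C_0(M)$-action from the $Gr(H)_x$-variable to the $M$-variable while simultaneously contracting the orbital Dirac term $\mathfrak{F}_G$ to zero — that the right-hand side is operator-homotopic (via Skandalis' criterion) to the cycle for $[\Theta_M]\otimes_{Cl_{\tau}(M)}[d_M]\otimes_{C_0(M)}[P]$, which then collapses to $[P]$ by the unit identity. Without $[\Theta_M]$ and the Bott--Dirac duality you have no mechanism for producing the class $[P]$ at the end.

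A few smaller points. First, the rotation homotopy that moves the $C_0(M)$-module structure from the groupoid variable to the base variable, and the fact that the orbital Dirac factor $\mathfrak{F}_G$ gets annihilated along the way (precisely using the compactness of $(1-P^2)(1+\Delta_G)^{-1}$ from Definition~\ref{def2}), is genuinely delicate, and your proposal does not touch it. Second, $\Gamma$ is the bundle of directions \emph{tangent to the orbits} (the image of $f'_x:\g\to T_xM$), not the transverse directions, so the Clifford-Dolbeault element is not a "sub-Dolbeault operator along the transverse directions"; in fact $[d_{M,\Gamma}]$ is built from the full $D_M=d_M+d_M^{\ast}$ with a twisted Clifford action $\xi_1\oplus\xi_2\mapsto \mathrm{ext}(\xi_1)+\mathrm{int}(\xi_1)+i(\mathrm{ext}(\xi_2)-\mathrm{int}(\xi_2))$. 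Third, the Plancherel/Fourier-analytic machinery for $C^{\ast}(Gr(H)_x)$ appears in Chapter~\ref{chap6} only to \emph{verify} the compactness hypothesis of Definition~\ref{def2} in examples, not in the proof of the index theorem itself; the positivity condition in the Kasparov product here is handled through Kasparov's technical lemma and explicit commutator estimates with the operators $N_1, N_2$, not through the Plancherel decomposition. Finally, the paper works in bounded $KK$-theory with connections; Kucerovsky's unbounded criterion would be a legitimate alternative technology, but it would not substitute for the missing dual-Dirac argument.
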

    Determining whether an arbitrary pseudo-differential operator meets the criteria outlined in our defined theory hinges on the Fourier transform theory for nilpotent group $C^{\ast}$-algebras. For an element $\phi\in C^{\ast}(\G)$, where $\G$ represents a nilpotent Lie group, its representations $\pi(\phi)$ constitute an operator-valued function on the spectrum $\widehat{\G}$, with some specific conditions. This theory extends to the Fourier transform of multiplier elements of the group $C^{\ast}$-algebra, transforming a multiplier of $C^{\ast}(\G)$ into an operator family parametrized by $\widehat{\G}$. As a result, a multiplier of $C^{\ast}(\G)$ is Fourier-transformed to a family of operators parametrized by the representation space $\widehat{\G}$, and the properties of this operator family reflect if the multiplier is an element of $C^{\ast}(\G)$ itself or not. Explicitly, for an element $e\in \mathcal{M}(C^{\ast}(\G))$, if we can verify that the Fourier transform $\mathcal{F}(e)\in \mathcal{F}(\mathcal{M}(C^{\ast}(\G)))$ actually lies in $\mathcal{F}(C^{\ast}(\G))$, then $e\in \mathcal{K}(C^{\ast}(\G))=C^{\ast}(\G)$. See the following diagram:
    $$\begin{tikzcd}
  &\mathcal{M}(C^{\ast}(\G)) \arrow[r, "\mathcal{F}"] & \mathcal{F}(\mathcal{M}(C^{\ast}(\G))) \\
     &C^{\ast}(\G)\arrow[r, "\mathcal{F}"]\arrow[u, phantom, sloped, "\subset"] & \mathcal{F}(C^{\ast}(\G))\arrow[u, phantom, sloped, "\subset"],
\end{tikzcd}$$
    Take $\G=Gr(H)$, we will employ this method to verify the compactness of the $C^{\ast}(Gr(H)_x)$-module multipliers:
    \begin{thm}\label{thm4}
         For each $x\in M$, suppose that $Q$ is a multiplier of the $C^{\ast}(Gr(H)_x)$ module, $Q\in \mathcal{M}(E_x\otimes C^{\ast}(Gr(H)_x))$. Then, $Q$ is compact , i.e., $$Q\in \text{End}(E_x)\otimes C^{\ast}(Gr(H))_x$$ if and only if the operator field defined by
    $$\phi=\{\pi_{\gamma}(Q)\}_{\gamma\in \widehat{Gr(H)}_x} \subseteq \text{End}(E_x)\otimes \mathit{l}^{\infty}(\widehat{Gr(H)}_x)$$
    lies in the sub-algebra $\text{End}(E_x)\otimes B^{\ast}(\widehat{Gr(H)}_x)$. The algebra $B^{\ast}(S)$ will be defined in Definition \ref{defn3}.
    \end{thm}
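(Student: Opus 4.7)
The plan is to invoke the Fourier transform theory for nilpotent Lie group $C^{\ast}$-algebras, encoded by the inclusion diagram preceding the theorem statement, and then to leverage the finite-dimensionality of $E_x$ to trivialize the endomorphism coefficient. First I would note that every irreducible unitary representation $\pi_{\gamma}$ of $Gr(H)_x$ extends canonically to the multiplier algebra $\mathcal{M}(C^{\ast}(Gr(H)_x))$, and, by $\text{End}(E_x)$-linearity, to $\mathcal{M}(E_x\otimes C^{\ast}(Gr(H)_x))$. Hence the field $\phi = \{\pi_{\gamma}(Q)\}_{\gamma}$ is automatically bounded in $\text{End}(E_x)\otimes \mathit{l}^{\infty}(\widehat{Gr(H)}_x)$, and the theorem is the precise characterization of when this field originates from $C^{\ast}(Gr(H)_x)$ itself rather than merely from its multipliers.

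The forward implication should be immediate: by construction $B^{\ast}(\widehat{Gr(H)}_x)$ (Definition \ref{defn3}) is designed to be exactly the Fourier image of $C^{\ast}(Gr(H)_x)$, and tensoring the Fourier transform with $\text{id}_{\text{End}(E_x)}$ sends $\text{End}(E_x)\otimes C^{\ast}(Gr(H)_x)$ into $\text{End}(E_x)\otimes B^{\ast}(\widehat{Gr(H)}_x)$. Because $E_x$ is finite-dimensional, all reasonable tensor product completions coincide, so there is no analytic subtlety on the endomorphism factor.

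For the converse, assume $\mathcal{F}(Q) \in \text{End}(E_x)\otimes B^{\ast}(\widehat{Gr(H)}_x) = \mathcal{F}(\text{End}(E_x)\otimes C^{\ast}(Gr(H)_x))$. Then there exists some $Q' \in \text{End}(E_x)\otimes C^{\ast}(Gr(H)_x)$ with $\mathcal{F}(Q') = \mathcal{F}(Q)$. I would close the argument by invoking injectivity of the Fourier transform on the multiplier algebra: the direct sum of all irreducible representations is faithful and nondegenerate on $C^{\ast}(Gr(H)_x)$, hence extends uniquely to a faithful strictly continuous $\ast$-homomorphism on $\mathcal{M}(C^{\ast}(Gr(H)_x))$, and tensoring with $\text{End}(E_x)$ preserves this faithfulness. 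Consequently $Q = Q'$, proving that $Q$ is compact.

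The main obstacle is not the formal argument sketched above but the preparatory work: producing a workable Definition \ref{defn3} of $B^{\ast}(\widehat{Gr(H)}_x)$ and verifying that it equals $\mathcal{F}(C^{\ast}(Gr(H)_x))$. Since $\widehat{Gr(H)}_x$ is typically a non-Hausdorff spectrum stratified by coadjoint orbit types, the correct continuity and decay-at-infinity conditions on the operator fields must be compatible with the Plancherel/Kirillov description of $C^{\ast}(Gr(H)_x)$; this is where the true labour lies. Once that identification is in place, Theorem \ref{thm4} reduces to the two-step argument above.
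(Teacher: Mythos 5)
Your proposal is correct and follows essentially the same route as the paper: both arguments reduce Theorem~\ref{thm4} to the Beltiță--Ludwig identification $\mathcal{F}(C^{\ast}(\G)) = B^{\ast}(\widehat{\G})$ for a connected simply connected nilpotent $\G$ (Corollary~\ref{cor1}, built from Theorems~\ref{thm11} and~\ref{thm1}), then tensor with the finite-dimensional $\text{End}(E_x)$. The one place where you are more explicit than the paper is the converse direction: the paper's proof simply asserts the ``if and only if,'' implicitly using that the Fourier transform is injective on $\mathcal{M}(E_x\otimes C^{\ast}(Gr(H)_x))$; you spell out that this follows from faithfulness of the nondegenerate direct-sum representation extended to the multiplier algebra, which is the right justification and a worthwhile detail to include.
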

    
An illustrative example of transversally H-elliptic operators is given by the operator $P_{\gamma}=\Delta_{n-k}+i\gamma T$ on the Heisenberg group $M=\mathbb{H}_{n}$ with transition action by $\R^{2k}$ on the first $2k$-coordinates. Here, $\Delta_{n-k}=-\sum_{i=k+1}^n ({X_i}^2+Y_i^2)$ means the sub-Laplacian operator on the sub-manifold $M'\cong \mathbb{H}_{n-k}$ identified by coordinates $x_{k+1},y_{k+1},\dots, x_n, y_n, z$. The function $\gamma$ on $M$ is assigned such that $P_{\gamma}$ on $M'$ is H-elliptic, see \cite{baum2014k}. Utilizing Theorem \ref{thm4}, we can show that $P_{\gamma}$ satisfies the condition in Definition \ref{def2}, so it is transversal H-elliptic on $M$.

\newpage
\section*{Structure of the paper}
\begin{itemize}
    \item Chapter \ref{chap1} sets the stage with preliminaries on symbolic calculus on filtered manifolds and Kirillov's orbit method for Heisenberg calculus of differential and pseudo-differential operators. It also touches upon Connes' \cite{connes1994noncommutative} groupoid approach for H-elliptic cases.
    
    \item Chapter \ref{chap2} introduces the $KK$-theoretic framework of the index theorem for elliptic and transversally elliptic operators, as detailed in \cite{kasparov2022k}\cite{kasparov2017elliptic}.
    
    \item Chapter \ref{chap3} articulates the general index theorem for H-elliptic operators, extending van Erp's findings on contact manifolds \cite{van2010atiyah1}. We reformulate this theorem from a $KK$-theoretic viewpoint.
   
   \item Chapter \ref{chap4} gives a definition for transversally H-elliptic operators, establishing the framework for defining symbol and index classes.

    \item Chapter \ref{chap5} is dedicated to the main index theorem for transversally H-elliptic operators. We will give a proof of the theorem using a similar approach as in \cite{kasparov2022k} and \cite{kasparov2024coarsepseudodifferentialcalculusindex}.

     \item Chapter \ref{chap6} introduces the Fourier transform theory for $C^{\ast}$-algebra of nilpotent Lie groupoids \cite{beltictua2017fourier} to verify the compactness condition in Definition \ref{def2}.
    
    \item In the Appendix, we exhibit some other possible conditions to define the transversally H-elliptic operators. We include prototypes of these definitions, from the definitions or certain crucial properties of transversally elliptic or H-elliptic operators.

\end{itemize}

\chapter*{Notations}
\begin{itemize}
    \item We denote by $G$ the locally compact Lie group acting on manifold $M$, by $\G$ a groupoid or a bundle of nilpotent Lie groups over the manifold $M$.
    \item We write $H^{\bullet}$ to mean the filtration structure over the manifold $M$, and $\mathcal{H}$ or $\mathcal{H}_{\pi}$ for the representation space of some unitary irreducible representation of the Lie group $\G\to U(\mathcal{H}_{\pi})$.
     \item We write $\sigma_H(P)$ or $\sigma_H(P,x)$ to mean the H-cosymbol of $P$ and $\sigma(P,x,\pi)$ to mean the representation $\pi$ on H-cosymbol $\pi(\sigma_H(P,x))$.
    \item If $E$ is a vector bundle over $M$, we will denote by $C^{\infty}(M; E)$ the space of smooth sections of $E$ and $C_0(M; E)$ the space of continuous sections of $E$ vanishing at infinity. We denote $C_0(E)$ to mean the space of continuous functions on $E$ (as a topological space) vanishing at infinity.
    \item On the tangent bundle $TM$, we denote its coordinate by $(x,\xi)$. Its tangent space $T(TM)$ has coordinate $(x,\xi,\chi,\zeta)$, where $\chi$ is the differential coordinate with respect to $x$, and $\zeta$ is the differential coordinate with respect to $\xi$.
   \item We denote by $\otimes_D$ the Kasparov product 
   $$\otimes_D: KK(A_1, B_1\hat{\otimes} D) \times KK(D \hat{\otimes} A_2, B_2)\to KK(A_1\hat{\otimes} A_2, B_1 \hat{\otimes} B_2).$$
   \item We denote by $\mathcal{M}(A)$ the multiplier algebra for a $C^{\ast}$-algebra $A$.
\end{itemize}
\chapter{Preliminaries for H-elliptic theory}
\label{chap1}
In \cite{van2010atiyah1}, van Erp analyzed the sub-elliptic operators on contact manifolds using the tangent groupoid approach and gave an index theorem for H-elliptic operators. The contact manifolds are special cases of filtered manifolds, and an analog of the tangent groupoid can be defined for filtered manifolds. We can extend the H-elliptic index theory to filtered manifolds.

In this chapter, we introduce some fundamental concepts related to filtered manifolds and the symbolic calculus of operators on filtered manifolds.

\section{Filtered manifold and the osculating groupoid}
\label{sec1.1}
Let $M$ be a smooth complete Riemannian manifold with a filtration of the tangent bundle 
$$0=H^0\subseteq H^1\subseteq H^2\subseteq\cdots \subseteq H^{n-1}\subseteq H^n=TM$$
by a sequence of vector bundles $H^i$, satisfying
$$[C^{\infty}(M;H^i),C^{\infty}(M;H^j)]\subseteq C^{\infty}(M;H^{i+j}),\ \forall 1\leq i,j\leq n.$$
Here, we follow the convention that $H^i = TM$ for $i>n$. 

Assume that $M$ is equipped with a proper isometric action by a compact group $G$.

\begin{defn}
    The filtration structure $H^{\bullet}$ is \textit{$G$-invariant} if for each $g\in G,x\in M$, the differential of the group action $g:M\to M, x\mapsto g(x)$,
\begin{equation*}
        d g:T_x M\to T_{g(x)} M
\end{equation*}
maps $H^i_x$ to $H^i_{g(x)}$ for each $0\leq i\leq n$. 
\end{defn}
We will always consider $G$-invariant filtrations in the following contents.
\begin{lem}
    At each point $x$ on the filtered manifold $M$, there is a simply-connected nilpotent Lie group structure on $T_x M$, called the osculating group at $x$, denoted by $Gr(H)_x$.
\end{lem}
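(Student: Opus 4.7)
The plan is to construct the osculating group in two stages: first build a graded nilpotent Lie algebra structure on the associated graded bundle $\mathfrak{t}_H(M)_x := \bigoplus_{i=1}^{n} H^i_x/H^{i-1}_x$, then invoke the standard simply-connected nilpotent Lie group correspondence. Since $T_x M$ is canonically isomorphic as a vector space to $\mathfrak{t}_H(M)_x$ once a splitting of the filtration is fixed (or, equivalently, via the exponential map of the nilpotent group once that group has been constructed), the resulting group law transports to $T_x M$.

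First I would define the bracket on $\mathfrak{t}_H(M)_x$ as follows. Given $u \in H^i_x/H^{i-1}_x$ and $v \in H^j_x/H^{j-1}_x$, choose local sections $U \in C^{\infty}(M;H^i)$ and $V \in C^{\infty}(M;H^j)$ whose values at $x$ represent $u$ and $v$. The filtration hypothesis $[C^{\infty}(M;H^i),C^{\infty}(M;H^j)] \subseteq C^{\infty}(M;H^{i+j})$ guarantees $[U,V]_x \in H^{i+j}_x$, and I would set $[u,v] := [U,V]_x \bmod H^{i+j-1}_x$, with the convention that this class is zero whenever $i+j > n$.

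The one substantive step is to verify that this class in $H^{i+j}_x/H^{i+j-1}_x$ depends only on the classes of $U_x$ and $V_x$ in the respective graded pieces. I would do this using the Leibniz identity $[fU, gV] = fg[U,V] + f(Ug)V - g(Vf)U$: rescaling $U$ and $V$ by smooth functions equal to $1$ at $x$ changes $[U,V]_x$ only by terms of the form $(Ug)(x) V_x$ and $(Vf)(x) U_x$, which lie in $H^j_x \subseteq H^{i+j-1}_x$ and $H^i_x \subseteq H^{i+j-1}_x$ respectively (valid for $i,j \ge 1$). Replacing $U$ by $U+U'$ with $U' \in C^{\infty}(M;H^{i-1})$ adds $[U',V] \in C^{\infty}(M;H^{i+j-1})$, and symmetrically in $V$. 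Hence the bracket descends to a well-defined, $\R$-bilinear, alternating pairing on the graded pieces, and the Jacobi identity is inherited fiberwise from that of vector fields. This yields a graded Lie algebra structure on $\mathfrak{t}_H(M)_x$, and nilpotency is automatic since any iterated bracket of length greater than $n$ lands in a vanishing graded component.

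To finish, I would invoke the classical fact that every finite-dimensional nilpotent Lie algebra $\mathfrak{g}$ integrates uniquely to a simply-connected nilpotent Lie group whose underlying manifold is $\mathfrak{g}$ itself with the group law given by the (finite) Baker--Campbell--Hausdorff series; applying this to $\mathfrak{t}_H(M)_x$ produces $Gr(H)_x$, and pulling the structure back along a splitting $T_x M \cong \mathfrak{t}_H(M)_x$ gives the desired simply-connected nilpotent Lie group structure on $T_x M$. The main obstacle is the well-definedness check for the bracket; once that is in hand, the rest is formal Lie theory.
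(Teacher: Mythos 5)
Your proposal is correct and follows essentially the same route as the paper: define the graded bracket on $\bigoplus_i H^i_x/H^{i-1}_x$ via the Lie bracket of representative sections, check well-definedness, observe nilpotency from the grading, and integrate to a simply-connected Lie group. The paper asserts the well-definedness of the bracket without proof, whereas you supply the Leibniz-rule computation explicitly; this is a welcome addition, not a divergence in approach.
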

\begin{proof}
    Define the graded bundle over $M$, $$gr(H):=\bigoplus_i H^i/H^{i-1}.$$

At each point $x\in M$, $gr(H)_x=H^i_x/H^{i-1}_x$ has a graded nilpotent Lie algebra structure induced from $$[X(x),Y(x)]:=[X,Y](x)\ \text{mod}\  H^{i+j-1}_x, X\in H^i, Y\in H^j. $$
It can be proved that $[X,Y](x)$ mod $H^{i+j-1}_x$ depends only on $X(x)$ mod $H^{i-1}_x$ and $Y(x)$ mod $H^{j-1}_x$. Therefore, the Lie bracket structure over $gr(H)_x$ is well defined for each $x\in M$. The graded algebra $gr(H)_x$ at each point is called the \textit{osculating Lie algebra}. The corresponding simply-connected nilpotent Lie group for each $x$ is the $Gr(H)_x$ we required.
\end{proof}

\begin{defn}\label{defn9}
    The space $$Gr(H):=\bigsqcup_{x\in M} Gr(H)_x$$ is called the \textit{osculating Lie groupoid} of $M$ with filtration structure $H^{\bullet}$. 
\end{defn}
The osculating groupoid serves as the non-commutative counterpart of the tangent bundle $TM$.
 
\begin{remark}\label{rmk1}
    The graded Lie algebra possesses a natural group of dilation automorphisms. For any $\lambda > 0$, let $\delta_{\lambda} \in \text{Aut}(gr(H))$ denote the bundle automorphism given by multiplication with $\lambda^i$ on the grading component $gr(H)^i$. For each $x \in M$, this restricts to an automorphism $\delta_{\lambda,x}\in\text{Aut}(gr(H)_x)$ of the osculating Lie algebra such that $\lim_{\lambda\to 0} \delta_{\lambda,x} = 0$. Evidently, $\delta_{\lambda_1\lambda_2} = \delta_{\lambda_1}\delta_{\lambda_2}$ for all $\lambda_1, \lambda_2 > 0$.
\end{remark}

\section{Kirillov's orbit method}
\label{sec1.2}
When dealing with the cosymbol on a filtered manifold, which will be defined in Section \ref{sec1.4}, it is necessary to replace the covectors $\xi\in T^{\ast}_x M$ in $\sigma(P,x,\xi)$ by unitary irreducible representations $\pi$ of $Gr(H)_x$.

As outlined in \cite{kirillov2004lectures}, Kirillov's orbit method characterizes the unitary dual $\widehat{G}$ as a topological space and construct the unitary irreducible representations $\pi_{\Omega}$ associated with a coadjoint orbit $\Omega \in \g^{\ast}$.
\begin{defn}
 A \textit{coadjoint orbit} is an orbit of a Lie group $G$ in the space $\g^{\ast}$ dual to $\g = \text{Lie}(G)$, on which the Lie group $G$ acts through the coadjoint representation.
\end{defn}
 The unitary irreducible representation $\pi_{\Omega}$ associated with each orbit $\Omega\subseteq \g^{\ast}$ is obtained as follows:
\begin{itemize}
    \item Select any point $F\in \Omega$;
    \item Identify a subalgebra $\mathfrak{h}$ of maximal dimension that is subordinate to $F$, i.e. $F|_{[\mathfrak{h},\mathfrak{h}]}=0$;
    \item Define a subgroup $H = \exp \mathfrak{h}$ and construct the $1$-dimensional unitary irreducible $\rho_{F,H}$ of $H$ using the formula
    $$\rho_{F,H}(\exp(X))=e^{2\pi i\langle F,X\rangle };$$
    \item $\pi_{\Omega}=\text{Ind}^G_H(\rho_{F,H})$.
\end{itemize}
Kirillov's orbit method asserts that:
\begin{thm}[\cite{kirillov2004lectures}]
    This map from coadjoint orbits $\Omega\in\mathcal{O}(G)$ in $\g^{\ast}$ to unitary irreducible representations of $\pi_{\Omega}\in \widehat{G}$ is a one-to-one correspondence.
\end{thm}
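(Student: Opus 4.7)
The plan is to prove the correspondence by induction on $\dim G$ using the structure theory of nilpotent Lie algebras, splitting the argument into four conceptual steps: well-definedness of $\pi_\Omega$, its irreducibility, injectivity of the map $\Omega \mapsto [\pi_\Omega]$, and surjectivity onto $\widehat{G}$. The base case is abelian $G$, where coadjoint orbits are points and the recipe sends $F \in \mathfrak{g}^{\ast}$ to the character $\exp(X) \mapsto e^{2\pi i \langle F, X\rangle}$, recovering standard Pontryagin duality.

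The first step is to verify that the construction does not depend on choices. Replacing $F$ by $\mathrm{Ad}^{\ast}(g)F$ for $g \in G$ conjugates the pair $(\mathfrak{h}, \rho_{F,H})$ by $g$, and the induced representations are intertwined by left translation. Independence of the polarization $\mathfrak{h}$ is more delicate: for any two subalgebras subordinate to $F$ of maximal dimension, I would construct an explicit unitary intertwiner between the two induced representations, essentially a partial Fourier transform along the fibers of the common refinement of the two coset spaces, following Kirillov's classical argument.

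For irreducibility and the inductive framework, I would invoke Kirillov's lemma on non-abelian nilpotent Lie algebras $\mathfrak{g}$: there exist a central element $Y \in \mathfrak{z}(\mathfrak{g})$, an element $X \in \mathfrak{g}$, and a codimension-one ideal $\mathfrak{g}_0$ containing $\mathfrak{z}(\mathfrak{g})$ with $\mathfrak{g} = \mathbb{R}X \oplus \mathfrak{g}_0$ and $[X, \mathfrak{g}_0] \subseteq \mathbb{R}Y$. Restricting $\pi_\Omega$ to $G_0 = \exp(\mathfrak{g}_0)$ and applying the inductive hypothesis expresses $\pi_\Omega|_{G_0}$ as a direct integral of representations indexed by an $\mathrm{Ad}^{\ast}(G)$-orbit in $\mathfrak{g}_0^{\ast}$, from which Mackey's analysis yields both irreducibility and a concrete model of $\pi_\Omega$.

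Injectivity can be read off from Kirillov's character formula: the distributional character of $\pi_\Omega$ equals the push-forward by $\exp$ of the canonical Liouville measure on $\Omega$, modulated by the Jacobian of $\exp$, so distinct orbits yield distinct characters. For surjectivity, starting from an arbitrary $\pi \in \widehat{G}$, I would again restrict to $G_0$, apply induction to write $\pi|_{G_0}$ as a direct integral over a coadjoint orbit of $G_0$, and use the Mackey machine with respect to the $G/G_0$-action on $\widehat{G_0}$ to locate the unique $G$-orbit $\Omega \subseteq \mathfrak{g}^{\ast}$ whose image in $\mathfrak{g}_0^{\ast}$ supports this decomposition; matching the ingredients reproduces $\pi_\Omega$. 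The main obstacle throughout is the polarization-independence in the first step together with the parallel control needed in the last, both of which demand careful tracking of how coadjoint orbits in $\mathfrak{g}^{\ast}$ decompose upon restriction to $\mathfrak{g}_0^{\ast}$—the subtle case being when a single $G$-orbit restricts to a continuous family of $G_0$-orbits, which is precisely where Mackey's imprimitivity theorem is doing the heavy lifting.
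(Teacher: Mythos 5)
The paper does not prove this theorem; it is cited from \cite{kirillov2004lectures} and treated as a black box, so there is no in-paper proof against which to compare. Your sketch nonetheless follows the argument of the cited reference (and of Corwin--Greenleaf): induction on dimension via a codimension-one ideal, the Mackey normal-subgroup machine, and an explicit intertwiner for polarization-independence. Two corrections are needed. First, your statement of Kirillov's lemma is wrong as written: after reducing to the case of one-dimensional center $\mathbb{R}Z$, the lemma produces $X,Y\in\mathfrak{g}$ with $[X,Y]=Z$ such that the centralizer $\mathfrak{g}_0$ of $Y$ is a codimension-one ideal containing $\mathfrak{z}(\mathfrak{g})$ and $\mathfrak{g}=\mathbb{R}X\oplus\mathfrak{g}_0$; it does \emph{not} assert $[X,\mathfrak{g}_0]\subseteq\mathbb{R}Y$, which already fails for the four-dimensional filiform algebra $[e_1,e_2]=e_3$, $[e_1,e_3]=e_4$ (take $Y=e_3$, $X=e_1$: then $[X,e_2]=e_3\notin\mathbb{R}e_4$). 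What the induction actually uses is only the existence of a good codimension-one ideal plus Mackey theory for $G_0=\exp\mathfrak{g}_0$, so the error is harmless to the strategy, but the bracket control you wrote is not what the lemma delivers. Second, the theorem holds only for connected, simply connected nilpotent $G$---in the paper these are the osculating groups $Gr(H)_x$. Nilpotency is what provides Kirillov's lemma and the one-dimensional-center reduction, and simple connectedness makes $\exp$ a global diffeomorphism so that subordinate subalgebras integrate to closed subgroups and induced representations behave as the recipe requires; neither the theorem as quoted nor your sketch records these hypotheses, and without them the statement is false.
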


The topology of $\widehat{G}$ is defined by considering the space $\mathcal{O}(G)$ of coadjoint orbits as the quotient space of $\widehat{G}$ with the quotient topology.
\begin{exmp}
   In the case of Heisenberg group $\mathbb{H}_n\cong \R^{2n+1}$, the family of representations, corresponding to all co-adjoint orbits, can be described as 
\begin{itemize}
    \item[1.] of the form $\lambda T$ for $\lambda\in \R-\{0\}$, associated with the Schr\"{o}dinger representation $\pi_{\lambda}$.
    $$\pi_{\lambda}(x,y,t)\xi(s):=e^{-2\pi i \lambda-2\pi i\frac{\lambda}{2}x\cdot y+2\pi i \lambda s\cdot y} \xi(s-x),$$ for $s\in \R^n$, $\xi\in L^2(\R^n)$, $(x,y,t)\in \mathbb{H}_n$.
    The infinitesimal operator of $\pi_{\lambda}$ can be written as follows:
    $$d\pi_{\lambda}(X_j)=i\lambda s_j, d\pi_{\lambda}(Y_j)=\frac{\partial}{\partial s_j}, d\pi_{\lambda}(T)=i\lambda,$$
where $$X_j=\frac{\partial}{\partial x_j}+\frac{1}{2}t y_j \frac{\partial}{\partial t}, Y_j=\frac{\partial}{\partial y_j}-\frac{1}{2}t x_j \frac{\partial}{\partial t}, T=\frac{\partial}{\partial t},j=1,\dots, n.$$
    
    \item[2.] of the form $\sum_{j=1}^n(x_j' X_j' +y_j' Y_j' )$ where $x_j',y_j'\in \R$, associated with the $1$-dimensional representation $$(x, y, t) \mapsto
\exp  (i(x\cdot x' + y\cdot y' )),$$ where $x\cdot x'$ and $y\cdot y'$ denote the canonical scalar product on $\R^n$.
\end{itemize} 
\end{exmp}

\section{Groupoid $C^{\ast}$-algebra}
\label{sec1.3}
The space $Gr(H)$ defined in \ref{defn9} is a groupoid. In place of the algebra $C_0(T^{\ast}M)$, we need to analyze the $C^{\ast}$-algebra $C^{\ast}(Gr(H))$ in the filtered manifold cases. 
In this section, we briefly introduce the groupoid $C^{\ast}$-algebra theory.
\begin{defn}
\begin{itemize}
    \item  A \textit{groupoid} is a set $\G$ together with:
    \begin{itemize}
        \item[(a)] a distinguished subset $\G^{(0)}\subset \G$;
        \item[(b)] source and range maps $r,s: \G\rightrightarrows \G^{(0)}$;
        \item[(c)] a multiplication map $\G^{(2)}\to \G$, $(a,b)\mapsto ab$, where 
        $$\G^{(2)}:=\{(a,b)\in \G\times\G\ |\ s(a)=r(b)\};$$
        \item[(d)] an inverse map $\G\to \G$, $a\mapsto a^{-1}$, such that:
        \begin{itemize}
            \item[1.] $r(x)=x=s(x)$ for all $x\in \G^{(0)}$;
             \item[2.] $r(a)a=a=as(a)$ for all $a\in \G$;
              \item[3.] $r(a^{-1})=s(a)$ for all $a\in \G$;
               \item[4.] $s(a)=a^{-1}a$ and $r(a)=aa^{-1}$ for all $a\in \G$;
                \item[5.] $r(ab)=r(a)$ and $s(ab)=s(b)$ for all $(a,b)\in \G^{(2)}$;
                 \item[6.] $(ab)c=a(bc)$ whenever $s(a) = r(b)$ and $s(b) = r(c)$.
        \end{itemize}
    \end{itemize}
   \item  A \textit{Lie groupoid} is a groupoid $\mathcal{G}$ that is also a smooth manifold, such that the space of units $\mathcal{G}^{(0)}$ is a smooth submanifold of $\G$; source and range maps $s,r: \G \rightarrow \G^{(0)}$ are smooth submersions; multiplication is a smooth map $\G^{(2)} \rightarrow \G$; and the inverse map $x \mapsto x^{-1}$ is smooth $\G \rightarrow \G$. We write $\G^x = r^{-1}({x})$ and $\G_x = s^{-1}({x})$ for the range and source fibers at $x \in \G^{(0)}$, which are submanifolds of $\G$.
\end{itemize}
\end{defn}

\begin{defn}\label{defn1}
\begin{itemize}
    \item  A \textit{Haar system} on a Lie groupoid $\G$ is a family of smooth measures $\{\mu_x\}$ supported on fibers $\G^x$ of $\G$ such that:
   \begin{itemize}
       \item[(a)] For any $f\in C_c(\G)$, the function 
       $$x\mapsto \int_{\G} f(\gamma) d\mu_x(\gamma)$$
       is continuous and compactly supported;
       \item[(b)] For all $\eta \in \G$,
       $$\int_{\G} f(\gamma)d\mu_{r(\eta)}(\gamma)=\int_{\G} f(\eta\gamma)d\mu_{s(\eta)}(\gamma).$$
   \end{itemize}
   \item   Let $f,g\in C_c(\G)$, define the convolution and involution maps:
    $$f\ast g(\gamma):=\int_{\G}f(\eta)g(\eta^{-1}\gamma)d\mu_{r(\eta)}(\gamma),$$
    $$f^{\ast}(\gamma):=\overline{f(\gamma^{-1})}.$$
\end{itemize}
  
\end{defn}

\begin{lem}[\cite{renault2006groupoid}]
    The operations above turn $C_c(\G)$ into a $\ast$-algebra.
\end{lem}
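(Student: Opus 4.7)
The plan is to verify in order each defining property of a $\ast$-algebra for $C_c(\G)$ with convolution as multiplication and $f^{\ast}$ as involution. Bilinearity of convolution (and antilinearity of involution) is immediate from linearity of the integral together with complex conjugation, so the substantive content reduces to: (i) $f\ast g$ and $f^{\ast}$ really lie in $C_c(\G)$; (ii) associativity $(f\ast g)\ast h=f\ast(g\ast h)$; (iii) the identity $(f\ast g)^{\ast}=g^{\ast}\ast f^{\ast}$; and (iv) the trivial $(f^{\ast})^{\ast}=f$.

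For (i), I would observe that for each $\gamma\in\G$ the integrand $\eta\mapsto f(\eta)g(\eta^{-1}\gamma)$ is continuous and compactly supported on the fiber $\G^{r(\gamma)}$, so the integral converges. Continuity of $\gamma\mapsto (f\ast g)(\gamma)$ then follows from Definition \ref{defn1}(a), applied to a compactly supported continuous function on $\G$ built from $f$ and $g$. For compact support, note that $(f\ast g)(\gamma)$ vanishes whenever $\gamma\notin \mathrm{supp}(f)\cdot \mathrm{supp}(g)$, and the latter is compact because groupoid multiplication is continuous and the image of a product of compacts is compact. That $f^{\ast}\in C_c(\G)$ is immediate since inversion is smooth, hence continuous.

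For (ii), I would expand both triple convolutions by definition, apply Fubini on the iterated integrals, and then use the left-invariance property (b) of the Haar system to perform a change of variable in the inner integral that identifies the two expressions. Concretely, in $((f\ast g)\ast h)(\gamma)$ the composite variable appears as $\eta_1^{-1}\eta_2$ and must be replaced by a single variable $\eta$, with $\mu_{s(\eta_1)}$ simultaneously turned into $\mu_{r(\eta_1)}$; this is exactly what axiom (b) provides. Property (iii) is derived similarly by unfolding $(f\ast g)^{\ast}(\gamma)=\overline{(f\ast g)(\gamma^{-1})}$, substituting $\eta\mapsto \gamma^{-1}\eta^{-1}$, and again invoking (b) along with $\overline{f(\eta)g(\eta^{-1}\gamma^{-1})}=g^{\ast}(\gamma\eta)f^{\ast}(\eta^{-1})$. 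Property (iv) is a direct check using the involution identity for inversion.

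The main obstacle is the associativity calculation in step (ii): it is the only place where axiom (b) enters nontrivially, and the bookkeeping is delicate because each measure $\mu_x$ lives on its own fiber $\G^x$. One must track carefully that after each substitution the new variable is integrated against the correct measure, using the identities $r(\eta\eta')=r(\eta)$ and $s(\eta\eta')=s(\eta')$ from the groupoid axioms. Once the fiber indexing is pinned down, the rest is a routine consequence of Fubini and Haar invariance.
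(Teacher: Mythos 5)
The paper does not prove this lemma at all---it is imported from Renault \cite{renault2006groupoid} as a black box---so there is no proof of record to compare with; your sketch is essentially the standard argument from that reference, and the overall plan is sound. Two points deserve correction. First, reducing continuity of $f\ast g$ to Definition~\ref{defn1}(a) is too quick: axiom~(a) gives continuity of $x\mapsto\int F\,d\mu_x$ for a \emph{fixed} $F\in C_c(\G)$, whereas in $(f\ast g)(\gamma)$ the integrand $\eta\mapsto f(\eta)g(\eta^{-1}\gamma)$ varies with $\gamma$. The standard remedy is to regard $(\eta,\gamma)\mapsto f(\eta)g(\eta^{-1}\gamma)$ as a compactly supported continuous function on the fibered product $\{(\eta,\gamma):r(\eta)=r(\gamma)\}$, approximate it uniformly by finite sums of products $\sum f_i(\eta)h_i(\gamma)$, and then apply (a) together with the uniform boundedness of $\mu^x$ on a fixed compact set; some treatments simply build joint continuity of the Haar system into the definition. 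Second, in the verification of $(f\ast g)^{\ast}=g^{\ast}\ast f^{\ast}$ the substitution you need is $\eta\mapsto\gamma^{-1}\eta$ (new variable $\eta'=\gamma\eta$), not $\eta\mapsto\gamma^{-1}\eta^{-1}$: your identity $\overline{f(\eta)g(\eta^{-1}\gamma^{-1})}=g^{\ast}(\gamma\eta)f^{\ast}(\eta^{-1})$ is correct, and feeding it into axiom~(b) converts the variable $\gamma\eta$ into a new one ranging over $\G^{r(\gamma)}$, after which $f^{\ast}(\eta^{-1})$ becomes $f^{\ast}(\eta'^{-1}\gamma)$, giving exactly $(g^{\ast}\ast f^{\ast})(\gamma)$. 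These are local repairs; your identification of axiom~(b) as the single nontrivial input for both associativity and the anti-multiplicativity of the involution is the right take.
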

\begin{defn}
    Let $\mu=\{\mu_x\in G^{0)}\}$ be a Haar system on the Lie groupoid $\G$. The \textit{groupoid $C^{\ast}$-algebra} $C^{\ast}(\G)$ is defined to be the completion of the algebra $C_c(\G)$ for the norm 
    $$\lVert f\rVert :=\sup_{\pi}\ \lVert \pi(f)\rVert,$$
    where $\pi$ runs over all non-degenerate representations of $C_c(\G)$.
\end{defn}

\section{Heisenberg calculus for differential 
operators and H-elliptic operators}
\label{sec1.4}
In this section, we will see how to use Kirillov's orbit method in Section \ref{sec1.2} to analyze the principal cosymbol of a differential operator on a filtered manifold $M$.

Let $E, F \rightarrow M$ be the vector bundles, $P:  C^{\infty}(M;E) \rightarrow  C^{\infty}(M;F)$ is a differential operator on $M$. 
\begin{defn}
    We say $P$ is \textit{of H-order $\leq k$} if $P$ can be locally expressed as a sum of monomials of the form $$\nabla_{X_1}\cdots\nabla_{X_s}\Phi,$$
where $\Phi\in  C^{\infty}(M;End(E,F))$, $\nabla$ represents a connection on $F$, $X_i\in  C^{\infty}(M;H^{a_i})$ and $\sum_{i=1}^s a_i\leq k$. We denote by $\text{Diff}^k_H(M, E, F)$ the space of differential operators of H-order $\leq k$.
\end{defn}

\begin{defn}\label{def4}
   Let $\pi : Gr(H)_x \rightarrow U(\mathcal{H}_{\pi})$ be an irreducible unitary representation. If $P \in \text{Diff}^k_H (M, E, F)$ can be locally expressed as the sum of monomials of the form $\nabla_{X_1}\cdots\nabla_{X_s}\Phi$ near $x$ with the maximal order equal to $k$, then the \textit{Heisenberg principal cosymbol of $P$ at $\pi$} can be defined as 
$$\sigma_H^k(P,x,\pi)=\sum_{\text{H-order}=k}d\pi([X_1]_x)\cdots d\pi([X_s]_x)\otimes \Phi_x:\mathcal{H}^{\infty}_{\pi}\otimes E_x\to \mathcal{H}^{\infty}_{\pi}\otimes F_x,$$
where
\begin{itemize}
    \item $d\pi: gr(H)_x\to B(\mathcal{H}^{\infty}_{\pi})$, $d\pi(X)=\frac{d}{dt}|_{t=0}\pi(\exp (tX)) v$ is the differential of the representation $\pi$,
    \item  $[X_i]_x^{a_i}\in gr(H)_x$ is the class of $X_i\in H^{a_i}$ in $gr(H)_x$. 
    \item $\mathcal{H}^{\infty}_{\pi}$ is the subspace of smooth vectors, i.e., $v\in \mathcal{H^{\infty}_{\pi}}$ if the function $G\ni x\mapsto \pi(x)v\in \mathcal{H}_{\pi}$ is smooth.
The smooth vector space $\mathcal{H}^{\infty}_{\pi}$ is generated by $\pi(\phi)=\int_{Gr(H)_x} \pi(g)\phi(g)d g$ for $\phi\in C_c^{\infty}(G)$, refer to \cite{fischer2016quantization} Chapter 1. 
\end{itemize} 
\end{defn}

\begin{remark}
\begin{itemize}
    \item According to Kirillov's orbit method, each point $\xi\in gr(H)_x$ corresponds to a representation $\pi_{\xi}$ of $Gr(H)_x$, indicating that $\sigma(P,x,\pi_{\xi})$ parallels the evaluation of the ordinary symbol $\sigma(P)$ on points $(x,\xi)\in T^{\ast}M$.
    \item  The $KK$-classes we will define later and the resulting map $\sigma_H^k(P,x,\pi)$ depend only on the H-principal component of $P$, so we can select the bundle map $TM\to gr(H)$ to be the simplest one which maps $v\in H^i_x$ to $[v]\in H^i_x/H^{i-1}_x\subset gr(H)_x$.
\end{itemize}
\end{remark}

\begin{lem}[\cite{fischer2016quantization}]\label{lem6}
The differential representation $d\pi$ satisfies
$$\forall Y_1,Y_2\in gr(H)_x,\ d\pi(Y_1)d\pi(Y_2)-d\pi(Y_2)d\pi(Y_1)-d\pi([Y_1,Y_2])=0.$$
Thus, $d\pi$ can be extended to be defined on the universal enveloping algebra of $gr(H)_x$ for each $x\in M$.
\end{lem}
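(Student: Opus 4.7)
The plan is to derive the bracket identity from the group-level relation via a smooth two-parameter differentiation, and then invoke the universal property of $U(\mathfrak{g})$.

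First I would fix $x\in M$ and write $G=Gr(H)_x$, $\mathfrak{g}=gr(H)_x$; this is a simply connected nilpotent Lie group, so the exponential map is a global diffeomorphism and the Baker--Campbell--Hausdorff (BCH) series terminates as a polynomial, which removes all convergence issues. For $v\in\mathcal{H}^\infty_\pi$ the orbit map $G\ni g\mapsto \pi(g)v$ is smooth by definition of smooth vectors, and the operator $d\pi(Y)v=\frac{d}{dt}\big|_{t=0}\pi(\exp(tY))v$ preserves $\mathcal{H}^\infty_\pi$ (a standard fact that one may cite from \cite{fischer2016quantization}). The key computational ingredient is the two-parameter family
\[
F(s,t)\;=\;\pi(\exp(sY_1))\pi(\exp(tY_2))v\;=\;\pi(\exp(sY_1)\exp(tY_2))v,
\]
which is smooth in $(s,t)$; differentiating by the chain rule gives $\partial_s\partial_t F(0,0)=d\pi(Y_1)d\pi(Y_2)v$, and similarly for the opposite order.

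The main step is then to apply BCH: on the group side one has
\[
\exp(sY_1)\exp(tY_2)\;=\;\exp\!\bigl(sY_1+tY_2+\tfrac{st}{2}[Y_1,Y_2]+R(s,t)\bigr),
\]
where $R(s,t)$ is a finite polynomial in $s,t$ consisting of terms of total degree $\geq 3$ in $(s,t)$ and, crucially, with no pure $st$ term. Substituting this into $F(s,t)$ and using $\pi\circ\exp$ together with the smoothness of $v$, one computes the mixed partial derivative at $(0,0)$; the cubic and higher remainder $R$ contributes nothing to $\partial_s\partial_t|_{0,0}$, and the linear part $sY_1+tY_2$ yields the symmetric piece $\tfrac12(d\pi(Y_1)d\pi(Y_2)+d\pi(Y_2)d\pi(Y_1))$. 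Writing the analogous expansion with $Y_1$ and $Y_2$ interchanged and subtracting the two expressions produces exactly $d\pi(Y_1)d\pi(Y_2)-d\pi(Y_2)d\pi(Y_1)=d\pi([Y_1,Y_2])$ on the dense subspace $\mathcal{H}^\infty_\pi$, which is the claimed identity. I expect the delicate point to be justifying the termwise differentiation of the BCH expansion: the nilpotency of $\mathfrak{g}$ makes this completely rigorous (finite sum, polynomial in $s,t$), but one must verify that differentiating $\pi(\exp(\cdot))$ along a sum of vectors genuinely commutes with substitution of the BCH polynomial; this follows from smoothness of the orbit map and linearity of $d\pi$ on $\mathfrak{g}$.

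Finally, once $d\pi\colon\mathfrak{g}\to\mathrm{End}(\mathcal{H}^\infty_\pi)$ is shown to be a Lie algebra homomorphism, the extension to the universal enveloping algebra is automatic: by the universal property of $U(\mathfrak{g})$, any Lie algebra map into an associative algebra (here $\mathrm{End}(\mathcal{H}^\infty_\pi)$ with commutator bracket) extends uniquely to a unital associative algebra homomorphism $U(\mathfrak{g})\to\mathrm{End}(\mathcal{H}^\infty_\pi)$. This completes the statement and gives the operator $d\pi(X_1\cdots X_s)=d\pi(X_1)\cdots d\pi(X_s)$ used in Definition \ref{def4}.
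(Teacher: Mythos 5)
Your argument is correct, and it is worth noting that the paper itself does not supply a proof of this lemma: it is stated with a citation to \cite{fischer2016quantization}, so there is no ``paper's approach'' to compare against beyond the standard references.

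The BCH/two-parameter differentiation scheme you propose is sound. Writing $\theta(Z)=\pi(\exp Z)v$ for $v\in\mathcal{H}^\infty_\pi$, smoothness of the orbit map and of $\exp$ (global, since $Gr(H)_x$ is simply connected nilpotent) make $\theta$ a $C^\infty$ map on $gr(H)_x$, so the chain rule for the mixed partial is completely rigorous; the only slight imprecision in your write-up is the phrase ``the linear part yields the symmetric piece'' --- more accurately it is the second-order Taylor term $D^2\theta_0$ of $\theta$, evaluated on the linear coefficients $(\partial_s Z,\partial_t Z)|_0=(Y_1,Y_2)$, that produces $\tfrac12\bigl(d\pi(Y_1)d\pi(Y_2)+d\pi(Y_2)d\pi(Y_1)\bigr)v$. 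This requires the small polarization observation that $D^2\theta_0(Z,Z)=d\pi(Z)^2v$ forces $D^2\theta_0(Y_1,Y_2)=\tfrac12\bigl(d\pi(Y_1)d\pi(Y_2)+d\pi(Y_2)d\pi(Y_1)\bigr)v$, which you invoke implicitly and should spell out. With that in place, a single application of the BCH identity already rearranges to the bracket relation; the ``swap $Y_1\leftrightarrow Y_2$ and subtract'' step is harmless but redundant. You do correctly rely on $d\pi(Y)$ preserving $\mathcal{H}^\infty_\pi$ (cited) so that the compositions make sense, and the extension to $\U(gr(H)_x)$ by the universal property is exactly right.

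A marginally cleaner route, which avoids both polarization and second-order Taylor data, is via conjugation: from $\exp(sY_1)\exp(tY_2)\exp(-sY_1)=\exp\bigl(t\,\mathrm{Ad}(\exp(sY_1))Y_2\bigr)$ one gets, after $\partial_t|_0$,
\[
\pi(\exp(sY_1))\,d\pi(Y_2)\,\pi(\exp(-sY_1))v=d\pi\bigl(\mathrm{Ad}(\exp(sY_1))Y_2\bigr)v,
\]
and a single $\partial_s|_0$ (product rule on the left, $\mathrm{ad}$ on the right) gives $[d\pi(Y_1),d\pi(Y_2)]v=d\pi([Y_1,Y_2])v$ directly. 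Either route proves the lemma; yours is a bit longer but equally valid.
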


  Let $N=\dim Gr(H)$. Let $\{X_j\}^N_{j=1}$ represents a local basis of the vector bundle $gr(H)$ near $x\in M$. then, $\{Y_j=X_j(x)\}^N_{j=1}$ will be a basis of $gr(H)_x$ at each point near $x$. Each $Y_j\in gr(H)_x$ can be regarded as a first-order left-invariant differential operator on $Gr(H)_x$, the formal sum $$\sum Y_1^{\alpha_1}\dots Y_N^{\alpha_N}$$ gives rise to a higher order left-invariant differential operator on $Gr(H)_x$. In fact, we have the following theorem:

\begin{thm}
    [\textbf{Poincar\'e-Birkhoff-Witt theorem}, \cite{bourbaki1989lie}, Ch 1, Sec.2.7] \label{thm9} Considering $\{X_j\}^n_{j=1}$ as a basis of the Lie algebra $\g$ of nilpotent Lie group $\G$, each vector $X\in \g$ can be regarded as a left-invariant differential operator on $C^{\infty}(G)$ defined as 
    $$X f(x):=\frac{d}{dt}|_{t=0}f(x\cdot \exp_G(tX)).$$
    Conversely, any left-invariant differential operator $T$ on $G$ can be uniquely expressed as a finite sum
    $$T=\sum c_{\alpha}X^{\alpha} $$
for some multi-index $\alpha=(\alpha_1, \dots, \alpha_n)$, where all but a finite number of the coefficients $c_{\alpha} \in \C$ are zero. This establishes an identification between the universal enveloping algebra $\mathcal{U}(\mathfrak{g})$ and the space of the left-invariant differential operators on $\G$.
\end{thm}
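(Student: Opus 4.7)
The plan is to construct a natural algebra homomorphism $\Phi$ from the universal enveloping algebra $\mathcal{U}(\g)$ to the algebra of left-invariant differential operators on $\G$, and then verify that $\Phi$ is a bijection. For the first half of the theorem, one observes directly that for each $X \in \g$ the assignment $X f(x) := \frac{d}{dt}|_{t=0} f(x \cdot \exp_G(tX))$ is a left-invariant vector field, and the commutator $[X_L, Y_L]$ of two such vector fields equals $[X,Y]_L$. By the universal property of $\mathcal{U}(\g)$, this extends uniquely to an algebra homomorphism $\Phi: \mathcal{U}(\g) \to \mathrm{Diff}^{LI}(\G)$ sending the monomial $X^{\alpha} = X_1^{\alpha_1}\cdots X_n^{\alpha_n}$ to the composition of the corresponding left-invariant fields.

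For surjectivity I would exploit the nilpotency of $\G$: since $\G$ is simply-connected and nilpotent, $\exp_G: \g \to \G$ is a global diffeomorphism, and the basis $\{X_j\}$ provides global exponential coordinates $(x_1,\dots,x_n)$ on $\G$. A left-invariant differential operator $T$ is completely determined by its symbol at the identity, $T|_e$, which is a finite-order distribution supported at $e$ and hence a polynomial in the coordinate derivatives $\partial/\partial x_i|_e$. Using the Baker--Campbell--Hausdorff formula (which terminates because $\g$ is nilpotent) to expand $\exp_G(tX_j)$ in coordinates, I invert the triangular relation between the $\partial/\partial x_i|_e$ and the $X_j|_e$ in order to express $T$ as a finite sum $\sum c_{\alpha} X^{\alpha}$.

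For injectivity, suppose $T = \sum c_\alpha X^{\alpha} = 0$ as an operator on $C^\infty(\G)$. In the exponential coordinates above, the leading term of $X_1^{\alpha_1}\cdots X_n^{\alpha_n}$ is $\partial_{x_1}^{\alpha_1}\cdots \partial_{x_n}^{\alpha_n}$, with strictly lower-order corrections coming from iterated BCH commutators. Evaluating $T$ on the coordinate monomials $x_1^{\beta_1}\cdots x_n^{\beta_n}$ at $e$ therefore yields a triangular linear system in the coefficients $c_\alpha$, indexed by the multi-index ordering; the system forces every $c_\alpha$ to vanish.

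The main obstacle is the bookkeeping in the injectivity step: one must control the lower-order corrections produced by non-commutativity precisely enough to confirm the triangular structure. In the nilpotent setting this is manageable because BCH terminates and one can assign each $X_j$ a positive weight (e.g.\ from the grading of $\g$) with respect to which the commutator corrections have strictly smaller total weight than the leading monomial. Once this filtration is in place, the triangularity argument closes the proof and simultaneously identifies $\Phi$ as an isomorphism compatible with the weighted order filtration used throughout the Heisenberg calculus.
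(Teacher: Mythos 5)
The paper does not prove Theorem~\ref{thm9}; it simply cites it as a standard result from Bourbaki (Ch.~1, Sec.~2.7), so there is no proof of record to compare your argument against. That said, your reconstruction is essentially the standard one and it is correct in outline: define the homomorphism $\Phi$ from $\mathcal{U}(\g)$ to left-invariant differential operators via the universal property, observe that a left-invariant operator is determined by the distribution $f\mapsto (Tf)(e)$ of finite order supported at the identity (hence a polynomial in $\partial/\partial x_i|_e$), and then use the triangular relation between the monomials $\partial^\beta|_e$ and $X^\alpha|_e$ in exponential coordinates to get surjectivity and injectivity simultaneously. One small imprecision: you speak of inverting the triangular relation ``between the $\partial/\partial x_i|_e$ and the $X_j|_e$,'' but at degree one these coincide exactly in exponential coordinates; the triangular structure is between the higher-order \emph{monomials} $\partial^\beta|_e$ and $X^\alpha|_e$ (with the corrections from non-commutativity dropping total degree), which is what actually drives both the surjectivity and the injectivity step.

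Two further remarks. First, nilpotency is not needed for this theorem — it holds for any Lie group — but using it to get global exponential coordinates via the terminating BCH series is a harmless and convenient shortcut, and it matches the setting of the paper. Second, the closing paragraph about assigning positive weights from the grading of $\g$ is a red herring for this particular statement: the ordinary total-degree filtration already makes the correction terms strictly lower order and closes the triangularity argument, so the weighted filtration is unnecessary here (it matters for the Heisenberg calculus elsewhere in the paper, e.g.\ for the cosymbol spaces $\Sigma^m_H$, but not for PBW itself). With the monomial-level triangularity made explicit, your argument is a complete and correct proof.
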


\begin{defn}

Let $P=\nabla_{X_1}\cdots\nabla_{X_s}\Phi$, then 
\begin{itemize}
    \item  At each point $x\in M$, we define
 $$P_x=\sum_{\text{order}=k}([X_1]_x)\cdots ([X_s]_x)\otimes \Phi_x\in \mathcal{U}(gr(H)_x)\otimes \text{End}(E_x,F_x)$$
 as the \textit{freezing coefficient} of $P$ at $x$.
 \item  The \textit{H-cosymbol of $P$} is defined to be the family of operators
 $$\sigma_H(P):=\{\sigma_H(P,x)\}_{x\in M}=\{P_x\}_{x\in M}.$$
 on $\{Gr(H)_x\}$ parameterized by $x\in M$.
\end{itemize}
 \end{defn}
By Lemma \ref{lem6} and the formula of $\sigma_H^k(P,x,\xi)$ in Definition \ref{def4}, the {Heisenberg principal cosymbol of $P$ at $\pi$} can be calculated as 
 $$\sigma_H^k(P,x,\pi)=(d\pi\otimes 1)(P_x):\mathcal{H}^{\infty}_{\pi}\otimes E_x\to \mathcal{H}^{\infty}_{\pi}\otimes F_x.$$

\begin{remark}\label{rmk4}
    The Heisenberg principal cosymbol $\sigma_H(P)$ defined above is different from the symbol in the ordinary sense. For an ordinary differential operator of order $m$, $P=\sum_{|\alpha|\leq m} a_{\alpha} \partial_x^{\alpha}$, freezing the coefficient at a point $x\in M$ induces the constant coefficient operator $P_x=\sum_{|\alpha|=m}a_{\alpha}(x)\partial_{\xi}^{\alpha}$ on $T_x M$. This family $\{P_x\}_{x\in M}$ is called the ordinary principal ``cosymbol’’ of $P$, with its Fourier transform being the principal symbol of $P$ in the ordinary sense:
    $$\sigma(P)=\sum_{\lvert\alpha\rvert=m}a_{\alpha}(x)(i\xi)^{\alpha}.$$

 The principal cosymbol is Fourier transformed to the principal symbol of $P$, in which the Fourier transform can be regarded as the representation by elements in $T_x^{\ast} M$ at each point. However, in the filtered manifold cases, $C^{\ast}(Gr(H))$ is not commutative so the Fourier transform is hard to calculate directly. That is why we only defined the Heisenberg cosymbol but not the Heisenberg symbol. The Fourier transform of the H-cosymbols will be discussed in Chapter \ref{chap6}.

The H-cosymbol will be frequently used within the Heisenberg calculus, whereas the terms ``cosymbol’’ and ``symbol’’ will refer to the family of operators of freezing coefficients and its Fourier transform, respectively, in a conventional context. 
\end{remark}

The following analysis illustrates how a representation $\pi$ interacts directly with the complete cosymbol class $\sigma_H(P)=\{\sigma_H(P,x)\}_{x\in M}$, without using the differential representation $d\pi$. 

Let $\pi:Gr(H)_x\to U(\mathcal{H_{\pi}})$ be a unitary representation, we can construct 
$$\pi(\sigma_H(P,x)):E_x\otimes \mathcal{H}_{\pi}\to F_x\otimes \mathcal{H}_{\pi}.$$
as follows:

By Theorem \ref{thm9}, $\mathcal{U}(gr(H)_x)$ elements can be identified with left-invariant differential operators on $Gr(H)_x$. So the cosymbol $\sigma_H(P)$, which is a family of elements in $\U(gr(H)_x)$ parametrized by $x$, can be considered a collection of left-invariant differential operators $\{\sigma_H(P,x)\}_{x\in M}$ on the family of manifolds $\{Gr(H)_x\}_{x\in M}$.

For each $x\in M$, since $\sigma_H(P,x)$ corresponds to a left-invariant differential operator on $Gr(H)_x$, it follows that for $f, g \in C_c^{\infty}(Gr(H)_x)$, $\sigma_H(P, x)(f\ast g)=f\ast \sigma_H(P, x)(g)$. We can extend the operator to the group $C^{\ast}$-algebra $C^{\ast}(Gr(H)_x)$ and yields
 $$\overline{\sigma_H(P,x)}:\text{Dom}(\overline{P})\subseteq E_x\otimes C^{\ast}(Gr(H)_x)\to F_x\otimes C^{\ast}(Gr(H)_x).$$
 This can be considered to be a morphism between $C^{\ast}(Gr(H)_x)$-Hilbert modules. Thus, the cosymbol $\sigma_H(P)=\{\sigma_H(P,x)\}$ parametrized by $x\in M$ can be extended to an unbounded multiplier of $C^{\ast}({Gr(H)})$:
 $$\overline{\sigma_H(P)}: C_0(M;E)\otimes_{C_0(M)} C^{\ast}(Gr(H))\to  C_0(M;F)\otimes_{C_0(M)} C^{\ast}(Gr(H)).$$
 
The representation $\pi$ can be extended to a representation of $C^{\ast}(Gr(H)_x)$ on $\mathcal{H_{\pi}}$, thus $\overline{\sigma_H(P)}\otimes_{\pi} 1$ on $(E_x\otimes C^{\ast}(Gr(H)_x))\otimes_{\pi} \mathcal{H_{\pi}}=E_x\otimes \mathcal{H_{\pi}}$ yields an operator
 $$\pi(\sigma_H(P,x)):E_x\otimes \mathcal{H}_{\pi}\to F_x\otimes \mathcal{H}_{\pi}.$$

 \begin{remark}
 Note that:
 \begin{itemize}
     \item[(i)] The map from $\sigma_H(P)$ to $\pi(\sigma_H(P,x))$ is consistent with the $d\pi$ map defined in Definition \ref{def4}  above by simple calculation, refer to \cite{fischer2016quantization} Proposition 1.7.6.
     \item[(ii)] The range and sourse map on the groupoid $Gr(H)$ are the same map. Therefore, the  groupoid $C^{\ast}$-algebra structure of $C^{\ast}(Gr(H))$ is a smooth family of group $C^{\ast}$-algebras $\{C^{\ast}{(Gr(H)_x)}\}_{x\in M}$. 
 \end{itemize}
\end{remark}
For left-invariant and homogeneous differential operators on a homogeneous Lie group, hypoellipticity is equivalent to the Rockland condition that we will define in the following theorem. The Rockland condition also helps to understand the definition of H-ellipticity.

\begin{thm}[\textbf{Rockland theorem}\label{thm12}, \cite{melin1983parametrix}] The principal cosymbol $\sigma_H(P,x)$ of a differential operator $P$ at $x\in M$ is left invertible on $L^2(M)$ if and only if for every unitary non-trivial representation $\pi$ of the group $Gr(H)_x$, the (unbounded) operator $\pi(\sigma_H(P,x))$ is injective. 
\end{thm}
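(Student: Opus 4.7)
The plan is to reduce everything to the osculating group $N:=Gr(H)_x$, where $\sigma_H(P,x)$ is a homogeneous left-invariant differential operator of homogeneous degree $k$, and then prove the two implications by separate arguments: the forward implication by an explicit obstruction construction, the reverse implication by building a parametrix in the Heisenberg calculus. After the reduction, the statement becomes: a homogeneous left-invariant differential operator $D$ on a graded nilpotent Lie group $N$ admits a left inverse modulo smoothing operators if and only if $\pi(D)$ is injective on $\mathcal{H}^{\infty}_{\pi}$ for every nontrivial $\pi\in\widehat{N}$.

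For the easy direction (left invertibility $\Rightarrow$ Rockland), I would argue by contradiction. Suppose some nontrivial $\pi$ carries a nonzero smooth vector $v$ with $\pi(D)v=0$. Pick a second smooth vector $w$ and form the matrix coefficient $c(g):=\langle \pi(g)v,w\rangle$, which is smooth but not $L^2$ on $N$. Using a cutoff $\chi$ together with the dilations $\delta_\lambda$ from Remark 1.1.4, set $u_\lambda(g):=\chi(\delta_\lambda g)c(g)$. Because $D$ is left-invariant and $\pi(D)v=0$, applying $D$ to $u_\lambda$ only hits the cutoff, producing lower-order contributions which, after rescaling by $\delta_\lambda$ and using homogeneity of $D$, yield $\|Du_\lambda\|/\|u_\lambda\|\to 0$. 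This contradicts any inequality $\|u\|\leq C(\|Du\|+\|Ru\|)$ with $R$ smoothing, which is what left invertibility modulo smoothing would furnish.

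For the hard direction (Rockland $\Rightarrow$ left invertibility), the plan is to construct a left parametrix $Q$ for $D$ inside the Heisenberg calculus on $N$. Concretely, I would proceed by: (i) passing to the operator-valued Fourier transform on $N$, so that $D$ becomes the field $\{\pi(D)\}_{\pi\in\widehat{N}}$; (ii) using that each $\pi(D)$ is a homogeneous, formally self-adjoint (after multiplying by $D^{*}$, replace $D$ by $D^{*}D$ which has the same Rockland property) operator that is injective, hence has closed range and admits a bounded inverse on that range which is homogeneous of degree $-2k$; (iii) assembling these inverses into a field that lies in the symbol class of a homogeneous Heisenberg pseudo-differential operator of order $-k$; (iv) using the Heisenberg pseudo-differential calculus and its composition formula to verify that the candidate $Q$ satisfies $QD-I\in \Psi_H^{-\infty}$. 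The transference from $N$ back to $M$ is then routine because the H-cosymbol only depends on the freezing of $P$ at $x$, and a local left parametrix near $x$ pulls back to left invertibility on $L^2$ of the cosymbol.

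The main obstacle is step (iii): producing the parametrix as an actual element of the Heisenberg calculus rather than merely a measurable field of inverses. One must show that the inverses $\pi(D)^{-1}$, defined on the ranges, depend on $\pi$ in a way compatible with the Schwartz-type symbol estimates required in the homogeneous calculus, and that they glue across the stratification of $\widehat{N}$ (where representations of different dimensions and different infinitesimal characters sit together). This is precisely the technical heart of the work of Rockland, Helffer--Nourrigat, and the reference \cite{melin1983parametrix} cited here; one would invoke their construction rather than redo it from scratch.
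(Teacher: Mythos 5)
The paper does not prove Theorem \ref{thm12}: it is stated as a cited background result from Melin \cite{melin1983parametrix}, so there is no in-paper argument to compare against. Your sketch nonetheless correctly identifies the standard shape of the proof in the literature (Rockland, Helffer--Nourrigat, Melin; or, in the groupoid formulation that the paper later uses via Definition \ref{def5} and Remark 1.4.13, invertibility of the cosymbol in $\Sigma_H^m(M)$), and you are right that the difficult content is step~(iii), i.e.\ showing that the measurable field of inverses $\{\pi(D^*D)^{-1}\}_{\pi}$ is the Fourier transform of a genuine Heisenberg symbol -- this is precisely what is established in the cited references and is not a triviality to reprove.

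Two small points that would need tightening if you were to turn the sketch into a proof. First, for the easy direction you assert that the matrix coefficient $c(g)=\langle\pi(g)v,w\rangle$ is smooth but not in $L^2(N)$; this is true, but it requires the fact that a noncompact simply connected nilpotent Lie group has no square-integrable irreducible unitary representations -- it does not follow just from ``$\pi$ nontrivial,'' so it deserves a sentence. Second, in step~(ii) the Rockland condition gives injectivity of each $\pi(D^*D)$ and, by homogeneity, a fiberwise inverse on the range, but there is no uniform spectral gap across $\widehat N$ (the spectra collapse to $0$ as $\pi$ degenerates to the trivial representation under the dilations), so ``admits a bounded inverse'' must be read representation-by-representation; the genuine glue across the stratification of $\widehat N$ is, as you say, exactly what Melin's construction supplies.
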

\begin{defn}
    An operator $P$ satisfying the condition in Theorem \ref{thm12} is said to satisfy \textit{the Rockland condition}.
\end{defn}

\begin{defn}\label{defn8}
    A differential operator $P$ of order $k$ on $M$ from vector bundle $E$ to $F$ is said to be \textit{H-elliptic} if the principal cosymbol $\sigma_H^k(P,x)$ and $\sigma_H^k(P^{\ast},x)$ are both invertible at each point $x\in M$.
\end{defn}

\begin{remark}\label{rmk3}
\begin{itemize}
    \item According to the Rockland theorem, the condition in Definition \ref{defn8} is equivalent to the condition that for all points $x\in M$, and for every non-trivial irreducible representation $\pi$ of $Gr(H)_x$, $\pi(\sigma_H(P,x))$ and $\pi(\sigma_H(P^{\ast},x))$ are injective.
    \item   In the pseudo-differential calculus, such a definition of H-ellipticity is equivalent to the invertibility of the cosymbol of the operator in the convolution distribution algebra $\mathcal{E}_r' (Gr(H))/C^{\infty}_p(Gr(H);\Omega_r)$, see Definition \ref{def3} and \ref{def5}. For details, refer to \cite{van2019groupoid}.
\end{itemize}
   
\end{remark}
\section{Hilbert modules and $KK$-theory}
\label{sec1.5}
In order to analyze the symbol class and index class of H-elliptic operators, we need to understand the $KK$-theory.
In this section, we briefly introduce the Hilbert module theory and Kasparov's $KK$-theory. See \cite{blackadar1998k} for details.
\begin{defn}
    Let $B$ be a $C^{\ast}$-algebra. A \textit{pre-Hilbert module} over $B$ is a right $B$-module $E$ equipped with a $B$-valued ``inner product'', a function $\langle \cdot , \cdot \rangle  : E \times E \rightarrow B$, with these properties:
    \begin{itemize}
        \item[(1)] $\langle \cdot , \cdot \rangle$ is sesquilinear. (conjugate-linear in the first variable and linear in the second variable.)
        \item[(2)] $\langle x , y b \rangle=\langle x, y \rangle b$ for all $x, y \in E, b \in B$.
        \item[(3)]$\langle y,x \rangle= \langle x, y\rangle^{\ast}$.
        \item[(4)] $\langle x,x\rangle\geq 0$, $\langle x,x \rangle=0$ if and only if $x=0$.
    \end{itemize}
    For $x \in E$, put $ \lVert x\rVert = \lvert\langle x, x\rangle\rvert^ {1/2}$. This is a norm on $E$. If $E$ is complete, $E$ is called a Hilbert module over $B$.
\end{defn}
\begin{defn}
    Let $E$ be a Hilbert $B$-module. We define $\mathcal{L}(E)$ to be the set of all module homomorphisms $T: E \rightarrow E$ for which there is an adjoint module homomorphism $T^{\ast}: E \rightarrow E$ with $\langle T x, y\rangle = \langle x, T^{\ast} y\rangle$ for all $x, y \in E$.
\end{defn}
\begin{prop}[\cite{blackadar1998k}]
    Each operator in $\mathcal{L}(E)$ is bounded, and $\mathcal{L}(E)$ is a $C^{\ast}$-algebra with respect to the operator norm. In general, $\mathcal{L}(E)$ does not contain all bounded module-endomorphisms of $E$.
\end{prop}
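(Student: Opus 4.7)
The plan is to handle the three assertions in sequence, following the standard route to the $C^{\ast}$-algebra structure of adjointable operators on a Hilbert module.

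First I would show that any $T\in\mathcal{L}(E)$ is bounded via the closed graph theorem. If $x_n\to x$ and $Tx_n\to y$ in $E$, then for any $z\in E$,
$$\langle z,Tx_n\rangle=\langle T^{\ast}z,x_n\rangle\longrightarrow \langle T^{\ast}z,x\rangle=\langle z,Tx\rangle,$$
and also $\langle z,Tx_n\rangle\to \langle z,y\rangle$ by continuity of the inner product. Non-degeneracy of the $B$-valued inner product then forces $Tx=y$, so the graph of $T$ is closed.

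Next I would verify the $C^{\ast}$-algebra axioms. Adjoints are unique (again by non-degeneracy), and a routine check gives $(ST)^{\ast}=T^{\ast}S^{\ast}$, $(T+S)^{\ast}=T^{\ast}+S^{\ast}$, and $T^{\ast\ast}=T$, making $\mathcal{L}(E)$ a normed $\ast$-algebra under the operator norm. The $C^{\ast}$-identity rests on the module Cauchy--Schwarz inequality $\|\langle x,y\rangle\|\le \|x\|\|y\|$: with it,
$$\|Tx\|^2=\|\langle Tx,Tx\rangle\|=\|\langle x,T^{\ast}Tx\rangle\|\le \|x\|^2\|T^{\ast}T\|,$$
so $\|T\|^2\le \|T^{\ast}T\|\le \|T^{\ast}\|\|T\|$, forcing $\|T^{\ast}\|=\|T\|$ and $\|T^{\ast}T\|=\|T\|^2$. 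The step I expect to be the main technical point is completeness. Given a Cauchy sequence $\{T_n\}$, the isometry property $\|T_n^{\ast}-T_m^{\ast}\|=\|T_n-T_m\|$ just obtained shows that $\{T_n^{\ast}\}$ is also Cauchy; both converge in the operator norm to bounded module maps $T$ and $S$, and passing to the limit in $\langle T_n x,y\rangle=\langle x,T_n^{\ast}y\rangle$ shows that $S$ is an adjoint of $T$, placing $T\in\mathcal{L}(E)$ with $T_n\to T$ there.

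For the final assertion I would give an explicit counterexample. Take $B=C[0,1]$, let $J=\{f\in B : f(0)=0\}$, and set $E=B\oplus J$ with its natural Hilbert $B$-module structure. The module endomorphism $T(b,j):=(j,0)$ is plainly bounded. A hypothetical adjoint $S(b,j)=(S_1(b,j),S_2(b,j))$ would have to satisfy
$$j_1^{\ast}b_2=b_1^{\ast}S_1(b_2,j_2)+j_1^{\ast}S_2(b_2,j_2)$$
for all $(b_1,j_1),(b_2,j_2)\in E$. Setting $j_1=0$ and using the unit of $B$ forces $S_1\equiv 0$; setting $b_1=0$ and letting $j_1$ run through an approximate unit for $J$ forces $b_2$ and $S_2(b_2,j_2)$ to agree on $(0,1]$, which contradicts $S_2(b_2,j_2)\in J$ as soon as $b_2(0)\ne 0$. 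Hence $T\notin\mathcal{L}(E)$, completing the proposition.
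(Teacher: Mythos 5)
The paper cites this result to Blackadar without reproducing a proof, so there is nothing in the text itself to compare against; but your argument is correct and is precisely the standard one found in the cited reference and in Lance's treatment of Hilbert modules (closed graph for boundedness, module Cauchy--Schwarz for the $C^{\ast}$-identity and completeness, and the classical $C[0,1]\oplus J$ counterexample for the last assertion). One small remark on the final step: after the approximate-unit argument you get $S_2(b_2,j_2)=b_2$ on $(0,1]$, hence on all of $[0,1]$ by continuity, so $S_2(b_2,j_2)=b_2\notin J$ whenever $b_2(0)\ne 0$; you phrased this as agreement only on $(0,1]$, but the contradiction is immediate either way, so the proof is complete as written.
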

\begin{defn}
\begin{itemize}
    \item  If $x, y \in E$, let $\theta_{x,y}$ be the operator defined by $\theta_{x,y}(z) = x \langle  y, z\rangle$ . We have $\theta_{x,y} \in \mathcal{L}(E)$; in fact, $\theta_{x,y}^{\ast} = \theta_{y,x}$. If $T \in \mathcal{L}(E)$, then $T \theta_{x,y} = \theta_{T x,y}; \theta_{x,y} T = \theta_{x,T^{\ast}y}$, so the linear span of ${\theta_{x,y}}$ is an ideal in $\mathcal{L}(E)$. The \textit{space of compact operators} $\mathcal{K}(E)$ is defined to be the closure of this linear span.
     \item A \textit{regular operator} on a Hilbert $B$-module $E$ is a densely defined operator $T$ on $E$ with densely defined adjoint $T^{\ast}$, such that $1 + T^{\ast}T$ has dense range in $E$.
\end{itemize}

\end{defn}

\begin{remark}
     If $T$ is a regular operator, then $(1+T^{\ast}T)^{-1}$ extends to an operator in $\mathcal{L}(E)$.
\end{remark}

\begin{defn}
Let $A$ and $B$ be graded $C^{\ast}$-algebras. $\mathbb{E}(A, B)$ is the set of all triples $(E, \phi, F)$, where $E$ is a countably generated graded Hilbert module over $B$, $\phi$ is a graded $\ast$-homomorphism from $A$ to $\mathcal{L}(E)$, and $F$ is an operator in $\mathcal{L}(E)$ of degree $1$, such that $[F, \phi(a)]$, $(F^2-1)\phi(a)$, and $(F -F^{\ast})\phi(a)$ are all in $\mathcal{K}(E)$ for all $a \in A$. The elements of $\mathbb{E}(A, B)$ are called \textit{Kasparov modules for $(A, B)$}.
\end{defn}

\begin{defn}
\begin{itemize}
    \item Two triples $(E_0, \phi_0, F_0)$ and $(E_1, \phi_1, F_1)$ are \textit{unitarily equivalent} if there is a unitary in $\mathcal{L}(E_0, E_1)$, of degree $0$, intertwining the $\phi_i$ and $F_i$. Unitary equivalence is denoted $\sim_u$.
      \item Let $IB$ denote the $C^{\ast}$-algebra of continuous functions from $[0,1]$ to $B$.
    A homotopy connecting $(E_0, \phi_0, F_0)$ and $(E_1, \phi_1, F_1)$ is an element $(E, \phi, F)$ of $\mathbb{E}(A, IB)$ for which $(E \hat{\otimes}_{f_i} B, f_i \circ \phi, {f_i}_{\ast}(F)) \sim_u (E_i, \phi_i, F_i)$, where $f_i$, for $i = 0, 1$, is the evaluation homomorphism from $IB$ to $B$. Homotopy equivalence is denoted $\sim_h$.
        \end{itemize}
\end{defn}
\begin{defn}
    The\textit{ Kasparov $K$-group} $KK(A, B)$ is defined to be the set of equivalence classes of $\mathbb{E}(A, B)$ under $\sim_h$.
    
    When $A$ and $B$ are graded $G$-algebras, the equivariant $KK$-group $KK^G(A, B)$ is defined in a similar manner with the additional condition that $F$ is $G$-continuous and $(g\cdot F-F) \phi(a)\in \mathcal{K}(E)$ for all $a\in A$ and $g\in G$.
\end{defn}

\begin{exmp}
    In the Kasparov $K$-group $KK(A,B)$, 
    \begin{itemize}
        \item if $A=\C$, then we have $KK(\C,B)\cong K_0(B)$, the operator $K$-theory group of $B$ in degree $0$;
        \item if $B=\C$, then we have $KK(A,\C)\cong K^0(A)$, the $K$-homology of $A$ which consists of equivalence classes of $A$-Fredholm modules.
    \end{itemize}
\end{exmp}
In order to define the $KK$-product, we need some special $C^{\ast}$-algebras. 
\begin{defn}
    Let $A$ be a $C^{\ast}$-algebra.
\begin{itemize}
    \item  A positive element $h \in A$ is \textit{strictly positive} if $\phi(h) > 0$ for every state $\phi: A\to\C$ of $A$. A $C^{\ast}$-algebra containing a strictly positive element will be called a \textit{$\sigma$-unital $C^{\ast}$-algebra}.
  \item A \textit{separable} $C^{\ast}$-algebra is a $C^{\ast}$-algebra whose underlying topological space is a separable topological space.
\end{itemize}
\end{defn}

    Kasparov showed that there is a product operation on $KK$-groups. We briefly explain how the $KK$-product is constructed.

 Given $x \in KK(A, D)$, $y \in KK(D, B)$, choose representatives $(E_1, \phi_1, F_1) \in \mathbb{E}(A, D)$ and $(E_2, \phi_2, F_2) \in \mathbb{E}(D, B)$; then define $(E, \phi, F) \in \mathbb{E}(A, B)$ by $E = E_1 \hat{\otimes}_{\phi_2} E_2$, $\phi=\phi_1\hat{\otimes}_{\phi_2} 1$, $F=F_1\# F_2$, where $F_1 \# F_2$ is a suitable combination of $F_1$ and $F_2$. 

 In general, $F$ will be a combination of $F_1\hat{\otimes} 1$ and $1 \hat{\otimes} F_2$ (when these are suitably defined). The coefficients will come from a judiciously chosen “partition of unity”. However, $1 \hat{\otimes} F_2$ cannot be defined in general cases, so we need an operator $F \in \mathcal{L}(E) (E = E_1\hat{\otimes}_{\phi_2} E_2)$ which “plays the role of $1 \hat{\otimes} F_2$ up to compacts.”

For any $x \in E_1$ there is an operator $T_x \in \mathcal{L}(E_2, E)$ defined by $T_x(y) = x \hat{\otimes} y$.
\begin{defn}
\begin{itemize}
    \item     An operator $F \in \mathcal{L}(E)$ is called an \textit{$F_2$-connection} for $E_1$(or an $F_2$-connection on $E$) if, for any $x \in E_1$, 
    \begin{equation*}
        \begin{aligned}
            T_x\circ F_2-(-1)^{\partial x \cdot \partial F_2} F\circ T_x\in \mathcal{K}(E_2,E), \\
      F_2\circ T_x^{\ast}-(-1)^{\partial x \cdot \partial F_2} T_x^{\ast}\circ F\in \mathcal{K}(E,E_2), 
        \end{aligned}
    \end{equation*}
     
      \item Let $F$ be an $F_2$-connection on $E$. $(E, \phi, F)$ is called a \textit{Kasparov product} for $(E_1, \phi_1, F_1)$ and $(E_2, \phi_2, F_2)$ if
      \begin{enumerate}
          \item[(1)] $(E, \phi, F)$ is a Kasparov $(A, B)$-module;
          \item[(2)]  For all $a \in A$, $\phi(a)[F_1\hat{\otimes} 1, F]\phi(a)^{\ast} \geq 0\text{ mod }\mathcal{K}(E)$.
      \end{enumerate}
\end{itemize}
\end{defn}
Let $G$ be any $F_2$-connection of degree $1$. Using Kasparov's Technical Theorem (Theorem 1.4 in \cite{kasparov1988equivariant}), we can choose appropriate $M, N \in \mathcal{L}(E)$ with $0\leq M, N\leq 1$ and $M+N=1$ satisfying several conditions so that the above (1) and (2) are realized, and the product operator $F$ can be constructed to be $F=M^{1/2}(F_1\hat{\otimes} 1)+N^{1/2} G$,
\begin{thm}[\cite{kasparov1988equivariant}]
    Let $A$, $B$ be separable and $D$ be $\sigma$-unital graded $C^{\ast}$-algebras. Let $(E_1, \phi_1, F_1)$ be a Kasparov $(A, D)$-module and $(E_2, \phi_2, F_2)$ be a Kasparov $(D, B)$-module, there is a Kasparov product for $(E_1, \phi_1, F_1)$ and $(E_2, \phi_2, F_2)$, which is a Kasparov $(A,B)$-module, unique up to homotopy $\sim_h$.
    The Kasparov product defines a bilinear function ${\otimes}_D: KK(A, D) \times KK(D, B) \rightarrow KK(A, B)$.
\end{thm}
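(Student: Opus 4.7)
The plan is to establish the theorem in three stages: first construct a Kasparov product explicitly, then verify its uniqueness up to homotopy, and finally deduce that the construction descends to a well-defined bilinear pairing on $KK$-groups. Throughout, I will work with the tensor product $E=E_1\hat{\otimes}_{\phi_2} E_2$ and the representation $\phi=\phi_1\hat{\otimes}_{\phi_2} 1$ of $A$, so the entire difficulty is concentrated in producing a suitable operator $F\in\mathcal{L}(E)$.

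For existence, I would first show that $F_2$-connections on $E$ exist. The key tool is a partition-of-unity/stabilization argument: since $E_2$ is countably generated, Kasparov's stabilization theorem embeds $E_2$ as a complemented summand of the standard module $\mathcal{H}_D=D\hat{\otimes}\mathcal{H}$, and on $\mathcal{H}_D$ the operator $1\hat{\otimes} F_2$ is literally defined; pulling this back by projections produces an $F_2$-connection $G$ of degree $1$, and any two $F_2$-connections differ by an element of $\mathcal{K}(E)$ modulo the image of the $T_x$ operators. With $G$ in hand, define the auxiliary sets
\[
\mathcal{A}_1=\phi(A)\cup [F_1\hat{\otimes} 1,\phi(A)]\cup\{(F_1^2-1)\hat{\otimes} 1,(F_1-F_1^{\ast})\hat{\otimes} 1\}\phi(A),
\]
\[
\mathcal{A}_2=\{[G,\phi(a)],(G^2-1)\phi(a),(G-G^{\ast})\phi(a):a\in A\},\ \Delta=\phi(A),
\]
viewed as subsets of $\mathcal{K}(E)$ and $\mathcal{L}(E)$. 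Applying Kasparov's Technical Theorem (Theorem 1.4 of \cite{kasparov1988equivariant}) to these sets produces $M,N\in\mathcal{L}(E)$ with $0\leq M,N\leq 1$, $M+N=1$, with $M$ annihilating $\mathcal{A}_2$ up to compacts and $N$ annihilating $\mathcal{A}_1$ up to compacts, and with both $M,N$ commuting modulo compacts with $\Delta$. Setting $F=M^{1/2}(F_1\hat{\otimes} 1)+N^{1/2}G$, I would verify directly that $[F,\phi(a)]$, $(F^2-1)\phi(a)$ and $(F-F^{\ast})\phi(a)$ lie in $\mathcal{K}(E)$, and that $\phi(a)[F_1\hat{\otimes} 1,F]\phi(a)^{\ast}\geq 0$ modulo $\mathcal{K}(E)$; the latter positivity reduces, using the commutation relations and $M+N=1$, to the manifestly nonnegative expression $\phi(a)\bigl(N^{1/2}[(F_1\hat{\otimes} 1),G]_++\ldots\bigr)\phi(a)^{\ast}$ plus compacts, where the connection condition converts $[F_1\hat{\otimes}1,G]$ into $2(F_1\hat{\otimes}1)G=2(F_1\hat{\otimes}1)^2$ up to compacts on $\phi(A)$, and $F_1^2=1$ mod compacts on $\phi(A)$. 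This positivity condition is the main technical obstacle: it is what forces the use of $M^{1/2}$ and $N^{1/2}$ rather than $M$ and $N$, and it is what keeps the operator on the Fredholm side of the Kasparov picture.

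For uniqueness up to homotopy, I would show that any two Kasparov products $(E,\phi,F)$ and $(E,\phi,F')$ are operator homotopic. Given two choices $G,G'$ of $F_2$-connection and two associated decompositions $M,N$ and $M',N'$, the straight-line paths $G_t=tG+(1-t)G'$ and $M_t=tM+(1-t)M'$, $N_t=1-M_t$ stay within the class of admissible data (connection property and the Kasparov-Technical-Theorem conclusions are convex), so $F_t=M_t^{1/2}(F_1\hat{\otimes} 1)+N_t^{1/2}G_t$ defines a norm-continuous path in $\mathcal{L}(E)$ such that $(E,\phi,F_t)$ is a Kasparov $(A,B)$-module for every $t$; operator homotopy then implies $\sim_h$-equivalence. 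The same convex-combination argument, together with the standard fact that Kasparov $(A,B)$-modules with the positivity condition form a subset closed under operator homotopy, gives uniqueness up to $\sim_h$.

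Finally, for well-definedness on classes and bilinearity, I would verify that the construction is functorial under the basic equivalence relations: unitary equivalence of $(E_1,\phi_1,F_1)$ or $(E_2,\phi_2,F_2)$ induces unitary equivalence of the product, compact perturbations of $F_1$ or $F_2$ yield compact perturbations of any product operator $F$, and a homotopy $(E_i,\phi_i,F_i^{(t)})$ on one side can be fed through the construction (working now over $IB$ or $ID$) to produce a homotopy of products. Bilinearity then follows because the direct sum of Kasparov modules is carried through the tensor-product construction to the direct sum of products, so addition in $KK(A,D)$ and $KK(D,B)$ maps to addition in $KK(A,B)$. Bilinearity on the $D$-side is automatic from the uniqueness up to $\sim_h$ already established, since both sides compute a Kasparov product for the same data.
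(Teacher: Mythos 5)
Your overall plan matches the route the paper sketches (and cites to \cite{kasparov1988equivariant}): form $E=E_1\hat{\otimes}_{\phi_2}E_2$, obtain an $F_2$-connection $G$ via Kasparov stabilization, invoke the Technical Theorem to produce $M,N\geq 0$ with $M+N=1$ commuting modulo compacts with the relevant data, and set $F=M^{1/2}(F_1\hat{\otimes}1)+N^{1/2}G$; your uniqueness-via-convex-paths and bilinearity outline is also the standard one. However, there is a genuine error in your positivity step. After commuting $M^{1/2}$ and $N^{1/2}$ past $F_1\hat{\otimes}1$ modulo $\mathcal{K}(E)$ against $\phi(a)$, the graded commutator decomposes as
\[
\phi(a)[F_1\hat{\otimes}1,F]\phi(a)^{\ast}\equiv 2\,\phi(a)M^{1/2}(F_1\hat{\otimes}1)^2\phi(a)^{\ast}+\phi(a)N^{1/2}[F_1\hat{\otimes}1,G]\phi(a)^{\ast}\ \bmod\ \mathcal{K}(E),
\]
and the manifestly nonnegative contribution is the \emph{first} term: since $((F_1\hat{\otimes}1)^2-1)\phi(a)\in\mathcal{K}(E)$ and $M^{1/2}\geq 0$, it is $\equiv 2\,\phi(a)M^{1/2}\phi(a)^{\ast}\geq 0$ modulo compacts. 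Your write-up instead locates positivity in the $N^{1/2}[F_1\hat{\otimes}1,G]$ term by asserting that the connection condition turns $[F_1\hat{\otimes}1,G]$ into $2(F_1\hat{\otimes}1)G=2(F_1\hat{\otimes}1)^2$ modulo compacts. This is false: $G$ is an $F_2$-connection, engineered to stand in for $1\hat{\otimes}F_2$, not for $F_1\hat{\otimes}1$, so there is no reason for $(F_1\hat{\otimes}1)G$ to agree with $(F_1\hat{\otimes}1)^2$ modulo $\mathcal{K}(E)$, nor does the graded commutator $[F_1\hat{\otimes}1,G]$ reduce to $2(F_1\hat{\otimes}1)G$ without a commutation hypothesis you do not have. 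In the correct argument, $\phi(a)N^{1/2}[F_1\hat{\otimes}1,G]\phi(a)^{\ast}$ is precisely the term that the Technical Theorem's $N$ must suppress modulo $\mathcal{K}(E)$; this forces $[F_1\hat{\otimes}1,G]\phi(a)$ to be among the operators fed into $\mathcal{A}_2$, which your stated choice of $\mathcal{A}_2$ omits. With the roles of the two terms corrected and $\mathcal{A}_1,\mathcal{A}_2$ adjusted accordingly, the rest of your argument goes through.
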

\begin{remark}
    Such a product between $KK$-groups does not exist in algebraic $K$-theory.
\end{remark}
\begin{cor}[\cite{kasparov1988equivariant}]
    Let $A_1$, $A_2$, $B_1$, $B_2$, and $D$ be graded $C^{\ast}$-algebras with $A_1$ and $A_2$ separable and $B_1$ and $D$ $\sigma$-unital. There is a Kasparov product
  $$\otimes_D: KK(A_1, B_1\hat{\otimes} D) \times KK(D \hat{\otimes} A_2, B_2)\to KK(A_1\hat{\otimes} A_2, B_1 \hat{\otimes} B_2)$$
    which is is bilinear, contravariantly functorial in $A_1$ and $A_2$ and covariantly functorial in $B_1$ and $B_2$.
\end{cor}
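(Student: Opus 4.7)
The plan is to reduce the external product to the internal one that the preceding theorem supplies. The key auxiliary tool is the exterior tensoring operation $\tau_C : KK(A,B) \to KK(A \hotimes C, B \hotimes C)$. On a representative Kasparov module $(E,\phi,F)$ for $(A,B)$, one would set
$$\tau_C(E,\phi,F) := (E \hotimes C,\ \phi \hotimes \operatorname{id}_C,\ F \hotimes 1),$$
view $E \hotimes C$ as a Hilbert $(B \hotimes C)$-module by right action on the second factor, and check that the defining compact-commutator conditions for a Kasparov module are preserved (the commutator $[F \hotimes 1, \phi(a) \hotimes c]$ equals $[F,\phi(a)] \hotimes c$, which is compact as an element of $\mathcal{K}(E) \hotimes C \subseteq \mathcal{K}(E \hotimes C)$, and similarly for the other two conditions). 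Routine verification shows that $\tau_C$ respects unitary equivalence and homotopy, so it descends to a well-defined additive map on $KK$-groups.

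Given $x \in KK(A_1, B_1 \hotimes D)$ and $y \in KK(D \hotimes A_2, B_2)$, I would then define
$$x \otimes_D y := \tau_{A_2}(x) \,\otimes_{B_1 \hotimes D \hotimes A_2}\, \tau_{B_1}(y),$$
where the product on the right is the internal Kasparov product from the preceding theorem, applied over the intermediate algebra $B_1 \hotimes D \hotimes A_2$. To invoke that theorem legitimately, I would check the hypotheses: $A_1 \hotimes A_2$ is separable (tensor product of separable $C^{\ast}$-algebras), and $B_1 \hotimes D \hotimes A_2$ is $\sigma$-unital (a product of strictly positive elements, one from each $\sigma$-unital factor, is strictly positive in the tensor product). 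The output lies in $KK(A_1 \hotimes A_2, B_1 \hotimes B_2)$ as required.

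Bilinearity then follows because $\tau_C$ is additive on $KK$-classes and the internal product is bilinear. For functoriality, given $\ast$-homomorphisms $\alpha_i : A_i' \to A_i$ and $\beta_j : B_j \to B_j'$, one would chase the naturality of $\tau_{A_2}$ and $\tau_{B_1}$ with respect to the induced pullback and pushforward maps on $KK$, and use that the internal product is itself contravariant in the first variable and covariant in the second. Contravariance in $A_2$ and covariance in $B_1$ require an additional small check that conjugation by a flip on the middle tensor factor commutes with the connection-based construction of the internal product, but this is formal.

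The step I expect to be the main obstacle is verifying that the construction is actually independent of the choice of exterior-tensoring order, i.e., that one could equally well write
$$x \otimes_D y = \tau_{B_2}^{\mathrm{left}}(x) \,\otimes_{B_1 \hotimes D \hotimes A_2}\, \tau_{A_1}^{\mathrm{right}}(y)$$
after suitable flips, and that the resulting class agrees. This amounts to exploiting associativity of the internal Kasparov product together with the elementary fact that $\tau_{C}$ is functorial in $C$ and commutes (up to the flip class in $KK(C \hotimes C', C' \hotimes C)$) with $\tau_{C'}$. The only technical point is that invoking associativity of the internal product itself requires one more application of Kasparov's technical theorem, but one can cite associativity from the preceding theorem rather than re-prove it. Once independence is in hand, bilinearity and functoriality follow by essentially formal arguments from the corresponding properties of the internal product.
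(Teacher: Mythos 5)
Your construction is correct and is in fact the standard one: the paper does not prove this corollary but simply cites Kasparov, and the definition $x\otimes_D y := \tau_{A_2}(x)\otimes_{B_1\hotimes D\hotimes A_2}\tau_{B_1}(y)$ via the exterior-tensoring map $\tau_C$ (with $\mathcal{K}(E\hotimes C)\cong\mathcal{K}(E)\hotimes C$ making $\tau_C$ well defined on $KK$) is exactly how Kasparov and Blackadar derive the external product from the internal one. One small caveat worth noting: the preceding theorem as stated in this paper asks for \emph{both} $A$ and $B$ to be separable, whereas your application takes $B=B_1\hotimes B_2$, which is not assumed separable here; your hypothesis-check covered $A_1\hotimes A_2$ and the middle term $B_1\hotimes D\hotimes A_2$ but not this one. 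The fix is that the internal product in fact only requires $A$ separable and $D$ $\sigma$-unital (the paper's statement is stronger than necessary), so your reduction goes through against the actual theorem rather than against the paper's phrasing of it.
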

In the discussions in Chapter \ref{chap5}, we also need the $\RKK$-group which is defined in \cite{kasparov1988equivariant}. We give a brief introduction here.
\begin{defn}\label{defi3}
    For a $\sigma$-compact $G$-space $M$ and two $G-C_0(M)$-algebras $A$ and $B$, define the group $\RKK^G(M;A,B)$ in the same way as $KK^G(A,B)$, with the additional condition on $(E, \phi, F)\in \mathbb{E}^G(A,B)$ that: for any $a\in A$, $b\in B$, $f\in C_0(M)$, $e\in E$, we have the equality $(fa)eb=ae(fb)$ in $E$.
\end{defn}

\section{Heisenberg calculus for pseudo-differential operators}
\label{sec1.6}
The normalization of operators to $0$-order and the inversion of cosymbol classes modulo smoothing terms are essential in our theory, but they are outside of the class of differential operators which we discussed in Section \ref{sec1.4}. In this section, we consider the Heisenberg calculus for pseudo-differential operators.
\subsection*{General Lie groupoid}
In \cite{van2019groupoid}, van Erp and Yuncken gave the pseudo-differential calculus on a filtered manifold through the Heisenberg tangent groupoid. We briefly review it here.

\begin{defn}
    Let $\mathcal{G}$ be a Lie groupoid. An \textit{$r$-fibered distribution} on a Lie groupoid $\G$ is a continuous $C^{\infty}(\G^{(0)})$-linear map
    $$u:C^{\infty}(\G)\to C^{\infty}(\G^{(0)}),$$
    where the $C^{\infty}(\G^{(0)})$-module structure on $C^{\infty}(\G)$ is induced by the pullback of functions via $r$. The set of $r$-fibered distributions on $\G$ is denoted $\mathcal{E}'_r(\G)$. An $r$-fibered distribution $u$ determines a smooth family $\{u_x\}_{x\in \G^{(0)}}$ of distributions on the $r$-fibers $u_x\in \mathcal{E}'(\G^x)$,
    $$\langle u,f\rangle(x)=\langle u_x, f|_{\G^x}\rangle, \quad  f\in C^{\infty}(\G).$$
\end{defn}
\begin{defn}
    A subset $X$ of a groupoid $\G$ is \textit{proper} if both $r: X \rightarrow M$ and $s: X \rightarrow M$ are proper maps. Denote by $\Omega_r$ and $\Omega_s$ the bundle of smooth densities tangent to the range and source fibers of $\G$ respectively. The space of smooth sections of $\Omega_r$ whose support is a proper subset of $\G$ is denoted $C_p^{\infty}(\G; \Omega_r)$. 
\end{defn}
\begin{lem}[\cite{lescure2017convolution}]
    $C_p^{\infty}(\G; \Omega_r)$ is a right ideal in $\mathcal{E}'_r(\G)$.
\end{lem}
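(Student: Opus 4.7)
The proof naturally breaks into two parts: verifying the inclusion $C_p^{\infty}(\G;\Omega_r) \subset \mathcal{E}'_r(\G)$, and then establishing closure of $C_p^{\infty}(\G;\Omega_r)$ under right convolution with an arbitrary element of $\mathcal{E}'_r(\G)$.

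For the inclusion, I would show that any smooth density $f \in C_p^{\infty}(\G;\Omega_r)$ determines a continuous $C^{\infty}(\G^{(0)})$-linear map $C^{\infty}(\G) \to C^{\infty}(\G^{(0)})$ via fibrewise integration
$$\langle f, \phi\rangle(x) := \int_{\G^x} \phi \cdot f|_{\G^x}, \qquad \phi \in C^{\infty}(\G).$$
The integral converges because $r$-properness of $\mathrm{supp}(f)$ makes $\mathrm{supp}(f) \cap \G^x$ compact for every $x$, smoothness in $x$ follows from standard smooth-dependence-on-parameter results, and the $C^{\infty}(\G^{(0)})$-linearity with respect to the pullback module structure is immediate because $(r^{*}g)|_{\G^x}$ is the constant function $g(x)$.

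For the right-ideal property, given $f \in C_p^{\infty}(\G;\Omega_r)$ and $u \in \mathcal{E}'_r(\G)$, I would unfold the groupoid convolution $f \ast u$ using the Haar system. A change of variables in the fibre integral exposes $(f \ast u)(\gamma)$ as a pairing of $u_{s(\gamma)}$ with a smooth function of $\zeta$ obtained from $f$ by groupoid translation; smoothness of the result in $\gamma$ then follows because $\gamma$-derivatives fall only on the smooth factor $f$, while $u$ is tested only against smooth functions on its source fibre. For the support, one uses the set-theoretic inclusion $\mathrm{supp}(f \ast u) \subseteq \mathrm{supp}(f) \cdot \mathrm{supp}(u)$ and verifies that both $r$ and $s$ are proper on this composition. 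Given a compact $K \subset \G^{(0)}$ and an element $\alpha\beta$ with $\alpha \in \mathrm{supp}(f)$, $\beta \in \mathrm{supp}(u)$, $s(\alpha) = r(\beta)$, and $r(\alpha\beta) \in K$, the $r$-properness of $\mathrm{supp}(f)$ pins $\alpha$ to a compact set, then $s(\alpha) = r(\beta)$ lies in a compact image, and the $r$-properness of $\mathrm{supp}(u)$ pins $\beta$; continuity of multiplication then yields compactness of the preimage of $K$ under $r$ in $\mathrm{supp}(f \ast u)$. A symmetric argument using $s$-properness of $\mathrm{supp}(f)$ and the composability constraint handles $s$-properness.

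The main obstacle is the $s$-direction of the support calculation: since $r$-properness of $u$ alone does not bound $\mathrm{supp}(u)$ in the $s$-direction, one must crucially exploit that $\mathrm{supp}(f)$ is proper in \emph{both} directions (which is precisely the content of $f \in C_p^{\infty}$) and carefully trace through the composability relation $s(\alpha)=r(\beta)$ to transfer the $s$-properness of the $f$-factor across the product. Once smoothness and proper support are in hand, $f \ast u$ is a smooth section of $\Omega_r$ with proper support, hence an element of $C_p^{\infty}(\G;\Omega_r)$, and the right-ideal property follows.
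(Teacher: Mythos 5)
Your argument correctly handles the inclusion $C_p^{\infty}(\G;\Omega_r) \subset \mathcal{E}'_r(\G)$, but the right-ideal step has two genuine gaps.

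First, the smoothness claim is not justified as written. With the groupoid convolution $(f \ast u)(\gamma) = \int_{\G^{r(\gamma)}} f(\eta)\,u(\eta^{-1}\gamma)\,d\mu(\eta)$, the substitution $\zeta = \eta^{-1}\gamma$ carries $\G^{r(\gamma)}$ onto the \emph{source} fibre $\G_{s(\gamma)}$: indeed $s(\zeta)=s(\gamma)$ is fixed while $r(\zeta)=s(\eta)$ varies. An $r$-fibred distribution is by definition a smooth family $(u_y)_y$ of distributions on the $r$-fibres $\G^y$; it does not restrict to a single distribution on $\G_{s(\gamma)}$. The formal symbol $u(\zeta)$ for $\zeta$ ranging over $\G_{s(\gamma)}$ is a transversal slice through the family $(u_y)_y$, not a pairing of the fixed distribution $u_{s(\gamma)}$ against a translate of $f$, so one cannot simply differentiate under a distributional pairing. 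Concretely, on the pair groupoid $\G=\R\times\R$ take $u_y=\delta_{g(y)}\in\mathcal{E}'(\G^y)$ for a proper smooth $g$ with a nondegenerate critical point; then $(f\ast u)_x$ equals the pushforward $g_{\ast}\bigl(f(x,\cdot)\,dy\bigr)$, which has $|z-z_c|^{-1/2}$-type singularities at the critical values $z_c$ and is therefore not a smooth density. The clean pairing identity you are after is the one for the \emph{opposite} product order, $(u\ast f)(\gamma) = \langle u_{r(\gamma)},\,\eta\mapsto f(\eta^{-1}\gamma)\rangle$, where the pairing is with $u_{r(\gamma)}$ over the $r$-fibre $\G^{r(\gamma)}$ and smoothness follows immediately. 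So the route you sketch establishes closure under \emph{left} convolution by $\mathcal{E}'_r(\G)$, not right convolution, and the fibre index should be $r(\gamma)$, not $s(\gamma)$.

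Second, the support estimate has exactly the hole you flag, and it cannot be closed the way you propose. In $\mathrm{supp}(f\ast u)\subset\mathrm{supp}(f)\cdot\mathrm{supp}(u)$, starting from $s(\alpha\beta)=s(\beta)\in K$ the only constraint linking the two factors is $s(\alpha)=r(\beta)$. That relation transports control between $s(\alpha)$ and $r(\beta)$, but there is no chain of implications from ``$s(\beta)$ compact'' to anything about $r(\beta)$, hence nothing about $\alpha$; the $s$-properness of $\mathrm{supp}(f)$ never enters. To pin $\beta$ one genuinely needs $\mathrm{supp}(u)$ to be $s$-proper as well --- that is, one must restrict to properly supported $r$-fibred distributions --- and this is not supplied by $u \in \mathcal{E}'_r(\G)$ alone. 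As written, neither the smoothness nor the properness of $f\ast u$ is actually established.
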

In \cite{connes1994noncommutative}, Connes constructed the tangent groupoid $$\mathbb{T}M=TM\times \{0\}\sqcup (M\times M)\times \R^{\times},$$
and used it to prove the Atiyah-Singer's index theorem in $K$-theory. In the filtered case, a similar concept can be defined as follows.
\begin{defn}
    Define $\mathbb{G}r(H)$ to be the disjoint union of the osculating groupoid $Gr(H)$ with a family of pair groupoids indexed by $\R^{\times}=\R\setminus \{0\}$,
    $$\mathbb{G}r(H)=Gr(H)\times \{0\}\sqcup (M\times M)\times \R^{\times},$$
    it is called the \textit{H-tangent groupoid} of the osculating groupoid $Gr(H)$.
    \begin{thm}[\cite{van2017tangent}]
        There is a compatible global smooth structure on $\mathbb{G}r(H)$ that makes $\mathbb{G}r(H)$ into a Lie groupoid.
    \end{thm}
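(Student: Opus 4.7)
The plan is to construct a smooth atlas on $\mathbb{G}r(H)$ by building adapted exponential-type charts that reconcile the pair groupoid piece at $t\neq 0$ with the osculating groupoid piece at $t=0$. Over $\R^{\times}$ the factor $(M\times M)\times \R^{\times}$ inherits the product smooth structure, and there is nothing to prove away from $t=0$; the entire content of the statement lives in a neighborhood of the $t=0$ slice, where one must interpolate between the nonlinear group $M\times M$ and the linear nilpotent group $Gr(H)_x$. The standard device is a privileged coordinate system compatible with the filtration $H^{\bullet}$, i.e.\ a smoothly varying family of local diffeomorphisms that osculates the filtered structure at each base point.

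First, I would fix an \emph{h-exponential}: a smoothly varying family of local diffeomorphisms $\varphi_x: U_x \subset gr(H)_x \to M$ with $\varphi_x(0)=x$ and whose derivative at $0$ respects the grading $H^i/H^{i-1}$. Concretely, one chooses a graded basis of local sections of $H^{\bullet}$ and forms the usual exponential of the corresponding vector fields, or uses any h-connection on $M$; all such choices agree up to smooth filtration-preserving changes of coordinates, which is the underlying reason the global smooth structure on $\mathbb{G}r(H)$ is intrinsic.

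Next, define the local chart around $(x,0)$ by
\begin{equation*}
\Psi(x,X,t)=\begin{cases} \bigl(x,\varphi_x(\delta_t X),t\bigr) & t\neq 0,\\ (x,X,0) & t=0, \end{cases}
\end{equation*}
where $\delta_t$ is the anisotropic dilation of Remark \ref{rmk1}. Declaring $\Psi$ to be a diffeomorphism onto its image produces a candidate smooth structure on $\mathbb{G}r(H)$; the key task is to verify smoothness across $t=0$ and independence of the h-exponential. This reduces to a Taylor expansion of $\varphi_x(\delta_t X)$ in which the $k$-th jet in the $H^i/H^{i-1}$-direction carries a factor $t^{i\cdot k}$; the compatibility condition $[C^{\infty}(M;H^i),C^{\infty}(M;H^j)]\subset C^{\infty}(M;H^{i+j})$ is exactly what forces the higher-order correction terms produced by the Baker--Campbell--Hausdorff formula to fall into the right grading components, so that after division by $t$ everything stays bounded as $t\to 0$. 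Changing $\varphi_x$ would produce an error vanishing to higher order in $t$, hence gives a smooth transition.

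Finally, I would check that the groupoid structure maps are smooth in this atlas. The source and range descend to the same projection $M\times \R\to M\times \R$, and in the chart $\Psi$ the pair-groupoid multiplication at $t\neq 0$, namely $(x,y)\cdot(y,z)=(x,z)$, translates via $\varphi$ and $\delta_t$ into an expression that, by the same BCH estimate, limits to the group multiplication on $Gr(H)_x$ at $t=0$; smoothness of the inverse is similar. The main obstacle throughout is precisely the $t=0$ smoothness of $\Psi$ and of the translated multiplication, and it is resolved entirely by the graded-dilation Taylor analysis outlined above, which is the technical core of the construction in \cite{van2017tangent}.
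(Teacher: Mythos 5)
Your proposal correctly reproduces the exponential-chart / graded-dilation construction of van Erp--Yuncken, which is precisely the source the paper cites for this theorem (the paper itself states the result without reproducing a proof), and your identification of the BCH-type Taylor analysis across $t=0$ as the technical core is accurate. One small slip: the source and range of $\mathbb{G}r(H)$ are distinct maps for $t\neq 0$ (on the pair-groupoid piece $r(x,y,t)=(x,t)$ while $s(x,y,t)=(y,t)$), so they do not descend to the \emph{same} projection and each must be checked for smoothness separately; this does not, however, affect the overall validity of your approach.
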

    There is an action $\alpha:\R_+\to \text{Aut}(\mathbb{G}r(H))$, called the \textit{zoom action}, defined to be a smooth one-parameter family of Lie groupoid automorphisms
\begin{equation*}
    \begin{aligned}
        &\alpha_{\lambda}(x,y,t)=(x,y,\lambda^{-1}t)\quad \text{if} &(x,y) \in M\times M,\ t\neq 0\\
        &\alpha_{\lambda}(x,\xi,0)=(x,\delta_{\lambda}(\xi),0)\quad \text{if} &(x,\xi)\in Gr(H),\ t=0
    \end{aligned}
\end{equation*}
where $\delta_{\lambda}$ is defined in Remark \ref{rmk1}.
\end{defn}

\begin{defn}\label{def3}
Define that:
\begin{itemize} 
    \item  A properly supported $r$-fibered distribution $\mathbb{P}\in \mathcal{E}'_r(\mathbb{G} r(H))$ is called \textit{essentially homogeneous of weight $m \in \R$} if
    $${\alpha_{\lambda}}_{\ast} \mathbb{P}-\lambda^m \mathbb{P}\in C^{\infty}_p(\mathbb{G} r(H),\Omega_r).$$
    The space of essentially homogeneous distributions of weight $m$ is denoted as ${\Psi}_H^m(M)$.
    \item A Schwartz kernel $P\in \mathcal{E}'_r(M\times M)$  is an H-pseudo-differential kernel of order $\leq m$ if $P =\mathbb{P}|_{t=1}$ for certain $\mathbb{P}\in {\Psi}_H^m(M)$. The set of H-pseudo-differential kernels of order $\leq m$ will be denoted $\psi^m_H(M)$. 
    \item  The \textit{cosymbol space} is defined as:
    $$\Sigma^m_H(M):=\{K\in \mathcal{E}'_r(Gr(H))/C^{\infty}_p(Gr(H),\Omega_r)|\ {\delta_{\lambda}}_{\ast}K = \lambda^m K \text{ for all } \lambda\in \R_+\}.$$
    \item The \textit{cosymbol map} 
    $$\sigma_m: \psi^m_H(M)\to \Sigma^m_H(M)$$
    is defined to select any $\mathbb{P} \in {\Psi}^m_H(M)$ with $\mathbb{P}|_{t=1} = P$ and set $$\sigma_m(P) := \mathbb{P}|_{t=0} \text{ modulo }C_p^{\infty}(T_H M; \Omega_r).$$
\end{itemize}
\end{defn}

H-elliptic pseudo-differential operators can be characterized using the cosymbol defined above: 
\begin{defn}\label{def5}
    An H-pseudo-differential operator $P \in \psi^m_H(M)$ is called \textit{H-elliptic} if its principal cosymbol $\sigma_m(P)$ admits a convolution inverse in the cosymbol space $\Sigma^m_H(M)=\mathcal{E}'_r(Gr(H))/ C_p^{\infty}(Gr(H); \Omega_r)$.
\end{defn}

\begin{remark}
Note that:
\begin{itemize} 
    \item[(i)] The Rockland condition is equivalent to the existence of a left parametrix for the cosymbol $\sigma_H(P)$. In the H-pseudo-differential calculus, if $P\in \psi_H^m(M)$ satisfies the Rockland criterion, then the operator $P$ itself acquires a left parametrix $Q\in \psi_H^{-m}(M)$ because of the existence of the left parametrix of the cosymbol $\sigma_H(P)$. Consequently, H-elliptic pseudo-differential operators are hypoelliptic.
    \item[(ii)] As explained in Section 10 of \cite{van2019groupoid}, differential operators are special cases in pseudo-differential theory. General analysis in pseudo-differential calculus is crucial, although differential calculus facilitates an intuitive comprehension of the cosymbol and index theory of H-elliptic operators.
\end{itemize}
\end{remark}

The cosymbol of a pseudo-differential operator can be viewed as a family of operators on the manifolds $Gr(H)_x$ for $x\in M$. So specifically, we need to analyze the special case $\G=Gr(H)$. 

\subsection*{Family of nilpotent Lie groups on a manifold}
Note that $Gr(H)$ is a family of Lie groups over $M$ with a graded structure on $gr(H)$ and $\sigma_H(P)$ is a family of left-invariant pseudo-differential operators on $Gr(H)_x$.

So we conclude some results for left-invariant pseudo-differential operators on a nilpotent Lie groupoid with a graded vector bundle $\g=\bigoplus_{i=1}^n \g_i\to M$, such that $[\g_i, \g_j ] \subseteq \g_{i+j}$ for all $i, j \in \N$ with $\g_k = 0$ if $k > n$. Assume that the bundle $\g$ is equipped with an Euclidean metric such that different $\g_i$’s are orthogonal to each other. The corresponding Lie group bundle is denoted by $\mathcal{G}$. The cosymbol operators are thus categorized as pseudo-differential operators in this calculus by considering $\G$ to be the osculating groupoid $Gr(H)$.
\begin{defn}
    Let $S^k(\mathcal{G})$ be the space of distributions $u \in \mathcal{E}'(\mathcal{G})$ such that $u$ is transversal to the map $\pi : \mathcal{G} \to M$ and for each $\lambda \in \R^{+}$,
$${\alpha_{\lambda}}_{\ast}u-\lambda^k u\in C_c^{\infty}(\mathcal{G}).$$
\end{defn}

Let $u \in  S^k(\mathcal{G})$. For each $x \in M$, the operator can be defined as
\begin{equation*}
    \begin{aligned}
         Op(u_x): C_c(&\mathcal{G}_x) &\to &\ C_c(\mathcal{G}_x)\\
       &f &\mapsto &(l\to (u_x,f(l\cdot)))
    \end{aligned}
\end{equation*}
and they glue together to form an operator 
$$ Op(u):C_c(\mathcal{G})\to C_c(\mathcal{G}).$$
It can be extended to an operator on $C^{\ast}(\mathcal{G})$,
$$\overline{Op(u)}:C^{\ast}(\mathcal{G})\to C^{\ast}(\mathcal{G}).$$

\begin{thm}[\cite{mohsen2020index}]
    Let $u\in S^k(\mathcal{G})$, then
    \begin{itemize}
        \item If $k \leq 0$, then $\overline{Op(u)} \in \mathcal{L}(C^{\ast}(\mathcal{G}))$;
        \item If $k < 0$, then $\overline{Op(u)} \in \mathcal{K}(C^{\ast}(\mathcal{G}))=C^{\ast}(\mathcal{G})$;
        \item If $k > 0$, and $u$ satisfy the Rockland condition, then $\overline{Op(u)}$ is a regular operator.
    \end{itemize}
\end{thm}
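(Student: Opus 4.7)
The plan is to treat the three cases separately, exploiting the essential homogeneity under the zoom action $\alpha_\lambda$ and the fiberwise group Fourier theory on $C^{\ast}(\mathcal{G})$, with the understanding that all three assertions reduce fiberwise to statements about left-invariant convolution operators on a single graded nilpotent Lie group $\mathcal{G}_x$.

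For the case $k \leq 0$, I would fix $x \in M$ and show that $u_x$ defines a bounded adjointable operator on $C^{\ast}(\mathcal{G}_x)$, regarded as a Hilbert module over itself. The essential homogeneity ${\alpha_\lambda}_*u - \lambda^k u \in C_c^{\infty}(\mathcal{G})$ combined with proper support implies that $u_x$ is, up to a compactly supported smooth remainder, homogeneous of degree $k \leq 0$. For each unitary irreducible representation $\pi$ of $\mathcal{G}_x$, a Calder\'on--Zygmund type estimate on the homogeneous group $\mathcal{G}_x$ gives boundedness of $\pi(u_x)$, with an operator bound uniform in $\pi$; equivalently, left convolution by $u_x$ is a bounded multiplier of $C^{\ast}(\mathcal{G}_x)$. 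Smoothness of the family in $x$ glues these into $\overline{Op(u)} \in \mathcal{L}(C^{\ast}(\mathcal{G}))$.

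For the second case $k < 0$, I would upgrade boundedness to compactness, using the identification $\mathcal{K}(C^{\ast}(\mathcal{G})) = C^{\ast}(\mathcal{G})$ for the standard module. The strict inequality combined with the Schwartz decay of the essential-homogeneity error makes $u_x$ strictly more regular than the critical case: the singularity at the unit becomes locally integrable, and the tail decays sufficiently that $u_x$ can be norm-approximated by elements of $C_c^{\infty}(\mathcal{G}_x)$. Since the approximants sit in $C^{\ast}(\mathcal{G}_x)$ and converge in operator norm, the limit $\overline{Op(u_x)}$ lies in the closure $C^{\ast}(\mathcal{G}_x)$. Uniformity in $x$ then yields $\overline{Op(u)} \in C^{\ast}(\mathcal{G})$.

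For the third case $k > 0$ with the Rockland condition, the key ingredient is a parametrix. The Rockland theorem (Theorem \ref{thm12}), together with the van Erp--Yuncken calculus on $\mathbb{G}r(H)$ specialized to the zero-fiber $Gr(H)$, produces $v \in S^{-k}(\mathcal{G})$ with $Op(v)\circ Op(u) - 1 \in C_c^{\infty}(\mathcal{G})$ and a symmetric statement on the right. By the first two cases, $\overline{Op(v)}$ is bounded and the remainder is compact, so $\overline{Op(u)}$ has an inverse modulo $C^{\ast}(\mathcal{G})$; a standard functional-analytic argument then produces density of the range of $1 + \overline{Op(u)}^{\ast}\overline{Op(u)}$, giving regularity. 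The main obstacle I expect is the critical case $k = 0$: establishing boundedness of a homogeneous convolution operator of degree zero on each nilpotent fiber, uniformly in $x$, requires either a Calder\'on--Zygmund argument on graded nilpotent groups or a uniform bound for the operator-valued group Fourier transform over the unitary dual \textemdash\ precisely the circle of ideas that Chapter \ref{chap6} will formalize. Once this estimate is secured, the compactness and regularity statements follow by scaling and parametrix manipulation.
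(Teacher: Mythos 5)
The paper does not prove this theorem; it is cited verbatim from \cite{mohsen2020index}, so the comparison must be with the argument there and in the closely related van Erp--Yuncken calculus \cite{van2019groupoid}. Your proposed route is a genuinely different one: fiberwise Calder\'on--Zygmund analysis on each graded nilpotent group $\mathcal{G}_x$, establishing $L^2$-boundedness of the critical degree-zero convolution kernel and upgrading to the full $C^{\ast}(\mathcal{G}_x)$-multiplier norm via amenability and the $\sup_{\pi}\lVert\pi(\cdot)\rVert$ description, then gluing over $x$. The cited sources argue $C^{\ast}$-algebraically inside the tangent-groupoid calculus instead: for order $< -Q/2$ (with $Q$ the homogeneous dimension of the fibers) the kernel is fiberwise square-integrable, hence Hilbert--Schmidt, hence $C^{\ast}$-bounded; the order-zero case is then reached by a bootstrap exploiting $*$-closure and multiplicativity of the cosymbol calculus together with $\lVert T\rVert^2 = \lVert T^{\ast}T\rVert$, and no Calder\'on--Zygmund theory is invoked. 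This difference is material to the completeness of your proposal: the boundedness at $k=0$, which you yourself flag at the end as the main obstacle, is left as an unproven black box in your sketch, whereas the groupoid route dissolves it. If you insist on the harmonic-analytic route you should also record that the cancellation condition on the homogeneous principal-value kernel is automatic for exactly homogeneous degree-zero distributions (without it the estimate fails), and that the error term ${\alpha_{\lambda}}_{\ast}u - \lambda^{k}u$ in the definition of $S^{k}(\mathcal{G})$ is compactly supported smooth, not Schwartz. The remaining two cases in your outline (for $k<0$, local $L^1$-integrability of the kernel and norm-approximation by $C_c^{\infty}(\mathcal{G})$, giving compactness; for $k>0$, a Rockland parametrix plus the Baaj--Julg regularity criterion) are essentially the argument in the cited reference and are sound as an outline.
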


If $P \in \U^k(\g)$ be a left-invariant differential operator on $\G$, then the distribution $(u, f) := Pf(e)$ belongs to $S^k(\G)$. Since $P$ is left-invariant, $Op(u) = P$. Thus we can consider $\U^k(\g) \subseteq S^k(\G)$.

For left-invariant differential operators on $\mathcal{G}$, the Rockland condition facilitates the consideration in $KK$-theory:
\begin{cor}[\cite{mohsen2020index}]\label{cor2}
    Let $P\in \mathcal{U}^k(\g)\otimes \text{Hom}(E, F)$ be a left-invariant differential operator on the bundle of Lie groups $\mathcal{G}$ satisfying the Rockland condition with $\alpha_{\lambda}(P)=\lambda^k(P)$ for $k\geq 0$. Then $\overline{P}$ is a regular operator in the sense of Baaj-Julg, and $\overline{P}^{\ast}=\overline{P^{\ast}}$. The operator
    $$D=\overline{P}/(1+\overline{P}^{\ast}\overline{P})^{1/2}\in \mathcal{L}( C_0(M;E)\otimes_{C_0(M)} C^{\ast}(\mathcal{G}),  C_0(M;F)\otimes_{C_0(M)} C^{\ast}(\mathcal{G}))$$
    is a Fredholm operator in the sense of Kasparov $C^{\ast}(\mathcal{G})$-modules.
\end{cor}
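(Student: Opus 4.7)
The plan is to derive all three conclusions from the theorem of Mohsen cited immediately above, applied to the distributions canonically associated with $P$, $P^{\ast}$, and $P^{\ast}P$. Via the inclusion $\mathcal{U}^k(\g) \subseteq S^k(\mathcal{G})$ recalled in the paragraph preceding that theorem, the operator $P$ is represented by a distribution of homogeneous weight $k$ satisfying the Rockland condition, so the cited theorem yields at once that $\overline{P}$ is regular in the sense of Baaj--Julg. The formal adjoint $P^{\ast}$ is itself left-invariant and homogeneous of weight $k$, with $\pi(P^{\ast}) = \pi(P)^{\ast}$ injective on smooth vectors whenever $\pi(P)$ is, so $P^{\ast}$ also satisfies Rockland; the identity $\overline{P}^{\ast} = \overline{P^{\ast}}$ then follows from the standard argument that both regular operators admit $C_c^{\infty}(\mathcal{G};E)$ as a common smooth core on which they coincide by left-invariant integration by parts.

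For the Fredholm claim I would use the bounded-transform identities
$$1 - D^{\ast}D = (1 + \overline{P}^{\ast}\overline{P})^{-1} = (1 + \overline{P^{\ast}P})^{-1}, \qquad 1 - DD^{\ast} = (1 + \overline{PP^{\ast}})^{-1},$$
reducing the problem to proving that both resolvents lie in $\mathcal{K}\bigl(C_0(M;E)\otimes_{C_0(M)} C^{\ast}(\mathcal{G})\bigr)$ and the analogous space for $F$. The operator $P^{\ast}P$ is left-invariant, homogeneous of weight $2k$, and Rockland (since $\pi(P^{\ast}P) = \pi(P)^{\ast}\pi(P)$ is injective precisely when $\pi(P)$ is), and symmetrically for $PP^{\ast}$. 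The substantive step is then to realize $(1 + \overline{P^{\ast}P})^{-1}$ as $\overline{Op(v)}$ for some $v \in S^{-2k}(\mathcal{G}) \otimes \text{End}(E)$ of strictly negative weight, at which point the $k<0$ case of the cited theorem delivers compactness immediately, and the same argument applied to $PP^{\ast}$ handles $1 - DD^{\ast}$.

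To produce such a $v$, I would construct a Heisenberg parametrix for $1+P^{\ast}P$: the Rockland condition on the positive symbol $1+\sigma_H(P^{\ast}P)$ yields a convolution inverse in the cosymbol algebra $\mathcal{E}'_r(\mathcal{G})/C_p^{\infty}(\mathcal{G};\Omega_r)$, which lifts to some $Q \in S^{-2k}(\mathcal{G})$ satisfying $Op(Q)(1 + P^{\ast}P) = 1 + R$ with $R$ smoothing, and one absorbs $R$ via a Neumann series $Q\sum_{n\geq 0}(-R)^{n}$ to produce a genuine inverse. The main obstacle will be this Neumann absorption step: one has to verify that the corrected series still represents a class in $S^{-2k}(\mathcal{G})$, i.e., that the smoothing remainders on the graded group bundle compose well enough to close up the parametrix construction. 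This is the groupoid analogue of the Helffer--Nourrigat maximal hypoellipticity theorem for positive Rockland operators, and the cleanest route is to invoke the Rockland-type parametrix results already developed in \cite{van2019groupoid}, thereby placing $(1+\overline{P^{\ast}P})^{-1}$ directly in the compact case of the cited theorem.
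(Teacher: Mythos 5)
The paper does not prove this corollary; it is cited from \cite{mohsen2020index}, with only the inclusion $\mathcal{U}^k(\g)\subseteq S^k(\mathcal{G})$ and the immediately preceding theorem on $\overline{Op(u)}$ as internal scaffolding. Your plan to reconstruct the proof from that theorem (regularity from the $k>0$ clause, the bounded-transform identities, then a negative-order parametrix plus the $k<0$ clause) is the natural route.

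The genuine gap is the claim that ``$\pi(P^{\ast}) = \pi(P)^{\ast}$ [is] injective on smooth vectors whenever $\pi(P)$ is.'' This is false as a statement about operators on a single Hilbert space: an injective operator can easily have a non-injective adjoint (think of a one-sided shift). As the paper defines it (Theorem~\ref{thm12} and the definition that follows), the Rockland condition on $P$ alone yields only a \emph{left} inverse for the cosymbol. You need $P^{\ast}$ Rockland in its own right, both to make $\overline{P^{\ast}}$ regular (so that $\overline{P}^{\ast}=\overline{P^{\ast}}$ is an equality of closed operators, not merely an inclusion) and to make $1-DD^{\ast}=(1+\overline{PP^{\ast}})^{-1}$ compact. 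The implication ``$P$ Rockland $\Rightarrow$ $P^{\ast}$ Rockland'' does hold for left-invariant homogeneous operators, but the proof is a calculus theorem --- a one-sided parametrix in the homogeneous convolution algebra improves automatically to a two-sided one, from which $P^{\ast}$ inherits a left parametrix --- rather than a pointwise observation about each $\pi$. You should either invoke that calculus fact explicitly, or read the hypothesis as requiring $P$ and $P^{\ast}$ both Rockland, in line with Definition~\ref{defn8}.

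As a secondary point, the Neumann-series absorption you flagged as the main obstacle is not needed, and aiming at a parametrix for $1+P^{\ast}P$ rather than for $P^{\ast}P$ is misleading: the $1$ has lower H-order and does not enter the principal cosymbol. A one-sided Heisenberg parametrix $Q\in S^{-2k}(\mathcal{G})\otimes\text{End}(E)$ with $Op(Q)\,P^{\ast}P = 1-R$, $R$ smoothing, already gives
$$
(1+\overline{P^{\ast}P})^{-1} = \overline{Op(Q)} - \bigl(\overline{Op(Q)}-\overline{R}\bigr)(1+\overline{P^{\ast}P})^{-1},
$$
and the right-hand side is compact: $\overline{Op(Q)}$ and $\overline{R}$ by the $k<0$ clause of the cited theorem, and $(1+\overline{P^{\ast}P})^{-1}$ is bounded by regularity. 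No exact inverse in $S^{-2k}(\mathcal{G})$ is required.
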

\begin{cor}
    For any arbitrary H-elliptic operator $P$, the principal cosymbol $\sigma_H(P)$ represents a $KK$-element $[\sigma_H(P)]\in KK(C_0(M),C^{\ast}(Gr(H))$.
\end{cor}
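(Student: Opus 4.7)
The plan is to obtain this corollary as a direct application of Corollary \ref{cor2} to the cosymbol $\sigma_H(P)$ itself. By the Poincar\'e--Birkhoff--Witt identification of Theorem \ref{thm9}, the family $\sigma_H(P)=\{\sigma_H(P,x)\}_{x\in M}$ lies in $\mathcal{U}^k(gr(H))\otimes \text{Hom}(E,F)$ and defines a smooth family of left-invariant differential operators on the bundle of nilpotent Lie groups $\mathcal{G}=Gr(H)\to M$, homogeneous of weight $k$ with respect to the dilations $\delta_{\lambda}$ of Remark \ref{rmk1}. Hence $\sigma_H(P)$ fits exactly the setting in which Corollary \ref{cor2} is formulated.

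Next I would verify the Rockland condition for $\sigma_H(P)$. By Definition \ref{defn8} together with Remark \ref{rmk3}, H-ellipticity of $P$ asserts that for every $x\in M$ and every non-trivial irreducible unitary representation $\pi$ of $Gr(H)_x$, both $\pi(\sigma_H(P,x))$ and $\pi(\sigma_H(P^{\ast},x))$ are injective, which is precisely the Rockland condition of Theorem \ref{thm12} imposed pointwise on the fibers. Corollary \ref{cor2} therefore produces the regular closure $\overline{\sigma_H(P)}$ and its bounded transform
\[ D=\overline{\sigma_H(P)}\bigl(1+\overline{\sigma_H(P)}^{\ast}\overline{\sigma_H(P)}\bigr)^{-1/2}, \]
which is Kasparov-Fredholm in $\mathcal{L}(C_0(M;E)\hotimes_{C_0(M)}C^{\ast}(Gr(H)),\,C_0(M;F)\hotimes_{C_0(M)}C^{\ast}(Gr(H)))$, so that $1-D^{\ast}D$ and $1-DD^{\ast}$ are compact.

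I would then package $D$ into a Kasparov $(C_0(M),C^{\ast}(Gr(H)))$-module. Take the $\Z/2$-graded Hilbert $C^{\ast}(Gr(H))$-module $\mathcal{E}=(C_0(M;E)\oplus C_0(M;F))\hotimes_{C_0(M)}C^{\ast}(Gr(H))$, let $\phi:C_0(M)\to \mathcal{L}(\mathcal{E})$ act by pointwise multiplication on the base variable, and set $F=\begin{pmatrix}0 & D^{\ast}\\ D & 0\end{pmatrix}$. Then $F=F^{\ast}$ by construction; the commutator $[F,\phi(a)]$ vanishes identically, because $\sigma_H(P)$ is $C_0(M)$-linear as a fiberwise family so its bounded transform also commutes with multiplication by $a\in C_0(M)$; and $(F^2-1)\phi(a)$ reduces to the already-known compactness of $1-D^{\ast}D$ and $1-DD^{\ast}$ cut down by $\phi(a)$. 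The resulting triple $(\mathcal{E},\phi,F)$ is the desired Kasparov module, yielding the class $[\sigma_H(P)]\in KK(C_0(M),C^{\ast}(Gr(H)))$.

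The step I expect to be most delicate is the final compactness check: Corollary \ref{cor2} gives $1-D^{\ast}D$ in the compact ideal of the Hilbert module $C_0(M;E)\hotimes_{C_0(M)}C^{\ast}(Gr(H))$, but one must still verify that cutting it down by $\phi(a)$ for $a\in C_0(M)$ produces a genuine element of $\mathcal{K}(\mathcal{E})$ rather than merely of its multiplier algebra, which amounts to identifying the compact operators on a Hilbert module over the non-unital $C^{\ast}$-algebra $C^{\ast}(Gr(H))$ fibered over $C_0(M)$. Up to this standard bookkeeping, the $KK$-class $[\sigma_H(P)]$ is well defined, and homotopy invariance under lower-order perturbations preserving H-ellipticity follows from the usual operator-homotopy arguments of Kasparov.
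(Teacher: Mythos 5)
Your proof is correct and follows essentially the same route as the paper: both invoke Corollary \ref{cor2} after recognizing that H-ellipticity of $P$ supplies the Rockland condition on the fiberwise family $\sigma_H(P)\in\mathcal{U}^k(gr(H))\otimes\text{Hom}(E,F)$, and then package the Baaj--Julg bounded transform into a Kasparov $(C_0(M),C^{\ast}(Gr(H)))$-module. Your extra worry about cutting down by $\phi(a)$ is already absorbed by the ``Fredholm in the sense of Kasparov $C^{\ast}(\mathcal{G})$-modules'' conclusion of Corollary \ref{cor2}, so it is not an additional gap.
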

\begin{proof}
By Corollary \ref{cor2}, $$\Big[ C_0(M;E_0\oplus E_1)\otimes_{C_0(M)}C^{\ast}(\mathcal{G}),
\begin{pmatrix}
    0 & \overline{P}^{\ast}/(1+\overline{P}\overline{P}^{\ast})^{1/2}\\
    \overline{P}/(1+\overline{P}^{\ast}\overline{P})^{1/2} & 0
\end{pmatrix}
\Big]$$ represents an element in $KK(C_0(M),C^{\ast}(\mathcal{G}))$.
Apply this result to $\sigma_H(P)$, the family of left-invariant differential operators on $Gr(H)$, so it represents an element in $KK(C_0(M),$ $ C^{\ast}(Gr(H))$.
\end{proof}

This framework can be extended to pseudo-differential operators. For a pseudo-differential operator $P$ on $M$, the principal cosymbol $\sigma_H(P)$ is an unbounded multiplier of the $C^{\ast}$-algebra $C^{\ast}(Gr(H))$, and the H-order of $P$ less or equal to $0$ decide the boundedness or compactness of $P$ as follows:
\begin{thm}[\cite{androulidakis2022pseudodifferential}]\label{thm6}
    Let $P$ be a pseudo-differential operators of H-order $k$ on compact manifold $M$, then
    \begin{itemize}
        \item If $k < 0$, then $P$ extends to a compact operator $L^2(M,E)\to L^2(M,F)$;
        \item If $k = 0$, then $P$ extends to a bounded operator $L^2(M,E)\to L^2(M,F)$.
    \end{itemize}
\end{thm}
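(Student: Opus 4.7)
The plan is to package $P$ inside the $C^{\ast}$-algebra of the H-tangent groupoid $\mathbb{G}r(H)$ and then transfer the boundedness/compactness of the cosymbol operator on $C^{\ast}(Gr(H))$, already supplied by the Mohsen theorem just quoted, to the $L^2(M)$-level via evaluation at $t=1$. Concretely, $P$ arises as $\mathbb{P}|_{t=1}$ for an essentially homogeneous $\mathbb{P}\in \Psi^k_H(M)$, and the evaluation $\text{ev}_1:C^{\ast}(\mathbb{G}r(H))\to C^{\ast}(M\times M)\cong\mathcal{K}(L^2(M))$ at $t=1$ realizes $P$ as the image of a multiplier of $C^{\ast}(\mathbb{G}r(H))$.

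First I would show that $\mathbb{P}$ defines a bounded element of $\mathcal{M}(C^{\ast}(\mathbb{G}r(H)))$ when $k=0$, and of $C^{\ast}(\mathbb{G}r(H))$ itself when $k<0$. On the fiber $t=0$ this is precisely Mohsen's theorem applied to $\sigma_k(P)\in S^k(Gr(H))$. The essential homogeneity ${\alpha_{\lambda}}_{\ast}\mathbb{P}-\lambda^k\mathbb{P}\in C^{\infty}_p(\mathbb{G}r(H);\Omega_r)$ identifies $\mathbb{P}$ at scale $\lambda$, modulo a properly-supported smoothing remainder, with $\lambda^k\mathbb{P}$ itself. Since the zoom action is an automorphism of $\mathbb{G}r(H)$ preserving the $C^{\ast}$-norm up to a continuous scaling of the Haar system, and since the remainder lives in the ideal $C^{\infty}_p(\mathbb{G}r(H);\Omega_r)\subseteq C^{\ast}(\mathbb{G}r(H))$, the $t=0$ estimate propagates to a neighbourhood of $t=0$ and, after rescaling by the zoom, to all of $t\in \R^{\times}$.

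Next I would apply $\text{ev}_1$, which is a surjective $\ast$-homomorphism extending to multiplier algebras. For $k=0$, bounded multipliers of $\mathcal{K}(L^2(M))$ coincide with bounded operators on $L^2(M)$, yielding $L^2$-boundedness of $P$. For $k<0$, $P=\text{ev}_1(\mathbb{P})$ lies in $\mathcal{K}(L^2(M))$ itself, hence is compact. Vector bundle coefficients are handled by tensoring with $\text{End}(E,F)$ throughout and bring no additional difficulty beyond the notational.

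The main technical obstacle is the propagation of the $t=0$ bound across the whole deformation in the first step: one needs at least upper semi-continuity of the function $t\mapsto \lVert \text{ev}_t(\mathbb{P})\rVert$, together with the compatibility between the zoom action and the fiberwise $C^{\ast}$-norms. This is typically handled by a partition-of-unity argument on $\mathbb{G}r(H)$, or by an application of Kasparov's Technical Theorem, which converts the abstract multiplier statement at $t=0$ into a genuine multiplier on the full groupoid. The smoothing remainder in $C^{\infty}_p(\mathbb{G}r(H);\Omega_r)$ contributes only integral operators with smooth, properly supported kernels, which are trace class on the compact manifold $M$ and therefore disturb neither the boundedness nor the compactness conclusion.
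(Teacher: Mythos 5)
The paper does not supply a proof of this theorem; it is cited directly from \cite{androulidakis2022pseudodifferential}, so the comparison must be with the argument given there (and in \cite{van2019groupoid}, which proves the same statement). Your overall framework is the right one: pass through the H-tangent groupoid $\mathbb{G}r(H)$, show that an essentially homogeneous $\mathbb{P}$ of order $\leq 0$ is a bounded multiplier of $C^{\ast}(\mathbb{G}r(H))$ (resp.\ lies in $C^{\ast}(\mathbb{G}r(H))$ for negative order), and evaluate at $t=1$ using $C^{\ast}(M\times M)\cong\mathcal{K}(L^2(M))$. This is indeed the route taken in the source.

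However, the crucial step in your argument --- propagating the $t=0$ estimate to a neighbourhood of $t=0$ and hence, via the zoom, to all $t>0$ --- is circular and is precisely where the theorem's content lies. The norm on $C^{\ast}(\mathbb{G}r(H))$ is the fiberwise supremum $\sup_t\lVert \operatorname{ev}_t(\mathbb{P})\rVert$, and proving this is finite is exactly what must be established. The zoom action does make $t\mapsto\lVert \operatorname{ev}_t(\mathbb{P})\rVert$ essentially constant for $t\in(0,1]$ (up to the bounded contribution of the smoothing remainder), but it gives no a priori control of the jump from the $t=0$ fiber, where Mohsen's theorem applies, to the open fibers. Neither a partition of unity nor Kasparov's Technical Theorem resolves this; the latter is designed for the construction of Kasparov products and does not convert a bound on a quotient algebra into a bound on the full algebra. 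The argument actually used in \cite{van2019groupoid} and \cite{androulidakis2022pseudodifferential} is the Hörmander--Calderón--Vaillancourt square-root trick carried out in the groupoid calculus: choose $c>\lVert\sigma_0(\mathbb{P})\rVert^2$, take the (invertible, positive) symbol $c-\sigma_0(\mathbb{P})^{\ast}\sigma_0(\mathbb{P})$, construct by iterated quantization an approximate square root $\mathbb{Q}\in\Psi_H^0$ with $\mathbb{Q}^{\ast}\mathbb{Q}=c-\mathbb{P}^{\ast}\mathbb{P}+\mathbb{R}$ where $\mathbb{R}$ has order more negative than minus the homogeneous dimension (hence an integrable Schwartz kernel, hence $\mathbb{R}\in C^{\ast}(\mathbb{G}r(H))$), and conclude $\mathbb{P}^{\ast}\mathbb{P}\leq c+\lVert\mathbb{R}\rVert$ as multipliers. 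That step --- not a continuity argument across $t=0$ --- is where the boundedness is manufactured, and it is missing from your proposal.
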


\begin{remark}\label{rmk8}
    In case $M$ is locally compact, denote the extension of $P$ to $L^2(M,E)$ by $\bar{P}$, then the result will becomes
    \begin{itemize}
        \item If $k < 0$, then $f\cdot \bar{P}$ is a compact operator for each $f\in C_0(M)$;
        \item If $k = 0$, then $f\cdot \bar{P}$ is a bounded operator for each $f\in C_0(M)$.
    \end{itemize}
    
\end{remark}
\section{Index map for H-elliptic operators}
\label{sec1.7}
    Let $M$ be a compact manifold with the filtration structure $H^{\bullet}$ and $E, F$ vector bundles over $M$, a sequence of Sobolev spaces $\tilde{H}^s(M, E, F)$ are defined in \cite{androulidakis2022pseudodifferential}. For a non-negative integer $s$, it encompasses the set of vector bundle $E$-valued distributions $u$ such that $Du\in L^2(F)$ for any differential operators $D\in \text{Diff}^s_{{H}}(M,E,F)$. It can be naturally extended to real numbers $s\geq 0$, and for $s < 0$, $\tilde{H}^s(M,E,F)$ is defined to be the dual of $\tilde{H}^{-s}(M,E,F)$. 

\begin{thm}[\cite{androulidakis2022pseudodifferential}]
    Let $M$ be a smooth compact manifold, and $P$ be a differential operator $C^{\infty}(M)\to C^{\infty}(M)$ of order $k$. Then the following are equivalent:
\begin{itemize}
    \item[(1)]   For every $x \in M$ and $\pi \in gr(H)_x$, $\sigma_H(P,x,\pi)$ is injective on $C^{\infty}(H_{\pi})$.
    \item[(2)] For every (or for some) $s \in \R$, and every distribution $u$ on $M$, $Pu \in  \tilde{H}^s (M)$ implies
$u \in \tilde{H}^{s+k}$.
\item[(3)] For every (or for some) $s \in \R$, the extension $\bar{P}:\tilde{H}^{s+k}(M)\to\tilde{H}^s(M)$ is left invertible modulo compact operators.
\end{itemize}
\end{thm}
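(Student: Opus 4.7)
The plan is to treat this as the natural Heisenberg-calculus generalization of the classical elliptic regularity/Fredholm package, deducing both (2) and (3) simultaneously from (1) via a parametrix, and closing the equivalence by a quasimode argument against a hypothetical failure point of the Rockland condition.

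For (1) $\Rightarrow$ (2) and (1) $\Rightarrow$ (3), I would first invoke the Rockland theorem (Theorem \ref{thm12}) together with Remark \ref{rmk3}(ii), so that (1) becomes the statement that the principal cosymbol $\sigma_H^k(P)$ is invertible in the cosymbol space $\Sigma_H^k(M) = \mathcal{E}'_r(Gr(H))/C^{\infty}_p(Gr(H);\Omega_r)$. Lifting this convolution inverse through the surjective cosymbol map $\sigma_{-k}:\psi_H^{-k}(M)\to \Sigma_H^{-k}(M)$ produces an H-pseudo-differential parametrix $Q\in \psi_H^{-k}(M)$ with $QP = I - R$ and $PQ = I - R'$, where $R,R'$ are of H-order $-\infty$ (smoothing). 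Given any distribution $u$ with $Pu = f\in \tilde{H}^s(M)$, the identity $u = Qf + Ru$ combined with the boundedness of $Q$ of H-order $-k$ on Sobolev scales and the fact that $Ru\in C^{\infty}(M)\subset \tilde{H}^{\infty}(M)$ yields $u\in \tilde{H}^{s+k}(M)$, which is (2). Extending $Q$ and $R$ to the Sobolev scale and applying Theorem \ref{thm6} (which promotes H-order $<0$ operators to compact operators between the relevant Sobolev spaces on compact $M$), one obtains $\bar{Q}\bar{P} = I - \bar{R}$ on $\tilde{H}^{s+k}(M)$ with $\bar{R}$ compact, which is (3).

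The implications between (2) and (3) for varying $s$ are then routine: (3) at some $s$ together with the boundedness of $\bar{P}:\tilde{H}^{s+k}\to\tilde{H}^s$ formally gives the statement of (2) at that $s$, and conversely the Banach closed-range argument together with the compactness of the inclusions $\tilde{H}^{s'}\hookrightarrow \tilde{H}^s$ for $s'>s$ on the compact manifold $M$ passes (2) to (3). One also needs to check that the $\tilde{H}^s$-scale is preserved by $\bar{Q}$ uniformly in $s$, which follows from the H-pseudo-differential calculus because the parametrix $Q$ can be taken of H-order $-k$ on the nose.

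The main obstacle is the converse direction (2) $\Rightarrow$ (1). Assume there exist $x_0\in M$ and a nontrivial irreducible unitary $\pi_0$ of $Gr(H)_{x_0}$ together with a nonzero smooth vector $v\in\mathcal{H}_{\pi_0}^{\infty}$ annihilated by $\sigma_H(P,x_0,\pi_0)$. The strategy is to transport this osculating obstruction back to $M$: using an exponential-type chart at $x_0$ adapted to the filtration $H^{\bullet}$ and the dilations $\delta_\lambda$ from Remark \ref{rmk1}, I would build a family of test functions $u_\lambda$ on $M$ whose Heisenberg rescaling converges as $\lambda\to 0$ to a matrix coefficient of $v$ under $\pi_0$. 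The zoom action $\alpha_\lambda$ on the H-tangent groupoid $\mathbb{G}r(H)$ from Section \ref{sec1.6} lets one compare $Pu_\lambda$ to $\sigma_H^k(P,x_0)$ acting through $\pi_0$ on $v$, showing $\|Pu_\lambda\|_{\tilde{H}^s}$ stays bounded (or $o(\lambda^{-k})$) while $\|u_\lambda\|_{\tilde{H}^{s+k}}$ blows up, contradicting (2). The delicate point is estimating the subprincipal error terms sharply enough to exhibit the blow-up — this is where I would need the full strength of the H-pseudo-differential calculus and the smooth structure on $\mathbb{G}r(H)$ to control the difference between $P$ and its osculating approximation uniformly in $\lambda$.
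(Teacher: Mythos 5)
The paper does not prove this theorem: it is imported verbatim from \cite{androulidakis2022pseudodifferential} (Androulidakis--Mohsen--Yuncken), and the text proceeds directly to a corollary. There is therefore no internal proof to compare your proposal against. That said, a few comments on the proposal itself are warranted, because it has a genuine gap precisely in the place you flag as ``delicate.''

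Your $(1)\Rightarrow(2)$ and $(1)\Rightarrow(3)$ via a left parametrix are in the right spirit, but note that Rockland (Theorem~\ref{thm12}) only converts (1) into \emph{left}-invertibility of $\sigma_H(P,x)$, and Remark~\ref{rmk3}(ii) speaks of invertibility under the stronger hypothesis that both $\sigma_H(P)$ and $\sigma_H(P^{\ast})$ satisfy Rockland. From (1) alone you may therefore lift a left inverse of the cosymbol to $Q\in\psi_H^{-k}(M)$ with $QP = I - R$ smoothing, but the asserted identity $PQ = I - R'$ is not free; fortunately it is not needed for (2) or (3), so this is a stylistic slip rather than a fatal one. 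Your passage $(2)\Leftrightarrow(3)$ also silently uses a Peetre-type closed-graph argument together with compactness of the inclusion $\tilde H^{s'}\hookrightarrow\tilde H^{s}$ on compact $M$; the latter is true in the Heisenberg Sobolev scale but is a nontrivial input of the calculus, not an immediate consequence of anything cited in this paper.

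The real gap is $(2)\Rightarrow(1)$. First, (2) as stated is a qualitative regularity statement about individual distributions, whereas a quasimode family $(u_\lambda)_\lambda$ produces no single counterexample $u$; one must first upgrade (2), via the closed graph theorem, to a uniform a priori estimate of the form $\|u\|_{\tilde H^{s+k}} \leq C\bigl(\|Pu\|_{\tilde H^{s}} + \|u\|_{\tilde H^{s+k-1}}\bigr)$ before a blow-up along a sequence can give a contradiction. Second, and more seriously, the proposed quasimodes ``converging to a matrix coefficient of $v$ under $\pi_0$'' are not readily available: for an infinite-dimensional $\pi_0$ the matrix coefficients on $Gr(H)_{x_0}$ are not square-integrable, so they must be cut off and renormalized, and controlling how the subprincipal part of $P$ and the truncation errors interact with the zoom scaling $\alpha_\lambda$ is exactly the heart of the necessity direction in \cite{androulidakis2022pseudodifferential} --- it is not a peripheral estimate one can defer. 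As written, your proposal records the statement of what must be shown there rather than a proof of it, so the equivalence is not actually closed.
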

Then, by Definition \ref{defn8} and Remark \ref{rmk3}, we have
\begin{cor}
    If $P$ is an H-elliptic differential operator of order $k$, $\bar{P}:\tilde{H}^s(M)\to \tilde{H}^{s-k}(M)$ is a Fredholm operator.
\end{cor}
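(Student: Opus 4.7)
The plan is to deduce Fredholmness by applying the preceding theorem separately to $P$ and to its formal adjoint $P^{\ast}$: implication (1)$\Rightarrow$(3) applied to $P$ yields a left parametrix modulo compacts for $\bar P$, while the same implication applied to $P^{\ast}$, combined with a duality argument on the Sobolev scale, yields a right parametrix modulo compacts for $\bar P$; their combination gives the Fredholm property.

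The first step is to translate H-ellipticity into the hypothesis of the preceding theorem. By Definition \ref{defn8}, $P$ being H-elliptic means that both $\sigma_H^k(P,x)$ and $\sigma_H^k(P^{\ast},x)$ are invertible at every $x\in M$, and by Remark \ref{rmk3} (via the Rockland theorem) this is equivalent to the injectivity of $\pi(\sigma_H(P,x))$ and $\pi(\sigma_H(P^{\ast},x))$ on the smooth vectors, for every $x\in M$ and every non-trivial irreducible unitary representation $\pi$ of $Gr(H)_x$. Hence condition (1) of the preceding theorem holds for both $P$ and $P^{\ast}$. Applying (1)$\Rightarrow$(3) to $P$ at Sobolev index $s-k$ produces a continuous $Q:\tilde H^{s-k}(M)\to\tilde H^s(M)$ with $Q\bar P-I$ compact on $\tilde H^s(M)$; applying it to $P^{\ast}$ at Sobolev index $-s$ produces a continuous $Q':\tilde H^{-s}(M)\to\tilde H^{k-s}(M)$ with $Q'\bar{P^{\ast}}-I$ compact on $\tilde H^{k-s}(M)$.

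To convert the $P^{\ast}$-statement into a right parametrix for $\bar P$, I would use the $L^2$-based duality pairing $\tilde H^a(M)\times\tilde H^{-a}(M)\to\C$, under which the Banach-space adjoint of $\bar{P^{\ast}}:\tilde H^{k-s}(M)\to\tilde H^{-s}(M)$ coincides with $\bar P:\tilde H^s(M)\to\tilde H^{s-k}(M)$. Taking adjoints of the compact-perturbation relation $Q'\bar{P^{\ast}}-I\in\mathrm{Compacts}$ yields $\bar P\,(Q')^{\ast}-I$ compact on $\tilde H^{s-k}(M)$, so $\bar P$ also admits a right parametrix modulo compacts. The standard fact that an operator with both a left and a right parametrix modulo compacts has finite-dimensional kernel, finite-dimensional cokernel and closed range then yields the Fredholm property. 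The most delicate step is the identification of the Sobolev duality and the compatibility of $P^{\ast}$ with the Banach-space adjoint of $\bar P$ on the $\tilde H$-scale; once this is in place, the proof reduces to two direct invocations of the preceding theorem.
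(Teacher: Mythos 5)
Your proof is correct and matches the approach the paper intends (the paper merely cites Definition \ref{defn8} and Remark \ref{rmk3} and leaves the details implicit): apply (1)$\Rightarrow$(3) to $P$ for a left parametrix modulo compacts, apply it to $P^{\ast}$ and dualize on the $\tilde H^s$-scale for a right parametrix, and conclude Fredholmness. Your write-up simply spells out the Sobolev-duality bookkeeping that the paper takes for granted.
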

\begin{defn}
     Let $P$ be an H-elliptic differential operator of order $k$. For arbitrary $s\in \R$, $P$ can be extended to the Sobolev space $\tilde{H}^s(M)$,
     $$\bar{P}:\tilde{H}^{s}(M)\to\tilde{H}^{s-k}(M).$$
     We define the \textit{analytical index of $P$} to be $\text{Ind}(P)=\dim \ker \bar{P}-\dim\ker \bar{P}^{\ast}$.
\end{defn}
 Since H-elliptic operators are hypoelliptic, the kernel of $\bar{P}$ and $\bar{P}^{\ast}$ lie in $C^{\infty}(M)=\bigcap_{s\in \R} \tilde{H}^s(M)$, so the analytical index is well-defined.

 \begin{prop}[\cite{mohsen2022index}]\label{prop1}
     Let $M$ be a compact filtered manifold. The analytical index is independent of the choice of $s$ and only depends on the symbol class $[\sigma_H(P)]\in KK(C(M), C^{\ast}(Gr(H)))$.
     The map from the symbol class to the analytical index is denoted by the composition
$$\text{Ind}_{H^{\bullet}}:KK(C(M), C^{\ast}(Gr(H)))\to K^0(C(M))\to \Z,$$
where the second map is induced from the inclusion $\C\subset C(M)$.

 In the $G$-invariant case, all the $KK$-classes are replaced by equivariant $KK$-classes, and the analytical index $\text{Ind}(P)=[\text{Ker}\ P]-[\text{Coker}\ P]$ in the representation ring $R(G):=KK^G(\C, \C)$ only depends on the symbol class $[\sigma_H(P)]\in KK^G(C(M), C^{\ast}(Gr(H)))$, i.e, we have the analytical index map
$$\text{Ind}_{H^{\bullet}}:KK^G(C(M), C^{\ast}(Gr(H)))\to K^0(C^{\ast}(G,C(M)))\to R(G).$$
 \end{prop}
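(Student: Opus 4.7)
The plan is to prove the three assertions of the proposition in sequence. For $s$-independence, since every H-elliptic operator is hypoelliptic (the remark following Definition~\ref{def5}), any $u\in\tilde H^s(M)$ with $\bar P u=0$ actually lies in $C^\infty(M)=\bigcap_s\tilde H^s(M)$, and likewise for $\bar P^\ast$. Thus $\ker\bar P$ and $\ker\bar P^\ast$ are intrinsic finite-dimensional subspaces of $C^\infty(M)$ and $\text{Ind}(P)=\dim\ker\bar P-\dim\ker\bar P^\ast$ does not depend on the Sobolev order.

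For dependence only on the symbol class, the plan is to factor the analytical index as a Kasparov product. By Corollary~\ref{cor2}, the bounded transform of $P$, arranged as an odd operator on $L^2(M;E\oplus F)$, defines a Fredholm module over $C(M)$ and hence a class $[P]\in KK(C(M),\C)=K^0(C(M))$. Pairing $[P]$ with the unit class $[1]\in KK(\C,C(M))$ coming from the inclusion $\C\subset C(M)$ returns $\dim\ker\bar P-\dim\ker\bar P^\ast=\text{Ind}(P)$; this realizes the second arrow $K^0(C(M))\to\Z$. For the first arrow, I would use the H-tangent groupoid $\mathbb{G}r(H)$ together with its zoom action to construct a canonical ``quantization'' class $\eta_{H^\bullet}\in KK(C^\ast(Gr(H)),\C)$: evaluation at $t=1$ quantizes a cosymbol into an operator on $L^2(M)$, while evaluation at $t=0$ recovers the cosymbol itself, and the one-parameter deformation $C^\ast(\mathbb{G}r(H))$ supplies the KK-class linking the two. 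Then I would prove $[P]=[\sigma_H(P)]\otimes_{C^\ast(Gr(H))}\eta_{H^\bullet}$ by observing that two operators with the same principal cosymbol differ by a lower-order term, which is a compact perturbation by Theorem~\ref{thm6} and so does not alter the KK-class. Defining $\text{Ind}_{H^\bullet}$ as the Kasparov product with $\eta_{H^\bullet}$ followed by the unit pairing then gives the desired factorization.

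For the $G$-equivariant version, I would run the same argument inside equivariant KK-theory. $G$-invariance of $P$ promotes $F_P$ to a $G$-continuous operator and yields $[P]\in KK^G(C(M),\C)$; Kasparov descent $j^G$ produces $j^G([P])\in K^0(C^\ast(G,C(M)))$, and composing with the $G$-equivariant analog of the unit pairing lands in $KK^G(\C,\C)=R(G)$. The construction of $\eta_{H^\bullet}$ goes through equivariantly because the zoom action commutes with the lifted $G$-action on $\mathbb{G}r(H)$.

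The main obstacle is the construction of $\eta_{H^\bullet}$ together with the product formula $[P]=[\sigma_H(P)]\otimes_{C^\ast(Gr(H))}\eta_{H^\bullet}$. This is a filtered, noncommutative analog of Connes' tangent-groupoid proof of Atiyah--Singer and requires careful handling of unbounded multipliers of $C^\ast(\mathbb{G}r(H))$ as well as the compatibility of the bounded transform with the evaluations at $t=0$ and $t=1$. In the equivariant case one must additionally check that Kasparov descent commutes with the groupoid deformation, which is why the proposition ultimately relies on the deeper framework of \cite{kasparov2022k} and \cite{mohsen2022index}.
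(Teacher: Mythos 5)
The paper does not include its own full argument for this proposition — it cites \cite{mohsen2022index} and only remarks, just before the statement, that hypoellipticity forces $\ker\bar P,\ker\bar P^\ast\subset C^\infty(M)=\bigcap_s\tilde H^s(M)$, which is exactly your argument for $s$-independence. Your route for the remaining assertions is consistent with the machinery the paper later recounts, but it is closer to the Kasparov treatment (Section~\ref{sec3.3}, via the Dolbeault element $[D'_M]\in K^0(C^\ast(Gr(H)))$ and the identity $[P]=[\sigma_H(P)]\otimes_{C^\ast(Gr(H))}[D'_M]$) than to Mohsen's actual proof as sketched in Section~\ref{sec3.1}--\ref{sec3.2}. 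Mohsen uses a two-parameter square of groupoids over $[0,1]^2$ together with the Connes--Thom isomorphism $\mu_{t=0}$ and the splitting isomorphism $\mu_{s=0}$, and identifies $\text{Ind}_{H^\bullet}$ with $\mu_{s=1}=\text{Ind}_a\circ\mu_{s=0}\circ\mu_{t=0}^{-1}$; what you propose is to work only on the $s=1$ edge of that square, i.e.\ the Heisenberg tangent groupoid $\mathbb{G}r(H)$, obtaining $\eta_{H^\bullet}=[ev_0]^{-1}\otimes[ev_1]\in KK(C^\ast(Gr(H)),\C)$ directly. This is a perfectly legitimate (and in fact more economical) route to the proposition as stated, since one does not yet need the Connes--Thom factorization to see that the index depends only on $[\sigma_H(P)]$.

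One place where you are slightly too quick: the observation that two operators with the same principal cosymbol differ by a compact perturbation (Theorem~\ref{thm6}) shows that the index class $[P]\in K^0(C(M))$ depends only on the \emph{cosymbol} $\sigma_H(P)$, but the proposition asserts dependence only on the \emph{KK-class} $[\sigma_H(P)]$, which is a strictly weaker invariant of the operator. To close this gap you really do need the product formula $[P]=[\sigma_H(P)]\otimes_{C^\ast(Gr(H))}\eta_{H^\bullet}$, since homotopies of the symbol class need not come from compact perturbations of $P$. You correctly flag the construction of $\eta_{H^\bullet}$ and the verification of this formula as the main obstacle, so your overall plan is sound; just be aware that the compact-perturbation remark is a reduction step, not a substitute for the KK-product identity. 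The equivariant version goes through as you describe via $j^G$ and the $G$-equivariance of the zoom action.
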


\section{Groupoid approach and Connes-Thom's isomorphism}
\label{sec1.8}
In this section, we explain the groupoid approach which was invented by Connes in \cite{connes1994noncommutative}. We will also introduce the Connes-Thom's isomorphism, which is important in van Erp and Mohsen's index theory for H-elliptic operators.

\begin{lem}\label{lem3}
    Let $C$ be a family of $C^{\ast}$-algebras parametrized by $[0,1]$, such that the fiber over $(0,1]$ is the $C^{\ast}$-algebra $B$ and the fiber over $0$ is the $C^{\ast}$-algebra $A$. There is an associated exact sequence
 $$0\to B\otimes C_0(0,1]\to C\xrightarrow{ev_0} A\to 0.$$ 
 Then the induced map of $ev_0$ in $K$-theory is an isomorphism. 
 Consequently, we can define a map in $K$-theory:
 $$\mu:=(ev_1)_{\ast}\circ (ev_0)_{\ast}^{-1}:K_{\ast}(A)\to  K_{\ast}(B),$$
 where $ev_t$ is the restriction of $C$ to the $C^{\ast}$-algebra fibered over $\{t\}$.
\end{lem}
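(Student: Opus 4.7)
The plan is to apply the six-term exact sequence in $K$-theory to the short exact sequence
$$0\to B\otimes C_0(0,1]\to C\xrightarrow{ev_0} A\to 0,$$
and to reduce the proof to the well-known fact that the cone $C_0(0,1]$ has vanishing $K$-theory.

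First, I would observe that the kernel of $ev_0$ consists of continuous sections of the field $C$ that vanish at $t=0$. Since the restriction of $C$ to $(0,1]$ is a trivial field with fiber $B$, this kernel is naturally isomorphic to $B\otimes C_0(0,1]$, so the sequence above is genuinely a short exact sequence of $C^{\ast}$-algebras.

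Second, I would show that $K_\ast(B\otimes C_0(0,1])=0$. The key point is that $C_0(0,1]$ is the cone $\mathrm{Cone}(\C)$ and is contractible as a $C^{\ast}$-algebra: the family of $\ast$-homomorphisms $\phi_s:C_0(0,1]\to C_0(0,1]$ defined by $(\phi_s f)(t)=f(st)$ gives a homotopy between $\phi_1=\mathrm{id}$ and $\phi_0=0$. Tensoring with $B$ (and noting that $-\otimes C_0(0,1]$ preserves homotopies) shows that the identity on $B\otimes C_0(0,1]$ is homotopic to $0$, so its $K$-theory vanishes in both degrees by homotopy invariance. Alternatively, one can cite the cone-algebra computation directly.

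Third, I would plug this into the six-term exact sequence
\begin{equation*}
\begin{tikzcd}
K_0(B\otimes C_0(0,1]) \arrow[r] & K_0(C) \arrow[r, "(ev_0)_\ast"] & K_0(A) \arrow[d] \\
K_1(A) \arrow[u] & K_1(C) \arrow[l, "(ev_0)_\ast"'] & K_1(B\otimes C_0(0,1]) \arrow[l]
\end{tikzcd}
\end{equation*}
Since the four corners involving $B\otimes C_0(0,1]$ are zero, exactness forces $(ev_0)_\ast$ to be an isomorphism in both degrees, as required. There is no genuine obstacle; the only point demanding a bit of care is the identification of $\ker(ev_0)$ with $B\otimes C_0(0,1]$, which is automatic once one notes that the field is trivial over $(0,1]$.
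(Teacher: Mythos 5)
Your proposal is correct and matches the paper's proof: both invoke the six-term exact sequence in $K$-theory associated to the short exact sequence and conclude from the contractibility of $C_0(0,1]$ that $(ev_0)_\ast$ is an isomorphism. The only difference is that you spell out the homotopy contracting $C_0(0,1]$ and the identification of $\ker(ev_0)$ explicitly, while the paper leaves these as immediate.
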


 \begin{proof}
   We have the $6$-term exact sequence in $K$-theory:
 \[
 \begin{tikzcd}[center picture]
&K_1(B\otimes C_0(0,1]) \arrow[r] & K_1(C) \arrow[r] &K_1(A) \arrow[d]\\
     &K_0(A)\arrow[u] & K_0(C) \arrow[l] & K_0(B\otimes C_0(0,1]) \arrow[l].
 \end{tikzcd}
 \]
  Since $C_0(0,1]$ is contractible, we see that $K(ev_0):K_{\ast}(C)\to K_{\ast}(A)$ is an isomorphism.    
\end{proof}

\begin{lem}\label{lemma5}
If the global $C^{\ast}$-algebra $A$ can be decomposed as $A=\{A_{ij}\}_{(i,j)\in [0,1]\times [0,1]}$, a family of $C^{\ast}$-algebras over $[0,1]\times [0,1]$ with parameters $s,t\in [0,1]$, whose fibers are:
\begin{itemize}
    \item $A_{00}$: fibered over $\{0\}\times\{0\}$,
    \item $A_{01}$: fibered over $\{0\}\times (0,1]$,
    \item $A_{10}$: fibered over $(0,1]\times\{0\}$,
    \item $A_{11}$: fibered over $(0,1]\times(0,1]$.
\end{itemize}
   Then, we obtain a commutative diagram:
 \[
 \begin{tikzcd}[center picture]
&K_{\ast}(A_{00})\arrow[d,"\mu_{s=0}"] \arrow[r,"\mu_{t=0}"] & K_{\ast}(A_{10}) \arrow[d,"\mu_{s=1}"]\\
     &K_{\ast}(A_{01})\arrow[r,"\mu_{t=1}"] & K_{\ast}(A_{11}).
 \end{tikzcd}
 \]
  in which we apply Lemma \ref{lem3} to each side of  $\partial{([0,1]\times [0,1])}$.
\end{lem}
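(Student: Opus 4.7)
The plan is to show that both compositions around the square factor through a single map realized on the global algebra of sections over $[0,1]\times[0,1]$, and hence agree. Concretely, let $A^+$ be the $C^{\ast}$-algebra of sections of the field $\{A_{ij}\}$ over the whole square, and let $C_{\mathrm{L}}, C_{\mathrm{B}}, C_{\mathrm{T}}, C_{\mathrm{R}}$ be its restrictions to the four edges. The key observation is that Lemma \ref{lem3} applies not only to each edge individually but also to $A^+$ itself in either coordinate direction.

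First I would apply Lemma \ref{lem3} to $A^+$ viewed as a $[0,1]$-family in the $s$-variable: its fiber at $s=0$ is $C_{\mathrm{L}}$, while the fiber at every $s\in(0,1]$ is one and the same algebra $R$ (with value $A_{10}$ at $t=0$ and $A_{11}$ on $(0,1]$). This yields an isomorphism $K_{\ast}(A^+)\xrightarrow{\cong}K_{\ast}(C_{\mathrm{L}})$; composing with the left-edge isomorphism $K_{\ast}(C_{\mathrm{L}})\xrightarrow{\cong}K_{\ast}(A_{00})$ coming from Lemma \ref{lem3} in the $t$-variable, I obtain a canonical iso $(ev_{(0,0)})_{\ast}:K_{\ast}(A^+)\xrightarrow{\cong}K_{\ast}(A_{00})$. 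The same reasoning via the bottom edge gives the identical isomorphism by naturality.

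Next I would verify that both paths around the square coincide with the single map $(ev_{(1,1)})_{\ast}\circ(ev_{(0,0)})_{\ast}^{-1}:K_{\ast}(A_{00})\to K_{\ast}(A_{11})$. Writing out the upper route
\[
\mu_{t=1}\circ\mu_{s=0}=(ev^{\mathrm{T}}_{(1,1)})_{\ast}\circ(ev^{\mathrm{T}}_{(0,1)})_{\ast}^{-1}\circ(ev^{\mathrm{L}}_{(0,1)})_{\ast}\circ(ev^{\mathrm{L}}_{(0,0)})_{\ast}^{-1},
\]
and substituting $(ev^{\mathrm{L}}_{(0,1)})_{\ast}=(ev^{A^+}_{(0,1)})_{\ast}\circ(ev^{A^+}_{\mathrm{L}})_{\ast}^{-1}$ together with the analogous identity on the top edge, the two middle arrows collapse to $(ev^{A^+}_{\mathrm{T}})_{\ast}\circ(ev^{A^+}_{\mathrm{L}})_{\ast}^{-1}$; combining with the outer edge-evaluations at the two corners yields $(ev^{A^+}_{(1,1)})_{\ast}\circ(ev^{A^+}_{(0,0)})_{\ast}^{-1}$. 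The symmetric argument through $C_{\mathrm{B}}$ and $C_{\mathrm{R}}$ gives the same expression for the lower route, establishing commutativity.

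The main obstacle I expect is Step 1: one must check that the field structure on $A^+$ over $[0,1]^2$ restricts, in each coordinate direction, to a continuous $[0,1]$-family whose fiber on $(0,1]$ is a single constant algebra, so that Lemma \ref{lem3} is genuinely applicable to $A^+$ and not only to its four edges. Once this iterated-field structure is confirmed --- which is automatic under the mild regularity tacitly built into the quadrant decomposition $\{A_{00},A_{01},A_{10},A_{11}\}$ used in the intended applications (H-tangent groupoid deformations in Chapter \ref{chap5}) --- the rest is a pure naturality/diagram chase in the $6$-term exact sequence, with no further analytic input.
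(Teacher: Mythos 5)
Your argument is correct and rests on the same idea as the paper's proof: both compositions around the square are the map $(ev_{(1,1)})_{\ast}\circ(ev_{(0,0)})_{\ast}^{-1}$ obtained by lifting to the global algebra over $[0,1]^2$ and then restricting to the $(1,1)$-corner. You spell out the evaluation-map bookkeeping more explicitly than the paper (which states the factorization through $A$ in one sentence), and your caveat about the iterated field structure being restrictable in each coordinate direction is a legitimate hypothesis that the paper tacitly assumes.
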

 \begin{proof}
     Each component $A_{ij},i,j=0,1$ is a restriction of the global $C^{\ast}$-algebra $A$. The compositions along both side of the commutative diagram from $K_{\ast}(A_{00})$ to $K_{\ast}(A_{11})$ illustrate the induced map on $K$-theory, composition of the lift map from $A_{00}$ to the global algebra $A$, and the restriction map from $A$ to $A_{11}$. The result then follows.
 \end{proof}
 \begin{remark}
     The same procedure can be performed by replacing $K_{\ast}(\cdot)$ with $KK(A',\cdot)$ for $A'$ an arbitrary $C^{\ast}$-algebra.
 \end{remark}

\begin{thm}[\textbf{Connes-Thom's isomorphism}, \cite{nistor2003index}, Lemma 4.3]\label{thm8}
    Let $\G$ represent a bundle of simply-connected, solvable Lie group over $M$, then
    $$K_i(C^{\ast}(\G))\cong K_i(C_0(\g^{\ast}))\cong K^i(\g^{\ast}).$$
\end{thm}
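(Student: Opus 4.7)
The plan is to establish the first isomorphism $K_i(C^{\ast}(\G)) \cong K_i(C_0(\g^{\ast}))$ by deforming $C^{\ast}(\G)$ to its abelianization via an adiabatic groupoid, and then to identify $K_i(C_0(\g^{\ast})) \cong K^i(\g^{\ast})$ through the standard correspondence between operator and topological $K$-theory.

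The key construction is the adiabatic groupoid $\G_{ad}$ of the bundle $\G$: a Lie groupoid over $M \times [0,1]$ whose fiber at $t \in (0,1]$ is isomorphic to $\G$ (via rescaling in the Baker--Campbell--Hausdorff coordinates on $\g$) and whose fiber at $t = 0$ is $\g$ viewed as a bundle of abelian Lie groups under vector addition. Simple-connectedness together with solvability of each fiber guarantees that the fiberwise exponential map $\g \to \G$ is a diffeomorphism, which supplies the smooth structure on $\G_{ad}$ and makes $\{C^{\ast}(\G_{ad})_t\}_{t \in [0,1]}$ a continuous field of $C^{\ast}$-algebras. The associated exact sequence
$$0 \to C^{\ast}(\G) \otimes C_0((0,1]) \to C^{\ast}(\G_{ad}) \xrightarrow{ev_0} C^{\ast}(\g) \to 0,$$
together with the contractibility of $C_0((0,1])$, implies by Lemma \ref{lem3} that $(ev_0)_{\ast}$ is an isomorphism on $K$-theory.

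Next I would show that the evaluation $(ev_1)_{\ast}: K_{\ast}(C^{\ast}(\G_{ad})) \to K_{\ast}(C^{\ast}(\G))$ is also an isomorphism, so that $\mu := (ev_1)_{\ast} \circ (ev_0)_{\ast}^{-1}$ is the desired Connes--Thom isomorphism. This step is the main obstacle. For a single simply-connected solvable Lie group $G$, the classical Connes argument realizes $G$ as an iterated semidirect product by $\R$ and applies the $\R$-Thom isomorphism $K_i(A \rtimes \R) \cong K_{i+1}(A)$ at each stage; after $\dim \g$ iterations, Bott periodicity yields $K_{\ast}(C^{\ast}(G)) \cong K_{\ast}(C_0(\g^{\ast}))$. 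The fiberwise argument has to be stitched continuously across $M$; amenability of the bundle of simply-connected solvable Lie groups ensures that the full and reduced $C^{\ast}$-algebras coincide and that the adiabatic index map $(ev_1)_{\ast}$ matches the classical Thom isomorphism on each fiber, so these fiberwise isomorphisms assemble into a global one.

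Finally, the fiberwise Fourier transform on the abelian Lie group bundle $\g$ yields an isomorphism $C^{\ast}(\g) \cong C_0(\g^{\ast})$, producing $K_i(C^{\ast}(\G)) \cong K_i(C_0(\g^{\ast}))$. The second isomorphism $K_i(C_0(\g^{\ast})) \cong K^i(\g^{\ast})$ is the Gelfand identification of operator $K$-theory of a commutative $C^{\ast}$-algebra with topological (compactly supported) $K$-theory of its spectrum, here the total space of $\g^{\ast}$ regarded as a locally compact Hausdorff space.
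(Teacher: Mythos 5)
The paper states this theorem as a citation to \cite{nistor2003index}, Lemma 4.3, and does not give its own proof; the closest thing in the paper is the ``concrete construction'' of the adiabatic groupoid that appears a few paragraphs later, and that construction is deliberately stated for a bundle of \emph{nilpotent} Lie groups, not solvable ones. Your proposal follows the same overall strategy as the cited reference (adiabatic deformation plus iterated $\R$-Thom), so the route is appropriate, but there are two points where the argument as written would not go through.

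First, the claim that ``simple-connectedness together with solvability of each fiber guarantees that the fiberwise exponential map $\g \to \G$ is a diffeomorphism'' is false. This is a theorem for nilpotent groups (and is exactly why the paper restricts its explicit $\exp$-based construction of $\mathbb{G}$ to the nilpotent case), but for simply connected solvable groups $\exp$ need be neither injective nor surjective --- the universal cover of the Euclidean motion group $E(2)$ is a standard counterexample. So you cannot use $\exp$ as the coordinate chart gluing $\G\times(0,1]$ to $\g\times\{0\}$; for the solvable case the deformation has to be built differently, for instance by iterating the adiabatic construction along a chain of normal subgroups $\G=\G_0\rhd\G_1\rhd\cdots\rhd\G_n=\{e\}$ with $\G_{i}/\G_{i+1}\cong\R$, which is essentially how Nistor proceeds.

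Second, the ``stitching'' paragraph is where the real content of the bundle statement lives, and it is not a routine consequence of the fiberwise case. Knowing that $K_*(C^*(\G_x))\cong K_*(C_0(\g_x^*))$ for each $x\in M$ does not by itself give $K_*(C^*(\G))\cong K_*(C_0(\g^*))$; one needs a $C_0(M)$-linear version of the Connes--Thom isomorphism (i.e.\ an $\RKK(M;-,-)$-statement), or a Mayer--Vietoris/induction argument over a good cover of $M$, to assemble the fiberwise maps into a global $KK$-equivalence. Amenability of the fibers is useful (it lets you work with the reduced algebra and gives continuity of the field of $C^*$-algebras), but it does not by itself produce the globalization. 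Since the paper outsources exactly this point to \cite{nistor2003index}, your sketch would need to either reproduce that $C_0(M)$-linear argument or cite it explicitly rather than gesture at it.

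The remaining steps --- $(ev_0)_*$ being an isomorphism via contractibility of $C_0(0,1]$, the Fourier transform $C^*(\g)\cong C_0(\g^*)$ on the abelian bundle, and the Gelfand identification $K_i(C_0(\g^*))\cong K^i(\g^*)$ --- are all correct and match the paper's framework.
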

This Connes-Thom's isomorphism can be written as the $KK$-product with some special element as follows:
\begin{prop}\label{prop3}
    There is a $KK$-element $[\mathcal{E}_0]\in KK(C^{\ast}(\G),C_0(\g^{\ast}))$, called the Connes-Thom class, such that the $KK$-product with $[\mathcal{E}_0]$ over $C^{\ast}(\G)$ yields the Connes-Thom's isomorphism in Theorem \ref{thm8}, i.e.
     $$K_i(C^{\ast}(\G))\xrightarrow[\cong]{\otimes_{C^{\ast}(G)}[\mathcal{E}_0]} K_i(C_0(\g^{\ast})).$$
\end{prop}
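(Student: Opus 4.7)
The plan is to realize the isomorphism of Theorem~\ref{thm8} as the action of an explicit element of $KK$, built from the adiabatic deformation groupoid of $\G$.

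First I would form the adiabatic groupoid
$$\mathbb{G}_{ad} = \g \times \{0\} \sqcup \G \times (0,1]$$
over $M \times [0,1]$, equipped with its standard smooth Lie groupoid structure, in which $\g$ is viewed as a bundle of abelian Lie groups. The associated groupoid $C^{\ast}$-algebra fits in the short exact sequence
$$0 \longrightarrow C^{\ast}(\G) \otimes C_0((0,1]) \longrightarrow C^{\ast}(\mathbb{G}_{ad}) \xrightarrow{ev_0} C^{\ast}(\g) \longrightarrow 0,$$
together with a second evaluation $ev_1: C^{\ast}(\mathbb{G}_{ad}) \to C^{\ast}(\G)$ at $t=1$. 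Fiberwise Fourier transform on the abelian bundle $\g$ identifies $C^{\ast}(\g) \cong C_0(\g^{\ast})$.

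Next, since $C^{\ast}(\G) \otimes C_0((0,1])$ is a cone and hence $KK$-contractible, the six-term exact sequence in $KK(D,-)$ together with Yoneda's lemma forces $[ev_0] \in KK(C^{\ast}(\mathbb{G}_{ad}), C_0(\g^{\ast}))$ to be a $KK$-equivalence, with a $KK$-inverse $[ev_0]^{-1}$. Composing with $[ev_1]$ yields
$$\beta := [ev_0]^{-1} \otimes_{C^{\ast}(\mathbb{G}_{ad})} [ev_1] \in KK(C_0(\g^{\ast}), C^{\ast}(\G)),$$
whose action on $K$-theory is precisely the map $\mu = (ev_1)_{\ast} \circ (ev_0)_{\ast}^{-1}$ of Lemma~\ref{lem3}, which by Theorem~\ref{thm8} is the Connes-Thom isomorphism $K_i(C_0(\g^{\ast})) \xrightarrow{\cong} K_i(C^{\ast}(\G))$. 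Defining $[\mathcal{E}_0] := \beta^{-1}$ would then give the required element of $KK(C^{\ast}(\G), C_0(\g^{\ast}))$, with $KK$-product acting as $\mu^{-1}$ on $K$-theory; this is the map displayed in the statement.

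The main obstacle is showing that $\beta$ is a genuine $KK$-equivalence, since a $K$-theory isomorphism alone is not sufficient to invert $\beta$ inside $KK$. Here the simply-connected solvable hypothesis on each fiber is essential: locally one chooses a composition series $0 \subset \g_1 \subset \cdots \subset \g_n = \g$ of ideals with one-dimensional abelian quotients, so that each $\G|_x$ decomposes as an iterated semi-direct product by copies of $\R$. For each $\R$-extension, the Connes-Thom isomorphism is implemented by an explicit degree-$1$ $KK$-equivalence in the style of Fack-Skandalis, and the composition of these equivalences along the series coincides with $\beta$ via a repeated application of the diagram in Lemma~\ref{lemma5}. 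A partition-of-unity argument over $M$ then patches the local $KK$-inverses into a global one, completing the construction of $[\mathcal{E}_0] = \beta^{-1}$ and simultaneously the verification that the induced map on $K$-theory equals the Connes-Thom isomorphism of Theorem~\ref{thm8}.
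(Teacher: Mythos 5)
Your proposal and the paper ultimately rest on the same two ingredients — the Fack--Skandalis realization of the $\R$-crossed-product Connes--Thom isomorphism as a degree-$1$ $KK$-equivalence, and Nistor's reduction of Theorem~\ref{thm8} to an iteration of such isomorphisms along a composition series of $\g$. The paper's proof stops there: it cites Nistor's Lemma~4.3 to express the isomorphism of Theorem~\ref{thm8} as a composite of standard $\R$-Connes--Thom isomorphisms, invokes Fack--Skandalis for each step, and concludes by associativity of the Kasparov product that the composite is itself a Kasparov product with a fixed element of $KK(C^{\ast}(\G),C_0(\g^{\ast}))$. That element is $[\mathcal{E}_0]$, full stop.

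Your detour through the adiabatic groupoid is where the gap sits. You are right that $[ev_0]$ is a $KK$-equivalence (the cone ideal argument and the ``$KK(D,-)$-Yoneda'' step are fine, granting semi-splitness, which holds since the fiberwise groups are amenable), so $\beta=[ev_0]^{-1}\otimes[ev_1]\in KK(C_0(\g^{\ast}),C^{\ast}(\G))$ is well defined. But to set $[\mathcal{E}_0]:=\beta^{-1}$ you must prove that $\beta$ itself is a $KK$-equivalence, and inducing an isomorphism on $K$-theory is not enough. Your proposed fix — that the iterated Fack--Skandalis equivalence ``coincides with $\beta$ via a repeated application of Lemma~\ref{lemma5}'' — does not follow from Lemma~\ref{lemma5}, which only asserts commutativity of $K$-theory maps for a square of deformations, not an identity of $KK$-classes between a one-parameter adiabatic deformation and an iterated crossed-product class. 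That identification is a genuinely nontrivial statement (compare Lemma~\ref{lem1}, which the paper records separately and only at the level of the induced map on $KK(A,-)$, not as an equality in $KK$). Once you have constructed the iterated Fack--Skandalis $KK$-equivalence, it already satisfies the conclusion of the proposition, so the cleanest route is to \emph{define} $[\mathcal{E}_0]$ to be that composite and drop the adiabatic groupoid from this particular argument; otherwise you owe a proof that the deformation element and the composition-series element agree in $KK$, which is missing.
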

    
\begin{proof}
    By Proposition 1 in \cite{fack1981connes}, the standard Connes-Thom's isomorphism for $C^{\ast}$-dynamic system $(A,\R,\alpha)$:
    $$KK^i(B, A)\to KK^{i+1}(B, A\rtimes_{\alpha} \R)$$ can be represented as the Kasparov product with an element in $KK^1(A, A\rtimes_{\alpha}\R)$. 
By the construction in the proof of Lemma 4.3 in \cite{nistor2003index}, the Connes-Thom's isomorphism in Theorem \ref{thm8} is a composite of such standard Connes-Thom's isomorphisms. By the associativity of $KK$-products, we see that the composition
$$KK_i(A,C^{\ast}(\G))\cong KK_i(A,C_0(\g^{\ast}))$$
can be calculated by the Kasparov product with a certain element in $KK(C^{\ast}(\G), C_0(\g^{\ast}))$.
\end{proof}

Now we give a concrete construction for the Connes-Thom's isomorphism in Theorem \ref{thm8}. Let $\G$ represent a bundle of nilpotent Lie groups over a locally compact manifold $M$. We can define a bundle of nilpotent Lie groups over $M \times [0, 1]$, denoted by $\mathbb{G}$ and called the adiabatic groupoid for $\G$:
$$\mathbb{G}= \G\times(0, 1] \cup \g \times \{0\},$$
with its topology induced by the isomorphism 
\begin{equation*}
\begin{aligned}
\g\times [0,1]&\to \mathbb{G}\\
    (X,t) &\mapsto (\exp(tX),t) &\text{for}\ t>0,\\
    (X,0) &\mapsto (X,0) &\text{for}\ t=0.        
\end{aligned}
\end{equation*}
Its $C^{\ast}$-algebra $C^{\ast}(\mathbb{G})$ lies in the exact sequence
$$0\to C_0(0,1]\otimes C^{\ast}\G\to C^{\ast}(\mathbb{G})\xrightarrow{ev_0} C_0(\g^{\ast})\to 0.$$

 By the arguments in Lemma \ref{lem3}, the map $ev_0$ induces an isomorphism in Kasparov's $KK$-theory. 
 \begin{lem}[\cite{nistor2003index}, Theorem 2.22]\label{lem1}
     For arbitrary $C^{\ast}$-algebra $A$, the composition $$(ev_1)_{\ast}\circ (ev_0)_{\ast}^{-1}:KK(A,C_0(\g^{\ast}))\to KK(A,C^{\ast}(\mathbb{G}))\to KK(A,C^{\ast}\G)$$
 is an isomorphism. It is consistent with the inverse of the Connes-Thom's isomorphism and equal to the $KK$-product with $[ev_0]^{-1}\in KK(C_0(\g^{\ast}), C^{\ast}(\mathbb{G}))$ and $[ev_1]\in KK(C^{\ast}(\mathbb{G}), C^{\ast}(\G))$.
 \end{lem}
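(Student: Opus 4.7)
The plan is to establish the isomorphism in two stages: first show that $(ev_0)_{\ast}$ is an isomorphism, then identify the resulting composition with the Connes-Thom map.

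For the first stage, I would apply the six-term exact sequence in $KK$-theory to the short exact sequence
\[
0\to C_0(0,1]\otimes C^{\ast}(\G)\to C^{\ast}(\mathbb{G})\xrightarrow{ev_0} C_0(\g^{\ast})\to 0.
\]
Since $C_0(0,1]$ is contractible, the tensor product $C_0(0,1]\otimes C^{\ast}(\G)$ is $KK$-contractible, so $KK(A, C_0(0,1]\otimes C^{\ast}(\G))=0$ in both degrees. The long exact sequence then forces $(ev_0)_{\ast}:KK(A,C^{\ast}(\mathbb{G}))\to KK(A,C_0(\g^{\ast}))$ to be an isomorphism for any separable $C^{\ast}$-algebra $A$; this is exactly the argument already carried out in Lemma \ref{lem3}. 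Invertibility of the class $[ev_0]\in KK(C^{\ast}(\mathbb{G}), C_0(\g^{\ast}))$ then follows from a standard Yoneda/naturality argument: one can produce $[ev_0]^{-1}$ by applying the inverse of $(ev_0)_{\ast}$ to the identity class $[\text{id}_{C_0(\g^{\ast})}]$.

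For the second stage, I would use associativity of the Kasparov product to rewrite the composition $(ev_1)_{\ast}\circ (ev_0)_{\ast}^{-1}$ as the product with the composite class $[ev_0]^{-1}\otimes_{C^{\ast}(\mathbb{G})}[ev_1]\in KK(C_0(\g^{\ast}), C^{\ast}(\G))$. This gives the stated representation of the map as a $KK$-product. To identify it with the inverse Connes-Thom map, I would invoke Proposition \ref{prop3} together with the construction cited from \cite{nistor2003index}, Lemma 4.3: the Connes-Thom isomorphism there is built by iterating the standard Connes-Thom isomorphism for $C^{\ast}$-dynamical systems $(A,\R,\alpha)$, and by Proposition 1 in \cite{fack1981connes} each such step is itself realized as a Kasparov product with a distinguished class. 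Composing these classes yields the Connes-Thom class $[\mathcal{E}_0]$ of Proposition \ref{prop3}, and one checks that this class coincides with the composite $[ev_0]^{-1}\otimes_{C^{\ast}(\mathbb{G})}[ev_1]$ by unwinding the construction of $\mathbb{G}$ as an iterated adiabatic deformation adapted to the lower central series of the fibers of $\g$.

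The main obstacle I anticipate is the final identification of the composite class with the Connes-Thom class. Verifying that the adiabatic deformation picture of Connes coincides with the iterative Fack-Skandalis picture for a general bundle of nilpotent Lie groups requires a careful induction along the central series of $\g$, comparing the adiabatic groupoid $\mathbb{G}$ of $\G$ with the mapping cylinder built out of successive semidirect-product extensions. Once the inductive step is verified for one-step central extensions (where the classical Connes-Thom class is transparently given by the Bott element for $\R$), the general case follows by iteration and associativity of the Kasparov product. The remaining assertions (independence of $A$, naturality, and the explicit formula in terms of $[ev_0]^{-1}$ and $[ev_1]$) are then formal consequences.
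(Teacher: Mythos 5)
Your proposal is correct and takes essentially the same route as the paper. The first stage (applying the six-term sequence to the extension by the contractible cone $C_0(0,1]\otimes C^{\ast}(\G)$ to show $(ev_0)_{\ast}$ is an isomorphism) is exactly what the paper invokes by referring back to Lemma \ref{lem3}, and the Yoneda argument producing $[ev_0]^{-1}$ as a two-sided inverse, together with associativity of the Kasparov product, gives the stated $KK$-product formula just as the paper intends. For the final identification with the inverse Connes--Thom map, the paper itself delegates this to \cite{nistor2003index} (Theorem 2.22 and the iterated Fack--Skandalis construction in Lemma 4.3, packaged as the class $[\mathcal{E}_0]$ in Proposition \ref{prop3}), so your acknowledgement that this step is the genuine content is accurate; note only that $\mathbb{G}$ itself is a single adiabatic deformation of $\G$ (not an iterated one), and the induction along the lower central series lives in the comparison with the Fack--Skandalis iterated Connes--Thom classes rather than in the construction of $\mathbb{G}$.
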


For each $x\in M$, The Lie group $Gr(H)_x$ is nilpotent and thus solvable; it is also simply-connected due to its construction. Thus, let $\G=Gr(H)_x$, we have
\begin{cor}\label{cor3} For any $C^{\ast}$-algebra $A$, there is an isomorphism
    $$KK(A,C_0(gr(H)^{\ast}))\cong KK(A,C^{\ast}(Gr(H))).$$
\end{cor}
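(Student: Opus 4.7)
The plan is to reduce the corollary to a direct application of Lemma \ref{lem1} with $\G = Gr(H)$. That lemma already provides, for any bundle of nilpotent Lie groups over $M$ and any $C^{\ast}$-algebra $A$, the isomorphism $KK(A, C_0(\g^{\ast})) \cong KK(A, C^{\ast}\G)$, realized concretely as the composition $(ev_1)_{\ast} \circ (ev_0)_{\ast}^{-1}$ coming from the adiabatic groupoid. The only task is to verify the hypotheses for $Gr(H)$ and to identify the relevant Lie algebra bundle.

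First I would recall from the lemma in Section \ref{sec1.1} that at each point $x \in M$, the osculating group $Gr(H)_x$ is the simply-connected Lie group integrating the graded nilpotent Lie algebra $gr(H)_x = \bigoplus_i H^i_x / H^{i-1}_x$. Nilpotency implies solvability, so each fiber $Gr(H)_x$ is simply-connected and solvable as required. The fibers assemble into the smooth Lie groupoid $Gr(H)$ of Definition \ref{defn9}, whose source and range maps coincide, so $Gr(H)$ is a bundle of Lie groups over $M$ in the required sense.

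Second I would identify the Lie algebra bundle of $Gr(H)$ with $gr(H)$, so that the dual bundle $\g^{\ast}$ appearing in Lemma \ref{lem1} becomes $gr(H)^{\ast}$. Applying the lemma with $\G = Gr(H)$ then yields the desired isomorphism
$$KK(A, C_0(gr(H)^{\ast})) \xrightarrow{\cong} KK(A, C^{\ast}(Gr(H))),$$
implemented by the $KK$-product with $[ev_0]^{-1} \otimes [ev_1]$ associated to the adiabatic groupoid $Gr(H) \times (0,1] \cup gr(H) \times \{0\}$, which fits into the exact sequence
$$0 \to C_0(0,1] \otimes C^{\ast}(Gr(H)) \to C^{\ast}(\text{ad}(Gr(H))) \xrightarrow{ev_0} C_0(gr(H)^{\ast}) \to 0.$$

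There is essentially no obstacle here: the corollary is a direct specialization of Lemma \ref{lem1}, and every hypothesis is immediate from the construction of the osculating groupoid in Section \ref{sec1.1}. The only point worth emphasizing is that by using the constructive form of Connes--Thom via the adiabatic groupoid, the isomorphism is implemented by an explicit $KK$-class, which is what will allow it to be composed with other $KK$-classes in the index formulas of later chapters.
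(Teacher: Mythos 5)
Your proposal is correct and matches the paper's argument: both reduce the corollary to Lemma \ref{lem1} by observing that each fiber $Gr(H)_x$ is nilpotent (hence solvable) and simply-connected, and that the Lie algebra bundle of $Gr(H)$ is $gr(H)$. The extra detail you add about the adiabatic groupoid and the exact sequence is just making explicit what the paper's proof leaves implicit via its reference to the concrete construction preceding Lemma \ref{lem1}.
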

 It is also called the Connes-Thom's isomorphism. It will be used in Chapter \ref{chap3} in Mohsen's H-elliptic index theory.

\chapter{Elliptic operators and transversally elliptic operators}
\label{chap2}
In order to give an appropriate definition for transversally H-elliptic operators from H-elliptic theory, we first observe the differences between the elliptic and transversally elliptic operators in $KK$-theory. We refer the elliptic theory to \cite{kasparov2017elliptic} and the transversal elliptic theory to \cite{kasparov2022k}, so that we can avoid the errors in transversal elliptic theory in \cite{kasparov2017elliptic}, which has been clarified by Kasparov in \cite{kasparov2022k}. We will focus on the index of operators in $KK_0$, the $KK_1$ index for self-adjoint operators on non-graded bundles can be analogously analyzed.

\section{Elliptic operators from $KK$-theoretic viewpoint}
\label{sec2.1}
In this section, we introduce the index theory of elliptic operators discussed in \cite{kasparov2017elliptic}, elucidating the symbol class and index class in $KK$-homology.

\begin{defn}[\cite{kasparov2017elliptic}]
     A pseudo-differential operator $P$ of order $0$ on $M$ is said to be \textit{elliptic} if 
     $$\lVert\sigma(P,x,\xi)\sigma(P^{\ast},x,\xi)-1\rVert\to 0,$$ $$\lVert\sigma(P^{\ast},x,\xi)\sigma(P,x,\xi)-1\rVert\to 0 $$
     when $\xi\to \infty$.
\end{defn}
\begin{remark} 
The definition asserting that $\sigma(P)$ is invertible at infinity aligns well with this definition. If $\sigma(P)$ is invertibility at infinity in $\xi$ , it can be normalized by replacing it with $((\sigma(P)\sigma(P^{\ast}))^{-1/2}\sigma(P)$. The new symbol is unitary at infinity in $\xi$ and homotopic to the initial one.
Moreover, upon replacing $P$ with the self-adjoint operator 
$$\begin{pmatrix}
    0 & P^{\ast}\\
    P & 0
\end{pmatrix},$$ 
any elliptic operator \(P\) can be reformulated into a zero-order elliptic self-adjoint operator on $\Z_2$-graded bundle $E=E_0\oplus E_1$, which we still denote by $P$, the definition of ellipticity can be replaced as 
\begin{align*}
    \lVert\sigma(P,x,\xi)^2-1\rVert\to 0. \tag{\textasteriskcentered} \label{ast}
\end{align*}

\end{remark}

Henceforth, we assume that elliptic operators satisfy the estimate \eqref{ast}. This definition helps to give a direct comprehension of the $KK$-classes related to elliptic operators. 
\begin{defn}
\label{defn10}
We define that:
\begin{itemize}
    \item  The \textit{symbol class} for a $0$-order $G$-invariant elliptic operator is defined by the $KK$-pairing
$$[\sigma(P)]=[( C_0(T^{\ast}M; p^{\ast}(E)),\sigma(P)]\in KK^G(C_0(M), C_0(T^{\ast}M)),$$
because $1-\sigma^2(P)\in C_0(T^{\ast}M; p^{\ast}(E))$ is a compact multiplier on $ C_0(T^{\ast}M;p^{\ast}(E))$;
 \item The \textit{index class} is defined by the $KK$-pairing
$$[P]=[(L^2(E),P)]\in K_G^{0}(C_0(M)).$$
\end{itemize}
   
\end{defn}
By estimate \eqref{ast}, these definitions are well-defined.

The index theorem is expressed as: 
\begin{thm}[\cite{kasparov2022k}]\label{thm5}
    Let $M$ be a complete Riemannian manifold and $G$ be a locally compact Lie group acting properly and isometrically on $M$. Let $P$ be a $G$-invariant bounded elliptic operator of order $0$ on $M$, then
    $$[P]=[\sigma(P)]\otimes_{C_0(T^{\ast}M)}[\mathcal{D}_M]\in K^0_G(C_0(M)),$$
    where $[\mathcal{D}_M]\in K^0_G(C_0(TM))$ denotes the $KK$-class of the Dolbeault operator, the $KK$-product on the right-hand side is considered over $$KK^G(C_0(M),C_0(T^{\ast}M))\times KK^G(C_0(T^{\ast}M),\C)\xrightarrow{{\otimes}_{C_0(T^{\ast}M)}} K^0_G(C_0(M)).$$ 
    Here, the Dolbeault operator represents the canonical Dirac operator $D_M$ on $TM$ with the symbol
    $$\sigma_{D_M}(x,\xi,\chi,\zeta)=(\text{ext}\ \zeta+\text{int}\ \zeta)+i(\text{ext}\ \chi-\text{int}\ \chi)).$$
    The coordinates in the cotangent space $T^{\ast}_{(x,\xi)}(TM)$ is denoted as $(\chi,\zeta)$.
\end{thm}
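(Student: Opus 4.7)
\emph{The plan} is to compute the Kasparov product $[\sigma(P)]\otimes_{C_0(T^{\ast}M)}[\mathcal{D}_M]$ explicitly and exhibit a $G$-equivariant homotopy between the resulting Kasparov module and the module $(L^2(M;E),P)$ representing $[P]$. Since both sides live in $K^0_G(C_0(M))$, the equality is naturally established at the level of Kasparov modules rather than through any intermediate topological invariant.

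\textbf{Constructing the product representative.} Starting from the Kasparov modules $(C_0(T^{\ast}M;p^{\ast}E),\sigma(P))\in\mathbb{E}^G(C_0(M),C_0(T^{\ast}M))$ and $(L^2(TM;\Lambda^{\bullet}T^{\ast}M),\mathcal{D}_M)\in\mathbb{E}^G(C_0(T^{\ast}M),\C)$ (identifying $TM$ with $T^{\ast}M$ via the metric), I would form the internal tensor product $C_0(T^{\ast}M;p^{\ast}E)\hotimes_{C_0(T^{\ast}M)}L^2(TM;\Lambda^{\bullet})\cong L^2(T^{\ast}M;p^{\ast}E\otimes\Lambda^{\bullet})$. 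Following the recipe reviewed in Section \ref{sec1.5}, I would choose a $\mathcal{D}_M$-connection $\mathcal{G}$, apply Kasparov's technical theorem to produce suitable $M_0,N_0\in\mathcal{L}(E)$ with $M_0+N_0=1$, and assemble the product operator $F=M_0^{1/2}(\sigma(P)\hotimes 1)+N_0^{1/2}\mathcal{G}$. This gives a canonical representative of the left-hand side in $\mathbb{E}^G(C_0(M),\C)$.

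\textbf{Identifying with $[P]$.} The natural tool is Connes' tangent groupoid $\mathbb{T}M=TM\times\{0\}\sqcup(M\times M)\times\R^{\times}$, whose $C^{\ast}$-algebra links $C_0(T^{\ast}M)$ at $t=0$ with $\K(L^2(M))$ at $t>0$ through an exact sequence entirely analogous to Lemma \ref{lem3}. I would realize both the explicit product representative and $[P]$ as the two endpoints of a single family of Kasparov modules parametrized by $[0,1]$: at $t=0$ one sees $[\sigma(P)]\otimes[\mathcal{D}_M]$ via the Fourier--Bott identification implemented by the Dolbeault element (which plays the role of the Thom isomorphism inverse on the fibers of $TM$), and at $t=1$ one sees $[P]$ via Weyl quantization of the symbol. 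The $G$-action lifts naturally to $\mathbb{T}M$ through the isometric proper action on $M$, so the family is equivariant throughout.

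\textbf{Main obstacle.} The analytically delicate step is showing that the transported product operator is $G$-equivariantly homotopic to $P$ on $L^2(M;E)$ within the properly cocompact, locally compact setting. Three issues must be handled: (i) $G$-continuity of the assembled product operator when $G$ is merely locally compact; (ii) careful control of the tangent-groupoid deformation inside the pseudo-differential calculus, so that the Dolbeault/Thom transport of $\sigma(P)$ at $t=0$ truly matches $P$ modulo compacts at $t=1$; and (iii) showing that the choices left ambiguous by Kasparov's technical theorem do not affect the homotopy class. The crux is (ii): one needs Weyl quantization to intertwine symbol composition and operator composition up to compact (lower-order) corrections, which is the essential analytic content making the Dolbeault operator behave as the inverse of the symbol map inside $KK^G$-theory.
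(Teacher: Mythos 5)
Your route is genuinely different from the one the paper (following Kasparov \cite{kasparov2022k}) actually takes. The paper does not pass through the tangent groupoid at all: instead it pairs $[P]$ with the Bott--Dirac (dual Dirac) element $[\mathcal{C}]$ to lift both sides onto $TM$ (resp.\ $T^{\ast}M$), represents $[\mathcal{C}]\hotimes[P]$ as a combination of $\mathcal{C}\hotimes 1$ with a connection $\tilde{P}$ lifting $P$, represents $[\sigma(P)]\hotimes[\mathcal{D}_M]$ similarly, and then joins them by an explicit rotation homotopy in the $(\xi,\chi)$ fiber coordinates of $T(TM)$. The conclusion is then drawn by ``dividing'' by $[\mathcal{C}]$, which is the identity in the relevant $KK$-group. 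Your tangent-groupoid deformation is the Connes-style alternative, and the paper itself uses exactly that machinery later for the H-elliptic case in Sections \ref{sec1.8} and \ref{sec3.1} --- but there the output is the numerical (or $R(G)$-valued) index, not the class in $K^0_G(C_0(M))$. That is precisely the gap you would have to close in your plan: the standard exact sequence $0\to C_0(0,1]\otimes\mathcal{K}\to C^{\ast}(\mathbb{T}M)\to C_0(T^{\ast}M)\to 0$ computes the index map $K_0(C_0(T^{\ast}M))\to\mathbb{Z}$, whereas Theorem \ref{thm5} asserts an identity of $C_0(M)$-Fredholm modules, so your family over $[0,1]$ must carry a coherent $C_0(M)$ (indeed $C^{\ast}(G,C_0(M))$) module structure throughout the deformation, which is more than the groupoid $C^{\ast}$-algebra by itself provides. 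Kasparov's rotation homotopy is engineered specifically so that the $C_0(M)$ action and the $G$-equivariance are manifest at every stage, which is why the paper prefers it for the $KK$-valued formulation and saves the groupoid deformation for the scalar index statements. If you want to pursue your route, the missing ingredient is essentially the $\RKK$-picture (Definition \ref{defi3}): one should upgrade the tangent-groupoid deformation to a homotopy in $\RKK^G(M;C_0(M),\cdot)$, after which evaluation recovers the desired $K^0_G(C_0(M))$ identity; absent that upgrade, your ``main obstacle (ii)'' is not merely an analytic delicacy but a genuine difference in the strength of what the deformation produces.
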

\begin{proof}
    The proof is given in \cite{kasparov2022k}, and we briefly conclude it here.
    
    It is important to note that the operator $P$ is differential but the symbol $\sigma(P)$ is a multiplication operator. So we shall construct a rotation homotopy relating the operator and symbol.
    
    More explicitly, the Kasparov product of the Bott--Dirac element $[\mathcal{C}]$ and $[P]$ yields an operator on $TM$ that combines $\mathcal{C}\hat{\otimes} 1$ and the lifting $\tilde{P}$ of the operator $P$ to $TM$, where $\tilde{P}$ denotes a $KK$-theoretic connection of $P$. The resulting operator acts on $TM$. 
    The Kasparov product of $[\sigma_F]$ and $[\mathcal{D}_M]$ can be represented as the operator combining the lifting $\widetilde{\mathcal{D}_M}$ and $1\hat{\otimes} \sigma_F$, functioning on $T^{\ast}M$.
    
   The symbols of these two operators are homotopic through a rotation homotopy of $(\xi,\chi)$-coordinate. The Bott--Dirac element $[\mathcal{C}]$ represent the trivial element in $KK$-homology, so we see that 
    $$[P]=[\sigma(P)]\otimes_{C_0(T^{\ast}M)}[\mathcal{D}_M].$$
\end{proof}

\section{Transversally elliptic operators from $KK$-theoretic viewpoint}
\label{sec2.2}
For transversally elliptic operators, it turns out that the direct replacement of the $C_0(TM)$ space by $C_0(T_G M)$ space is not enough. As discussed in \cite{berline1996chern}, additional conditions are necessary to consider the ``good’’ symbols due to the inadequate nature of the algebra $C_0(T_G M)$ for formulating an index theorem.

Kasparov's index theorem for transversally elliptic operators in $KK$-theory is introduced in \cite{kasparov2022k}, including the concept of the tangent-Clifford symbol, which underpins our definition of transversal H-ellipticity.

Let $M$ be a complete Riemannian manifold equipped with a smooth, proper, and isometric action of a locally compact Lie group $G$. Let $x\in M$ and $f_x: G \to M$ illustrate the map defined by $g \mapsto g(x)$. The Lie algebra of the group $G$ is denoted as $\g$, and accordingly, $f'_x: \g\to T_x M$, which accounts for the derivative of $f_x$ at $e$. The $G$-action on $M$ is proper, and thus, there exists a $G$-invariant Riemannian metric on $\g_M=M\times \g$. 
   There is a $G$-invariant quadratic form $q$ on covectors $\xi \in  T_x^{\ast} M$ by $$q_x(\xi)=\lVert{f'_x}^{\ast}(\xi)\rVert^2, $$ and we have the following $G$-invariant map 
$$\varphi_x:=(f'_x {f'_x}^{\ast})^{1/2}:T_x M\to T_x M.$$ 

\begin{remark}
    Note that $\xi\in  T_x M$ is orthogonal to the $G$-orbit passing through $x$ if and only if $q_x(\xi)=0$, or if and only if $\varphi_x(\xi)=0$, denote the space of such $\xi$ by $T_G M$, it is called the \textit{transversal part of $TM$} with $G$-action. 
\end{remark}

Kasparov's definition of transversally elliptic operators is stated as follows:

\begin{defn}\label{defn7}
    A $G$-invariant pseudo-differential operator $P$ of order $0$ on a vector bundle $E$ over $M$ is called \textit{transversally elliptic} if its symbol $\sigma(P)$ satisfies the following condition: for any $\epsilon> 0$, there exists $c > 0$ such that for any $x \in M$, $\xi\in T^{\ast}_x M$,
    $$\lVert\sigma(P)^2(x,\xi)-1\rVert\leq c (1+q_x(\xi))(1+\lVert\xi\rVert^2)^{-1}+\epsilon.$$
\end{defn}
\begin{remark}
\begin{itemize}
    \item For a $G$-invariant properly supported normalized pseudo-differential operator $P$ of order $0$, the original definition of the transversally elliptic is that $\lVert\sigma(P,x,\xi)^2-1\rVert\to 0$ uniformly in $x \in M$ as $(x,\xi)\to \infty$ in $T_G M$. Note that we have replaced the operator $P$ by the normalization
    $$D=\begin{pmatrix}
        0 & P^{\ast}\\
        P & 0
    \end{pmatrix}
    ,$$ such that it acts on the $\Z_2$-graded bundle $E=E_0\oplus E_1$ of $0$-order and is self-adjoint. By \cite{kasparov2017elliptic}, Lemma 6.2, these two definitions coincide for $0$-order pseudo-differential operators.
    
    \item In Definition 7.1 of \cite{kasparov2022k}, there is a space of negative order symbol $\mathcal{S}_{tr}(M)$, defined as the sub-algebra of $C_b(TM)$ comprising functions $b(x,\xi)$ with the pseudo-differential estimate, and such that for any $\epsilon>0$,
$$|b(x,\xi)|\leq c (1+q_x(\xi))(1+\lVert\xi\rVert^2)^{-1}+{\epsilon}$$
for some certain $c>0$. The symbol $\sigma(P)$ of a transversally elliptic operator will induce a class $[\sigma(P)]\in KK^G(C_0(M), \mathcal{S}_{tr}(M))$.
\end{itemize}
\end{remark}

 We need to consider the tangent-Clifford symbol of $P$, which is vital for the transversally H-elliptic analysis.

    Let $V$ be a real vector bundle over $M$ equipped with a $G$-invariant Riemannian metric. Take $\text{Cliff}(V, Q)$ the Clifford algebra bundle associated with the quadratic form $Q(v)=\lVert v\rVert^2$ on $V$.
    The norm on fibers of the Clifford bundle $\text{Cliff}(V, Q)$ is defined by its embedding into the bundle of endomorphisms $\mathcal{L}(\Lambda^{\ast}(V))$ of the exterior algebra of $V$.

    \begin{defn}\label{def6}
    \begin{itemize}
        \item  Define $Cl_{V}(M)$ to be the complexification of the algebra of continuous sections of $\text{Cliff}(V, Q)$ over $M$ vanishing at infinity of $M$, forming a $C^{\ast}$-algebra with the sup-norm on sections. We will use a particular case of this algebra when $V=\tau=T^{\ast}M$. This algebra will be denoted $Cl_{\tau}(M)$.
        \item  For the manifold $M$ with group $G$-action, define the bundle $$\Gamma=\{f'(s)|\ s\in C_0(M;M\times \g)\},$$
where $f'$ denotes the collection of maps $f'_x:\g\to T_x M$ defined at beginning of this section. Then $Cl_{\Gamma}(M)$ is the algebra of continuous sections of the sub-algebra of $\text{Cliff}(TM)$ which is generated by vectors tangent to the orbit direction. 
    \end{itemize}

\end{defn}

Similar to Definition \ref{defn10}, we can define the index class and symbol class for transversally elliptic operators.
\begin{defn}\label{defn6}
Let $P$ be a $0$-order $G$-invariant transversally elliptic operator on $E$ over $M$.
\begin{enumerate}
    \item[(a)] The \textit{index class} $[P]$ is defined as $[(L^2(E),P)]\in K^{\ast}(C^{\ast}(G, C_0(M)))$.
    \item[(b)] The \textit{tangent-Clifford symbol class} 
    $$[\sigma^{tcl}(P)]:=[( C_0(T^{\ast}M; p^{\ast}E)\hat{\otimes}_{C_0(M)} Cl_{\Gamma}(M),\sigma^{tcl}(P))]\in KK^G(C_0(M), Cl_{\Gamma}(TM)),$$
    is defined by the operator
    $$\sigma^{tcl}(P)(x,\xi):=\sigma(P)(x,\xi)\hat{\otimes} 1+(1-\sigma(P)^2)^{1/2}\hat{\otimes}f_{\Gamma}(x,\xi)\in \mathcal{L}((C_0(T^{\ast}M; p^{\ast}E)\hat{\otimes}_{C_0(M)} Cl_{\Gamma}(M)),$$
    where 
    \begin{itemize}
        \item $Cl_{\Gamma}(TM):=Cl_{\Gamma}(M)\otimes_{C_0(M)} C_0(TM)$, 
        \item $f_{\Gamma}(x,\xi)=\varphi_x(\xi)/(1+q_x(\xi))^{1/2}$ denotes a covector field on $T^{\ast} M$, which represents the Clifford multiplication operator.
    \end{itemize}
      The tangent-Clifford symbol class is denoted by $[\sigma^{tcl}(P)]\in KK^G(C_0(M), Cl_{\Gamma}(TM))$.

\end{enumerate}
\end{defn}

\begin{remark}
 
    By the $KK^G$-equivalence in some sense between the algebras $Cl_{\Gamma}(TM)$ and $\mathcal{S}_{tr}(M)$ (see Theorem 7.10 in \cite{kasparov2022k}), there is a one-to-one correspondence between the symbol class $[\sigma(P)]\in KK^G(C_0(M),\mathcal{S}_{tr}(M))$ and the tangent-Clifford symbol class, $[\sigma^{tcl}(P)]\in KK^G(C_0(M),Cl_{\Gamma}(TM))$. We will always analyze the transversally elliptic symbols by using the tangent-Clifford symbol class $[\sigma^{tcl}(P)]$ in the following contents.
\end{remark}

    The definition of $\mathcal{S}_{tr}(M)$ space depends on the inequality estimate, rendering it unsuitable for the transversally H-elliptic case as $C^{\ast}(Gr(H))$ is noncommutative. On the other hand, the tangent-Clifford symbol class only depends on the orbit direction in $T_x M$ for each $x\in M$, which is considerable in the transversally H-elliptic case, and we will use it to give the definition in Section \ref{sec4.5}.

\begin{defn}\label{def7}
        The \textit{Clifford-Dolbeault element} $[D^{cl}_{M,\Gamma}]\in K^0(C^{\ast}(G,Cl_{\Gamma}(TM)))$ is defined to be the 
    $KK$-element
    $[(\mathcal{H},\pi, F)]$, where
    \begin{itemize}
        \item $\mathcal{H}=L^2(\Lambda_{\C}^{\ast}(TM))$;
        \item $F=\mathcal{D}_M$ is the same operator as the Dolbeault element $[\mathcal{D}_M]\in K^{0}_G(C_0(TM))$, which has been defined in Theorem \ref{thm5};
        \item The representation $\pi$ is extended from $C_0(TM)$ to $C^{\ast}(G,Cl_{\Gamma}(TM))$, by $\zeta\to (-\text{ext}(\zeta)+\text{int}(\zeta))$ for $\zeta$ a section of $\Gamma$, and the natural group action $G$ on $L^2(\Lambda_{\C}^{\ast}(TM))$.
    \end{itemize}
\end{defn}

From these definitions, the transversally elliptic index theorem is stated as follows:
\begin{thm}[\cite{kasparov2022k}]
    Let $M$ be a complete Riemannian manifold and $G$ be a locally Lie group which acts on $M$ properly and isometrically, Let $P$ represent a $G$-invariant, $L^2$-bounded transversally elliptic operator on $M$ of order $0$, then
    $$[P]=j^G([\sigma^{tcl}(P)])\otimes_{C^{\ast}(G,Cl_{\Gamma}(TM))} [\mathcal{D}_{M,\Gamma}^{cl}],$$
where $j^G$ denote the natural map from $G$-equivariant $KK$-homology to $KK$-homology of crossed product with $G$, refer to \cite{blackadar1998k}.
    The $KK$-product on the right-hand side is considered  over 
    $$KK(C^{\ast}(G, C_0(M)),C^{\ast}(G,Cl_{\Gamma}(TM))\times K^0(C^{\ast}(G,Cl_{\Gamma}(TM))\xrightarrow{\otimes_{C^{\ast}(G,Cl_{\Gamma}(TM)} } K^0(C^{\ast}(G, C_0(M))).$$
\end{thm}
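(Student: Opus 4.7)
The plan is to adapt the rotation-homotopy argument used for the elliptic case (Theorem \ref{thm5}) to the transversally elliptic setting, following Kasparov's strategy in \cite{kasparov2022k}. The key new feature is that $\sigma(P)$ is no longer invertible at infinity on all of $T^{\ast}M$, only in directions transverse to $G$-orbits in the controlled sense of Definition \ref{defn7}. The Clifford factor $f_{\Gamma}(x,\xi)=\varphi_x(\xi)/(1+q_x(\xi))^{1/2}$ in $\sigma^{tcl}(P)$ contributes a ``mass'' exactly in the orbit directions where $\sigma(P)$ degenerates, so that $(\sigma^{tcl}(P))^2-1$ is compact in the appropriate module sense; the role of the Dolbeault representation $\pi$ of Definition \ref{def7}, which sends sections of $\Gamma$ to $-\mathrm{ext}(\zeta)+\mathrm{int}(\zeta)$, is to realize this Clifford action as honest operators on $L^2(\Lambda^{\ast}_{\C}(TM))$.

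First I would unpack the two sides. The right-hand side is obtained by a standard Kasparov product: choose representatives
$(C^{\ast}(G,C_0(T^{\ast}M;p^{\ast}E)\hotimes_{C_0(M)}Cl_{\Gamma}(M)),\sigma^{tcl}(P))$ for $j^G([\sigma^{tcl}(P)])$ and $(L^2(\Lambda_{\C}^{\ast}(TM)),\pi,\mathcal{D}_M)$ for $[\mathcal{D}^{cl}_{M,\Gamma}]$; a $\mathcal{D}_M$-connection is furnished by the lift $\widetilde{\mathcal{D}_M}$ to $T^{\ast}M$, and Kasparov's technical theorem produces a product operator of the form
$$F_1=M^{1/2}(\sigma^{tcl}(P)\hotimes 1)+N^{1/2}\,\widetilde{\mathcal{D}_M}.$$
The left-hand side $[P]$ is represented by $(L^2(E),P)$; after Kasparov-multiplying on the left by the trivial Bott--Dirac cycle $[\mathcal{C}]\in KK^G(\C,C_0(T^{\ast}M))$, which is a unit, we obtain an equivalent representative
$$F_0={M'}^{1/2}(\mathcal{C}\hotimes 1)+{N'}^{1/2}\,\widetilde{P}$$
on the lifted $L^2$-space over $T^{\ast}M$, where $\widetilde{P}$ is a $KK$-theoretic lift of $P$.

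Next I would set up the rotation homotopy in the cotangent coordinates $(x,\xi,\chi,\zeta)$ of $T^{\ast}(TM)$. The symbols of $F_0$ and $F_1$ are built from the same ingredients but with the roles of the Bott direction $\xi$ and the dual-to-base direction $\chi$ interchanged; the family
\begin{equation*}
R_{\theta}(\xi,\chi)=(\cos\theta\,\xi+\sin\theta\,\chi,\,-\sin\theta\,\xi+\cos\theta\,\chi),\qquad \theta\in[0,\pi/2],
\end{equation*}
conjugates one symbol to the other. Applying $R_{\theta}$ to the Clifford multiplications coming from $\sigma_{\mathcal{C}}$, $\sigma_{\mathcal{D}_M}$ and from the $Cl_{\Gamma}$-factor produces a continuous path $F_{\theta}$ of candidate Kasparov cycles interpolating $F_0$ and $F_1$. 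Passing through the descent map $j^G$ keeps everything $G$-equivariant, since $R_{\theta}$ commutes with the $G$-action (it is an isometry of the fibers and $\Gamma$ is $G$-invariant by construction).

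The main obstacle is verifying that each $F_{\theta}$ is a genuine Kasparov cycle, i.e.\ that $(F_{\theta}^2-1)\phi(a)$ is compact on the underlying $C^{\ast}(G,C_0(M))$-module and that commutators with $\phi(a)$ vanish modulo compacts. Away from $T_G^{\ast}M$ the ellipticity estimate
$\lVert\sigma(P)^2-1\rVert\leq c(1+q_x(\xi))(1+\lVert\xi\rVert^2)^{-1}+\epsilon$
directly controls the symbol part; along the orbit directions, where $\sigma(P)^2-1$ is only small modulo $(1+q_x(\xi))(1+\lVert\xi\rVert^2)^{-1}$, the key identity
$f_{\Gamma}(x,\xi)^2 = q_x(\xi)/(1+q_x(\xi))$
shows that $(1-\sigma(P)^2)^{1/2}\hotimes f_{\Gamma}$ squares to exactly the defect, so $(\sigma^{tcl}(P))^2-1$ is compact and this compactness is preserved (up to conjugation by $R_{\theta}$) along the homotopy. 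Once this estimate is made uniform in $\theta$ and $\epsilon$ is sent to $0$, the homotopy yields
$$[P]=[\mathcal{C}]\otimes[P]=j^G([\sigma^{tcl}(P)])\otimes_{C^{\ast}(G,Cl_{\Gamma}(TM))}[\mathcal{D}^{cl}_{M,\Gamma}],$$
using triviality of $[\mathcal{C}]$ and associativity of the Kasparov product.
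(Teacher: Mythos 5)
The paper does not supply its own proof of this theorem; it is taken from \cite{kasparov2022k} (Theorem 10.3 there). However the paper does reproduce the relevant strategy in detail when it proves the analogous transversally H-elliptic statement (Theorem \ref{thm3} in Chapter \ref{chap5}), so there is something definite to compare against, and it shows your proposal diverges from the approach actually used for the transversal case.

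Your plan transplants the Bott--Dirac $+$ rotation-homotopy sketch of the \emph{elliptic} Theorem \ref{thm5} verbatim. The paper's argument for the transversal case does not do this: it works instead with the local dual Dirac element $[\Theta_M]\in KK^G(C_0(M),C_0(M)\hotimes Cl_\tau(M))$ and the Dirac element $[d_M]\in K^0_G(Cl_\tau(M))$, using $[\Theta_M]\otimes_{Cl_\tau(M)}[d_M]=1_M$ (Theorem 4.8 of \cite{kasparov1988equivariant}); it localizes both $KK$-products to a neighbourhood $U$ of the diagonal in $M\times M$ via exponential maps; it then uses a \emph{translation} homotopy $t\mapsto\int_M P(ty+(1-t)x)\,d\phi$ to pass from the frozen-symbol operators to $P$, a geodesic-segment homotopy $\phi_t\colon a\mapsto p_t^{\ast}(a)$ to transfer the $C_0(M)$-action between the two copies of $M$, and — crucially — a simultaneous homotopy $t\mapsto(1-t)\mathfrak{F}_G$ that shrinks the orbital Dirac part to zero. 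It is during that last homotopy, combined with $p_t^{\ast}(a)$ becoming a $C_0$-function in the right variable for $t>0$, that the transversal ellipticity hypothesis (compactness of $(1-P^2)(1+\Delta_G)^{-1}$) is actually deployed.

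This points to the genuine gap in your proposal. At the end of your homotopy you must recover the cycle $(L^2(E),P)$, which carries \emph{no} $Cl_\Gamma(M)$ factor and no orbital Clifford multiplication. A coordinate rotation $R_\theta$ in $(\xi,\chi)$ interlocks the Bott and Dirac Clifford generators but it cannot make the $\Gamma$-Clifford degree of freedom disappear: $f_\Gamma(x,\xi)$ lives only in the orbit directions inside $T_xM$ and does not pair off against anything in $\sigma_{\mathcal{D}_M}$ or $\sigma_{\mathcal{C}}$ under $R_\theta$, so $R_\theta$ applied to ``the Clifford multiplications coming from the $Cl_\Gamma$-factor'' is not even well-posed as written. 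What you need, and what is missing, is a separate homotopy that turns off the orbital Clifford term, together with a compactness estimate (supplied by transversal ellipticity) that keeps the intermediate operators in the Fredholm class while that term is being switched off; your sentence ``Away from $T_G^{\ast}M$ the ellipticity estimate directly controls the symbol part'' does not tell you what controls things \emph{in} the orbit directions as $f_\Gamma$ is altered. Without this, the homotopy $F_\theta$ need not consist of Kasparov cycles. Relatedly, calling $[\mathcal{C}]$ ``a unit'' is imprecise: it is not a unit in any $KK$-group, and the argument actually needs a dual-Dirac/Dirac identity of the form $[\Theta_M]\otimes_{Cl_\tau(M)}[d_M]=1$; this should be stated and used explicitly rather than hidden in ``triviality of $[\mathcal{C}]$''.
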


\chapter{H-elliptic index theory}
\label{chap3}

In Chapter \ref{chap2}, we have seen the difference between elliptic and transversally elliptic operators. Before defining the transversally H-elliptic operators, we introduce the H-elliptic index theory and write the index theorem from the $KK$-theoretic viewpoint.

\section{The groupoid approach for H-elliptic operators}
\label{sec3.1}

In \cite{mohsen2022index}, Mohsen constructed a diagram of groupoids over $[0,1]^2$, such that the groupoid structure on each side independently forms a Lie groupoid structure. A $C^{\ast}$-algebra is defined on this groupoid, analyzed, and integrated to deduce the index theorem for H-elliptic differential operators. 

Let $M$ be a smooth closed filtered manifold.
We consider the global groupoid structure which is constructed over $[0,1]^2$ with units in $M\times [0,1]^2$ as follows:
\begin{itemize}
    \item the pair groupoid $M \times M$ if $(t, s) \in (0, 1]^2$;
    \item the tangent bundle $TM$ if $(t,s)\in (0,1]\times \{0\}$;
    \item the bundle of osculating groups $Gr(H):=\bigsqcup_{x\in M} Gr(H)_x$ if $(t,s)\in \{0\}\times(0,1] $;
    \item the bundle of abelian groups $gr(H):=\bigsqcup_{x\in M} gr(H)_x$ if $(t,s)=(0,0)$.
\end{itemize}
For some fixed $t_0$, the tangent bundle $TM$ over $(t_0,0)$ and pair groupoid $M\times M$ over $(t_0,s)$ for $s\in (0,1]$ can be attached by the convergence relation
$$x_s,y_s\in M\times M,\ V\in T_x M,\ (x_s,y_s,s)\to (V,0)\ \ \text{if}\ \   s^{-1} (y_s-x_s)\to V$$
to form the tangent groupoid $$\mathbb{T}M=TM\times \{0\}\cup M\times M\times (0,1].$$
In the same way, the groupoids constructed above can be attached by the diagram
 \[
 \begin{tikzcd}[center picture]
&gr(H)\arrow[d,no head,dashed,"s^{-1}"] \arrow[r,no head, dashed, "\delta_t^{-1}"] & TM \arrow[d,no head,dashed,"s^{-1}"]\\
     &Gr(H)\arrow[r,no head, dashed, "\delta_t^{-1}"] & M\times M,
 \end{tikzcd}
\] 
where $s^{-1}$ and $\delta_t^{-1}$ indicates the the convergences relations.

Each side of the diagram independently constitutes a groupoid structure. At $t=1$, the attached groupoid is the tangent groupoid of $T M$, and at $t=0$, it is the adiabatic groupoid constructed in Section \ref{sec1.8}. At $s=0$, it is perceived as the $C^{\infty}(M \times \R)$-submodule of $ C^{\infty}(M\times \R;TM \times \R)$ consisting of sections:
$$\big\{\mathbb{X}\in  C^{\infty}(M\times \R; TM\times \R)\big|\ \partial_t^k \mathbb{X}|_{t=0} \in C^{\infty}(M;H^k) \text{ for all } t\geq 0\big\}.$$
At $s=1$, the attached groupoid is the Heisenberg tangent groupoid, or is called the holonomy groupoid of the adiabatic foliation $\mathfrak{a}H$ associated with $H$. Further information on holonomy groupoid and adiabatic foliation is available in \cite{androulidakis2022pseudodifferential}.

We apply the restriction of the global $C^{\ast}$-algebra of the constructed groupoid over $[0,1]^2$ to each side of the square. Then, as discussed in Section \ref{sec1.8}, for a family of $C^{\ast}$-algebras over $[0,1]$, with $B$ fibered over $(0,1]$ and $A$ over $0$, we obtain an exact sequence that induces a map in $K$-theory or $KK$-theory that
$$(ev_1)_{\ast}\circ (ev_0)_{\ast}^{-1}:K_{\ast}(A)\to  K_{\ast}(B).$$
Using this construction for the $C^{\ast}$-algebra of the groupoids over each side, by Lemma \ref{lemma5}, we can obtain a commutative diagram:

 \[
 \begin{tikzcd}[center picture]
&K^0(gr(H)^{\ast})\arrow[d,"\mu_{t=0}"] \arrow[r, "\mu_{s=0}"] & K^0(T^{\ast}M) \arrow[d,"\mu_{t=1}"]\\
     &K_0(C^{\ast}(Gr(H)))\arrow[r, "\mu_{s=1}"] & \Z.
 \end{tikzcd}
 \]
 
\begin{remark}
    Note that the $\mu_{s=0}$ is induced by the attaching between $gr(H)^{\ast}$ and $T^{\ast}M$ facilitated by $\delta_t^{-1}$. Subsequently, we explore the map between the $KK$-groups of $C^{\ast}$-algebras is induced by the isomorphism from the bundle $TM$ to $gr(H)$ via orthogonal decomposition---such an isomorphism is called a splitting isomorphism (\cite{van2017tangent}). The induced map in $K$-theory remains invariant regardless of the selected isomorphism, leading to the consistency of these two induced maps through direct computation.
\end{remark}

\begin{lem}
    The $\mu_{t=0}$ map is the Connes-Thom's isomorphism in Corollary \ref{cor3}.
    The $\mu_{t=1}$ map is the ordinary analytical index map given by Connes in \cite{connes1994noncommutative}. The map $\mu_{s=1}$ is the analytical index map $\text{Ind}_{H^{\bullet}}$ in the Heisenberg calculus, which is defined in Section \ref{sec1.8}. 
\end{lem}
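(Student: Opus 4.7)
The plan is to treat each of the three maps in turn by identifying the restriction of the global groupoid over $[0,1]^2$ to the corresponding side of the square with a groupoid that has already appeared earlier in this chapter, and then invoking the definitions and identifications established there. In each case the induced $K$-theory map $(ev_1)_{\ast}\circ(ev_0)_{\ast}^{-1}$ built from the exact sequence of Lemma \ref{lem3} is, by design, the evaluation map of the relevant deformation groupoid.

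For $\mu_{t=0}$: the restriction of the global groupoid to $\{0\}\times[0,1]$ consists of $gr(H)$ at $s=0$ attached via the $s^{-1}$ convergence relation to the family $Gr(H)\times(0,1]$. Via the exponential map, this is precisely the adiabatic groupoid $\mathbb{G}$ of the bundle of nilpotent Lie groups $Gr(H)$ introduced just before Lemma \ref{lem1}. The associated short exact sequence
$$0\to C_0(0,1]\otimes C^{\ast}(Gr(H))\to C^{\ast}(\mathbb{G})\xrightarrow{ev_0} C_0(gr(H)^{\ast})\to 0$$
has contractible kernel, so $(ev_0)_{\ast}$ is a $K$-theory isomorphism, and Lemma \ref{lem1} identifies $(ev_1)_{\ast}\circ (ev_0)_{\ast}^{-1}$ with the Connes-Thom isomorphism of Corollary \ref{cor3}, proving the first claim.

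For $\mu_{t=1}$: the restriction to $\{1\}\times[0,1]$ is $TM\times\{0\}$ glued via $s^{-1}$ to the family $M\times M\times(0,1]$ of pair groupoids, which is exactly Connes' tangent groupoid $\mathbb{T}M$ recalled in Section \ref{sec1.6}. Since $C^{\ast}(M\times M)\cong\mathcal{K}$ with $K_0(\mathcal{K})=\Z$, the map $(ev_1)_{\ast}\circ (ev_0)_{\ast}^{-1}:K^0(T^{\ast}M)\to \Z$ is the map that Connes takes as his $K$-theoretic analytical index in \cite{connes1994noncommutative}, and which is shown there to coincide with the classical Atiyah-Singer analytical index. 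For $\mu_{s=1}$: the restriction to $[0,1]\times\{1\}$ is $Gr(H)\times\{0\}$ glued via the $\delta_t^{-1}$ convergence to $M\times M\times(0,1]$, which is precisely the Heisenberg tangent groupoid $\mathbb{G}r(H)$ defined in Section \ref{sec1.6}. The corresponding evaluation exact sequence, together with $K_0(\mathcal{K})=\Z$, recovers by Proposition \ref{prop1} the Heisenberg analytical index map $\text{Ind}_{H^{\bullet}}$.

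The only content beyond bookkeeping is to verify that the smooth structure on each side of the global groupoid genuinely agrees with the smooth structure on $\mathbb{G}$, $\mathbb{T}M$, and $\mathbb{G}r(H)$ as they are defined elsewhere; concretely, one must check that the convergence relations $s^{-1}$ and $\delta_t^{-1}$ hard-wired into the global groupoid match the ones used in the adiabatic, tangent, and Heisenberg tangent constructions. I expect this compatibility check to be the main obstacle; once it is in place, each of the three identifications follows immediately from the already-stated results.
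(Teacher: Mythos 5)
Your structural approach mirrors the paper's: identify each side of the $[0,1]^2$ square with a deformation groupoid already present in the text (adiabatic groupoid, Connes' tangent groupoid, Heisenberg tangent groupoid), and then cite an earlier result for the identification of the induced $K$-theory map. For $\mu_{t=0}$ and $\mu_{t=1}$ your identifications and citations (Lemma \ref{lem1} and Connes' Lemma 2.5.6) match what the paper uses.

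For $\mu_{s=1}$, however, the reference is off. Proposition \ref{prop1} only \emph{defines} the map $\text{Ind}_{H^{\bullet}}: KK(C(M),C^{\ast}(Gr(H)))\to K^0(C(M))\to \Z$ and records that it is well-defined (depends only on the symbol class and not on the Sobolev parameter $s$). It says nothing about the $K$-theoretic map $(ev_1)_{\ast}\circ (ev_0)_{\ast}^{-1}$ coming from the Heisenberg tangent groupoid $\mathbb{G}r(H)$. The statement that these two maps coincide is precisely the nontrivial content of the lemma for this side of the square, and it requires the argument given in Section 3 of van Erp's paper, which the paper cites. Your plan correctly identifies the groupoid in question as $\mathbb{G}r(H)$, but "recovers by Proposition \ref{prop1}" glosses over the actual step: one must show that evaluating a cosymbol class at $t=1$ in the Heisenberg tangent groupoid computes the Fredholm index of a corresponding H-pseudo-differential operator. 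Fix the citation (van Erp, not Proposition \ref{prop1}) and the argument is complete.
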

\begin{proof}
    By the concrete construction for Connes-Thom's class before Lemma \ref{lem1}, the $\mu_{t=0}$ map is the Connes-Thom's isomorphism. The $\mu_{t=1}$ is same as the Atiyah-Singer analytical index map $\text{Ind}_a$ defined in Lemma 2.5.6 in \cite{connes1994noncommutative}. The last part is proved in Section 3 in \cite{van2010atiyah1}.
\end{proof}

The commutativity of the diagram and Lemma 4.1 in \cite{mohsen2022index} implies that
\begin{thm}[\cite{mohsen2022index}]\label{thm10}
The index map $\text{Ind}_{H^{\bullet}}:K_0(C^{\ast}(Gr(H))\to \Z$ can be evaluated as 
$$\text{Ind}_{H^{\bullet}}=\text{Ind}_{a}\circ\mu_{s=0}\circ \mu_{t=0}^{-1},$$
where $\mu_{t=0}^{-1}:K_0(C^{\ast}(Gr(H)))\to K^0(gr(H)^{\ast})$ representing Connes-Thom's isomorphism. 
\end{thm}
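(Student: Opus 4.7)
The plan is to read off the identity directly from the commutative square of $K$-groups built in Lemma \ref{lemma5}, once the three outer maps have been identified. Since that commutativity has already been established from the global $C^{\ast}$-algebra of the two-parameter groupoid over $[0,1]^2$, what remains is a short diagram chase together with an appeal to the preceding lemma that names each $\mu$.

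Concretely, I would first invoke the commutative diagram
\[
\begin{tikzcd}
K^0(gr(H)^{\ast}) \arrow[d,"\mu_{t=0}"'] \arrow[r,"\mu_{s=0}"] & K^0(T^{\ast}M) \arrow[d,"\mu_{t=1}"]\\
K_0(C^{\ast}(Gr(H))) \arrow[r,"\mu_{s=1}"'] & \Z,
\end{tikzcd}
\]
so that $\mu_{s=1}\circ \mu_{t=0}=\mu_{t=1}\circ \mu_{s=0}$. Next, I would apply the preceding lemma to identify each side of the square: the vertical map $\mu_{t=0}$ coincides with the Connes-Thom isomorphism of Corollary \ref{cor3} and hence is invertible; the vertical map $\mu_{t=1}$ is the Atiyah-Singer analytical index map $\text{Ind}_a$ given by Connes' tangent groupoid construction; and the bottom horizontal map $\mu_{s=1}$ is exactly the Heisenberg analytical index map $\text{Ind}_{H^{\bullet}}$ defined in Proposition \ref{prop1}. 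Composing the commutativity relation on the right with $\mu_{t=0}^{-1}$ then yields
\[
\text{Ind}_{H^{\bullet}}=\mu_{s=1}=\mu_{t=1}\circ \mu_{s=0}\circ \mu_{t=0}^{-1}=\text{Ind}_a\circ\mu_{s=0}\circ\mu_{t=0}^{-1},
\]
which is the claimed formula.

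The one nontrivial point in this argument, and the step I expect to need the most care, is the commutativity of the square itself. It relies on the fact that the $C^{\ast}$-algebra associated with the two-parameter groupoid over $[0,1]^2$ restricts consistently along all four edges, so that each of the four edge algebras really does arise as a slice of a single global algebra. Equivalently, one needs to verify that the attaching maps indicated by $\delta_t^{-1}$ and $s^{-1}$ in the diagram between $gr(H)$, $TM$, $Gr(H)$, and $M\times M$ glue together to form a smooth Lie groupoid in such a way that the two compositions $\mu_{s=1}\circ\mu_{t=0}$ and $\mu_{t=1}\circ\mu_{s=0}$ both factor through the $K$-theory of the global algebra. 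Assuming the construction in \cite{mohsen2022index} and Lemma \ref{lemma5}, the diagram chase above then closes the argument without further analytic input; the only remaining bookkeeping is to confirm that the isomorphism $TM\cong gr(H)$ induced by a splitting does not affect the induced map in $K$-theory, as remarked after the diagram.
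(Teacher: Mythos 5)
Your proposal is correct and matches the paper's own approach: the paper likewise obtains the formula as an immediate consequence of the commutativity of the $K$-theory square from Lemma \ref{lemma5} together with the preceding lemma identifying $\mu_{t=0}$, $\mu_{t=1}$, and $\mu_{s=1}$ as the Connes--Thom isomorphism, $\text{Ind}_a$, and $\text{Ind}_{H^{\bullet}}$ respectively. You have simply spelled out the diagram chase ($\mu_{s=1} = \mu_{t=1}\circ\mu_{s=0}\circ\mu_{t=0}^{-1}$ by composing the commutativity relation with $\mu_{t=0}^{-1}$) that the paper leaves implicit in the one-line remark preceding the theorem.
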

This constitutes the index theorem for H-elliptic differential operators.
 An alternative formulation of this index theorem will be given in the next section. 

\section{Mohsen's Index theorem for H-elliptic operators from $KK$-theoretic viewpoint}
\label{sec3.2}
H-elliptic operators can also be analyzed using Kasparov's $KK$-theory. In this section, we transform the index theorem for H-elliptic differential operators (Theorem \ref{thm10}), to be expressed as a $KK$-product representation.

In the $G$-invariant cases, there is a Lie group $G$ action on $TM$ induced by the $G$-action on $M$. We assume that $G$ preserves the H-filtration structure. Then, we have
\begin{lem}
    There is a natural $G$-action on the bundle $gr(H)$ by $g[v]=[gv]$ for arbitrary $g\in G$ and $v\in H^i_x$. It induces a $G$-action on the bundle of groups $Gr(H)$ by the exponential map. 
\end{lem}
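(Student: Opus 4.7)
The plan is to leverage the hypothesis that $G$ preserves the filtration $H^{\bullet}$ to produce first a fiberwise $G$-action on the graded bundle $gr(H)$, verify that it is by graded Lie algebra automorphisms, and then transport it across the exponential map to obtain the action on $Gr(H)$.

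First I would argue that the candidate map is well-defined on quotients. For $g\in G$ and $x\in M$, the differential $dg\colon T_xM\to T_{g(x)}M$ sends $H^i_x$ into $H^i_{g(x)}$ by the $G$-invariance of the filtration, and in particular sends $H^{i-1}_x$ into $H^{i-1}_{g(x)}$. Therefore passing to the quotient yields a linear map
$$gr(H)^i_x = H^i_x/H^{i-1}_x \;\longrightarrow\; H^i_{g(x)}/H^{i-1}_{g(x)} = gr(H)^i_{g(x)},$$
which assembles, over all degrees, to a bundle map $g\cdot \colon gr(H)_x \to gr(H)_{g(x)}$. The cocycle identities $e\cdot [v]=[v]$ and $(gh)\cdot [v]=g\cdot(h\cdot [v])$ descend from the corresponding identities for the differentials of diffeomorphisms, and smoothness in $g$ and $x$ is inherited from the smoothness of the $G$-action on $M$.

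Next I would verify that this linear action is a Lie algebra action, i.e.\ that it commutes with the graded Lie bracket defined in Section~\ref{sec1.1}. Locally extend $[v]\in gr(H)^i_x$ and $[w]\in gr(H)^j_x$ to sections $X\in C^{\infty}(M;H^i)$, $Y\in C^{\infty}(M;H^j)$ with $X(x)=v$, $Y(x)=w$. Because each $g\in G$ acts on $M$ by diffeomorphisms, the pushforwards $g_{\ast}X$ and $g_{\ast}Y$ are again smooth sections of $H^i$ and $H^j$ respectively (using once more the $G$-invariance of the filtration), and $g_{\ast}[X,Y]=[g_{\ast}X,g_{\ast}Y]$ as vector fields on $M$. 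Reducing both sides modulo $H^{i+j-1}$ at $g(x)$ gives $g\cdot [v,w] = [g\cdot v,\,g\cdot w]$, showing that the action is by graded Lie algebra homomorphisms.

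Finally, since each osculating group $Gr(H)_x$ is simply connected and nilpotent, the exponential map $\exp_x\colon gr(H)_x \to Gr(H)_x$ is a diffeomorphism and integrates Lie algebra homomorphisms uniquely to Lie group homomorphisms. Defining $g\colon Gr(H)_x\to Gr(H)_{g(x)}$ by $\exp_{g(x)}\circ (g\cdot)\circ \exp_x^{-1}$ therefore gives a smooth bundle action of $G$ on the family of groups $Gr(H)$, and the group axioms transport through the exponential. The only mild subtlety to double-check is that the smooth structure of $Gr(H)$ defined in Section~\ref{sec1.1} is the one for which $\exp$ is fiberwise a diffeomorphism compatibly with the base variable $x$, so that the induced action is jointly smooth; this follows from the standard fact that the family exponential of a smooth bundle of nilpotent Lie algebras is a bundle diffeomorphism, making this the only place where any care is needed.
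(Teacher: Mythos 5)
The paper states this lemma without proof, so there is nothing to compare against; your task here is simply to supply the missing verification, and your argument does that correctly. The three steps are exactly the ones needed: (1) well-definedness of the induced linear action on the quotients $H^i/H^{i-1}$ from $G$-invariance of the filtration together with the chain rule giving the cocycle identity; (2) compatibility with the graded bracket, which you establish by pushing forward local extensions and using $g_\ast[X,Y]=[g_\ast X,g_\ast Y]$ before reducing modulo $H^{i+j-1}$; and (3) integration of the resulting Lie algebra isomorphisms $gr(H)_x\to gr(H)_{g(x)}$ through $\exp$, which is a fiberwise global diffeomorphism since each $Gr(H)_x$ is simply connected nilpotent. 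The remark about joint smoothness via the BCH-polynomial family exponential is the right thing to flag and correctly dispatched. No gaps.
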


For a general $G$-invariant H-elliptic differential operator $P_0$, we consider the bounded transform $$P=
\begin{pmatrix}
    0 & P_0^{\ast}/(1+P_0 P_0^{\ast})^{1/2} \\
    P_0/(1+P_0^{\ast}P_0)^{1/2} & 0 
\end{pmatrix}
,$$ 
which is a $G$-invariant Fredholm operator of order $0$, and for $a\in C_0(M)$, $[a, P]$ and $a(1-P^2)$ are operators with negative order symbols, so they are both compact.
Hence, we can define that
\begin{defn}
     \textit{The index class} $[P_0]$ of $P_0$ is defined to be the $KK$-class
     $[(L^2(E_0\oplus E_1), P)]\in K^0_G(C_0(M))$. 
\end{defn}

\begin{remark}
    Henceforth, we will always assume that the considered operator $P_0$ is transformed as above to a self-adjoint operator $P$ of $0$-order. We identify the index classes $[P]=[P_0]$.
\end{remark}

By Proposition \ref{prop1} , a $0$-order H-elliptic operator $P$ induces the symbol class $$[\sigma_H(P)]\in KK(C_0(M),C^{\ast}(Gr(H))).$$ 
and in the $G$-invariant case,
$$[\sigma_H(P)]\in KK^G(C_0(M),C^{\ast}(Gr(H))).$$ 
can be defined in a similar manner.

The commutative diagram in Section \ref{sec3.1} can be obtained for $KK((C_0(M)), \cdot)$ using a similar argument, then we have the following diagram:
 \[
 \begin{tikzcd}[center picture]
&KK(C_0(M), C_0(gr(H)^{\ast}))\arrow[d,"\mu_{t=0}"] \arrow[r, "\mu_{s=0}"] & KK(C_0(M), C_0(T^{\ast}M)) \arrow[d,"\mu_{t=1}"]\\
     &KK(C_0(M), C^{\ast}(Gr(H)))\arrow[r, "\mu_{s=1}"] & KK(C_0(M),\C).
 \end{tikzcd}
 \]
The Connes-Thom's isomorphism can be represented by a $KK$-product with a certain element  $[\mathcal{E}_0]\in KK(C^{\ast} (Gr(H)), C_0(gr(H)^{\ast}))$. Let $$[\mathcal{E}]=[\mathcal{E}_0]\otimes_{C^{\ast}(gr(H)^{\ast})} [ev_0]^{-1}\otimes [ev_1]\in KK(C^{\ast} (Gr(H)),C_0(T^{\ast}M)),$$ where $[ev_1]$ and $[ev_0]$ represent the $KK$-element given by the $C^{\ast}$-algebra homomorphisms $ev_1$ and $ev_0$ in $s=0$ side.

\begin{thm}
    The analytical index map $\text{Ind}_{H^{\bullet}}$ for H-elliptic operators in Proposition \ref{prop1} can be expressed by $KK$-products as $$\text{Ind}_{H^{\bullet}}=\otimes_{C^{\ast}(Gr(H))} [\mathcal{E}]\otimes_{C_0(T^{\ast}M)} [\mathcal{D}_M].$$ 
\end{thm}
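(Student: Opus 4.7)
The plan is to translate Theorem \ref{thm10} (Mohsen's three-step decomposition of $\text{Ind}_{H^\bullet}$) directly into the language of $KK$-products, identifying each of the maps $\mu_{t=0}^{-1}$, $\mu_{s=0}$, and $\text{Ind}_a$ in the composition
$$\text{Ind}_{H^{\bullet}} = \text{Ind}_a \circ \mu_{s=0}\circ\mu_{t=0}^{-1}$$
as a right Kasparov product with a specific canonical class. Associativity of $\otimes$ then assembles the three factors into the single product on the right-hand side of the statement.

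First, I would handle the vertical map. By Proposition \ref{prop3} applied fiberwise to $\G = Gr(H)$, the Connes--Thom class $[\mathcal{E}_0]\in KK(C^{\ast}(Gr(H)), C_0(gr(H)^{\ast}))$ implements $\mu_{t=0}^{-1}$ as
$$\mu_{t=0}^{-1}(x) = x\otimes_{C^{\ast}(Gr(H))}[\mathcal{E}_0],\qquad x\in KK(C_0(M), C^{\ast}(Gr(H))).$$
Next, for the horizontal map on the $s=0$ face, I would argue exactly as in the construction before Lemma \ref{lem1}: the $s=0$ family of groupoids produces an adiabatic-type exact sequence whose $ev_0$ is a $KK$-equivalence, so the associated splitting map $\mu_{s=0}$ is realized as the Kasparov product with $[ev_0]^{-1}\otimes[ev_1]$. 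By the definition of $[\mathcal{E}]$, composing these two steps gives
$$\mu_{s=0}\circ\mu_{t=0}^{-1}(x) = x\otimes_{C^{\ast}(Gr(H))}[\mathcal{E}].$$

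For the last step, I would invoke Theorem \ref{thm5}: for any elliptic class in $KK(C_0(M), C_0(T^{\ast}M))$, its image under the ordinary analytical index $\text{Ind}_a = \mu_{t=1}$ equals the Kasparov product with the Dolbeault class $[\mathcal{D}_M]\in K^0(C_0(T^{\ast}M))$. Composing with the previous identification and using associativity of the Kasparov product over the intermediate $\sigma$-unital separable algebras yields, for any class $x$ in $KK(C_0(M), C^{\ast}(Gr(H)))$,
$$\text{Ind}_{H^{\bullet}}(x) = x\otimes_{C^{\ast}(Gr(H))}[\mathcal{E}]\otimes_{C_0(T^{\ast}M)}[\mathcal{D}_M],$$
which is the claimed identity. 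The equivariant case follows verbatim with $KK^G$ in place of $KK$.

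The main obstacle is verifying that the analytical-index-by-Dolbeault identification from Theorem \ref{thm5} actually extends to arbitrary $KK$-classes rather than just those coming from the symbol of an operator; however, this is precisely the content of the $KK$-theoretic formulation of the Atiyah--Singer theorem used in Chapter \ref{chap2}, so it can be applied off the shelf. A minor but necessary bookkeeping issue is that one must check the intermediate $C^{\ast}$-algebras $C^{\ast}(Gr(H))$, $C_0(gr(H)^{\ast})$, $C^{\ast}(\mathbb{G})$, and $C_0(T^{\ast}M)$ satisfy the separability/$\sigma$-unitality hypotheses required for the associativity of the Kasparov product, which they do because $M$ is second countable. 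No rotation/homotopy argument is required here, since the heavy lifting has already been done in Theorems \ref{thm10} and \ref{thm5}.
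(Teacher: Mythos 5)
Your proposal is correct and follows essentially the same approach as the paper's proof: start from Theorem~\ref{thm10}'s factorization $\text{Ind}_{H^\bullet} = \text{Ind}_a \circ \mu_{s=0}\circ\mu_{t=0}^{-1}$, identify $\mu_{s=0}\circ\mu_{t=0}^{-1}$ with $\otimes_{C^\ast(Gr(H))}[\mathcal{E}]$ (which is immediate from how $[\mathcal{E}]$ was assembled from the Connes--Thom class and $[ev_0]^{-1}\otimes[ev_1]$), identify $\text{Ind}_a = \mu_{t=1}$ with $\otimes_{C_0(T^\ast M)}[\mathcal{D}_M]$ via Theorem~\ref{thm5}, and compose. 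Your added remarks on the separability/$\sigma$-unitality hypotheses and on the extension of Theorem~\ref{thm5} from operator-derived classes to arbitrary $KK$-classes are reasonable bookkeeping the paper glosses over, but they do not change the route taken.
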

\begin{proof}
    The Kasparov product with $[\mathcal{E}]$ facilitates an isomorphism between $KK(C_0(M), C^{\ast}(Gr(H))$ and $KK(C_0(M), C_0(T^{\ast}M))$, which represents the composition of the Connes-Thom's isomorphism and the induced isomorphism of $K^0$ between $T^{\ast}M$ and $gr(H)^{\ast}$, i.e.,
$$\mu_{s=0}\circ\mu^{-1}_{t=0}=\otimes_{C^{\ast}(Gr(H))} [\mathcal{E}].$$
    By Theorem \ref{thm5}, the Kasparov product with $[\mathcal{D}_M]$ gives the ordinary analytical index map, i.e., $$\text{Ind}_a=\mu_{t=1}=\otimes_{C_0(T^{\ast}M)} [\mathcal{D}_M].$$
    Apply these two equations to Theorem \ref{thm10}, we see that
    \begin{equation*}
        \begin{aligned}
\text{Ind}_{H^{\bullet}}&=\text{Ind}_{a}\circ\mu_{s=0}\circ \mu_{t=0}^{-1}\\
        &=\otimes_{C^{\ast}(Gr(H))} [\mathcal{E}]\otimes_{C_0(T^{\ast}M)} [\mathcal{D}_M].
        \end{aligned}
    \end{equation*}
\end{proof}

By the natural construction of $G$-action on $gr(H)$, the Connes-Thom's isomorphism and the induced isomorphism $\mu_{s=0}$ from $gr(H)$ to $TM$ is $G$-invariant.
Consequently, 
\begin{prop}
    $[\mathcal{E}]$ represent a $G$-invariant $KK$-class in $KK^G(C^{\ast}(Gr(H)),C_0(T^{\ast}M))$. The Kasparov product with $[\mathcal{E}]$ provides an isomorphism between $KK^G(C_0(M), C^{\ast}(Gr(H))$ and $KK^G(C_0(M),C_0(T^{\ast}M))$.
\end{prop}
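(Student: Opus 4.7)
The plan is to reduce the assertion to the equivariant versions of results already used in the non-equivariant case, treating the two claims separately: that $[\mathcal{E}]$ lifts to $KK^G$, and that Kasparov product with this lift is an isomorphism on the relevant equivariant $KK$-groups.

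For the first claim, I would exploit the hypothesis that $G$ preserves the filtered structure $H^{\bullet}$, which gives a natural $G$-action on $gr(H)$ by graded Lie algebra automorphisms, and hence via the fibrewise exponential map, on $Gr(H)$ by Lie group automorphisms. This equips $C^{\ast}(Gr(H))$ and $C_0(gr(H)^{\ast})$ with $G$-algebra structures and promotes the adiabatic deformation $\mathbb{G}r(H)$ of Section \ref{sec1.8} to a $G$-equivariant family (with trivial $G$-action on the parameter $t\in[0,1]$). Consequently, both $ev_0$ and $ev_1$ are $G$-equivariant $\ast$-homomorphisms and define classes in $KK^G$. Because the deformation ideal $C_0(0,1]\otimes C^{\ast}(Gr(H))$ is equivariantly contractible (the interval carries the trivial $G$-action), the equivariant six-term sequence, which is the $G$-invariant analog of Lemma \ref{lem3}, shows that $[ev_0]$ is invertible in $KK^G$. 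Similarly, the Connes-Thom class $[\mathcal{E}_0]$ of Proposition \ref{prop3} lifts to $KK^G(C^{\ast}(Gr(H)),C_0(gr(H)^{\ast}))$: it is built from iterated one-parameter Connes-Thom classes along a composition series of $Gr(H)$, and because the grading components $H^i/H^{i-1}$ are $G$-invariant, one may take the filtration induced by the grading and apply the equivariant one-parameter Connes-Thom class of Fack-Skandalis at each step.

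For the second claim, the Kasparov product with $[\mathcal{E}]$ decomposes as three successive products, each of which is an isomorphism on $KK^G(C_0(M),\,\cdot\,)$. The product with $[\mathcal{E}_0]$ is the equivariant Connes-Thom isomorphism, by the argument above. The products with $[ev_0]^{-1}$ and then $[ev_1]$ implement the equivariant splitting identification between $C_0(gr(H)^{\ast})$ and $C_0(T^{\ast}M)$: a $G$-invariant Riemannian metric, which exists because $G$ acts properly and isometrically, provides a $G$-equivariant orthogonal decomposition $TM\cong\bigoplus_i H^i/H^{i-1}=gr(H)$ of vector bundles, and dualization yields a $G$-equivariant $C^{\ast}$-isomorphism $C_0(gr(H)^{\ast})\cong C_0(T^{\ast}M)$. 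Composing the three isomorphisms yields the desired result.

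The main obstacle I expect is the equivariance of $[\mathcal{E}_0]$. In the non-equivariant proof, the Connes-Thom class is constructed from an arbitrary composition series of the nilpotent Lie algebra, and in general such a series need not be $G$-invariant. The argument sketched above circumvents this by using the $G$-invariant grading of $gr(H)$ in place of an arbitrary composition series; one must then verify, appealing to the uniqueness of the Connes-Thom class up to $KK$-equivalence, that the class produced in this way is independent of the choice of $G$-invariant refinement and, after forgetting the $G$-action, coincides with the $[\mathcal{E}_0]$ defined previously. Once this is in place, the remainder of the argument is formal manipulation of $KK^G$-products and the naturality of the adiabatic evaluation maps.
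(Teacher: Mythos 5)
Your proposal is correct and follows essentially the same route as the paper: the paper gives only a one-sentence justification, asserting that because the $G$-action on $gr(H)$ is natural, both the Connes--Thom isomorphism and the splitting identification $\mu_{s=0}$ between $gr(H)^{\ast}$ and $T^{\ast}M$ are $G$-invariant, and concludes the proposition from there. You have usefully unpacked this: in particular, your observation that the arbitrary composition series in Nistor's construction of $[\mathcal{E}_0]$ need not be $G$-invariant, and that one should instead use the $G$-invariant grading filtration $\bigoplus_{j\geq i}H^j/H^{j-1}$ with $\R^{k_i}$-Connes--Thom at each stage, addresses a point the paper glosses over, and the rest of your argument (equivariance of the adiabatic evaluation maps via the trivial action on the deformation parameter, and of the vector-bundle splitting $\psi:TM\to gr(H)$ from the $G$-invariant metric) is exactly what the paper has in mind.
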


Thus, we can give an equivalent statement of the index theorem for H-elliptic operators (Theorem \ref{thm10}) in $KK$-theory as follows:
\begin{thm}\label{thm2}
     Let $M$ be a complete filtered Riemannian manifold and $G$ be a locally compact Lie group, which isometrically acts on $M$ and preserves the filtered structure. Let $P$ is an H-elliptic $G$-invariant differential operator on $M$ of order $0$, then we have
    $$[P]=[\sigma_H(P)]\otimes_{C^{\ast}(Gr(H))} [\mathcal{E}]\otimes_{C_0(T^{\ast}M)}[\mathcal{D}_M],$$
    where $[\mathcal{D}_M]\in K^0_G(C_0(TM))$ denote the $KK$-class of the Dolbeault operator. \\
The $KK$-product on the right-hand side is executed over 
\begin{equation*}
    \begin{aligned}
KK^G(C_0(M),C^{\ast}(Gr(H))&\otimes_{C^{\ast}(Gr(H))} KK^G(C^{\ast}(Gr(H)),C_0(T^{\ast}M))\\
&\otimes_{C_0(T^{\ast}M)} KK^G(C_0(T^{\ast}M),\C).
    \end{aligned}
\end{equation*}

\end{thm}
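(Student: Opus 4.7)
The strategy is to combine the preceding Theorem (the identity $\text{Ind}_{H^{\bullet}} = \otimes_{C^{\ast}(Gr(H))}[\mathcal{E}] \otimes_{C_0(T^{\ast}M)}[\mathcal{D}_M]$ on non-equivariant $KK$-groups) with Proposition \ref{prop1}, which says that the analytical index map sends the symbol class to the index class, and then to promote the resulting identity to the $G$-equivariant setting.

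The plan has two conceptually simple steps. First, I would invoke the $G$-equivariant version of Proposition \ref{prop1}: for a $G$-invariant $0$-order H-elliptic operator $P$, the analytical index map
$$\text{Ind}_{H^{\bullet}} \colon KK^G(C_0(M), C^{\ast}(Gr(H))) \to K^0_G(C_0(M))$$
sends $[\sigma_H(P)]$ to $[P]$. Once $P$ has been normalized to a self-adjoint $0$-order Fredholm operator as in the remark preceding the theorem, this is essentially the defining property of $[P]$ together with the $KK$-representation of the principal cosymbol from Corollary \ref{cor2}. Second, I would apply the preceding Theorem to evaluate $\text{Ind}_{H^{\bullet}}([\sigma_H(P)])$ as the triple Kasparov product in the statement, using the $G$-equivariance of $[\mathcal{E}]$ established in the Proposition immediately preceding the present theorem, and the $G$-equivariance of $[\mathcal{D}_M]$ coming from the canonical $G$-action on $TM$. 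The associativity and bilinearity of the Kasparov product then assemble the three equivariant classes into a well-defined element of $K^0_G(C_0(M))$.

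The main obstacle I expect is verifying that the preceding Theorem, stated in non-equivariant form, genuinely lifts to the $G$-equivariant category. Because the forgetful map $KK^G \to KK$ is generally not injective, the non-equivariant identity does not automatically imply the equivariant one. To close this gap I would rework the proof of Theorem \ref{thm10} --- the commutative square of groupoid $C^{\ast}$-algebras over $[0,1]^2$ --- in the equivariant category. Since the filtration $H^{\bullet}$ is $G$-invariant, the $G$-action on $M$ lifts canonically to $TM$, to the osculating bundle $Gr(H)$, to the adiabatic and Heisenberg-tangent groupoids, and to the global groupoid over $[0,1]^2$. Each of the edge maps $\mu_{t=0}, \mu_{t=1}, \mu_{s=0}, \mu_{s=1}$ then becomes a $G$-equivariant morphism of $KK$-groups, and the commutativity of the square follows from the equivariant version of Lemma \ref{lem3} (the contractibility argument for $C_0(0,1]$ is insensitive to the group action).

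With the equivariant version of Theorem \ref{thm10} in hand, the identification of $\mu_{t=1}$ with $\otimes_{C_0(T^{\ast}M)}[\mathcal{D}_M]$ is the equivariant Atiyah--Singer identity recorded in Theorem \ref{thm5}, and the identification of $\mu_{s=0}\circ\mu_{t=0}^{-1}$ with $\otimes_{C^{\ast}(Gr(H))}[\mathcal{E}]$ follows from the equivariant refinement of the Connes--Thom class (Proposition \ref{prop3}) composed with the splitting isomorphism $TM \simeq gr(H)$, which is $G$-equivariant because the bundle map sending $v\in H^i_x$ to its class in $H^i_x/H^{i-1}_x$ commutes with the $G$-action on the filtration. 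Substituting $x = [\sigma_H(P)]$ into the resulting equivariant formula and using $\text{Ind}_{H^{\bullet}}([\sigma_H(P)]) = [P]$ yields the stated identity.
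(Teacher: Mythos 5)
Your proposal is correct and follows essentially the same route the paper implicitly takes: the paper gives no explicit proof of this theorem, presenting it as a direct consequence of the preceding non-equivariant $KK$-product formula for $\text{Ind}_{H^\bullet}$, the equivariant analytical index map of Proposition \ref{prop1}, and the $G$-equivariance of $[\mathcal{E}]$ established in the proposition immediately above. Your observation that the forgetful map $KK^G\to KK$ is not injective, so the commutative square over $[0,1]^2$ must itself be reworked in the equivariant category, identifies a real gap in the paper's exposition, and your resolution---that all the groupoids and edge maps inherit $G$-actions because $H^\bullet$ is $G$-invariant---is exactly the right fix.
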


\begin{remark}  \label{rmk2}
 For the multiplier $\sigma_H(P)$ of $C^{\ast}(Gr(H))$, it represents an element of $KK(\C, C^{\ast}(Gr(H)))$, its $KK$-product with $[\mathcal{E}]$ lies in $K^0(T^{\ast}M)$ and is represented by an elliptic symbol $\sigma'(P)(x,\xi)$ on $C_0(TM)$. The construction from $P$ to $\sigma'(P)$ is natural, however, it is quite difficult to give an explicit formula for this elliptic symbol from the H-elliptic operator $P$. Even in the simplest case that $M$ is a contact manifold, the construction of this elliptic symbol is complicated, see \cite{baum2014k}. That is why it is difficult to analyze the H-elliptic index theory explicitly in $KK$-theory.

\end{remark}

\section{Index theorem for H-elliptic operators from $KK$-theoretic viewpoint}
\label{sec3.3}
In this section, we will see the index theory for hypoelliptic operators discussed in \cite{kasparov2024coarsepseudodifferentialcalculusindex}. Note that in our case that $M$ is a filtered manifold, we always consider H-elliptic operators.
First, we introduce some basic $KK$-elements.

    Cover $M$ with a smooth family of small balls $\{U_x\}$ with centers $x\in M$ of radius $r_x$. Assume that the radii $r_x$ vary smoothly over $M$.  This family of balls actually defines an open neighborhood $U$ of the diagonal in $M\times M$. We assume that both coordinate projections of the closure of $U$ into $M$ are proper maps.
    \begin{defn}
  Recall that in Definition \ref{def6}, we have defined the algebra $Cl_{\tau}(M)$. Define the radical covector field 
    $\Theta_x(y)=\rho(x,y)d_y\rho(x,y)/r_x$
    with $y\in U_x$.
    Consider $\Theta_x(y)$ as an element of the Clifford algebra fiber of $Cl_{\tau}(U_x)$ at the point $y\in U_x$. By definition $\Theta_x^2-1\in C_0(U_x)\subset Cl_{\tau}(U_x)$, so globally over $M$, the family of Clifford multiplications by covector fields $\Theta_x$ defines an element
    $[\Theta_{M}]\in KK^G(C_0(M),C_0(U)\cdot C_0(M)\otimes Cl_{\tau}(M))$, and consequently, an
element of $KK^G(C_0(M),C_0(M)\otimes Cl_{\tau}(M))$. It is called the \textit{local dual Dirac element}.

\end{defn}

\begin{defn}\label{defi4}
The \textit{Dirac element} $[d_M]\in K^0_G(Cl_{\tau}(M))$ is defined as follows. Let $H=L^2(\Lambda^{\ast}(M))$ be the Hilbert space of complex-valued $L^2$-forms on $M$ graded by the even-odd form decomposition. The homomorphism $Cl_{\tau}(M)\to B(H)$ is given on (real) covector fields by the Clifford multiplication operators $v\mapsto \text{ext}(v)+\text{int}(v)$. The (unbounded) operator $d_M$ is the operator of the exterior derivation on $H$. The operator $D_M=  d_M + d^{\ast}_M$ is essentially self-adjoint. We define $[d_M]$ as the pair 
    $(H, D_M(1+D_M^2)^{-1/2})$.

\end{defn}
\begin{thm}[\cite{kasparov1988equivariant}, Theorem 4.8] Let $M$ be a $G$-manifold. We have
    $$[\Theta_M]\otimes_{Cl_{\tau}(M)}[d_M]=1_M \in KK^G( C_0(M),C_0(M)).$$
\end{thm}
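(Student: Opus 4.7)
The plan is to realize the product $[\Theta_M]\otimes_{Cl_{\tau}(M)}[d_M]$ as an explicit Kasparov $(C_0(M),C_0(M))$-module and then homotope it to the identity class $1_M$. First I would identify the tensored Hilbert module: the module for $[\Theta_M]$ is $E_1:=C_0(U)\cdot\bigl(C_0(M)\otimes Cl_{\tau}(M)\bigr)$, a $C_0(M)$-$Cl_{\tau}(M)$-bimodule via the two coordinate projections $U\subset M\times M\to M$ (both proper by assumption). Tensoring with $\bigl(L^2(\Lambda^{\ast}(M)),D_M(1+D_M^2)^{-1/2}\bigr)$ over $Cl_{\tau}(M)$ yields a Hilbert $C_0(M)$-module $E$ whose fiber over $x\in M$ can be identified with $L^2(\Lambda^{\ast}(U_x))$, carrying the inherited $G$-action.

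Next I would construct the product operator. Applying Kasparov's Technical Theorem to a $D_M$-connection on $E$ together with the multiplication operator $c(\Theta)\hat{\otimes}1$, the product is represented, in the unbounded picture, by a Witten-style deformed de Rham operator
\[
\widetilde{D}_x \;=\; (d_M+d_M^{\ast})|_{U_x} \;+\; c(\Theta_x)
\]
acting fiberwise on $L^2(\Lambda^{\ast}(U_x))$, where $c$ denotes Clifford multiplication and $\Theta_x$ is the radial covector field from the definition of $[\Theta_M]$. Observing that $\Theta_x=d_y f_x$ with $f_x(y)=\rho(x,y)^2/(2r_x)$, this is precisely a Witten deformation of the de Rham complex on $U_x$, and the square $\widetilde{D}_x^{\,2}$ is a harmonic-oscillator-type operator centered at $x$.

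Then I would identify the class. By the standard Witten/harmonic-oscillator computation, $\widetilde{D}_x$ has a one-dimensional kernel at each $x\in M$, spanned by a Gaussian-like ground state concentrated in degree $0$ near $x$, while the rest of its spectrum is uniformly bounded away from $0$ in $x$. These kernels assemble into a $G$-equivariant trivial line sub-bundle of $E$; a scaling homotopy $\Theta_x\mapsto s\Theta_x$, $s\to\infty$, followed by a compact perturbation, retracts the product Kasparov module to $(C_0(M),\mathrm{id},0)$, i.e.\ to $1_M$.

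The main technical obstacle will be verifying the two Kasparov-product axioms (the $F$-connection condition and the positivity condition of Kasparov's Technical Theorem) for $\widetilde{D}_x$ uniformly and $G$-equivariantly in the base parameter $x$. A related issue is the defect $\Theta_x^2-1\in C_0(U_x)$, which is not zero but only compactly supported in $U_x$; the scaling $s\to\infty$ converts this into a negligible compact perturbation, but it forces the cutoffs and the ball radii $r_x$ to be chosen $G$-invariantly and to depend smoothly on $x$, so that all the local constructions glue to a single global $G$-equivariant Kasparov module over $M$.
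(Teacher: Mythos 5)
The paper does not prove this theorem; it quotes it as Theorem 4.8 of \cite{kasparov1988equivariant}, so there is no in-paper proof to compare against. Your outline reproduces the standard Dirac--dual-Dirac argument for this identity and is, in substance, the content of Kasparov's proof: realize the product fiberwise on $L^2(\Lambda^{\ast}(U_x))$, recognize $D_M|_{U_x}+c(\Theta_x)$ as a Witten deformation (since $\Theta_x = d_y\bigl(\rho(x,y)^2/(2r_x)\bigr)$) with a unique nondegenerate critical point at $y=x$, and use the harmonic-oscillator ground-state computation plus a scaling homotopy to reduce to $1_M$. That high-level plan is correct.

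One step would not survive as stated. The scaling $\Theta_x\mapsto s\Theta_x$, $s\to\infty$, does not produce Kasparov modules at intermediate $s>1$: the condition $\Theta_x^2-1\in C_0(U_x)$ holds precisely because $\lVert\Theta_x(y)\rVert=\rho(x,y)/r_x\to 1$ as $y\to\partial U_x$, so $(s\Theta_x)^2-1\to s^2-1\neq 0$ at the boundary and hence $(s\Theta_x)^2-1\notin C_0(U_x)$. (Relatedly, $\Theta_x^2-1$ is not compactly supported in $U_x$; it is nonzero on the whole open ball and merely tends to $0$ at $\partial U_x$.) You must either shrink the ball radii $r_x\mapsto r_x/s$ in tandem, saturate the field at norm $1$ near $\partial U_x$ by a monotone cutoff applied to $\lVert\Theta_x\rVert$, or run the entire homotopy in the unbounded picture and pass to the bounded transform only at the end. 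You correctly anticipate that the choice of $\{r_x\}$ and the cutoffs must be smooth and $G$-equivariant; the point is that this choice is not merely needed for gluing over $M$, it is what keeps each fiberwise cycle a legitimate Kasparov module during the deformation, and it is exactly where completeness of $M$ and the proper isometric $G$-action enter.
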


From 5.1 in \cite{kasparov1988equivariant}, for a Lie group $\G$ with maximal compact subgroup $\mathcal{K}$, there exists an element $\eta_{\G/\K}\in K^{\G}_0(Cl_{\tau}(\G/\K))$ which satisfies the following two equivalent conditions:
$$[d_{\G/\K}]\otimes_{\C}[\eta_{\G/\K}]=1_{Cl_{\tau}(\G/\K)}\in KK^{\G}(Cl_{\tau}(\G/\K), Cl_{\tau}(\G/\K)),$$
$$1_{C_0(\G/\K)}\otimes_{\C} [\eta_{\G/\K}]\in [\Theta_{\G/\K}].$$
Such element $[\eta_{\G/\K}]$ is always represented by a pair $(Cl_{\tau} (\G/\K)), c(\eta))$, where $\eta$ is a covector field on $\G/\K$ and $c$ is the Clifford multiplication. 

In the case that $\G=Gr(H)_x$ for $x\in M$, we see that it is nilpotent and simply-connected, so $\K$ will always be equal to $0$.
\begin{defn}
\begin{itemize}\label{defi2}
    \item There is a continuous (in $x$) family of covector fields $\eta_{Gr(H)_x}$ given by the above inductive construction. We denote this family of covector fields by $[\eta_{Gr(H)}]\in \RKK^{Gr(H)}(M; C_0(M), Cl_{\tau}(Gr(H)))$.
    Apply the family of homomorphisms $j^{Gr(H)_x}$ to
the element $[\eta_{Gr(H)}]$ pointwise for each $x$. The second variable of the resulting $\RKK$-group corresponding to the family of $C^{\ast}$-algebras $\{C^{\ast}(Gr(H)_x,$ $ Cl_{\tau}(Gr(H)_x))\}_{x\in M}$, which is isomorphic to $\mathcal{K}(L^2(Gr(H)_x)\otimes Cl_{\tau_x})$. By the Morita equivalence between $\mathcal{K}(L^2(Gr(H)_x)$ and $\C$, we obtain an element of the group
$\mathcal{R}KK(M; C^{\ast}(Gr(H)), Cl_{\tau}(M))$. We denote it by 
$[\eta_{C^{\ast}(Gr(H))}]$.
\item  For any $x \in M$, consider the Hilbert module $H_x=L^2(T_x^{\ast}M)\otimes Cl_{\tau_x}$. The \textit{Dirac operator} on this Hilbert module is defined by $D_x=\sum_{x=1}^{dim M}(-i)c(e_k)\partial/\partial \xi_k$
    in any orthonormal basis $\{e_k\}$ of $T^{\ast}_x M$, where $c(e_k)$ are the left Clifford
multiplication operators and $\xi_k$ are the coordinates of $T^{\ast}_x M$ in the basis $\{e_k\}$.
Denote $F_x=D_x (1+D_x^2)^{-1/2}$ on $H_x$. The family of Hilbert modules $H_x$, parametrized by $x\in M$, defines a Hilbert module of continuous
sections over the algebra $Cl_{\tau}(M)$, vanishing at infinity of $M$, which will be denoted
by $\mathcal{E}$. The grading of $\mathcal{E}$ is defined by the grading of the algebra $Cl_{\tau}(M)$. The family of operators $F_x$ gives an operator $\Phi$ of degree $1$ on $\mathcal{E}$. The algebra $C_0(TM)$ acts on $\mathcal{E}$ on the left by multiplication. We define the \textit{fiberwise Dirac element} $[d_{\xi}]\in KK^G(C_0(TM),Cl_{\tau}(M))$ by the pairing $(\mathcal{E},\Phi)$.
\item The \textit{Dolbeault element} $[D'_{M}]\in K^0(C^{\ast}(Gr(H)))$ is defined to be the $KK$-product of $[\eta_{C^{\ast}(Gr(H))}]\in \mathcal{R}KK(M; C^{\ast}(Gr(H)), Cl_{\tau}(M))$ and  $[d_{M}]\in K^0(Cl_{\tau}(M)))$.
\end{itemize}
\end{defn}

\begin{remark}
\label{rmk7}
By Theorem 2.10 in \cite{kasparov2017elliptic}, the old Dobeault element $[D_M]=[d_{\xi}]\otimes _{Cl_{\tau}(M)}[d_M]$, so we see that the new defined Dolbeault element $[D'_{M}]$ is taken by replacing $[d_{\xi}]$ with $[\eta_{C^{\ast}(Gr(H))}]$. We can understand that $[\eta_{C^{\ast}(Gr(H))}]$ is the Fourier transform of $[d_{\xi}]$ in some sense.

\end{remark}

Now we state the index theorem for H-elliptic operators by Kasparov in \cite{kasparov2024coarsepseudodifferentialcalculusindex}.
\begin{defn}
    The \textit{Clifford cosymbol} of an H-elliptic operator $P$ is defined as
    $$[\sigma_H^{cl}(P)]:=[\sigma_H(P)]\otimes_{C^{\ast}(Gr(H))} [\eta_{C^{\ast}(Gr(H))}]\in KK(C_0(M), Cl_{\tau}(M)).$$
\end{defn}
\begin{thm}[Inverse Clifford Index Theorem]
    let $P$ be a $0$-order properly supported H-elliptic operator on $M$, then 
    $$[\sigma_H^{cl}(P)]=[\Theta_M]\otimes_{C_0(M)} [P]\in KK(C_0(M), Cl_{\tau}(M)).$$
\end{thm}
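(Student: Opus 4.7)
The plan is to use the H-tangent groupoid $\mathbb{G}r(H)$ as a simultaneous deformation device for both the bounded operator $P$ and the $\eta$-element, and then to appeal to homotopy invariance of the Kasparov product. At $t=0$ the groupoid restricts to $Gr(H)$, on which $\sigma_H(P)$ represents $[\sigma_H(P)]$; at $t=1$ it restricts to the pair groupoid $M\times M$, Morita equivalent to $\C$, on which the bounded transform of $P$ itself represents $[P]$. In parallel, under the zoom action $\alpha_{\lambda}$, the radial covector field $\Theta_x(y)=\rho(x,y)\,d_y\rho(x,y)/r_x$ on a neighbourhood $U$ of the diagonal in $M\times M$ which defines the local dual Dirac $[\Theta_M]$ degenerates, via the splitting isomorphism between $TM$ and $gr(H)$, to the translation-invariant radial covector field on each osculating group $Gr(H)_x$ that enters Kasparov's $\eta_{Gr(H)_x}$.

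First I would package the bounded transform of $P$ as a Kasparov module
$$[\mathbf{P}]\in KK(C_0(M),C^{\ast}(\mathbb{G}r(H)))$$
using the pseudo-differential calculus of van Erp--Yuncken together with Corollary \ref{cor2} applied fibrewise, so that $(ev_0)_{\ast}[\mathbf{P}]=[\sigma_H(P)]$ and $(ev_1)_{\ast}[\mathbf{P}]$ coincides, after the Morita equivalence $\mathcal{K}(L^2(M))\sim\C$, with $[P]$. Second, I would build a Clifford-valued family
$$[\boldsymbol{\eta}]\in \RKK(M\times[0,1];\,C^{\ast}(\mathbb{G}r(H)),\,Cl_{\tau}(M))$$
whose $t=0$ restriction is $[\eta_{C^{\ast}(Gr(H))}]$ of Definition \ref{defi2}, and whose $t=1$ restriction, after the same Morita equivalence, is $[\Theta_M]$; the identity $1_{C_0(\G/\K)}\otimes_{\C}[\eta_{\G/\K}]=[\Theta_{\G/\K}]$ recalled before Definition \ref{defi2}, applied fibrewise to $\G=Gr(H)_x$ with $\K=0$, is the algebraic compatibility that makes the two ends match. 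Finally, the Kasparov product
$$[\mathbf{P}]\otimes_{C^{\ast}(\mathbb{G}r(H))}[\boldsymbol{\eta}]\in KK(C_0(M)\otimes C[0,1],\,Cl_{\tau}(M))$$
yields a homotopy whose endpoints are exactly $[\sigma_H^{cl}(P)]$ and $[\Theta_M]\otimes_{C_0(M)}[P]$, and the theorem follows by homotopy invariance of $KK$.

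The main obstacle I expect is producing $[\boldsymbol{\eta}]$ as a genuinely smooth, $r$-fibered Kasparov family over $\mathbb{G}r(H)$. The Clifford multiplications at $t=0$ are given by the translation-invariant radial covector field on each fibre $Gr(H)_x$, while at $t>0$ they come from the radial field $\Theta_x(y)$ on $U_x\subset M$; matching these under the zoom action requires identifying the normal exponential coordinates of $M$ near the diagonal with the exponential chart of the nilpotent group $Gr(H)_x$, and then checking that the resulting covector-field family defines a properly supported, self-adjoint, degree-one multiplier which satisfies the Kasparov compactness axioms uniformly in $t$. A secondary subtlety is that Kasparov's inductive definition of $\eta_{\G/\K}$ is not literally a radial multiplication, so the bridge between the two descriptions at $t=0$ must be mediated through the equality $1\otimes[\eta_{\G/\K}]=[\Theta_{\G/\K}]$ rather than through a direct comparison of concrete representatives.
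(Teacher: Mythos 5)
The paper states this theorem as imported from Kasparov and does not reprove it; the companion proof it gives for the transversally H-elliptic version in Chapter 5 is local rather than groupoidal: it transports $\sigma_H(P,x)$ through the exponential charts $L^2(U_x)\cong L^2(\mathcal{V}_x)$, forms $P(x)=\int P(x)\,d\phi$, and runs an explicit geodesic rotation homotopy before invoking $[\Theta_M]\otimes_{Cl_\tau(M)}[d_M]=1$. Your tangent-groupoid deformation is a genuinely different strategy, but the obstacle you yourself flag at the end is the theorem, not a side technicality. The identity $1_{C_0(\G/\K)}\otimes_{\C}[\eta_{\G/\K}]=[\Theta_{\G/\K}]$ holds over a \emph{fixed} group $\G$ acting on its own homogeneous space, so it supplies a homotopy for each $x$ separately; it does not produce a field of self-adjoint degree-one Clifford multipliers on $C^{\ast}(\mathbb{G}r(H))\hotimes_{C_0(M\times[0,1])}Cl_\tau(M)$ that is continuous through the zoom degeneration at $t=0$ and restricts correctly at both ends. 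Establishing that such a field exists, is properly supported, and satisfies the Kasparov compactness axioms uniformly in $t$ is where the analytic content of the statement lives, and the proposal does not attempt it.

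There is a second, more structural gap at $t=1$. After the Morita equivalence $C^{\ast}(M\times M)\cong\mathcal{K}(L^2(M))\sim\C$, your product $[\mathbf{P}]|_{t=1}\otimes[\boldsymbol{\eta}]|_{t=1}$ becomes an external product $[P]\otimes_{\C}[\boldsymbol{\eta}]'$ with $[\boldsymbol{\eta}]'\in KK(\C,Cl_\tau(M))$, whereas $[\Theta_M]\otimes_{C_0(M)}[P]$ is emphatically not an external product: the second $C_0(M)$ factor in $[\Theta_M]\in KK(C_0(M),C_0(M)\hotimes Cl_\tau(M))$ must act on $L^2(E)$ through the very representation that defines $[P]$. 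Collapsing the pair groupoid to $\C$ forgets precisely that bimodule structure. To keep it you would have to carry the whole argument out in $\RKK(M;\cdot,\cdot)$, treating $C^{\ast}(M\times M)$ as a $C_0(M)$-algebra via the range map and reconciling the two copies of $M$ at the end --- which is again the heart of the theorem, not something the groupoid picture delivers for free.
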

\begin{thm}[H-elliptic Index Theorem]
    Let $P$ be a $0$-order properly supported H-elliptic operator on $M$, then the formula for the index $[P]$ of this operator is
    $$[P]=[\sigma_H(P)]\otimes_{C^{\ast}(Gr(H))} [D'_M]\in K^{0}(C_0(M)).$$
\end{thm}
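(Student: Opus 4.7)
The plan is to deduce this statement directly from the Inverse Clifford Index Theorem just stated, together with the Kasparov duality $[\Theta_M]\otimes_{Cl_{\tau}(M)}[d_M]=1_M$, by a purely formal manipulation of Kasparov products. The strategy is to start from the right-hand side, use the definition $[D'_M]=[\eta_{C^{\ast}(Gr(H))}]\otimes_{Cl_{\tau}(M)}[d_M]$ to split the product into two pieces, regroup these pieces via associativity so that the Clifford cosymbol $[\sigma_H^{cl}(P)]$ emerges, substitute using the Inverse Clifford Index Theorem, and then rearrange once more to invoke the duality identity.

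Concretely, I would expand
$$[\sigma_H(P)]\otimes_{C^{\ast}(Gr(H))}[D'_M]=[\sigma_H(P)]\otimes_{C^{\ast}(Gr(H))}\bigl([\eta_{C^{\ast}(Gr(H))}]\otimes_{Cl_{\tau}(M)}[d_M]\bigr)$$
and use associativity of the Kasparov product over disjoint coefficient algebras (the general version recorded in the corollary quoted in Section \ref{sec1.5}) to regroup this as
$$\bigl([\sigma_H(P)]\otimes_{C^{\ast}(Gr(H))}[\eta_{C^{\ast}(Gr(H))}]\bigr)\otimes_{Cl_{\tau}(M)}[d_M]=[\sigma_H^{cl}(P)]\otimes_{Cl_{\tau}(M)}[d_M],$$
where the second equality is the definition of the Clifford cosymbol. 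Next, invoking the Inverse Clifford Index Theorem to replace $[\sigma_H^{cl}(P)]$ by $[\Theta_M]\otimes_{C_0(M)}[P]$ yields
$$\bigl([\Theta_M]\otimes_{C_0(M)}[P]\bigr)\otimes_{Cl_{\tau}(M)}[d_M].$$
Since these two products are taken over the disjoint coefficient algebras $C_0(M)$ and $Cl_{\tau}(M)$, with $[P]\in KK(C_0(M),\C)$ inert on the $Cl_{\tau}(M)$-slot, associativity permits the reordering
$$\bigl([\Theta_M]\otimes_{Cl_{\tau}(M)}[d_M]\bigr)\otimes_{C_0(M)}[P]=1_M\otimes_{C_0(M)}[P]=[P],$$
which is exactly the identity to be established.

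The substantive obstacle is not in the formal manipulations above but in the nontrivial analytic input, namely the identification of $[\sigma_H^{cl}(P)]$ with $[\Theta_M]\otimes_{C_0(M)}[P]$; this is precisely the content of the Inverse Clifford Index Theorem, which I am granting as a prerequisite and which itself rests on a rotation-homotopy argument relating the H-cosymbol of $P$ on $Gr(H)$ to a Clifford multiplication on a neighborhood of the diagonal of $M\times M$, in the spirit of the rotation argument in the proof of Theorem \ref{thm5}. A secondary point deserving care is checking that the $\mathcal{R}KK(M;-,-)$-module structure on $[\eta_{C^{\ast}(Gr(H))}]$ from Definition \ref{defi2} is compatible with the plain $KK$-products used above, so that associativity applies unambiguously once $[\eta_{C^{\ast}(Gr(H))}]$ is viewed via its image in $KK(C^{\ast}(Gr(H)),Cl_{\tau}(M))$; this is standard but worth flagging explicitly.
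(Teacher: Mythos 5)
Your derivation is correct. The paper itself states both the Inverse Clifford Index Theorem and the H-elliptic Index Theorem as quoted results from \cite{kasparov2024coarsepseudodifferentialcalculusindex} without reproducing a proof, so there is no internal paper argument to compare against word for word; but your reduction is exactly the natural formal route linking the two statements. You correctly assemble the three ingredients — the definition $[D'_M]=[\eta_{C^{\ast}(Gr(H))}]\otimes_{Cl_{\tau}(M)}[d_M]$, the Inverse Clifford Index Theorem $[\sigma_H^{cl}(P)]=[\Theta_M]\otimes_{C_0(M)}[P]$, and the duality $[\Theta_M]\otimes_{Cl_{\tau}(M)}[d_M]=1_M$ — and the associativity/commutativity steps you invoke are legitimate: the first is straight associativity of a chain over $C^{\ast}(Gr(H))$ and then $Cl_{\tau}(M)$, while the second is the standard interchange of products over the disjoint slots $C_0(M)$ and $Cl_{\tau}(M)$ in the target $C_0(M)\hat{\otimes}Cl_{\tau}(M)$ of $[\Theta_M]$. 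Your flagged caveat about passing from $\mathcal{R}KK(M;\cdot,\cdot)$ to plain $KK$ via the forgetful map before applying associativity is the right thing to be careful about and is indeed standard. You are also right that the genuine analytic content of this circle of results lives in the Inverse Clifford Index Theorem (which you rightly treat as given), so the present statement really is a formal corollary as you argue.
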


\begin{remark}
    Comparing with Theorem \ref{thm2}, we see that this newly defined Dolbeault element $[D'_M]$ is actually the product of the Connes-Thom element $[\mathcal{E}]$ and the old Dolbeault $[D_M]$.
\end{remark}

Finally, we conclude the index classes and symbol classes in different cases, which lead to a basic expectation for transversally H-elliptic operators. The construction of symbol and index classes in $KK$-homology, and their correlation to the analytical and topological index of the operator on a locally compact manifold are summarized as follows:

\begin{itemize}
    \item The index class, expressed as the $KK$-pairing $[(L^2(E),P)]\in KK^G(A,\C)$, with different preference of the $C^{\ast}$-algebra $A$ in varying cases. The index class represents the analytical index for operators.
    \item The symbol class is represented as the $KK$-pairing $[( C_0(M;E)\otimes_{C_0(M)} B, \sigma(P))]\in KK^G(C_0(M),B)$, where $B$ is selected to be the symbol algebra of ``negative order’’ terms in different cases. It is constructed from the symbol depending on the topological properties of $P$ and $M$, such symbol class can be understood as the topological index. Especially, in the transversally elliptic case, the ``negative order’’ algebra $S_{tr}(M)$ is replaced by the $KK$-equivalent Clifford algebra $Cl_{\Gamma}(TM)=C_0(TM)\otimes_{C_0(M)} Cl_{\Gamma}(M)$.
\end{itemize}

Thus, in the transversally H-elliptic case defined in the following section, we expect to construct some appropriate algebras $A$, $B$ and construct the symbol class and index class in a similar fashion:
$$[P]\in KK^G(A, \C),\ [\sigma_H(P)]\in KK^G(C_0(M), B).$$

\chapter{Construction for transversally H-elliptic operators}
\label{chap4}
From now on, we begin to construct the definition of transversally H-elliptic operators from H-elliptic theory.

\section{Basic notions}
\label{sec4.1}
To articulate the concept of transversally H-elliptic operators, preliminary notions related to the transversally elliptic operators are required.

Let $M$ be a complete filtered Riemannian manifold equipped with a smooth, proper, and isometric action of a locally compact Lie group $G$. Recall that for each point $x\in M$, we have the mapping $f_x: G \to M$, $g \mapsto g(x)$. The derivative of $f_x$ at $e\in G$ is denoted by $f_x':\g\to T_x M$.

To transform the concepts in transversal elliptic theory to transversal H-elliptic theory, we need to construct a natural isomorphism between the tangent bundle $TM$ and the bundle of osculating Lie algebras $gr(H)=\bigsqcup_x gr(H)_x$.
\begin{defn}\label{defn5}
    Define $\psi_x: T_x M\to gr(H)_x$ to be the orthogonal bundle map, which maps $v=v_0+v_1+\dots+v_n\in T_x M$, where all $v_i, v_j$ are orthogonal in the Riemannian structure of $M$ and $v_i\in H^i$ for each $i$, to $$[v]:=[v_0]+[v_1]+\dots+[v_n]\in gr(H)_x=\bigoplus_{i=1}^n H^i_x/H^{i-1}_x.$$
We set
    $$h_x:=\psi_x\circ f_x':\g\to T_x M\to gr(H)_x.$$ 
\end{defn}

It induces a $G$-invariant quadratic form $\tilde{q}$ on $\xi \in  gr(H)_x^{\ast}$ by $$\tilde{q}_x(\xi)=\lVert h_x^{\ast}(\xi)\rVert^2, $$
and we obtain a $G$-invariant map $\tilde{\varphi}_x=(h_x h_x^{\ast})^{1/2}:gr(H)^{\ast}_x\to gr(H)^{\ast}_x$.
\begin{remark}
    Note that $\xi\in  gr(H)^{\ast}_x$ corresponds to covectors that are orthogonal to the orbit passing through $x$ if and only if $\tilde{q}_x(\xi)=0$, or if and only if $\tilde{\varphi}_x(\xi)=0$.
\end{remark}

Let $Y_1, \dots, Y_n$ be an orthonormal basis for $\g$ and the same notation is used for the corresponding left-invariant vector fields on $G$. Let $\tilde{Y_1},\dots,\tilde{Y_n}$ be the corresponding first-order differential operators (Lie derivatives by vector fields on $M$ corresponding to $Y_i$'s).
\begin{defn}\label{defn4}
\textit{The orbital Laplacian operator} $\Delta_G$ with ordinary symbol $\sigma(\Delta_G,x,\xi)={q}_x(\xi)$ is defined by
    $$\Delta_G:=\sum_{i} -\tilde{Y_i^2}.$$
\end{defn}

\section{Construction of leaf-wise Dirac operator}
\label{sec4.2}
The difference between H-elliptic and transversally H-elliptic operators lies in the leaf-wise part of the $G$-action. So we need a leaf-wise elliptic operator, the orbital Dirac operator, so that it will intertwine with transversally H-elliptic operators in some sense to become H-elliptic operators.

In this section, we give the construction of the orbital Dirac operator with respect to the $G$-action on $M$ in \cite{kasparov2022k}. The construction can be derived by the following three steps:
\begin{itemize}
    \item For each $x\in M$, we first construct a canonical differential Dirac operator on the quotient $G/M_x$, where $M_x$ is the stability subgroup of $G$ at $x\in M$. The Lie algebra of the stability subgroup $M_x$ will be denoted $\mathfrak{m}_x$.
\begin{lem}[\cite{kasparov2022k}]
The space of $L^2$-sections of the vector bundle
$\Lambda^{\ast}(G/M_x)$ is isomorphic to the $M_x$-invariant subspace $(L^2(G) \otimes \Lambda^{\ast}(\mathfrak{m}^{\bot}_x))^{M_x}$, where $M_x$ acts on $L^2(G)$ by right translations and $\Lambda^{\ast}$ means the exterior algebra.
\end{lem}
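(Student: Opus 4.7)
The plan is to realize $\Lambda^{\ast}(G/M_x)$ as an associated bundle over the principal $M_x$-bundle $G \to G/M_x$, and then invoke the standard identification between $L^2$-sections of an associated bundle and the $M_x$-invariant subspace of $L^2(G)\otimes V$, where $V$ is the typical fiber.

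First, I would fix a $G$-invariant Riemannian metric on $M$ (already assumed in the paper), which induces an $\mathrm{Ad}(M_x)$-invariant inner product on $\mathfrak{g}$ via the identification $f'_x:\mathfrak{g}\to T_x M$, and hence an orthogonal decomposition $\mathfrak{g}=\mathfrak{m}_x\oplus\mathfrak{m}_x^{\bot}$ where both summands are $M_x$-invariant. The natural identification $T_{eM_x}(G/M_x)\cong \mathfrak{g}/\mathfrak{m}_x\cong \mathfrak{m}_x^{\bot}$ is then an isomorphism of $M_x$-representations, where $M_x$ acts by the restriction of $\mathrm{Ad}$. Translating by left multiplication of $G$, one gets the standard description of the tangent bundle of the homogeneous space as an associated bundle,
\[
T(G/M_x)\;\cong\;G\times_{M_x}\mathfrak{m}_x^{\bot}.
\]

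Next I would apply the exterior algebra functor, which commutes with the associated-bundle construction because it is defined fiberwise by $M_x$-equivariant operations. This yields
\[
\Lambda^{\ast}(G/M_x)\;\cong\;G\times_{M_x}\Lambda^{\ast}(\mathfrak{m}_x^{\bot}),
\]
again as $G$-equivariant vector bundles, with $M_x$ acting on $\Lambda^{\ast}(\mathfrak{m}_x^{\bot})$ via its induced representation from $\mathrm{Ad}|_{\mathfrak{m}_x^{\bot}}$.

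Finally, I would invoke the standard Peter--Weyl / induced-representation identification: for any unitary $M_x$-representation $V$, smooth (respectively $L^2$) sections of $G\times_{M_x}V$ correspond bijectively to smooth (respectively $L^2$) functions $\varphi:G\to V$ satisfying the equivariance $\varphi(gm)=m^{-1}\cdot\varphi(g)$ for $m\in M_x$, which is precisely the $M_x$-invariant subspace $(L^2(G)\otimes V)^{M_x}$ for the diagonal action (right translation on $L^2(G)$, induced representation on $V$). Applying this with $V=\Lambda^{\ast}(\mathfrak{m}_x^{\bot})$ produces the claimed isomorphism. I would make sure the Haar measure on $G$ is normalised against the Haar measure on $M_x$ so that the natural fiber integration agrees with the Riemannian measure on $G/M_x$ induced by the chosen metric; this is the only delicate point, and it is the main (though essentially routine) obstacle: one must check that the pushed-forward measure matches the Riemannian volume form on $G/M_x$, which follows from the fact that our inner product on $\mathfrak{m}_x^{\bot}$ was taken to be the quotient of the $G$-invariant metric on $\mathfrak{g}$.
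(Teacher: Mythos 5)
The paper does not give its own proof of this lemma; it is quoted verbatim from Kasparov's paper with only a citation, so there is no in-text argument to compare against. Your proposal is the standard proof: identify $T(G/M_x)$ with the associated bundle $G\times_{M_x}\mathfrak{m}_x^{\bot}$ via a reductive decomposition $\mathfrak{g}=\mathfrak{m}_x\oplus\mathfrak{m}_x^{\bot}$, pass to exterior powers, and then apply the Frobenius/induction identification of $L^2$-sections of an associated bundle with the $M_x$-invariants of $L^2(G)\otimes V$. This is almost certainly the argument Kasparov intends, and it is correct. Two details you handled implicitly are worth making explicit: (i) since the $G$-action on $M$ is proper, $M_x$ is compact, so $\mathrm{Ad}(M_x)$ preserves an inner product and the orthogonal complement $\mathfrak{m}_x^{\bot}$ is indeed $M_x$-invariant, which is needed for the associated-bundle description to make sense; and (ii) compactness of $M_x$ also forces $\Delta_G|_{M_x}\equiv 1$, so a $G$-invariant measure on $G/M_x$ exists and the measure-matching step you flagged as delicate goes through automatically. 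With those two remarks in place the proposal is complete.
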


\begin{defn}
    For any $v \in \g$, let $g_v(t)$ be the one-parameter subgroup of $G$ corresponding
to the vector $v$.
The left-invariant differential operator $\partial/\partial v$ on smooth elements of $L^2(G)$ is defined to be the infinitesimal right translation by $g_v(t)$ on $L^2(G)$.
\end{defn}

\begin{remark}
     Note that when $v \in \mathfrak{m}_x$, the differential operator $\partial/\partial v$ is $0$ on $C_c^{\infty}(G/M_x)$ and also on $(C_c^{\infty}(G) \otimes \Lambda^{\ast}(\mathfrak{m}^{\bot}_x))^{M_x}$.
\end{remark}
Choose any basis $\{\tilde{v}_k\}$ in $\mathfrak{m}^{\bot}_x$ and the dual basis $\{v_k\}$ in $\g/\mathfrak{m}_x$. Define
the differential Dirac operator on $(C_c^{\infty}(G) \otimes \Lambda^{\ast}(\mathfrak{m}^{\bot}_x))^{M_x}$ by the formula
$$\mathcal{D}_{G/M_x}=-i\sum_k (\text{ext}(\tilde{v}_k)+\text{int}(\tilde{v}_k))\frac{\partial}{\partial v_k}.$$
\begin{lem}[\cite{kasparov2022k}]\label{lem7}
    For all $x\in M$, $\mathcal{D}_{G/M_x}$ are elliptic, and their principal symbols are given by $\sigma_{\mathcal{D}_{G/M_x}} = \text{ext} (\eta) + \text{int} (\eta)$.
\end{lem}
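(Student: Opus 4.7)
The lemma really bundles two claims: a computation of the principal symbol, and a verification that the resulting symbol is invertible off the zero section. I would handle them in that order, since ellipticity will fall out of the formula for $\sigma_{\mathcal{D}_{G/M_x}}$ by a single Clifford-algebra identity.

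For the symbol computation, the key observation is that each $\partial/\partial v_k$, by construction, is the left-invariant first-order differential operator on $G$ corresponding to $v_k \in \g$, and for $v_k \in \mathfrak{m}_x^{\perp}$ this descends (via the identification $(L^2(G)\otimes\Lambda^{\ast}(\mathfrak{m}_x^{\perp}))^{M_x} \cong L^2(\Lambda^{\ast}(G/M_x))$) to the Lie derivative along the vector field on $G/M_x$ induced by $v_k$. Its principal symbol at a covector $\eta \in T^{\ast}_{gM_x}(G/M_x) \cong (\mathfrak{m}_x^{\perp})^{\ast}$ is therefore $i\eta(v_k)$ under the standard convention. Substituting into the defining formula cancels the leading $-i$ and yields
$$\sigma_{\mathcal{D}_{G/M_x}}(\eta) \;=\; \sum_k \eta(v_k)\bigl(\text{ext}(\tilde v_k)+\text{int}(\tilde v_k)\bigr).$$
Since $\{v_k\}$ and $\{\tilde v_k\}$ are dual bases of $\g/\mathfrak{m}_x$ and $\mathfrak{m}_x^{\perp}$, the vector $\sum_k \eta(v_k)\tilde v_k \in \mathfrak{m}_x^{\perp}$ is exactly the one identified with $\eta$, so the displayed sum collapses to $\text{ext}(\eta)+\text{int}(\eta)$, as claimed.

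Ellipticity is then a one-line consequence of the Clifford relations. For $\eta \neq 0$,
$$\bigl(\text{ext}(\eta)+\text{int}(\eta)\bigr)^{2} \;=\; \text{ext}(\eta)\text{int}(\eta)+\text{int}(\eta)\text{ext}(\eta) \;=\; \lVert\eta\rVert^{2}\cdot\text{id},$$
since $\text{ext}(\eta)^{2}=0=\text{int}(\eta)^{2}$ on the exterior algebra. Hence $\sigma_{\mathcal{D}_{G/M_x}}(\eta)$ is invertible off the zero section, which is exactly ellipticity.

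The one place I expect to have to be careful is checking that the expression on the right of the definition is genuinely intrinsic: it must be independent of the chosen basis $\{\tilde v_k\}$ (and its dual), and it must actually preserve the $M_x$-invariant subspace so that it descends to an operator on $\Lambda^{\ast}(G/M_x)$. Both facts follow from recognizing $\sum_k (\text{ext}(\tilde v_k)+\text{int}(\tilde v_k))\otimes v_k$ as the image of $\text{id}_{\mathfrak{m}_x^{\perp}}$ under the canonical map $\text{End}(\mathfrak{m}_x^{\perp})\cong \mathfrak{m}_x^{\perp}\otimes (\mathfrak{m}_x^{\perp})^{\ast}\hookrightarrow \text{Cliff}(\mathfrak{m}_x^{\perp})\otimes(\g/\mathfrak{m}_x)$, which is $M_x$-equivariant because the inner product, the Clifford structure and the splitting $\g=\mathfrak{m}_x\oplus\mathfrak{m}_x^{\perp}$ are all $M_x$-invariant. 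Once this invariance is in hand, the symbol and ellipticity computations above are coordinate-free and the lemma follows.
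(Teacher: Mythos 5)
Your proof is correct and follows essentially the same route as the paper: the paper's proof is a one-line "direct calculation" of $\sigma_{\mathcal{D}_{G/M_x}}(\eta) = -i\sum_k i\langle\eta,v_k\rangle(\text{ext}(\tilde v_k)+\text{int}(\tilde v_k)) = \text{ext}(\eta)+\text{int}(\eta)$ followed by the observation that this is invertible for $\eta\neq 0$, and you have simply filled in the intermediate steps (symbol of $\partial/\partial v_k$, duality of bases, and the Clifford identity $(\text{ext}(\eta)+\text{int}(\eta))^2 = \lVert\eta\rVert^2$). Your closing remark on basis-independence and $M_x$-equivariance is a sensible additional sanity check, though the paper leaves it implicit.
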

\begin{proof}
    By direct calculation, the principal symbol of $\mathcal{D}_{G/M_x}$ at $\eta$ is 
    $$-i\sum_k i\langle \eta, v_k\rangle (\text{ext}(\tilde{v}_k)+\text{int}(\tilde{v}_k))=\text{ext} (\eta) + \text{int} (\eta).$$
    It is invertible for aribitrary $\eta\neq 0$, thus $\mathcal{D}_{G/M_x}$ is elliptic.
\end{proof}
\item Then we construct the Dirac operator acting on sections of the vector bundle $\Lambda^{\ast}(\mathcal{O}_x)$ for each orbit $\mathcal{O}_x \cong G/M_x$. 
Using the homeomorphisms $f_x : G/{M_x} \cong \mathcal{O}_x$, we can push forward all these operators $\D_{G/M_x}$ to the orbits $\mathcal{O}_x$ in $M$. The operator $\mathcal{D}$ on the orbit $\mathcal{O}$ will be denoted $\mathcal{D}_{\mathcal{O}}$. By the construction above, it depends only on the orbit, not on the point $x$ of the orbit.
\item Using the embedding $\mathfrak{m}^{\bot}_x \cong T_x({\mathcal{O}}_x) \subset T_x(M)$, one can redefine the operators $\mathcal{D}_{{\mathcal{O}}_x}$ as operators acting on the bundles $\Lambda^{\ast}(M)|_{{\mathcal{O}}_x}$. This will give the operators
$\mathcal{D}_{\mathcal{O}}$ on sections of the bundle $\Lambda^{\ast}(M)$ over the orbit ${\mathcal{O}}$. 
\begin{defn}
    The \textit{orbital Dirac operator} $\D_G$ on $\Lambda^{\ast}(M)$ is formed by the family of operators $\{\mathcal{D}_{{\mathcal{O}}_x}\}$ on $\Lambda^{\ast}(M)|_{{\mathcal{O}}_x}$.
\end{defn}
\end{itemize}

The above statements can be concluded as follows:
\begin{prop}\label{prop2}
    There exist an orbital Dirac operator $\mathcal{D}_G$ on $\Lambda^{\ast}(M)$ whose ordinary principal symbol is $\text{ext }(\varphi_x(\xi))+\text{int }(\varphi_x(\xi))$.
\end{prop}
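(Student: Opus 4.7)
The plan is to verify the symbol formula pointwise at $x \in M$ by tracking how the symbol transforms through the three-step construction laid out above. At stage one, Lemma~\ref{lem7} already provides $\sigma_{\mathcal{D}_{G/M_x}}(\eta) = \text{ext}(\eta) + \text{int}(\eta)$ for $\eta \in (\g/\mathfrak{m}_x)^{\ast} \cong \mathfrak{m}_x^{\perp}$, acting on $\Lambda^{\ast}(\mathfrak{m}_x^{\perp})$; left $G$-invariance of the construction propagates this identity to every point of $G/M_x$.

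At stage two, I would push the operator forward through the diffeomorphism $f_x : G/M_x \to \mathcal{O}_x$. Since pushforward transforms principal symbols by pullback of covectors, for $\xi \in T_x^{\ast} \mathcal{O}_x$ the symbol of $\mathcal{D}_{\mathcal{O}_x}$ at $\xi$ reads $\text{ext}((f_x')^{\ast}\xi) + \text{int}((f_x')^{\ast}\xi)$, still viewed on $\Lambda^{\ast}(\mathfrak{m}_x^{\perp})$. At stage three the decisive move is the polar decomposition $f_x' = U \varphi_x$, where $U : \mathfrak{m}_x^{\perp} \to T_x \mathcal{O}_x$ is an isometric isomorphism and $\varphi_x = (f_x'(f_x')^{\ast})^{1/2}$, viewed as an operator on $T_x M$ vanishing on $(T_x \mathcal{O}_x)^{\perp}$. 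A direct computation gives $(f_x')^{\ast}\xi = U^{-1}\varphi_x(\xi)$ for $\xi \in T_x \mathcal{O}_x$, so transporting the Clifford module $\Lambda^{\ast}(\mathfrak{m}_x^{\perp})$ to $\Lambda^{\ast}(T_x \mathcal{O}_x) \subset \Lambda^{\ast}(T_x M)$ through the isometry $U$ turns the symbol precisely into $\text{ext}(\varphi_x(\xi)) + \text{int}(\varphi_x(\xi))$. For a general $\xi \in T_x^{\ast} M$ the formula persists: the orbital operator differentiates only along the orbit, and $\varphi_x$ annihilates the normal component anyway, so only $\varphi_x(\xi)$ survives.

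The principal obstacle is this last piece of metric bookkeeping, in particular pinning down that the correct power is $\varphi_x$ rather than $\varphi_x^2$ or $f_x'(f_x')^{\ast}$. The naive identification $\mathfrak{m}_x^{\perp} \cong T_x \mathcal{O}_x$ via $f_x'$ is not an isometry, and transporting the Clifford structure through it would introduce the wrong factor. Using instead the isometric part $U$ of the polar decomposition is what produces the correct Clifford module structure on $\Lambda^{\ast}(T_x M)$, and the square root $\varphi_x$ then appears naturally as the positive part of the decomposition.
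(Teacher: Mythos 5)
Your proposal is essentially the same argument as the paper's: both start from Lemma~\ref{lem7}, push the symbol forward to the orbit so that it becomes $\text{ext}(f_x'{}^{\ast}\xi)+\text{int}(f_x'{}^{\ast}\xi)$, and then use the isometric part of the polar decomposition of $f_x'$ (the paper writes it as $(f_x'f_x'{}^{\ast})^{-1/2}f_x'$) to transport $f_x'{}^{\ast}\xi\in\mathfrak{m}_x^{\perp}$ to $\varphi_x(\xi)\in T_x\mathcal{O}_x$. One small slip: with $U\colon\mathfrak{m}_x^{\perp}\to T_x\mathcal{O}_x$ and $\varphi_x\colon T_xM\to T_xM$, the decomposition should read $f_x'|_{\mathfrak{m}_x^{\perp}}=\varphi_x\circ U$ rather than $U\varphi_x$ (the latter is a type mismatch and would also give $f_x'{}^{\ast}=\varphi_xU^{-1}$ rather than the $U^{-1}\varphi_x$ you actually use); the downstream computation is consistent with the corrected order, so the conclusion stands.
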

\begin{proof}
The existence has been verified already. The ordinary principal symbol can be calculated as follows. By Lemma \ref{lem7}, the principal symbol of $D_{\mathcal{O}}$ on $\Lambda^{\ast}(M)|_{{\mathcal{O}}}$ can be written as 

$$\sigma({D_\mathcal{O}})(x,\xi)= \text{ext}({f'_x}^{\ast}(\xi))+\text{int}({f'_x}^{\ast}(\xi)).$$
Note that ${f'_x}^{\ast}(\xi)$ lies in $\g_x^{\ast}$, and we want to transform it back to $T_x M$. Since $f'_x$ is a linear map from $\g$ to $T_x M$, the image of the linear map $f'_x$ and the image of its adjoint ${f'_x}^{\ast}$ are isometrically isomorphic by
$$(f'_x{f'_x}^{\ast})^{-1/2}f'_x: \text{Im}(f'_x)\to \text{Im}({f'_x}^{\ast}).$$
       Applying this isometry to ${f'_x}^{\ast}$, we can replace it by $$(f'_x{f'_x}^{\ast})^{-1/2}f'_x= (f'_x{f'_x}^{\ast})^{1/2}(\xi)=\varphi_x(\xi)$$
        which is defined in Section \ref{sec2.2}.
        Therefore, we have
        $$\sigma(\D_G)(x,\xi)=\text{ext}(\varphi_x(\xi))+\text{int}(\varphi_x(\xi)).$$

\end{proof}

\section{Adjoint in different viewpoints}
\label{sec4.3}
The H-principal cosymbol $\sigma_H(\D_G,x)$ at one point $x$ should be an operator on $\G=Gr(H)_x$. It defines an operator on $C_c^{\infty}(\G)$ and extends to $C^{\ast}(\G)$. In order to give a well-defined definition for transversally H-elliptic operators (Definition \ref{def1}), it is reasonable to show the essential self-adjointness of 
 $$\sigma_H(\D_G,x)= -i \sum_k X_k (\text{ext}(e_k)+\text{int}(e_k)),$$
     where $X_k$ are elements in $gr(H)_x$ corresponding to generators of $\g$ of highest order in the filtration structure and $e_k$ are the dual to $X_k$.

We can define the adjoint operator of a left-invariant differential operator $P=\sigma_H(\D_G,x)$, for example, in the sense of $C^{\ast}$-module morphism or in the sense of universal algebra. Before we verify the essential self-adjointness of $\sigma_H(\D_G,x)$, it suffices to show that these adjoint maps $P\to P^{\ast}$ are in fact the same.

\begin{defn} There are three definitions for the \textit{adjoint} of an element in $P\in \mathcal{U}(gr(H)_x)$:
    \begin{itemize}
    \item[(a)] $P$ is identified as an element in the universal algebra $\mathcal{U}(gr(H)_x)$. On the universal algebra $\mathcal{U}(gr(H)_x)$, there is an involution map $$-{id}:gr(H)_x \to gr(H)_x$$
    which maps all $X_i$ to $-X_i$. It extended to get an anti-automorphism
    $$i:\mathcal{U}(gr(H)_x)\to \mathcal{U}(gr(H)_x).$$
    The image $P^{\ast}:=i(P)$ is called the adjoint of $P$. This construction fits with all the algebraic structures we want in H-elliptic theory and satisfies $$\sigma_H(D^{\ast})=\sigma_H(D)^{\ast}$$
    for a differential operator $D$ on $M$.
    \item[(b)]  $P$ is identified as a morphism of the Hilbert $C^{\ast}(Gr(H)_x)$-module $E=C^{\ast}(Gr(H)_x)$ itself with inner product $\langle a,b\rangle =a^{\ast}b$. Then we can define the adjoint $P^{\ast}$ by $$\langle Pf,g\rangle=\langle f,P^{\ast}g\rangle$$ by the dense range of $P$.
    \item[(c)] $P$ is identified as a left-invariant differential operator, which can be transformed to an essentially homogeneous distribution on $Gr(H)_x$ by
    $(u,f)=(Pf)(e)$ for $f\in C_c^{\infty}(Gr(H)_x)$. It corresponds to an operator $Op(u)$ by
    $$(Op(u)(f))(x)=(u,f\circ l_x).$$
    By the left-invariance of $P$, we see that $Op(u)=P$.
    The involution operator is defined by 
    $$(u^{\ast},f):=\overline{(u,f^{\ast})}.$$
    Such involution on $u$ induces a adjoint $P^{\ast}$ for the corresponding operator $P=Op(u)$ of $u$.
\end{itemize}
\end{defn}

\begin{thm}
    The above three definitions for the adjoint operator are equivalent.
\end{thm}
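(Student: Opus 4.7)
The plan is to show that each of (a), (b), (c) defines a conjugate-linear anti-involution on the algebra of left-invariant differential operators on $Gr(H)_x$, which is identified with $\mathcal{U}(gr(H)_x)$ via the Poincar\'e-Birkhoff-Witt theorem (Theorem \ref{thm9}). Any such anti-involution is determined by its values on the generating subspace $gr(H)_x \subset \mathcal{U}(gr(H)_x)$, so it suffices to verify that all three constructions send each $X \in gr(H)_x$ to $-X$.

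First I would check anti-multiplicativity for each construction. For (a), this is built into the definition: $i$ is declared to be the unique anti-automorphism of $\mathcal{U}(gr(H)_x)$ extending $-\mathrm{id}$. For (c), a direct Fubini-type computation shows $Op(u \ast v) = Op(u) \circ Op(v)$, so $Op$ is an algebra isomorphism from the convolution algebra of $r$-fibered distributions supported at $e$ onto the left-invariant differential operators; combined with the standard identity $(u \ast v)^{\ast} = v^{\ast} \ast u^{\ast}$ (verified by substituting $y \mapsto y^{-1}$ in the convolution integral for $\overline{(u \ast v)(f^{\ast})}$), this gives $(PQ)^{\ast} = Q^{\ast} P^{\ast}$ for operation (c). For (b), the $C^{\ast}$-module adjoint is automatically anti-multiplicative whenever it exists on a common dense domain.

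Next I would evaluate each operation on a generator $X \in gr(H)_x$. Under (a) one has $i(X) = -X$ by definition. Under (c), for the distribution $u_X$ defined by $(u_X, f) = Xf(e)$ and using $f^{\ast}(g) = \overline{f(g^{-1})}$, a short computation yields
\begin{equation*}
Xf^{\ast}(e) = \tfrac{d}{dt}\Big|_{t=0}\overline{f(\exp(-tX))} = -\overline{Xf(e)},
\end{equation*}
so $(u_X^{\ast}, f) = \overline{Xf^{\ast}(e)} = -Xf(e)$, whence $Op(u_X^{\ast}) = -X$. Under (b), for $f, g$ in the dense subset $C_c^{\infty}(Gr(H)_x) \subset C^{\ast}(Gr(H)_x)$, a standard integration by parts against the Haar measure on the unimodular nilpotent group $Gr(H)_x$ (the substitution $g \mapsto g\exp(-tX)$ leaves the Haar measure invariant) gives $\langle Xf, g\rangle = \langle f, -Xg\rangle$ in the $C^{\ast}$-module sense, so $X^{\ast} = -X$ again.

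The main obstacle is the analytic justification in (b): one must argue that the unbounded symmetric operator $X$ on the $C^{\ast}$-module $C^{\ast}(Gr(H)_x)$ is regular in the sense of Baaj-Julg and that the adjoint defined by the dense-range prescription in (b) truly extends the naive adjoint computed on $C_c^{\infty}(Gr(H)_x)$. This rests on the essential self-adjointness results for invariant differential operators on nilpotent Lie groups, together with the Rockland-type regularity furnished by Corollary \ref{cor2}. Once this point is settled, all three anti-involutions coincide on the generating subspace $gr(H)_x$ of $\mathcal{U}(gr(H)_x)$, and the universal property of the enveloping algebra forces them to coincide on the whole of $\mathcal{U}(gr(H)_x)$, completing the proof.
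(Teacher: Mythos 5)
Your argument follows essentially the same route as the paper: reduce to first-order generators $X\in gr(H)_x$ via (anti-)multiplicativity, then verify that all three constructions send $X$ to $-X$ — operation (a) by definition, operation (c) by the pullback-by-inversion computation, and operation (b) by invariance of the Haar measure on the unimodular group. One small caveat on your closing remark: Corollary \ref{cor2} requires the Rockland condition, which a single generator $X$ does not satisfy, so it cannot be invoked to settle regularity of $X$ on the $C^{\ast}$-module; the paper sidesteps this by verifying the adjoint identity only on the dense common domain $C_c^{\infty}(Gr(H)_x)$, exactly as you do in the body of the argument, and the regularity issue is treated separately (Theorem \ref{thm7}) only for the particular operator that actually needs it.
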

\begin{proof}
We prove for a general nilpotent Lie group $\G$. We consider only unimodular groups, which means that the Haar measure is both left- and right-invariant. 
  
  Recall that we have defined the convolution and involution maps on $C^{\infty}_{c}(\G)$ in Definition \ref{defn1}.
Let $f,g \in C_c^{\infty}(\G)$, the convolution $f \ast g$ is defined by
$$f\ast g(x):=\int_G f(y)g(y^{-1}x)dy=\int_G f(xy^{-1})g(y)dy.$$
The involution $\ast$-operator is defined by
$$f^{\ast}:=\overline{f(t^{-1})}.$$
On $C^{\ast}(\G)$-module $E=C^{\ast}(\G)$, the product is given by 
$$\langle f,g\rangle=f^{\ast}\ast g.$$
    $(a)\Leftrightarrow(b)$: It suffices to prove that $\langle Xf,g\rangle =\langle f,-Xg\rangle $ for any first-order differential operator $X\in \g$ and $f,g \in C_c^{\infty}(\G)$.
It can be proved by direct calculation that
\begin{equation*}
    \begin{aligned}
        \langle Xf,g\rangle +\langle f,Xg\rangle &=\int_G \overline{(Xf)(y)}g(y x) dy+\int_G \overline{f(y)}(Xg)(y x) dy\\
        &=\int_G X(\overline{f(y)}g(y x))) dy\\
        &=0
    \end{aligned}
\end{equation*}
by the invariance of the Haar measure.

$(a)\Leftrightarrow(c)$: Assume $f \in C_c^{\infty}(\G)$. It suffice to prove that for $u$ corresponds to $X\in \g$ by $(u,f)=Xf(e)$, $Op(u^{\ast})$ transform $f$ to $-Xf$.
 $Op(u^{\ast})f$ maps $x\in \G$ to 
\begin{equation*}
    \begin{aligned}
        &\overline{X(\bar{f}\circ l_{x}\circ\iota)}(e)\\
        =&d l_{x}\circ d\iota_e \circ \overline{X(\bar{f}(\cdot)})(e)\\
        =&d l_{x}\circ  (-X)(f(\cdot))(e)\\
        =&-X(f)(x).
    \end{aligned}
\end{equation*}
The second equation holds because $d\iota_e=-\text{id}_{T_e \G}$.\\
Thus the adjoint constructions in $(a),(b),(c)$ are equivalent.

\end{proof}

\section{Essential self-adjointness of $\sigma_H(\D_G)$}
\label{sec4.4}
In this section, we show the essential self-adjointness of $\sigma_H(\D_G)$, where $\D_G$ is the orbital Dirac operator on $\Lambda^{\ast}(M)$ constructed in Section \ref{sec4.2}.

For each $x\in M$, take the $C^{\ast}(Gr(H)_x)$-module $E=C^{\ast}(Gr(H)_x)\otimes \Lambda^{\ast}_x(M)$. Then the H-principal cosymbol of $\D_G$ is an unbounded operator on the Hilbert module $E$
 $$\sigma_H(\D_G,x): C^{\ast}(Gr(H)_x)\otimes \Lambda^{\ast}_x(M)\to C^{\ast}(Gr(H)_x)\otimes \Lambda^{\ast}_x(M).$$
 
In this section, we will prove the following theorem:
  \begin{thm}\label{thm7}
       For each $\lambda\geq 1$ large enough, the image of $\sigma_H(\D_G,x)\pm i\lambda$ is dense in the algebra $C^{\ast}(Gr(H)_x)\otimes \Lambda^{\ast}_x(M)$.
       
       Thus $\sigma_H(\D_G,x)$ is a regular operator as a morphism on the $C^{\ast}(Gr(H)_x)$-module $E=C^{\ast}(Gr(H)_x)\otimes \Lambda^{\ast}_x(M)$.
   \end{thm}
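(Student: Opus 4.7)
Write $D := \sigma_H(\D_G, x) = -i \sum_k X_k c_k$ with $c_k = \text{ext}(e_k) + \text{int}(e_k)$. By Section \ref{sec4.3}, $D$ is symmetric on the dense subspace $C_c^{\infty}(Gr(H)_x) \otimes \Lambda^{\ast}_x(M)$ of the Hilbert $C^{\ast}(Gr(H)_x)$-module $E$. The standard Baaj--Julg criterion for regularity of a symmetric operator on a Hilbert $C^{\ast}$-module asserts that regularity of the closure follows once the images $\text{Im}(D \pm i\lambda)$ are dense in $E$ for some $\lambda > 0$. Thus the theorem reduces to the density statement, and I would organise the rest of the proof around that reduction.

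Using the Clifford anticommutation relations $c_j c_k + c_k c_j = 2\delta_{jk}$, one computes
\begin{equation*}
D^2 \;=\; -\sum_k X_k^2 \;-\; \sum_{j<k}[X_j, X_k]\, c_j c_k \;=\; \sigma_H(\Delta_G, x) + R,
\end{equation*}
where $R$ is a Clifford-valued ``curvature'' term built from the commutators of the $X_k$. Since $D^2 + \lambda^2 = (D \mp i\lambda)(D \pm i\lambda)$, the image of $D^2 + \lambda^2$ is contained in both $\text{Im}(D \pm i\lambda)$, so it suffices to show that $D^2 + \lambda^2$ has dense image. The symmetry of $D$ supplies the formal positivity $\langle (D^2+\lambda^2) f, f \rangle = \langle Df, Df \rangle + \lambda^2 \langle f, f \rangle \geq \lambda^2 \langle f, f \rangle$ in the $C^{\ast}(Gr(H)_x)$-valued inner product.

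To upgrade this formal positivity to genuine dense range, I would invoke the Fourier-transform framework of Chapter \ref{chap6}, and in particular the compactness criterion of Theorem \ref{thm4}. For every nontrivial unitary irreducible representation $\pi \in \widehat{Gr(H)_x}$, each $d\pi(X_k)$ is skew-adjoint on smooth vectors, so $d\pi(D) = -i \sum_k d\pi(X_k) c_k$ is symmetric on $\mathcal{H}_{\pi}^{\infty} \otimes \Lambda^{\ast}_x(M)$, whence $d\pi(D^2 + \lambda^2) \geq \lambda^2$ as a positive self-adjoint operator with bounded inverse of norm at most $\lambda^{-2}$, uniformly in $\pi$. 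The resulting operator field $\{d\pi(D^2+\lambda^2)^{-1}\}_{\pi}$ should then lie in $\text{End}(\Lambda^{\ast}_x(M)) \otimes B^{\ast}(\widehat{Gr(H)_x})$, and Theorem \ref{thm4} identifies it with an element of the multiplier algebra of $E$ that inverts $D^2 + \lambda^2$; dense range of $D^2 + \lambda^2$, and hence of $D \pm i\lambda$, follows immediately.

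\noindent\textbf{Main obstacle.} The crux is the final, Fourier-theoretic step. Because the $X_k$ span only the orbit subspace $h_x(\g) \subset gr(H)_x$ rather than all of $gr(H)_x$, the operator $D$ does not in general satisfy the Rockland condition, and Corollary \ref{cor2} is therefore unavailable to conclude regularity directly. One must instead verify by hand that the field of fiberwise inverses of $D^2 + \lambda^2$ satisfies the smoothness and decay conditions packaged into the algebra $B^{\ast}(\widehat{Gr(H)_x})$ of Definition \ref{defn3}. I expect that for $\lambda \geq 1$ sufficiently large the uniform shift $\lambda^2$ dominates the Clifford-curvature term $R$ across all representations, yielding the uniform estimates needed to apply Theorem \ref{thm4} and close the argument.
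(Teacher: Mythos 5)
Your proposal takes a genuinely different route from the paper's proof, and it has a real gap that you yourself flag but do not close.

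The paper's argument works entirely on the ``space side'' of $Gr(H)_x$: it first proves an essential self-adjointness lemma on the Hilbert space $L^2(Gr(H)_x)\otimes\Lambda^*_x(M)$ using Friedrich mollifiers, a cut-off sequence $g_n$, and the finite propagation speed of the first-order operator $D=\sigma_H(\D_G,x)$; it then perturbs $D$ by a term $i\kappa\sigma_D(z,d\rho(z))$ and uses the exponential weight $e^{-\kappa\rho(z)}$ to show that the $L^2$-solution $u$ of $(D\pm i\lambda)u=v$ actually lies in $L^1$ and hence in $C^*(Gr(H)_x)$, and finally approximates $u$ by smooth compactly supported elements via convolution with $g_n$. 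This gives density of $\mathrm{Im}(D\pm i\lambda)$ directly inside the $C^*$-module. Your plan instead passes to the ``dual side'': you factor $D^2+\lambda^2=(D\mp i\lambda)(D\pm i\lambda)$, observe fiberwise positivity $d\pi(D^2+\lambda^2)\geq\lambda^2$ for every $\pi\in\widehat{Gr(H)_x}$, and hope to assemble the operator field of fiberwise inverses into an adjointable inverse on $E$ by the Fourier-transform machinery of Chapter \ref{chap6}.

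The gap is precisely the assembly step, and it is not a minor technicality. First, Theorem \ref{thm4} is the wrong tool for what you want: it characterizes when a \emph{given} multiplier of $E_x\otimes C^*(Gr(H)_x)$ is \emph{compact}; it does not tell you that an operator field lying in $\mathcal{M}(B^*(\widehat{Gr(H)_x}))$ comes from a multiplier of the module in the first place. You would need the converse direction of Corollary \ref{cor6}, and that requires verifying the strict-continuity conditions $(c')$ and the stratification-compatibility condition $(d')$ of the operator field $\{d\pi(D^2+\lambda^2)^{-1}\}_{\pi}$ across the strata of $\widehat{Gr(H)_x}$ --- none of which you establish. Second, your heuristic that ``for $\lambda$ large the uniform shift $\lambda^2$ dominates the Clifford-curvature term $R$ across all representations'' is not sound: $R=-\sum_{j<k}[X_j,X_k]c_jc_k$ is built from elements of $gr(H)_x$ acting as \emph{unbounded} left-invariant first-order operators, so a fixed $\lambda$ cannot dominate $R$ in operator norm; the positivity $d\pi(D^2+\lambda^2)\geq\lambda^2$ comes from the symmetry of $d\pi(D)$, not from outweighing $R$, and this positivity alone does not give the norm-continuity or dual-limit conditions needed. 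Third, even if you produced a multiplier $Q\in\mathcal{L}(E)$ with $d\pi(Q)=d\pi(D^2+\lambda^2)^{-1}$ for all $\pi$, you would still have to check that $Q$ maps $E$ into the domain of the closure of $D^2+\lambda^2$ and inverts it there, before concluding dense range. In short, your outline correctly identifies that the conclusion is equivalent to the Baaj--Julg resolvent-range condition and correctly records the formal algebra of $D^2$, but the Fourier-theoretic closing move is a program rather than a proof, whereas the paper's exponential-weight argument actually carries the estimate from $L^2$ to the $C^*$-module.
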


 By the formal adjoint argument in the last section, $$\sigma_H(\D_G,x)= -i \sum_k X_k (\text{ext}(e_k)+\text{int}(e_k))$$
    is formally self-adjoint, where $X_k$ are elements in $gr(H)_x$ corresponding to generators of $\g$ of highest order in the filtration structure and $e_k$ are the dual to $X_k$.
\begin{lem}\label{lem8}
   Let $D$ be a first-order formally self-adjoint differential operator on $Gr(H)_x$ which has finite propagation speed. Namely, $c_D<\infty$, where
   $$c_D=\sup_{z\in {Gr(H)_x}}\sup_{\xi\in T_z^{\ast}{Gr(H)_x},\lVert\xi\rVert=1}\lVert\sigma_D(z,\xi)\rVert.$$
   Then, $D$ is essentially self-adjoint on the Hilbert space $ L^{2}(Gr(H)_x) \otimes_{\C}\Lambda_x^{\ast}(M)$.
\end{lem}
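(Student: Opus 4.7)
The plan is to invoke the standard essential self-adjointness criterion of Chernoff for first-order symmetric differential operators of finite propagation speed on a complete Riemannian manifold. Since $Gr(H)_x$ is a simply connected nilpotent Lie group, any left-invariant Riemannian metric turns it into a geodesically complete Riemannian manifold, and $L^2(Gr(H)_x) \otimes_{\C} \Lambda_x^{\ast}(M)$ is naturally identified with the space of $L^2$-sections of the trivial Hermitian bundle $Gr(H)_x \times \Lambda_x^{\ast}(M)$ over $Gr(H)_x$. The hypothesis $c_D < \infty$ is exactly the finite propagation speed assumption of Chernoff's theorem, so the result is expected.

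The concrete argument I would carry out proceeds as follows. Take the initial core $C_c^{\infty}(Gr(H)_x) \otimes \Lambda_x^{\ast}(M)$; essential self-adjointness is equivalent to $\ker(D^{\ast} \pm i\lambda) = 0$ for some (hence every) $\lambda > 0$. Suppose $u \in L^2$ satisfies $D^{\ast} u = i\lambda u$ distributionally. Using completeness, pick a smooth proper exhaustion function $\rho$ with $\|d\rho\|_{\infty} \leq 1$ and define cutoffs $\chi_n = \varphi(\rho/n)$ for a fixed smooth $\varphi$ supported in $[-2,2]$ equal to $1$ on $[-1,1]$, so that $\chi_n \equiv 1$ on $\{\rho \leq n\}$, $\mathrm{supp}\,\chi_n \subset \{\rho \leq 2n\}$, and $\|d\chi_n\|_{\infty} \leq C/n$. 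By Friedrichs' mollification lemma, $\chi_n^2 u$ lies in the domain of the closure $\overline{D}$ of $D|_{C_c^{\infty}}$. The commutator satisfies $[D,\chi_n]u = \sigma_D(d\chi_n)\, u$, a zeroth-order multiplication whose pointwise norm is bounded by $c_D\,\|d\chi_n\|_{\infty}$. Pairing $D^{\ast} u = i\lambda u$ against $\chi_n^2 u$, using symmetry of $\overline{D}$ on its domain and taking imaginary parts, I obtain
$$\lambda \|\chi_n u\|^2 \leq c_D \cdot \|d\chi_n\|_{\infty} \cdot \|u\|_{L^2(\mathrm{supp}\,d\chi_n)} \cdot \|\chi_n u\|.$$
As $n \to \infty$, the right-hand side tends to zero because $u \in L^2$ and the annuli $\mathrm{supp}\,d\chi_n$ escape to infinity, forcing $\|u\| = 0$. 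The identical argument with $-i\lambda$ in place of $i\lambda$ completes the proof. This also yields that the range of $\overline{D} \pm i\lambda$ is dense, which is what Theorem \ref{thm7} requires.

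The main obstacle is purely technical: one must produce cutoffs with \emph{uniformly} bounded gradient on $Gr(H)_x$, which is why completeness is essential. Completeness of a left-invariant Riemannian metric on a connected Lie group is classical (left-translation implies geodesic completeness via the Hopf--Rinow theorem), and it guarantees the existence of the smooth proper $\rho$ with $\|d\rho\|_{\infty} \leq 1$ used above. A secondary subtlety is the invocation of Friedrichs' lemma, which requires the coefficients of $D$ to be smooth on $Gr(H)_x$; this holds in our setting because $D$ is the frozen-coefficient model of a smooth differential operator, which in particular has smooth (indeed polynomial) coefficients in exponential coordinates on the nilpotent group.
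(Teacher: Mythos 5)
Your argument is correct and reaches the same conclusion as the paper's proof, but it is organized around a different criterion. You reduce essential self-adjointness to showing that $\ker(D^{\ast} \pm i\lambda) = 0$, then run a single ``pair against a cutoff and take imaginary parts'' estimate using the gradient bound $\lVert d\chi_n\rVert_{\infty}\to 0$ and the finite propagation speed, and you invoke Friedrichs mollification only to conclude that the compactly supported element $\chi_n^2 u$ lies in $\operatorname{dom}(\overline{D})$. The paper instead proves the stronger domain statement directly: it shows that the maximal domain coincides with the minimal domain, in two stages — first the compactly supported case via Friedrichs mollifiers combined with Mazur's lemma (to upgrade weak convergence of $DF_n v$ to strong convergence of convex combinations), and then the general case by cutting off with $g_n$ and controlling $[g_n,D]$ via the propagation speed bound. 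Both proofs ultimately depend on the same two geometric inputs, namely completeness (to manufacture cutoffs with uniformly small gradient) and the Friedrichs lemma (to handle the compactly supported core), so the divergence is organizational rather than substantive. Your version is shorter and avoids the Mazur-lemma step, while the paper's constructive approach has the small advantage of directly exhibiting approximating sequences, which is convenient for the way Corollary \ref{cor5} and Theorem \ref{thm7} quote the lemma; as you note, the density of $\operatorname{ran}(\overline{D}\pm i\lambda)$ still follows abstractly from the vanishing deficiency indices, so nothing is lost. One caveat to double-check in your write-up: the line ``using symmetry of $\overline{D}$ on its domain and taking imaginary parts'' hides the sign bookkeeping of the sesquilinear inner product; carried out carefully it gives $-2i\lambda\lVert\chi_n u\rVert^2 = 2\langle \chi_n u,\sigma_D(d\chi_n)u\rangle$, which yields your displayed inequality, but one has to be careful not to lose a sign that would make the estimate vacuous.
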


\begin{proof}
 First, for each compact subset $K\subset Gr(H)_x$, there exists a family of Friedrich’s mollifiers $L^2(K)\otimes V\to L^2(Gr(H)_x)\otimes V$, which is a sequence of bounded operators with the following properties:
    \begin{itemize}
        \item[(i)]  each $F_n$ is a contraction;
        \item[(ii)] the image of each $F_n$ is contained in $C_c^{\infty}({Gr(H)_x})\otimes {\Lambda^{\ast}_x(M)}$;
        \item[(iii)] for all $v\in L^2({Gr(H)_x})\otimes {\Lambda^{\ast}_x(M)}$, $F_n v\to v$ and $F_n^{\ast}v\to v$ in the $L^2({Gr(H)_x})$-norm as $n\to \infty$;
        \item[(iv)] for any first-order differential operator $D$, sequence $[D,F_n]=[D,F_n^{\ast}]$ of operators on $C_c(K)\otimes V$ is uniformly bounded in operator norm.
    \end{itemize}
    
     Assume that $K$ is a compact subset of $\R^d$, Let $h : \R^d \to \R$ be any smooth, non-negative compactly supported function that satisfies 
$\int_{\R^d} h(x)dx=1$. For each $n \geq 1$,
define $h_n(x)=n^d h(nx)$ , and let 
\begin{equation*}
    \begin{aligned}
        F_n=F^{0}_n\otimes 1 &: L^2(K)\otimes  {\Lambda^{\ast}_x(M)}\to L^2(\R^d)\otimes {\Lambda^{\ast}_x(M)},\\
        (F^0_n u)(x):&=\int_{\R^n}h_n(x-y)u(y)dy.
    \end{aligned}
\end{equation*}

 Cover $K \subset Gr(H)_x$ by finitely many coordinate charts, and use a finite smooth
partition of unity consisting of compactly supported functions to patch together
mollifiers constructed as above.

    To prove the essential self-adjointness of $D$, we need to show that for $v, w\in L^2({Gr(H)_x})\otimes {\Lambda^{\ast}_x(M)}$ such that for all $u\in C_c^{\infty}({Gr(H)_x})\otimes {\Lambda^{\ast}_x(M)}$,
    $$\langle Du,v\rangle=\langle u,w\rangle.$$
    (say $v$ is in the maximal domain of $D$ if there is a $w$ satisfying this condition), there exists a sequence $v_n$ in $C_c^{\infty}({Gr(H)_x})\otimes {\Lambda^{\ast}_x(M)}$ converging to $v$ in $L^2({Gr(H)_x})\otimes {\Lambda^{\ast}_x(M)}$ and such that $Dv_n$ converges to $w$ in $L^2({Gr(H)_x})\otimes {\Lambda^{\ast}_x(M)}$ (say $v$ is in the minimal domain of $D$ if there is a $w$ satisfying this condition).
    
    Assume first that $v$ is compactly supported in some $K\subset Gr(H)_x$, let $(F_n)$
be a family of Friedrich’s mollifiers for $K$. Consider the sequence $F_n v$, which is in $C_c^{\infty}({Gr(H)_x})\otimes {\Lambda^{\ast}_x(M)}$ and converges in $L^2({Gr(H)_x})$-norm to $v$. For any $u\in C_c^{\infty}({Gr(H)_x})\otimes {\Lambda^{\ast}_x(M)}$,
 \begin{equation*}
 \begin{aligned}
           \langle D F_n v, u\rangle &=\langle v,F_n^{\ast}D u\rangle \\
           &= \langle v,[F_n^{\ast},D]u\rangle+ \langle v,D F_n^{\ast}u\rangle \\
           &=\langle v,[F_n^{\ast},D]u\rangle+\langle w,F_n^{\ast}u\rangle .
 \end{aligned}
 \end{equation*}
 Hence $\lVert D F_n v\rVert\leq 
\lVert [F_n^{\ast}, D] \rVert \lVert v\rVert+ \lVert w\rVert$ for all $n$, the sequence $(D F_n v)$ is uniformly bounded, thus has a convergent subsequence and still denote it by $(D F_n v)$. Since
 $$\langle u,D F_n v\rangle =\langle D u,F_n v\rangle \to \langle D u,v\rangle =\langle u,w\rangle ,$$
the limit is $w$. Here we need the following lemma:
\begin{lem}[Mazur's lemma]\label{lem5}
    Let $(X,\lVert\cdot\rVert)$ be a normed vector space and let $\{x_j\}_{j\in\N} \subset X$ be a sequence converges weakly to some $x\in X$. Then there exists a sequence $\{y_k\}_{k\in \N}\subset X$ made up of finite convex combination of the $x_j$'s of the form
$$y_k=\sum_{j\geq k} \lambda_j^{(k)}x_j$$
such that $y_k$ is strongly convergent to $x$, i.e., $\lVert y_k-x \rVert\to 0$ as $k\to\infty$.
\end{lem}

By Lemma \ref{lem5}, there is a sequence $(v_n)$ in $C_c^{\infty}({Gr(H)_x})\otimes {\Lambda^{\ast}_x(M)}$ consisting of convex combinations of the sequence, such that $v_n\to v$ and $F_n v\to w$ which strongly converges to $w$ as $n\to\infty$. Thus we have proved that when $v$ in the maximal domain is compactly supported, $v$ is in the minimal domain of $D$.

It is obvious that ${Gr(H)_x}$ is complete and simply connected and $D $ has finite propagation speed 
$c_{D} =\sup_{z\in {Gr(H)_x}}\sup_{\xi\in T_z^{\ast}{Gr(H)_x},\lVert\xi\rVert=1}\lVert\sigma_{D}(z,\xi)\rVert$.

 Let $(f_n: \R \to [0, 1])$ be a sequence of smooth, compactly supported functions such that $f(t) = 1$ for all $t \leq n$, and such that the sequence $(\sup_{t\in \R} |f_n'(t)|)$ of real numbers tends to zero as $n$ tends to infinity. Fix $z_0\in {Gr(H)_x}$, and define 
 $$g_n(z):=f_n(d(z,z_0)).$$
 Then $g_n$ is smooth and compactly supported, the sequence $\sup_{z\in Gr(H)_x} \lVert dg_n(z)\rVert_{n=0}^{\infty}$ tends to zero
as $n$ tends to infinity, and that $(g_n)$ tends to one uniformly on compact subsets
of ${Gr(H)_x}$.

For general elements $v,w\in L^2({Gr(H)_x})\otimes {\Lambda^{\ast}_x(M)}$ with the condition that $v$ is in the maximal domain, let $v_n=g_n v$. Then 
\begin{equation*}
    \begin{aligned}
        \langle {D}u,v_n\rangle &=\langle g_n {D}u, v\rangle \\
        &=\langle [g_n,{D}] u, v\rangle +\langle {D} g_n u,   v\rangle \\
        &=\langle \sigma_{D}(dg_n) u,  v\rangle +\langle g_n u, w\rangle \\
        &=\langle u,\sigma_{D}(dg_n)v+g_n w\rangle 
    \end{aligned}
\end{equation*}
for all $u\in C_c^{\infty}({Gr(H)_x})\otimes {\Lambda^{\ast}_x(M)}$. So $v_n$ is in the maximal domain of ${D}$ for all $n$. Since $v_n$ is compactly supported by compact support of $g_n$, $v_n$ is in the minimal domain of ${D}$. So we can apply the closure of ${D}$ on $v_n$ and for any $u\in C_c^{\infty}({Gr(H)_x})\otimes {\Lambda^{\ast}_x(M)}$,
\begin{equation*}
    \begin{aligned}
        \langle {D}v_n,u\rangle &=\langle  v,g_n {D} u\rangle \\
        &=\langle v,[g_n,{D}] u\rangle +\langle  v,{D}g_n u\rangle \\
        &=\langle  v, \sigma_{D}(dg_n) u\rangle +\langle w,g_nu\rangle, 
    \end{aligned}
\end{equation*}
and 
\begin{equation*}
    \begin{aligned}
    |\langle v,[g_n,{D}]u\rangle |&\leq c_{D}\sup_{z\in Gr(H)_x} \lVert dg_n(z)\rVert_{n=0}^{\infty} \lVert v\rVert_{L^2} \lVert u\rVert _{L^2}\to 0\\
    \end{aligned}
\end{equation*}
as $n\to \infty$.
\begin{equation*}
    \begin{aligned}
\lim_{n\to\infty} \langle {D}v_n,u\rangle =\lim_{n\infty}\langle w,g_n u\rangle =\langle w,u\rangle .
    \end{aligned}
\end{equation*}

Since $u$ is arbitrary, $({D}v_n)$ converges to $w$. So $v$ is in the minimal domain of ${D}$.
So the maximal domain of $D$ is contained in the minimal domain of $D$, thus $D$ is essentially self-adjoint.
\end{proof}

 For any $\kappa,\lambda\in\R,\lambda\neq 0$, let $$T_{\kappa,\lambda}:=\sigma_H(\D_G,x)+i\kappa\sigma_{\sigma_H(\D_G,x)}(z,d\rho(z))+i\lambda.$$
\begin{cor}\label{cor5}
    Assume that $2|\kappa|\cdot\lVert c_{\sigma_H(\D_G,x)}\rVert < |\lambda|$, then $T_{\kappa,\lambda}(C_c^{\infty}(Gr(H)_x)\otimes \Lambda^{\ast}_x(M))$ is dense in $ L^{2}(Gr(H)_x) \otimes \Lambda_x^{\ast}(M)$. \\
Moreover, there is an inverse $T_{\kappa,\lambda}^{-1}$ which is a bounded operator on $ L^{2}(Gr(H)_x) \otimes \Lambda_x^{\ast}(M)$.
\end{cor}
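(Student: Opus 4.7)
The plan is to realize $T_{\kappa,\lambda}$ as a bounded, norm-controlled perturbation of the invertible self-adjoint operator $\overline{D}+i\lambda$, where $\overline{D}$ denotes the closure of $D:=\sigma_H(\D_G,x)$, and then to invert via a Neumann series. By Lemma \ref{lem8} applied to $D$ (which is first-order, formally self-adjoint, and of finite propagation speed $c_D$), $\overline{D}$ is self-adjoint on $L^{2}(Gr(H)_x)\otimes\Lambda^{\ast}_x(M)$, so $\overline{D}+i\lambda$ is invertible with $\lVert(\overline{D}+i\lambda)^{-1}\rVert\leq 1/|\lambda|$, and $C_c^\infty(Gr(H)_x)\otimes\Lambda^{\ast}_x(M)$ is a core for $\overline{D}$. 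The remaining term $S := i\kappa\,\sigma_D(\cdot,\,d\rho(\cdot))$ is a bounded multiplication operator: since $\rho$ is $1$-Lipschitz we have $\lVert d\rho(z)\rVert\leq 1$ almost everywhere, and by the very definition of $c_D$ the fiberwise endomorphism $\sigma_D(z,d\rho(z))$ has operator norm $\leq c_D$, so $\lVert S\rVert\leq |\kappa|\,c_D$. Thus $T_{\kappa,\lambda} = \overline{D}+i\lambda+S$ as operators on $\text{Dom}(\overline{D})$.

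The key step will be the factorization
\[
T_{\kappa,\lambda} \;=\; \bigl(I + S(\overline{D}+i\lambda)^{-1}\bigr)(\overline{D}+i\lambda),
\]
valid on $\text{Dom}(\overline{D})$. The standing hypothesis $2|\kappa|\,\lVert c_D\rVert < |\lambda|$ forces $\lVert S(\overline{D}+i\lambda)^{-1}\rVert \leq |\kappa|\,c_D/|\lambda| < 1/2$, so the first factor is invertible on $L^{2}(Gr(H)_x)\otimes\Lambda^{\ast}_x(M)$ via a Neumann series, with inverse of norm $\leq 2$. Composing with the bounded $(\overline{D}+i\lambda)^{-1}$ yields an everywhere-defined bounded inverse
\[
T_{\kappa,\lambda}^{-1} \;=\; (\overline{D}+i\lambda)^{-1}\bigl(I + S(\overline{D}+i\lambda)^{-1}\bigr)^{-1}
\]
of norm at most $2/|\lambda|$; in particular $T_{\kappa,\lambda}$ maps $\text{Dom}(\overline{D})$ onto $L^{2}(Gr(H)_x)\otimes\Lambda^{\ast}_x(M)$.

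Finally I would deduce the density assertion. Because $C_c^\infty(Gr(H)_x)\otimes\Lambda^{\ast}_x(M)$ is a core for $\overline{D}$ and $S$ is bounded, the same subspace remains a core for the closed operator $T_{\kappa,\lambda}$; combined with the surjectivity of $T_{\kappa,\lambda}$ on $\text{Dom}(\overline{D})$, this implies that $T_{\kappa,\lambda}(C_c^\infty(Gr(H)_x)\otimes\Lambda^{\ast}_x(M))$ is dense in $L^{2}(Gr(H)_x)\otimes\Lambda^{\ast}_x(M)$. The main technical obstacle I expect is simply the bookkeeping around the unbounded-operator factorization: one must check that $(\overline{D}+i\lambda)^{-1}$ takes values in $\text{Dom}(\overline{D})$ (which follows from the self-adjoint functional calculus) and that $S$ preserves $\text{Dom}(\overline{D})$ (which is automatic because $S$ is bounded on the whole Hilbert space), after which the algebraic identity above holds on $\text{Dom}(\overline{D})$ by direct verification.
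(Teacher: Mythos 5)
Your proof is correct, but it takes a genuinely different route from the paper's. The paper applies Lemma~\ref{lem8} directly to the perturbed operator $D' := \sigma_H(\D_G,x)+i\kappa\,\sigma_{\sigma_H(\D_G,x)}(z,d\rho(z))$, asserting that $D'$ is still formally self-adjoint (which is true in the symbol convention $\sigma_D(df)=[D,f]$, where $\sigma_D(\xi)$ is skew-adjoint for symmetric $D$, so $i\kappa\,\sigma_D(d\rho)$ is symmetric) and has the same finite propagation speed as $D$; essential self-adjointness of $D'$ then gives density of the range of $T_{\kappa,\pm\lambda}=D'\pm i\lambda$, and the boundedness of $T_{\kappa,\lambda}^{-1}$ is deduced from the coercivity estimate $\lVert T_{\kappa,\lambda}u\rVert^2 \geq (\lambda^2 - 2|\kappa\lambda|\,c_D)\lVert u\rVert^2$ (the paper prints $\leq$, which is evidently a typo, as only the lower bound yields the conclusion). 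Your proposal instead applies Lemma~\ref{lem8} only to $D=\sigma_H(\D_G,x)$, treats $S=i\kappa\,\sigma_D(\cdot,d\rho(\cdot))$ as a bounded zeroth-order perturbation with $\lVert S\rVert \leq |\kappa|\,c_D$, and inverts $T_{\kappa,\lambda}=(I+S(\overline D+i\lambda)^{-1})(\overline D+i\lambda)$ by a Neumann series since $\lVert S(\overline D+i\lambda)^{-1}\rVert\le |\kappa| c_D/|\lambda|<1/2$, recovering density from the core property of $C_c^\infty$ for $\overline D$ together with surjectivity of $\overline D+i\lambda+S$. Your route is arguably cleaner: it sidesteps the convention-sensitive claim that $D'$ is formally self-adjoint, it requires no coercivity computation (and hence avoids the paper's sign slip), and it in fact works already under the weaker hypothesis $|\kappa|\,c_D < |\lambda|$; the paper's approach, when the estimate is corrected, does yield a slightly different explicit bound on $\lVert T_{\kappa,\lambda}^{-1}\rVert$. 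Both arguments share the minor unaddressed point that $\rho$ is only Lipschitz (not $C^\infty$), so that $\sigma_D(\cdot,d\rho(\cdot))$ is a bounded measurable multiplier rather than a smooth one; this affects neither approach but would warrant a remark.
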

   \begin{proof}
    Apply to Lemma \ref{lem8} with $D=\sigma_H(\D_G,x)+i\kappa\sigma_{\sigma_H(\D_G,x)}(z,d\rho(z))$, it is formally self-adjoint and has finite propagation speed since $\sigma_H(\D_G,x)$ has finite propagation speed. So $D$ is essentially self-adjoint and thus $D\pm \lambda i$ has dense image in $L^{2}(Gr(H)_x) \otimes \Lambda_x^{\ast}(M)$ for $\lambda$ large enough.
    By direct calculation,
   $$\lVert T_{\kappa,\lambda}(u) \rVert^2\leq (\lambda^2-2|\kappa\lambda|\cdot \lVert c_D\rVert)\cdot \lVert u\rVert^2. $$
    So the operator $T_{\kappa,\lambda}^{-1}$ is a bounded on $ L^{2}(Gr(H)_x) \otimes \Lambda_x^{\ast}(M)$.
   \end{proof}
  
   \begin{proof}(\textit{of Theorem \ref{thm7}})
      Denote the closure of $\sigma_H(\D_G,x)$ with $L^2$-inner product by $D_1$ and the closure with $C^{\ast}$-inner product by $D_2$.
      
      Choose $\kappa$ positive and large enough so that the function of $z\in Gr(H)_x$, $e^{-\kappa\rho(z)}$ is in $L^2(Gr(H)_x)$, where $\rho(z)$ the distance function from the identity $e$ to $z\in Gr(H)_x$.
      
      Let $v\in C_c^{\infty}(Gr(H)_x)\otimes \Lambda^{\ast}_x(M)$. By Corollary \ref{cor5}, the operator $T_{0,\pm\lambda}=D_1\pm i\lambda$ and $T_{\kappa,\pm\lambda}=D_1\pm i\lambda$ are invertible on $L^{2}(Gr(H)_x) \otimes \Lambda_x^{\ast}(M)$. So we can solve the equation $(D_1\pm i\lambda)(u)=v$ and $T_{\kappa,\pm\lambda}(u)=e^{\kappa\rho(z)}v$ in $L^{2}(Gr(H)_x) \otimes \Lambda_x^{\ast}(M)$.
      By direct calculation $(D_1\pm i\lambda)(u-e^{-\kappa\rho(z)}u_1)=0$, so $u=e^{-\kappa\rho(z)}u_1$.
      Now we have $e^{-\kappa\rho(z)}$ in $L^2(Gr(H)_x)$ and $u_1$ lie in $ L^{2}(Gr(H)_x) \otimes \Lambda_x^{\ast}(M)$. Thus the product $u=e^{-\kappa\rho(z)}u_1$ lies in $L^1(Gr(H)_x)\otimes \Lambda_x^{\ast}(M)$ and its $L^1$-norm is restricted by
      $$\lVert u\rVert_{L^1\otimes \Lambda_x^{\ast}(M)}\leq \lVert e^{-\kappa\rho(z)}\rVert_{L^2} \lVert u_1\rVert_{L^2\otimes \Lambda_x^{\ast}(M)}. $$
      Thus $u\in C^{\ast}(Gr(H)_x)\otimes \Lambda_x^{\ast}(M)$.
      
      Similarly, we see that 
      $$\lVert (1-g_n)u\rVert_{C^{\ast}\otimes \Lambda_x^{\ast}(M)}\leq\lVert (1-g_n)u\rVert_{L^1\otimes \Lambda_x^{\ast}(M)}\leq \lVert e^{-\kappa\rho(z)}\rVert_{L^2} \lVert (1-g_n)u_1\rVert_{L^2\otimes \Lambda_x^{\ast}(M)}, $$
      where $g_n$ is defined in the proof of Lemma \ref{lem8}.

      So when $n\to \infty$, we see that the elements $g_n u\in C_c(G)$ converges to $u$ in the $C^{\ast}(Gr(H)_x)$-module $C^{\ast}(Gr(H)_x)\otimes \Lambda_x^{\ast}(M)$. 
    
      Let $u_n=g_n\ast g_n u$, since $g_n u$ is in $L^1$ and compactly supported, $g_n$ is smooth, we have $u_n\in C_c^{\infty}(G)\otimes \Lambda_x^{\ast}(M)$. Since $g_n\to 1$ in $L^1$ and thus in $C^{\ast}$, $1-g_n$ is uniformly bounded and decreasing, and $g_n u\to u$ also in $L^1$ and thus in $C^{\ast}$ when $n\to \infty$, we see that $u_n$ converges to $u$ in $C^{\ast}(Gr(H)_x)\otimes \Lambda_x^{\ast}(M)$.

     Since $D_2=D_1=\sigma_H(\D_G,x)$ on $C_c^{\infty}(G)\otimes \Lambda_x^{\ast}(M)$, we have 
     \begin{equation*}
         \begin{aligned}
             &(D_2\pm i\lambda)(u_n)\\
             =&(D_1\pm i\lambda)(u_n)\\
             =&g_n\ast D_1 (g_n u) \pm i\lambda u_n\\
             =&g_n\ast \sigma_{D_1}(x,dg_n) u+ g_n \ast g_n D_1 u \pm g_n\ast g_n(i\lambda u)\\
             =&g_n\ast \sigma_{D_1}(x,dg_n) u+ g_n\ast g_n v.
         \end{aligned}
     \end{equation*}
     
      Note that $\sigma_{D_1}(x,dg_n)\to 0$, 
      $$\lim_{n\to\infty}(D_2\pm i\lambda) (u_n)=v.$$
      So the images of $D_2\pm i\lambda$ are dense in $C_c^{\infty}(Gr(H)_x)\otimes \Lambda^{\ast}_x(M)$. 
      As a result the images of $\sigma_H(\D_G,x)\pm i\lambda$ are dense in the $C^{\ast}(Gr(H)_x)\otimes \Lambda^{\ast}_x(M)$, i.e., $\sigma_H(\D_G,x)$ is a regular operator as a morphism on the $C^{\ast}(Gr(H)_x)$-module $E=C^{\ast}(Gr(H)_x)\otimes \Lambda^{\ast}_x(M)$.
      \end{proof}
      
\section{Definition for transversally H-elliptic operators}
\label{sec4.5}
Now we can give the definition for transversally H-elliptic operators.
From the basic idea of transversally elliptic theory, we shall define the transversal H-ellipticity by requiring operators to be H-elliptic after intertwining with the leafwise elliptic operator $\Delta_G$.

 The fundamental definition is motivated by the tangent-Clifford symbol for transversally elliptic operators defined in Definition \ref{defn6}. 
 Recall that for a transversally elliptic operator $P$, its tangent-Clifford symbol $[\sigma^{tcl}(P)]$ is in the $KK$-group $KK^G(C_0(M),Cl_{\Gamma}(TM))=KK^G(C_0(M), Cl_{\Gamma}(M)\otimes_{C_0(M)}C_0(TM))$.
  
  Replacing $C_0(TM)\cong C^{\ast}(TM)$ by $C^{\ast}(Gr(H))$ in the group $KK^G(C_0(M), Cl_{\Gamma}(TM))=KK^G(C_0(M), Cl_{\Gamma}(M)\otimes_{C_0(M)} C_0(TM))$, we intend to derive the formal formula of the tangent-Clifford symbol for operators that can induce a $KK$-class $[\sigma_H^{tcl}(P)]$ in $$KK^G(C_0(M), Cl_{{\Gamma}}(M)\hotimes_{C_0(M)}C^{\ast}(Gr(H))).$$ 
  
  The definition of $KK$-classes requires $1-\sigma^{tcl}_H(P)^2$ to be a compact multiplier, which motivates the definition for transversally H-elliptic operators:

\begin{defn}\label{def1}
    Let $M$ be a complete filtered Riemannian manifold and $G$ be a locally compact Lie group which acts on $M$ properly, isometrically, and preserves the filtered structure. We call a pseudo-differential operator of $0$-order $P$ is \textit{transversally H-elliptic} if at each point $x\in M$, the multiplier term 
    $$(1-\sigma_H^2(P,x))(1+\sigma_H(\Delta_{G},x))^{-1}\in \mathcal{M}(E_x\otimes C^{\ast}(Gr(H)_x))$$
    is compact as operators on  $C^{\ast}(Gr(H)_x)$-modules.
\end{defn}
\begin{remark}
Note that
\begin{itemize}
    \item In the multiplier term $(1-\sigma_H^2(P,x))(1+\sigma_H(\Delta_{G},x))^{-1}$, the first term $(1-\sigma_H^2(P,x))$ is a multiplier of $E_x\otimes C^{\ast}(Gr(H)_x)$ by definition of H-cosymbol, and the second term $(1+\sigma_H(\Delta_{G},x))^{-1}$, by the essentially self-adjointness of $\sigma_H(\D_G,x)$ as a $C^{\ast}(Gr(H)_x)$ morphism in Theorem \ref{thm7}, also represent a multiplier of $C^{\ast}(Gr(H)_x)$.
    When the vector bundle $E$ is trivial (in general cases we take the local trivialization such that $\text{End}(E)$ part becomes a matrix), the condition in Definition \ref{def1} requires that for all $x\in M$, the product multiplier element $(1-\sigma_H^2(P,x))(1+\sigma_H(\Delta_{G},x))^{-1}$ lies in the $C^{\ast}$-algebra $C^{\ast}(Gr(H)_x)$. 
    \item The leaf-wise sub-Laplacian operator $\Delta_G$ is employed instead of a leaf-wise H-elliptic operator, because the leaf-wise direction contains several different order terms if no additional conditions are taken. Thus, an appropriate operator like $\Delta_G$ with H-elliptic property in the orbit direction cannot be readily presumed in the leaf-wise direction in general cases.
\end{itemize}
\end{remark}

\begin{remark}
    If the group $G$ acts trivially on $M$, then the condition will be reduced to $1-\sigma_H^2(P,x)$ is compact. In this case, $1-\sigma_H^2(P,x)$ is homogeneous up to smooth so in the pseudo-differential calculus, the compactness of a multiplier is equivalent to of negative order. The condition is equivalent to the existence of parametrix for $\sigma_H(P,x)$, which implies the Rockland condition at each point $x\in M$ and leads to the H-ellipticity, Conversely, it can be proved that the cosymbols of H-elliptic operators are unbounded Fredholm multipliers (refer to  \cite{joachim2003unbounded} for the definition). Considering the bounded transform expressed in Section \ref{sec3.2}, then $1-\sigma^2_H(P)$ is a compact multiplier.
\end{remark}

The symbol class and index class can be defined for such transversally H-elliptic operators.
\begin{defn}\label{defi1}
   We define the \textit{tangent-Clifford symbol} $\sigma_H^{tcl}(P)$ by
$$\sigma^{tcl}_H(P):=\sigma_H(P)\hat{\otimes} 1+(1-\sigma_H(P)^2)^{1/2}\hat{\otimes} f'_{{\Gamma}}\in \mathcal{L}( C_0(M;E)\hat{\otimes}_{C_0(M)}C^{\ast}(Gr(H))\hotimes_{C_0(M)} Cl_{{\Gamma}}(M)).$$
Here $f'_{{\Gamma}}$ is a multiplier of $ C^{\ast}(Gr(H)) \hotimes_{C_0(M)} Cl_{{\Gamma}}(M)$ defined as $\sigma_H(\mathcal{D}_G
)\hotimes (1+\sigma_H(\Delta_G))^{-1/2}$. By the discussion of $\sigma_H(\D_G)$ in Section \ref{sec4.4}, at each point $x$, $\sigma_H(
\mathcal{D}_G,x)$ consists of the sum of the tensor product of an element in $gr(H)_x$, which represents a multiplier on $C^{\ast}(Gr(H)_x)$, and a Clifford-product element on $\Lambda^{\ast}(T_x M)$. The Clifford-product part is leaf-wise, so it represents a multiplier on $Cl_{{\Gamma_x}}(M)$. The remaining part $(1+\sigma_H(\Delta_{G},x))^{-1/2}$ is a multiplier on $C^{\ast}(Gr(H)_x)$. As a result, we can take $f'_{\Gamma}=\sigma_H(\mathcal{D}_G
)\hotimes (1+\sigma_H(\Delta_G))^{-1/2}$ as a multiplier on $ C^{\ast}(Gr(H)) \hotimes_{C_0(M)} Cl_{{\Gamma}}(M)$.
\end{defn}

By the construction of $\sigma_H^{tcl}(P)$, we see that
\begin{prop} Modulo compact operators on $Gr(H)_x$ for each $x$, we have
    $$1-(\sigma^{tcl}_H(P))^2=(1-\sigma^2_H(P))(1-{f'_{\Gamma}}^2)=(1-\sigma_H^2(P))(1+\sigma_H(\Delta_{G}))^{-1}$$
\end{prop}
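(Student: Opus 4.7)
The plan is to split the proposition into two claims: (i) the first equality $1 - (\sigma_H^{tcl}(P))^2 = (1-\sigma_H^2(P))(1-(f'_\Gamma)^2)$ holds exactly as multipliers, and (ii) the second equality, $(1-\sigma_H^2(P))(1-(f'_\Gamma)^2) \equiv (1-\sigma_H^2(P))(1+\sigma_H(\Delta_G))^{-1}$, holds only modulo compacts and is where the transversal H-ellipticity hypothesis (Definition \ref{def1}) enters.

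For (i), I write $A=\sigma_H(P)\hotimes 1$ and $B=(1-\sigma_H^2(P))^{1/2}\hotimes f'_\Gamma$ and expand $(A+B)^2$. After the bounded transform, $\sigma_H(P)$ is odd for the bundle grading $E=E_0\oplus E_1$, and $f'_\Gamma$ is odd in the Clifford grading since $\sigma_H(\mathcal{D}_G)=-i\sum_k X_k\otimes c(e_k)$ with $c(e_k)$ odd. Because $\sigma_H(P)$ commutes with the even element $(1-\sigma_H^2(P))^{1/2}$ by functional calculus, the Koszul sign rule gives $AB=-BA$, so the cross terms cancel and
\[
(A+B)^2=\sigma_H^2(P)\hotimes 1+(1-\sigma_H^2(P))\hotimes (f'_\Gamma)^2.
\]
Subtracting from $1$ and factoring yields (i) as an exact identity of multipliers.

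For (ii), the content is to show $(f'_\Gamma)^2\equiv \sigma_H(\Delta_G)(1+\sigma_H(\Delta_G))^{-1}$ after left multiplication by $(1-\sigma_H^2(P))$, modulo compacts on $C^{\ast}(Gr(H)_x)$. Using $c(e_j)c(e_k)+c(e_k)c(e_j)=2\delta_{jk}$, a direct Weitzenböck-type computation gives
\[
\sigma_H(\mathcal{D}_G)^2=\sigma_H(\Delta_G)\otimes 1_{Cl}+K,\qquad K:=-\tfrac{1}{4}\sum_{j,k}[X_j,X_k]\otimes [c(e_j),c(e_k)].
\]
Since $f'_\Gamma=\sigma_H(\mathcal{D}_G)(1+\sigma_H(\Delta_G))^{-1/2}$ (the resolvent extended to $Cl_{\Gamma}$ by identity), one expands
\[
(f'_\Gamma)^2=\sigma_H(\mathcal{D}_G)^2\,(1+\sigma_H(\Delta_G))^{-1}+\sigma_H(\mathcal{D}_G)\bigl[(1+\sigma_H(\Delta_G))^{-1/2},\sigma_H(\mathcal{D}_G)\bigr](1+\sigma_H(\Delta_G))^{-1/2}.
\]
Substituting the Weitzenböck identity and using $\sigma_H(\Delta_G)(1+\sigma_H(\Delta_G))^{-1}=1-(1+\sigma_H(\Delta_G))^{-1}$, the needed difference collects into a sum of terms of the form $M\cdot(1+\sigma_H(\Delta_G))^{-1}$ or $(1+\sigma_H(\Delta_G))^{-1}\cdot M$, where $M$ is a multiplier built from $K$ and from the resolvent commutator. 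Left multiplication by $(1-\sigma_H^2(P))$ then reduces each term, after commuting the factors into position, to a product of a bounded multiplier with the compact operator $(1-\sigma_H^2(P))(1+\sigma_H(\Delta_G))^{-1}$ supplied by Definition \ref{def1}.

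The main obstacle is exactly this last step: justifying that each ``$M$ in the middle'' can be moved without breaking compactness. The cleanest route is to commute $(1-\sigma_H^2(P))$ past $M$ toward $(1+\sigma_H(\Delta_G))^{-1}$, and to commute resolvents past $\sigma_H(\mathcal{D}_G)$ via the identity $[A,(1+B)^{-1}]=-(1+B)^{-1}[A,B](1+B)^{-1}$; since $K$ and $\sigma_H(\mathcal{D}_G)$ have H-order no greater than that of $\sigma_H(\Delta_G)$, each commutator is of strictly lower H-order and therefore contributes only a compact correction by the H-calculus compactness criterion of Section \ref{sec1.6}. Once these technical commutator estimates are in hand, every remaining term is a bounded multiplier applied to the compact operator of Definition \ref{def1}, and (ii) follows.
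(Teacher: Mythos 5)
Your claim (i), that the first equality holds \emph{exactly} by a Koszul sign argument, has a gap. You treat $\sigma_H(P)$ as if it lived purely in $\text{End}(E)$ and $f'_\Gamma$ purely in $Cl_\Gamma(M)$, so that the two operators sit on disjoint tensor factors and graded-anticommute. But this is not the setting: $\sigma_H(P,x)$ is a multiplier of $E_x\hotimes C^{\ast}(Gr(H)_x)$, while $f'_\Gamma = \sigma_H(\mathcal{D}_G)\hotimes(1+\sigma_H(\Delta_G))^{-1/2}$ is a multiplier of $C^{\ast}(Gr(H))\hotimes_{C_0(M)}Cl_\Gamma(M)$; both act nontrivially on the shared middle factor $C^{\ast}(Gr(H))$. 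Decomposing $\sigma_H(P)=\sum_i a_i\hotimes m_i$ (with $a_i\in\text{End}(E_x)$ odd, $m_i\in\mathcal{M}(C^{\ast}(Gr(H)_x))$) and $f'_\Gamma=\sum_j n_j\hotimes c_j$ (with $c_j\in Cl_{\Gamma_x}$ odd), the graded anticommutator of $\sigma_H(P)\hotimes 1$ and $1\hotimes f'_\Gamma$ on the triple tensor is $\sum_{i,j}a_i\hotimes[m_i,n_j]\hotimes c_j$: the Koszul sign converts the $\text{End}(E)/Cl_\Gamma$ anticommutator into an ordinary commutator on the $C^{\ast}(Gr(H))$-part, but does not make it vanish. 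This commutator is not zero in general, and the paper's proof explicitly treats it as a correction term, arguing that $[\sigma_H(P)\hotimes 1,\,1\hotimes f'_\Gamma]$ is of negative H-order (hence compact) — which is precisely why the proposition is stated only \emph{modulo} compact operators for both equalities, not just the second.

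On claim (ii), your Weitzenböck-type computation correctly flags a point the paper asserts without comment, but invoking Definition \ref{def1} inside the proof is both unnecessary and logically backwards. The proposition is stated for an arbitrary order-$0$ pseudo-differential $P$ and is then used, together with Definition \ref{def1}, to conclude that $1-(\sigma_H^{tcl}(P))^2$ is compact for transversally H-elliptic $P$; feeding the hypothesis of Definition \ref{def1} back into the proof would render the proposition conditional on the very property it is meant to detect. The correction terms you identify — the Weitzenböck remainder $K$ and the resolvent commutators — are already of strictly negative H-order once divided by $(1+\sigma_H(\Delta_G))$ and multiplied by the order-$\leq 0$ term $(1-\sigma_H^2(P))$, so the compactness criterion of Section \ref{sec1.6} applies directly, as in the paper, with no appeal to the ellipticity hypothesis.
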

\begin{proof}
By direct calculation,
 $$1-(\sigma^{tcl}_H(P))^2=1-\sigma^2_H(P)\hat{\otimes} 1-(1-\sigma_H(P)^2)\hat{\otimes} {f'_{\Gamma}}^2-(1-\sigma_H(P)^2)^{1/2}[\sigma_H(P)\hat{\otimes} 1,1\hat{\otimes} f'_{\Gamma}].$$
    Since the cosymbol $\sigma_H(P)\hat{\otimes} 1$ acts on bundle $E$ and $1\hat{\otimes} \sigma_H(\D_G)$ represents on $Cl_{{\Gamma}}(M)$, the commutator  
    $[\sigma_H(P)\hat{\otimes} 1,1\hat{\otimes} f'_{\Gamma}]=0$ modulo negative order terms, i.e. modulo compact operators on $Gr(H)_x$ for each $x\in M$ because all these operators are left-invariant on $Gr(H)_x$. So, modulo compact operators, we have 
    \begin{equation*}
       \begin{aligned}
           1-(\sigma^{tcl}_H(P))^2=&1-\sigma^2_H(P)\hat{\otimes} 1-(1-\sigma_H(P)^2)\hat{\otimes} {f'_{\Gamma}}^2\\
           =&(1-\sigma^2_H(P))(1-{f'_{\Gamma}}^2)\\
           =&(1-\sigma_H^2(P))(1+\sigma_H(\Delta_{G}))^{-1}.&
       \end{aligned} 
    \end{equation*}
\end{proof}
 Thus, by Definition \ref{def1}, $1-(\sigma_H^{tcl}(P))^2$ is compact for transversally H-elliptic operators. So we can define that:
\begin{defn}\label{defn12}
Let $P$ be a transversally H-elliptic operator. The \textit{tangent-Clifford cosymbol class} $[\sigma_H^{tcl}(P)]\in KK^G(C_0(M),  C^{\ast}(Gr(H))\hotimes_{C_0(M)} Cl_{{\Gamma}}(M))$ is defined by the $KK$-pairing 
  $$[\sigma_H^{tcl}(P)]:=(C_0(M;E)\hat{\otimes}_{C_0(M)}C^{\ast}(Gr(H)){\hotimes_{C_0(M)}} Cl_{{\Gamma}}(M),\sigma^{tcl}_H(P)).$$
 
\end{defn}

The index class is given as follows:
\begin{thm}\label{thm13}
Let $P$ be a transversally H-elliptic operator. Then $(L^2(E),P)$ defines an element in the group $K^{0}(C^{\ast}(G, C_0(M))$. It is denoted $[P]$ and called the index class of $P$.
\end{thm}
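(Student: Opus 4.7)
The plan is to verify directly that the pair $(L^2(E), P)$ satisfies the Kasparov module axioms for $(C^{\ast}(G, C_0(M)), \C)$. The representation $\phi:C^{\ast}(G,C_0(M))\to\mathcal{L}(L^2(E))$ is obtained by integrating the covariant pair $(M,U)$ of pointwise multiplication $M_f$ by $f\in C_0(M)$ and the unitary representation $U$ of $G$ on $L^2(E)$ induced by the isometric action on $M$ and its equivariant lift to $E$. Because $P$ has been normalized to a self-adjoint $0$-order operator in Section \ref{sec3.2}, the axiom $(P-P^{\ast})\phi(a)=0$ is automatic, and the $G$-invariance of $P$ similarly kills the $G$-continuity condition. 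It therefore remains only to check that $[P,\phi(a)]$ and $(P^2-1)\phi(a)$ are compact on $L^2(E)$ for every $a$; by norm density it is enough to treat the elementary generators $\phi(a)=M_f\circ U(\chi)$ with $f\in C_c(M)$, $\chi\in C_c(G)$, and $U(\chi):=\int_G\chi(g)U_g\,dg$.

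For the commutator: $G$-invariance gives $[P,U_g]=0$ for every $g$, hence $[P,U(\chi)]=0$ and $[P,\phi(a)]=[P,M_f]\,U(\chi)$. Now $[P,M_f]$ is a properly supported pseudo-differential operator of negative H-order with Schwartz kernel supported in a neighbourhood of $\mathrm{supp}(f)\times\mathrm{supp}(f)$, so by Theorem \ref{thm6} together with Remark \ref{rmk8} it is compact on $L^2(E)$; composition with the bounded operator $U(\chi)$ preserves compactness.

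For the main condition $(P^2-1)\phi(a)\in\mathcal{K}(L^2(E))$ the crucial input is the transversal H-ellipticity from Definition \ref{def1}: the cosymbol
\begin{equation*}
(1-\sigma_H^2(P,x))(1+\sigma_H(\Delta_G,x))^{-1}\in \mathcal{M}(E_x\otimes C^{\ast}(Gr(H)_x))
\end{equation*}
actually lies in $\mathrm{End}(E_x)\otimes C^{\ast}(Gr(H)_x)$ at every $x$. Thus the operator $R:=(P^2-1)(1+\Delta_G)^{-1}$ sits in the Heisenberg pseudo-differential calculus with fibrewise compact cosymbol, and by Corollary \ref{cor2} together with Theorem \ref{thm6} the product $R\cdot M_{f'}$ is compact on $L^2(E)$ for every $f'\in C_c(M)$. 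To conclude I would decompose
\begin{equation*}
(P^2-1)\,M_f U(\chi)=R\,(1+\Delta_G)\,M_f\,U(\chi)=R\,M_f\,U(\chi)+R\,[\Delta_G,M_f]\,U(\chi)+R\,M_f\,\Delta_G U(\chi),
\end{equation*}
where in each summand one factors out a cutoff $\tilde f\in C_c(M)$ equal to $1$ on the relevant support, so that $R\,M_{\tilde f}$ may be extracted as a compact operator; the last summand uses that $\Delta_G U(\chi)=U(\chi')$ for a suitable $\chi'\in C_c(G)$, since $\Delta_G$ corresponds along $G$-orbits to a right-invariant differential operator on $G$ that acts on $\chi$.

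The main obstacle I anticipate is the passage from fibrewise cosymbol compactness of $R$ in $C^{\ast}(Gr(H)_x)$ to honest operator compactness on $L^2(E)$, because the auxiliary factor $(1+\Delta_G)^{-1}$ is only bounded, not compact. I would deal with this by working on the filtered Sobolev scale $\tilde H^{s}(M;E)$ of Section \ref{sec1.7}: the compact-cosymbol condition on $R$ produces a gain of transverse H-regularity, and combined with the Rellich embedding of $\tilde H^{s}$ over the relatively compact set $\mathrm{supp}(f)$ this yields the required $L^2$-compactness, in parallel with Kasparov's treatment of the transversally elliptic case in \cite{kasparov2022k} but with the elliptic resolvent $(1+\Delta_M)^{-1}$ replaced by the hypoelliptic resolvent $(1+\Delta_G)^{-1}$ and elliptic symbol estimates replaced by the groupoid-$C^{\ast}$-algebraic statements of Section \ref{sec1.6}. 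A final continuity argument, together with norm-density of $C_c(G)\odot C_c(M)$ in $C^{\ast}(G,C_0(M))$, then transfers the conclusion to all of $C^{\ast}(G,C_0(M))$ and shows that $(L^2(E),P)$ is a Kasparov $(C^{\ast}(G,C_0(M)),\C)$-module, hence defines a class $[P]\in K^0(C^{\ast}(G,C_0(M)))$.
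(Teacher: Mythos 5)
Your overall plan of verifying the Kasparov-module axioms directly, and your treatment of the commutator $[P,\phi(a)]$ and of the reduction to generators in $C_c(M)\odot C_c(G)$, are fine and close in spirit to what the paper does implicitly. The gap is in the main compactness step, where your route diverges from the paper's and, as written, does not close.

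You introduce $R=(P^2-1)(1+\Delta_G)^{-1}$ and assert that $R$ ``sits in the Heisenberg pseudo-differential calculus with fibrewise compact cosymbol'' equal to $(1-\sigma_H^2(P))(1+\sigma_H(\Delta_G))^{-1}$. This is the step that fails: $\Delta_G$ is only elliptic in the orbit directions and is \emph{not} H-elliptic, so $(1+\Delta_G)^{-1}$ has no parametrix in the H-calculus and is not an H-pseudodifferential operator. Consequently $R$ is merely a bounded operator on $L^2(E)$; it has no well-defined H-cosymbol, and Theorem \ref{thm6} / Remark \ref{rmk8} (compactness from negative H-order) cannot be invoked for it. Definition \ref{def1} is a statement about the fibrewise multiplier $(1-\sigma_H^2(P,x))(1+\sigma_H(\Delta_G,x))^{-1}$ acting on the module $E_x\otimes C^{\ast}(Gr(H)_x)$; it does not by itself say anything about the operator $(P^2-1)(1+\Delta_G)^{-1}$ on $L^2$. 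You flag this obstacle yourself and propose a Sobolev/Rellich workaround, but Section \ref{sec1.7} supplies only the H-Sobolev scale $\tilde H^s$, not a transverse scale, and there is no ``transverse Rellich lemma'' available in the paper to make that argument run.

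The paper's proof sidesteps the issue entirely. It does \emph{not} invert $(1+\Delta_G)$ as an operator. Instead it quantizes the compact cosymbol directly: let $K$ be an H-pseudodifferential operator whose cosymbol is $(1-\sigma_H^2(P))(1+\sigma_H(\Delta_G))^{-1}$; fibrewise compactness of this cosymbol forces $K$ to have negative H-order, so $aK$ is compact by Remark \ref{rmk8}. It then sets $F=d_G^{\ast}Kd_G$, where $d_G$ is a genuine differential operator built from the $G$-action with $\sigma_H(d_G^{\ast}d_G)=\sigma_H(\Delta_G)$, and checks that $\sigma_H(1-P^2)-\sigma_H(F)$ is of negative order, so $1-P^2\equiv F$ modulo negative-order (hence compact-after-cutoff) terms. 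The key factorization is $aFe=(a\,d_G^{\ast}K)\,(d_G\circ e)$: the first factor is compact because $a\,d_G^{\ast}K$ has negative order, and the second is a bounded convolution operator by Lemma 6.6 of \cite{kasparov2017elliptic}. Your decomposition $R(1+\Delta_G)\phi(a)$ is structurally different, and the difference is not cosmetic: the paper's $d_G^{\ast}(\cdot)d_G$ sandwich produces an honest pseudodifferential operator that can be estimated in the calculus, while your $R$ leaves the calculus altogether. To repair your argument you would essentially need to replace $R$ by the paper's $K$ and re-organize the factorization as the paper does.
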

\begin{proof}
Let $\{v_k\}$ be a basis for the Lie algebra $\g$ and $v_k^{\ast}$ the dual basis for $\g$. Define the operator 
\begin{equation*}
\begin{aligned}
     d_G:C_c^{\infty}(E)&\to C_c^{\infty}(E\otimes \g_M^{\ast})\\
    f&\mapsto \sum_k \frac{\partial f}{\partial v_k}\otimes v_k^{\ast},
\end{aligned}
\end{equation*}
where $\frac{\partial}{\partial v_k}$ indicates the derivative along the one-parameter subgroup of $G$ corresponding to the vector $v_k$.

  Let $K$ be the operator with the cosymbol $(1-\sigma_H^2(P))(1+\sigma_H(\Delta_{G}))^{-1}$ that is compact at each point $x\in M$.
   
    Using Remark \ref{rmk8}, we see that the operator $K$ itself is of negative order. To prove that $a(1-P^2)e$ is compact for all $a\in C_c^{\infty}(M)$ and $e\in C_c^{\infty}(G)$, let $F=d_G^{\ast}K d_G$. As $\sigma_H(d_G^{\ast}d_G)=\sigma_H(\Delta_G)$, we see that $\sigma_H(1-P^2)-\sigma_H(F)$ is of negative order, and thus $1-P^2$ is equal to $F$ modulo operators of negative order.
    
   By Lemma 6.6 in \cite{kasparov2017elliptic}, for any arbitrary $e\in C_c(G)$, the representation of $e$ on $F$, $a F e=a\cdot d_G^{\ast} K d_G\circ e$ is equal to the product of the compact operator $a\cdot d_G^{\ast} K$ and the bounded convolution operator with $d(e)$. Therefore, $a F e$ and thus $a(1-P^2)e$ is compact for arbitrary $e\in C_c(G)$, and by natural extension, for arbitrary $a\in C_0(M)$ and $e\in C^{\ast}(G)$, establishing it as a $KK$-class in $K^{0}(C^{\ast}(G, C_0(M)))$.
\end{proof}
\begin{remark}\label{rmk5}
      In the case that $M$ is compact, the index element $[P]\in K^{0}(C^{\ast}(G, C_0(M)))$ can be simplified by the induced map of the inclusion $\C\subset C_0(M)$ and the resulting index class in $K^0(C^{\ast}(G))$ is called the distribution index of $P$.
\end{remark}

\chapter{Index theorem for transversally H-elliptic operators}
\label{chap5}

In this chapter, we give the main index theorem for transversally H-elliptic operators and prove it using a similar approach in \cite{kasparov2022k} and \cite{kasparov2024coarsepseudodifferentialcalculusindex}.

\section{Main index theorem for transversally H-elliptic operators}
\label{sec5.1}
We denote the algebra $Cl_{\tau}(M)\hotimes_{C_0(M)}Cl_{\Gamma}(M)$ by $Cl_{\tau\oplus\Gamma}(M)$. First, we define the Dirac element with Clifford action and use it to define the Clifford-Dolbeault element similar to the construction we have done in Definition \ref{defi2}.
\begin{defn}
    The \textit{Dirac element} $[d_{M,\Gamma}]\in K^0(C^{\ast}(G,Cl_{\tau\oplus\Gamma}(M)))$
    is given by the pair $(H,F_M)$, where $H=L^2(\Lambda^{\ast}(M))$, the action of $Cl_{\tau\oplus\Gamma}(M)$ on $H$ is defined on (real) covectors by 
    $$\xi_1\oplus \xi_2\mapsto \text{ext}(\xi_1)+\text{int}(\xi_1)+i (\text{ext}(\xi_2)-\text{int}(\xi_2)).$$
Here $\xi_1$ is a section of  $\tau=TM$ and $\xi_2$ is a section of $\Gamma$. The representation
of $C^{\ast}(G, Cl_{\tau\oplus\Gamma}(M))$ on $H$ is induced by the covariant representation defined by the action of $G$ and the above action of $Cl_{\tau\oplus\Gamma}(M)$. We take $D_M=d_M+d^{\ast}_M$ and let $F_M:=D_M(1+D_M^2)^{-1/2}$.
\end{defn}
 
\begin{defn}
    The \textit{Clifford-Dolbeault element} $[{\mathcal{D}_{M,\Gamma}^{cl'}}]\in K^0_G(C^{\ast}(G,C^{\ast}(Gr(H))\hotimes_{C_0(M)}$ $ Cl_{\Gamma}(M)))$ is the $KK$-product of the element $j^G([\eta_{C^{\ast}(Gr(H))}]\otimes_{C_0(M)}1_{Cl_{\Gamma}(M)})\in  \mathcal{R}KK(M;$ $ C^{\ast}(Gr(H))\hotimes_{C_0(M)}Cl_{\Gamma}(M), Cl_{\tau\oplus \Gamma}(M))$ and the Dirac element $[d_{M,\Gamma}]\in K_G^0(C^{\ast}(G,Cl_{\tau\oplus\Gamma}(M)))$.
\end{defn}
\begin{remark}
    This Clifford-Dolbeault element is different from the one we defined in Definition \ref{def7}. In fact, similar to the argument in Remark \ref{rmk7}, $[\mathcal{D}_{M,\Gamma}^{cl}]$ is actually the $KK$-product of the fiberwise Dirac and the Dirac element with respect to the Clifford action (see Definition 8.17 in \cite{kasparov2017elliptic}), and the new Clifford-Dolbeault element $[{\mathcal{D}_{M,\Gamma}^{cl'}}]$ is defined replacing the fiberwise Dirac by the global dual Dirac element $\eta$.
\end{remark}

For a transversally H-elliptic operator $P$, by Definition \ref{defn12} and Theorem \ref{thm13}, we have the tangent-Clifford symbol class
$$[\sigma^{tcl}_H(P)]\in KK^G(C_0(M), C^{\ast}(Gr(H))\hotimes_{C_0(M)} Cl_{{\Gamma}}(M)),$$ and the index class 
$$[P]\in K^0(C^{\ast}(G,C_0(M))).$$

The index theorem for transversally H-elliptic operators is presented as follows:
\begin{thm}\label{thm3}
Let $M$ be a complete filtered Riemannian manifold and $G$ be a locally compact Lie group properly and isometrically acting on $M$ and preserving the filtered structure. Let $P$ be a $G$-invariant transversally H-elliptic operator on $M$ of order $0$, then we have
    $$[P]=j^G([\sigma^{tcl}_H(P)])\otimes_{C^{\ast}(G,C^{\ast}(Gr(H))\hotimes_{C_0(M)}Cl_{\Gamma}(M))}  [\mathcal{D}_{M,{\Gamma}}^{cl'}],$$
    where the $KK$-product on the right-hand side is considered over
    \begin{equation*}
        \begin{aligned}
            KK(C^{\ast}(G, C_0(M)),C^{\ast}(G,C^{\ast}(Gr(H))\hotimes_{C_0(M)} Cl_{{\Gamma}}(M))\\
            \otimes_{C^{\ast}(G,C^{\ast}(Gr(H))\hotimes_{C_0(M)}Cl_{\Gamma}(M))} KK(C^{\ast}(G,C^{\ast}(Gr(H))\hotimes_{C_0(M)}Cl_{\Gamma}(M)),\C).
        \end{aligned}
    \end{equation*}
  
\end{thm}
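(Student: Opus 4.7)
The plan is to follow Kasparov's strategy from \cite{kasparov2022k,kasparov2024coarsepseudodifferentialcalculusindex}, combining associativity of the Kasparov product with a rotation homotopy that relates the tangent-Clifford symbol of $P$ to the operator itself. First, I would unfold the right-hand side using the definition of $[\mathcal{D}_{M,\Gamma}^{cl'}]$ as the $KK$-product $j^G([\eta_{C^{\ast}(Gr(H))}]\otimes_{C_0(M)}1_{Cl_{\Gamma}(M)})\otimes[d_{M,\Gamma}]$. By associativity of the Kasparov product and functoriality of $j^G$, the right-hand side of the theorem becomes
\begin{equation*}
j^G\bigl([\sigma^{tcl}_H(P)]\otimes_{C^{\ast}(Gr(H))\hotimes_{C_0(M)}Cl_{\Gamma}(M)}([\eta_{C^{\ast}(Gr(H))}]\otimes_{C_0(M)}1_{Cl_{\Gamma}(M)})\bigr)\otimes_{C^{\ast}(G,Cl_{\tau\oplus\Gamma}(M))}[d_{M,\Gamma}].
\end{equation*}

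Next I would perform the first internal product in the $\mathcal{R}KK^G(M;-,-)$ picture. The operator representative of $[\sigma_H^{tcl}(P)]$ is the bounded multiplier $\sigma_H(P)\hotimes 1+(1-\sigma_H^2(P))^{1/2}\hotimes f'_{\Gamma}$ on $C_0(M;E)\hotimes_{C_0(M)}C^{\ast}(Gr(H))\hotimes_{C_0(M)}Cl_{\Gamma}(M)$, while $[\eta_{C^{\ast}(Gr(H))}]$ is represented fiberwise by Clifford multiplication by the covector fields $\eta_{Gr(H)_x}$. Their Kasparov product yields a module over $Cl_{\tau\oplus\Gamma}(M)$ whose operator is a Kasparov-type sum of $\sigma_H^{tcl}(P)$ (now paired with $Cl_{\Gamma}$) and Clifford multiplication by $\eta$ (supplying the $Cl_{\tau}$ direction). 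Taking the further product with $[d_{M,\Gamma}]$, whose operator is $F_M=D_M(1+D_M^2)^{-1/2}$ acting on $L^2(\Lambda^{\ast}(M)\hotimes E)$ with the Clifford action of $Cl_{\tau\oplus\Gamma}(M)$, produces an explicit Kasparov module $(L^2(\Lambda^{\ast}(M))\hotimes E,\Phi)$ over $C^{\ast}(G,C_0(M))$, where $\Phi$ combines a $KK$-theoretic lift $\tilde{P}$ of $P$, the orbital Dirac $\mathcal{D}_G$ (rescaled by $(1+\Delta_G)^{-1/2}$), and the exterior/Dolbeault piece $D_M$.

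The final and most delicate step is to identify this combined Kasparov module with $[P]=[(L^2(E),P)]$ up to operator homotopy. The argument is an adaptation of the rotation homotopy used in Theorem \ref{thm5}: in the $(\xi,\chi)$-coordinates of $T^{\ast}(TM)$ one interpolates $\Phi$ with the operator built from $\tilde{P}\hotimes 1$ plus a Bott-Dirac contribution; the Bott-Dirac class is trivial in $KK$-homology, so the product collapses to $[P]$. In our setting the fiberwise Dirac element $[d_{\xi}]$ is replaced by $[\eta_{C^{\ast}(Gr(H))}]$ (see Remark \ref{rmk7}), so the rotation has to be transported through the Connes-Thom-type isomorphism from the abelian model to the osculating groupoid. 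The orbital Dirac $\mathcal{D}_G$ plays the role of the ``transversal Bott element'' that links the leafwise Clifford factor to the orbit directions.

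The main obstacle will be carrying out the rotation homotopy in the non-commutative fiber $C^{\ast}(Gr(H))$: unlike the commutative case $C_0(T^{\ast}M)$, there is no pointwise $(\xi,\chi)$-rotation action on $C^{\ast}(Gr(H))$, so the homotopy must be constructed at the level of multipliers, using the essential self-adjointness of $\sigma_H(\mathcal{D}_G)$ established in Theorem \ref{thm7} to guarantee that all intermediate operators are regular and their commutators with $C_0(M)$ and $C^{\ast}(G)$ remain compact, as in Theorem \ref{thm13}. A secondary technical point is compatibility with $j^G$: one must check that the homotopy is $G$-continuous and that compactness is preserved only after representing $C^{\ast}(G,C_0(M))$, exactly matching the transversal H-ellipticity condition of Definition \ref{def1}. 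Together these steps, mimicking \cite{kasparov2024coarsepseudodifferentialcalculusindex} for the H-elliptic case and \cite{kasparov2022k} for the transversal direction, will yield the stated equality.
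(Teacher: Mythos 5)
Your proposal correctly identifies Kasparov's product-unfolding strategy and, to its credit, names the genuine obstacle: there is no $(\xi,\chi)$-rotation on the noncommutative fiber $C^{\ast}(Gr(H))$, so the elliptic-case rotation homotopy of Theorem~\ref{thm5} does not transplant. But after naming the obstacle you do not supply the idea that removes it, and the route you gesture at — carrying the rotation ``through the Connes--Thom-type isomorphism from the abelian model to the osculating groupoid'' — is precisely the route the paper deliberately avoids: Remark~\ref{rmk2} records that the symbol resulting from the Connes--Thom transport is intractable even for contact manifolds, which is why the $\eta$-based Dolbeault element $[\mathcal{D}_{M,\Gamma}^{cl'}]$ is introduced in the first place.

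What the paper's proof actually does, and what is missing from your proposal, is the following combination. First, the cosymbol operators $\sigma_H(P,x)$ on the fibers $Gr(H)_x$ are transplanted to operators on small geodesic balls $U_x\subset M$ via the local diffeomorphisms $U_x\cong V_x\cong \mathcal{V}_x$ and the operator-integration machinery of Section~\ref{subsec5.2.2} (borrowed from \cite{kasparov2022k} and \cite{kasparov2024coarsepseudodifferentialcalculusindex}); this is how one passes from a constant-coefficient fiber operator to the actual pseudodifferential operator, without any Fourier transform or Connes--Thom step. Second, the homotopy used is not a rotation in the fiber but a \emph{geodesic} homotopy $a\mapsto p_t^{\ast}(a)$ between the $C_0(M)$-module structures on the two $M$-variables of $U\subset M\times M$, run simultaneously with the homotopy $t\mapsto(1-t)\mathfrak{F}_G$ that kills the orbital Dirac term; the transversal H-ellipticity of Definition~\ref{def1} is exactly what makes the error terms compact during this deformation. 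Third, and crucially, the deformed product cycle is not identified with $[P]$ directly; it is identified with the auxiliary triple product $[\Theta_M]\otimes_{Cl_{\tau}(M)}[d_M]\otimes_{C_0(M)}[P]$ by invoking Skandalis's operator-homotopy criterion (positivity of $a[Q'',\mathcal{Q}]a^{\ast}$ modulo compacts), and then one finishes with Kasparov's Dirac--dual-Dirac identity $[\Theta_M]\otimes_{Cl_{\tau}(M)}[d_M]=1\in KK^G(C_0(M),C_0(M))$. Your remark that ``the Bott--Dirac class is trivial in $KK$-homology, so the product collapses to $[P]$'' conflates the Bott--Dirac element $[\mathcal{C}]$ of the elliptic proof with this Dirac--dual-Dirac identity; in the transversal-Heisenberg setting the latter is the identity that actually closes the argument. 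Without the local transplantation, the geodesic (not rotational) homotopy, the Skandalis criterion, and the $[\Theta_M]\otimes[d_M]=1$ step, the proposal does not reach the conclusion.
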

\section{Some preliminaries for the proof}
\label{sec5.2}
\subsection{Local homeomorphisms} 
\label{subsec5.2.1}
Remember that there is an exponential map $\exp_x$ which maps a small open ball $V_x \subset \tau_x=T_x M$ with center $0 \in \tau_x$ diffeomorphically onto a
small neighborhood $U_x$ of the point $x\in M$. The tangent map $(\exp_x)_{\ast}$ is close to an isometry when $V_x$ is sufficiently small, and the smaller $V_x$ is, the closer to an isometry this map is.
For any $x \in M$, there is also a similar exponential map $\text{Exp}_x: gr(H)_x\to Gr(H)_x$ which is a diffeomorphism of a small neighborhood of $0 \in gr(H)_x$ to a small neighborhood of $0\in Gr(H)_x$.
By the identification of $gr(H)_x$ with $\tau_x$ by the isomorphism, we can choose the neighborhood $V_x$ small enough so that $\text{Exp}_x$ maps this neighborhood $V_x\subset gr(H)_x$ to some neighborhood $\mathcal{V}_x\subset Gr(H)_x$. We can identify all three $L^2$-spaces: $L^2(U_x)$, $L^2(V_x)$ and $L^2(\mathcal{V}_x)$ by the isometric isomorphisms.

\subsection{The construction from $\sigma_H(P)$ to $P$}
\label{subsec5.2.2}
Let $\sigma_H$ be a cosymbol in $\mathcal{L}(E)\otimes \mathcal{M}(C^{\ast}(Gr(H)))$. Choose a continuous family of open balls $\{U_x, x \in M\}$ which satisfy the following properties: the vector bundle $E$
is trivial over each $U_x$ (and this trivialization depends continuously on $x$), and
the maps $\exp^{-1}_x: U_x \to V_x$ and $\text{Exp}_x: V_x \to \mathcal{V}_x$ are well defined diffeomorphisms.\\
Pick a function $\nu\in C_0([0,1))$ such that $0\leq \nu\leq 1$, $\nu(t) = 1$ for $t \leq 1/2$, and $\nu(t) = 0$ for $t \geq 2/3$. Assume that $V_x \subset \tau_x$ is a Euclidean ball of radius $r_x$, define the function $\nu_x(v)$ on $V_x$ by $\nu_x(v)=\nu(\lVert x\rVert /r_x)$.\\
Using the isomorphisms $L^2(U_x)\cong L^2(V_x)\cong L^2(\mathcal{V}_x)$, we can transplant $\nu_x$ into $L^2(\mathcal{V}_x)$, then transplant the operator $\nu_x\sigma_{H,x}\nu_x$ into $\mathcal{L}(E_x\otimes L^2(U_x))$. This operator will be called $P(x)$.\\
 If the family $\{P(x)\}\subset \mathcal{L}(L^2(E))$ is norm-continuous, by the operator integration discussion in Section 2 in \cite{kasparov2022k}, we have an operator $P=\int_M P(x) d\phi$, where $\phi$ is the natural action of $C_0(M)$ on $L^2(E)$ by multiplication. By Theorem 3.12 in \cite{kasparov2022k}, this constructed $P$ is well-defined up to negative order terms.

    \section{Proof of the main theorem}
    \label{sec5.3}

    \begin{proof}(\textit{of Theorem \ref{thm3}})
        Assume that $P$ is a $G$-invariant transversally H-elliptic differential operator of order $0$.
        The proof can be separated into the following steps:
        \begin{enumerate}
        \item[(i)]   

    We first verify each element of the $KK$-product on the right-hand side.
    \begin{itemize}
        \item The tangent-Clifford cosymbol element is $[\sigma^{tcl}_H(P)]\in KK^G(C_0(M), C_0(M;E)\hotimes_{C_0(M)}$ $C^{\ast}(Gr(H))\hotimes_{C_0(M)} Cl_{\Gamma}(M))$. The Hilbert module is the family $\{E_x\hat{\otimes}C^{\ast}(Gr(H)_x)$ $\hotimes Cl_{\Gamma_x}(M)\}_{x\in M}$, and the operator is the family of operators $\{\sigma^{tcl}_H(P,x)\}_{x\in M}$ we defined in Definition \ref{defi1}.
        \item $[\eta_{C^{\ast}(Gr(H))}]\otimes_{C_0(M)} 1_{Cl_{\Gamma}(M)}\in \mathcal{R}KK(M; C^{\ast}(Gr(H))\hotimes_{C_0(M)}Cl_{\Gamma}(M), Cl_{\tau\oplus \Gamma}(M))$. The Hilbert module is the family $\{L^2(Gr(H)_x)\hat{\otimes}Cl_{\tau_x\oplus \Gamma_x}(M)\}_{x\in M}$, and for each $x$ the operator is the covector fields $\eta_x(y)$ over $Gr(H)_x$.
        \item $[d_{M,\Gamma}]\in K^{0}(C^{\ast}(G, Cl_{\tau\oplus\Gamma }(M)))$. The Hilbert space is $H=L^2(\Lambda^{\ast}(M))$ with $Cl_{\tau\oplus\Gamma}(M)$ action by Clifford multiplication operator on covectors by $$\xi_1\oplus \xi_2\mapsto \text{ext} (\xi_1)+\text{int}(\xi_1)+i (\text{ext}(\xi_2)-\text{int}(\xi_2)),$$ and the operator is $F_M=D_M/(1+D_M^2)^{1/2}$, where $D_M=d_M+d_M^{\ast}$ and $d_M$ is the operator of exterior derivation on $H$.
    \end{itemize}
The $KK$-product on the right-hand side can be represented on the Hilbert space $\mathcal{H}=\{E_x\hat{\otimes} L^2(Gr(H)_x)\hotimes Cl_{\tau_x\oplus \Gamma_x}(M)\}_{x\in M}\hat{\otimes}_{Cl_{\tau\oplus\Gamma}(M)} L^2(\Lambda^{\ast}(M))$ with a $C^{\ast}(G, C_0(M))$ action. The tangent-Clifford cosymbol $\sigma^{tcl}_H(P,x)$ acts on $\{E_x\hat{\otimes}$ $ L^2(Gr(H)_x)\hotimes Cl_{\Gamma_x}(M)\}_{x\in M}$, the covector field $\eta_x$ acts on $\{L^2(Gr(H)_x)\hotimes Cl_{\tau_x\oplus \Gamma_x}(M)\}_{x\in M}$  $\hat{\otimes}_{Cl_{\tau\oplus\Gamma}(M)} L^2(\Lambda^{\ast}(M))$, and $F_M$ acts on $L^2(\Lambda^{\ast}(M))$.

The triple $KK$-product can be represented as the operator on the Hilbert space $\mathcal{H}=\{E_x\hat{\otimes} L^2(Gr(H)_x)\hotimes Cl_{\tau_x\oplus \Gamma_{x}}(M)\} \hat{\otimes}_{Cl_{\tau\oplus\Gamma}(M)} L^2(\Lambda^{\ast}(M))$:
$$Q=1\hotimes \eta\hotimes 1 + (1-\eta^2)^{1/2} (N_1 \sigma_H^{tcl}(P)\hotimes 1\hotimes 1 +N_2 1\hotimes 1\hotimes F_M),$$
where $\eta=\{\eta_x\}_{x\in M}$ and $\sigma^{tcl}_H(P)=\{\sigma^{tcl}_H(P,x)\}_{x\in M}$, with appropriate operators $N_1$ and $N_2$ such that $N_1^2+N_2^2=1$.


\item[(ii)] 
We can choose a small radical neighborhood $\mathcal{V}_x$ of $x$ with the radii $r_x$ varying smoothly for $x\in M$ as in Subsection \ref{subsec5.2.1} and \ref{subsec5.2.2}. Assume that $\lVert \eta_x\rVert=1$  outside of $\mathcal{V}_x$ (or a smaller neighborhood) by renormalization. 
By this renormalization of $\eta_x$, the product $Q$ only depends on the value of $\eta_x$ and $\sigma^{tcl}_H(P,x)$ on $\mathcal{V}_x\subset Gr(H)_x$. 
 By the local homeomorphism between $\mathcal{V}_x\subset Gr(H)_x$ and $U_x\subset M$, $\sigma^{tcl}_H(P,x)$ will be transformed into the operator $T^x$ with constant cosymbol $\sigma^{tcl}_H(P,x)$ on $U_x$, and $\eta_x(y)$, which is a covector field over $\mathcal{V}_x\subset Gr(H)_x$, will be transformed into a covector field on $U_x$. We still denote it by $\eta_x(y)$ for $y\in U_x\subset M$. As a result, we can transform all these operators to a small neighborhood $U$ of the diagonal $M\times M$ with $U_x=U\cap (M\times \{x\})$.

Then after the local homeomorphism, the operator $\sigma_H(P,x)$ and $\eta_x$ are defined on $E_x\hotimes L^2(Gr(H)_x)$ will be transformed into operators on $E_x\hotimes L^2(U_x)\cong L^2(E|_{U_x})$ by the trivialization condition of $E$ on $U_x$. So the whole operator $Q$ is transformed into 
$$Q'=1\hotimes \eta\hotimes 1 + (1-\eta^2)^{1/2} (N_1 \{T^x\}_{x\in M}\hotimes 1\hotimes 1 +N_2 1\hotimes 1\hotimes F_M),$$
acting on the Hilbert space $\mathcal{H}'=\{L^2(E|_{U_x}) \hat{\otimes} Cl_{\tau_x\oplus \Gamma_x}(M)\}_{x\in M} \hotimes_{Cl_{\tau\oplus\Gamma}(M)} L^2(\Lambda^{\ast}(M))=\{L^2(E) $ $\hat{\otimes} L^2(\Lambda^{\ast}(U_x))\}_{x\in M}$. Consequently, $Q'$ can be considered as a family of operators $Q'_x$ on $\mathcal{H}'_x=L^2(E)\hat{\otimes} L^2(\Lambda^{\ast}(U_x))$, parametrized by $x\in M$:
$$Q'_x=1\hotimes \eta_x\hotimes 1 + (1-\eta_x^2)^{1/2} (N_1 T^x\hotimes 1\hotimes 1 +N_2 1\hotimes 1\hotimes F_M).$$
The operator $T^x$ can be represented as 
 $$T^x=P(x)\hotimes 1+(1-P(x)^2)^{1/2}\hotimes \mathfrak{F}_G(x),$$
 in which $P(x)$ is the operator we constructed in subsection \ref{subsec5.2.2} from the cosymbol $\sigma_H(P,x)$ at $x$ and $\mathfrak{F}_G(x)$ is obtained similarly from the cosymbol $\sigma_H(\mathfrak{F}_G, x)$. We can write $P(x)$ (and also $\mathfrak{F}_G(x)$) as the integral representation $\int_{y\in M} P(x) d\phi$ and do the simple homotopy, from the operator $P(x)$ with constant cosymbol $\sigma_H(P,x)$ on $U_x$, to the operator $P$ on $U_x$ (up to negative order terms) by 
$$t\mapsto \int_{y\in M} P(ty+(1-t)x) d\phi,\ 0\leq t\leq 1$$
for all $y\in U_x$. By the arguments in Section 5.3 in \cite{kasparov2022k}, the function from orbit $\mathcal{O}$ to the operator $\mathfrak{F}_{\mathcal{O}}=\D_{\mathcal{O}}/(1+\D^2_{\mathcal{O}})^{1/2}$ defined in Section \ref{sec4.2} is continuous in the $\ast$-strong topology, and $1-\mathfrak{F}_{\mathcal{O}}^2$ is continuous in norm, so the above homotopy provides a standard homotopy between $KK$-cycles, from $T^x$ to $T=P\hotimes 1+(1-P^2)^{1/2}\hotimes \mathfrak{F}_G$ on $U_x$. 
During the homotopy, we shall note that these operators are considered on the consistent Hilbert space $L^2(E)$.

It suffices to show that $a(1-T^2)$ is a compact operator for arbitrary $a\in C_0(M)$, so that such an operator $T$ actually represents a $KK$-cycle. As in the proof of Theorem 10.3 of \cite{kasparov2022k}, the commutator term $[P\hat{\otimes}1,1\hat{\otimes}\mathfrak{F}_G]$ is estimated on two sides by the operator $(1+\D_{\mathcal{O}}^2)^{-1/2}$, i.e., for some positive constant number $c$, we have $$-c (1+\D_{\mathcal{O}}^2)^{-1/2}\leq [P\hat{\otimes}1,1\hat{\otimes}\mathfrak{F}_G]\leq c (1+\D_{\mathcal{O}}^2)^{-1/2}.$$ By calculation, $1-T^2=(1-P^2)(1-\mathfrak{F}_G^2)-(1-P^2)^{1/2}[P\hat{\otimes}1,1\hat{\otimes}\mathfrak{F}_G]$. Note that by definition, the transversally H-elliptic operator $P$ satisfies that $(1-\sigma_H(P)^2)(1+\sigma_H(\Delta_G))^{-1}$ is compact and thus has negative H-order. So by Remark \ref{rmk8}, $(1-P^2)(1+\D_{\mathcal{O}}^2)^{-1}$ also has negative order. Thus $a(1-T^2)$ is compact for arbitrary $a\in C_0(M)$ acting on the first copy $M$ of $U\subset M\times M$ where $T$ lies.

 The $C^{\ast}(G, C_0(M))$ action in the present $KK$-product is on the second copy of $M$ in $U\subset M\times M$, since the first $M$ is obtained from the transformation of $Gr(H)_x$ part and the second $M$ is the origin manifold $M$. We need a `rotation' homotopy between the $C_0(M)$ action on these two different variables $M$. For any $(y,x)\in U\subset M\times M$, let $p_t(x,y),\ 0\leq t\leq 1$ be the geodesic segment joining $x$ and $y$. Then for arbitrary $a\in C_0(M)$, $$\phi_t: a\mapsto p_t^{\ast}(a)\in C_b(M\times M)$$ gives a homotopy between the homomorphism of multiplication on $C_b(U)$ by $C_0(M)$ on the first and second $M$ variable.

 Simultaneously, we need to take the homotopy of the operator $t\mapsto (1-t){\mathfrak{F}_{G}},\ 0\leq t\leq 1$ from $\mathfrak{F}_{G}$ to $0$ with the same $t$ above. 
From the definition, $(1-\sigma_H(P)^2)(1+\sigma_H(\Delta_G))^{-1}$ is compact and thus has negative H-order. So by Remark \ref{rmk8}, $(1-P^2)(1+\Delta_G)^{-1}$ becomes compact after being multiplied by any function from $C_0(M)$ on the first $M$. For each fixed $y\in M$, the function $p_t^{\ast}(a)$ will be a $C_0$ function in $x$ for $0< t\leq 1$. So $t(1-P^2)^{1/2}\hotimes \mathfrak{F}_{G}$ will be killed by the multiplication by  $p_t^{\ast}(a)\cdot (1+\Delta_G)^{-1/2}\in C^{\ast}(G, C_0(M))$ during the homotopy. For the same reason for fixed $x\in M$, the commutator of $t\mathfrak{F}_{G}$ and $F_M$ is also compact with multiplication by $p_t^{\ast}(a)$ during the homotopy. By this homotopy, the orbital Dirac operator $\mathfrak{F}_G$ will be killed, and the multiplication of $C_0(M)$ will act on the first copy of $M$.


Then, by the homotopies constructed above, $Q'$ is transformed into a family of operators $Q''=\{Q_x''\}_{x\in M}$:
$$Q''_x=1\hotimes \Theta_x\hotimes 1 + (1-\Theta_x^2)^{1/2} (N_1 P\hotimes 1\hotimes 1 +N_2 1\hotimes 1\hotimes F_M),$$
acting on the Hilbert space $\mathcal{H}'_x=L^2(E) \hat{\otimes} L^2(\Lambda^{\ast}(U_x))$ for each $x\in M$.
\item[(iii)]
 We calculate the $KK$-product $[\Theta_M]\otimes_{ Cl_{\tau}(M)} [d_M]\otimes_{C_0(M)}[P]$, in which $[P]$ takes the product with $[\Theta_M]\in KK(C_0(M), C_0(M)\hotimes Cl_{\tau}(M))$ over $C_0(M)$ in the second variable and $[d_M]$ takes the product with $[\Theta_M]$ over $Cl_{\tau}(M)$.  First, we verify each element of this product.
 \begin{itemize}
     \item The local dual Dirac element $[\Theta_M]\in KK(C_0(M), C_0(M)\hotimes Cl_{\tau}(M))$ is defined by the field of pairs $\{(Cl_{\tau}(U_x), \Theta_x)_{x\in M}\}$.
     \item The index class $[P]\in K^0(C^{\ast}(G,C_0(M)))$ is defined by the pair $(L^2(E),P)$.
     \item The Dirac element $[d_M]\in K^0(Cl_{\tau}(M))$ is defined by the pair $(L^2(\Lambda^{\ast}(M)), F_M=D_M/(1+D_M^2)^{1/2})$, where $D_M=d_M+d_M^{\ast}$.
 \end{itemize}
 
 So the triple $KK$-product $[\Theta_M]\otimes_{Cl_{\tau}(M)} [d_M]\otimes_{C_0(M)}[P]$ can be written as the pair $$(C^{\ast}(G, C_0(M))\to \mathcal{H}',\mathcal{Q}),$$ where $\mathcal{H}'$ is the Hilbert space $\{L^2(E)\hotimes L^2(\Lambda^{\ast}(U_x))\}_{x\in M}$, which can be considered as a family of Hilbert spaces $\mathcal{H}'_x=L^2(E)\hotimes L^2(\Lambda^{\ast}(U_x)$, and the operator $\mathcal{Q}=\{\mathcal{Q}_x\}_{x\in M}$ is a family of operators $\mathcal{Q}_x$ on $\mathcal{H}'_x$ parametrized by $x\in M$:
 $$\mathcal{Q}_x=N_1 P\hotimes 1\hotimes 1+ N_2 [ 1\hotimes \Theta_x \hotimes 1 + (1-\Theta_x^2)^{1/2}  1\hotimes 1\hotimes F_M]$$
with $N_1^2+N_2^2=1$.

By the construction of $KK$-product (Theorem 2.11 in \cite{kasparov1988equivariant}), the operators $N_1$ and $N_2$ are assigned to commute with $C_0(M)$, $P\hotimes 1\hotimes 1$, $1\hotimes \Theta_x \hotimes 1$ and $1\hotimes 1\hotimes F_M$ module $\mathcal{K}(L^2(M\times M))$, and they satisfy $N_1(\mathcal{K}(L^2(M))\otimes 1)\subset \mathcal{K}(L^2(M\times M))$ so $N_1$ vanishes $[P\hotimes 1\hotimes 1, 1\hotimes \Theta_x\hotimes 1]$ and $1-\Theta_x^2$,
$N_2(1\otimes \mathcal{K}(L^2(M))\subset \mathcal{K}(L^2(M\times M))$ and $N_2$ vanishes $[1\hotimes \Theta_x\hotimes 1, 1\hotimes1\hotimes F_M]$ and $[P\hotimes 1\hotimes 1, 1\hotimes1\hotimes F_M]$.
So we see that 
$$a[Q'', \mathcal{Q}]a^{\ast}\geq 0\ \text{mod}\ \mathcal{K}(\mathcal{H}')$$
for all $a\in C^{\ast}(G, C_0(M))$. Then by Lemma 11 in \cite{skandalis1984some}, the $KK$-cycles $(\mathcal{H}', Q'')$ and $(\mathcal{H}', \mathcal{Q})$ are operator homotopic. As a result, we obtain the following equation:
 \begin{equation*}
     \begin{aligned}
         &[\Theta_M]\otimes_{Cl_{\tau}(M)} [d_M]\otimes_{C_0(M)}[P]\\
         =&j^G([\sigma^{tcl}_H(P)])\otimes_{C^{\ast}(G,C^{\ast}(Gr(H))\hotimes_{C_0(M)}Cl_{\Gamma}(M))}  [\mathcal{D}_{M,{\Gamma}}^{cl'}]\in K^0(C^{\ast}(G,C_0(M))).
     \end{aligned}
 \end{equation*}

\item[(iv)]

 
By Theorem 4.8 in \cite{kasparov1988equivariant}, 
$$[\Theta_M]\otimes_{Cl_{\tau}(M)}[d_M]=[1]\in KK(C_0(M),  C_0(M)),$$
so its product with index class $[P]\in K^0(C^{\ast}(G, C_0(M)))$ over $C_0(M)$ will be $[P]$ itself. 
So we obtain the index class $[P]\in K^0(C^{\ast}(G, C_0(M)))$ on the left-hand side as we desired.
        \end{enumerate}
         That finishes the proof.         
         
    \end{proof}
\begin{remark}
     It is also considerable to establish the index theory for transversally H-elliptic operators via an adiabatic groupoid approach, similar to the construction in Section \ref{sec3.1}. For each representation of $G$, it entails contemplating distinct $C^{\ast}$-algebras for each fiber of the adiabatic groupoid, so that it can represent a commutative diagram and we obtain an index in $\Z$. By such construction, the $C^{\ast}$-algebra diagram induces an element in $\text{Hom}(R(G),\Z)\cong \widehat{R(G)}\cong K^0(C^{\ast}(G))$, and it should be consistent with the distributional index class of $P$ we discussed in Remark \ref{rmk5}. A different proof of the index theorem using such a groupoid approach is left for future research.
\end{remark}

\chapter{Verification of the condition and basic examples}
\label{chap6}
In this chapter, we focus on verifying the compactness condition outlined in Definition \ref{def1}, i.e.,
\begin{quest}\label{que1}
    Let $e$ be an fixed element in the multiplier algebra of the $C^{\ast}(Gr(H)_x)$-module $E=C^{\ast}(Gr(H)_x)$,
$e\in \mathcal{M}(C^{\ast}(Gr(H)_x))$,
 how to show whether or not $e$ lies in the compact operator space $\mathcal{K}(E)=E$.
\end{quest}

\section{Fourier transform of nilpotent Lie group $C^{\ast}$-algebras}
\label{sec6.1}

\subsection*{Fourier transform from ordinary cosymbol to symbol}
Recall that for a differential operator $P$, let $F$ be the ordinary cosymbol of $P$ we discussed in Remark \ref{rmk4}. We can take the Fourier transformation of $F\in \mathcal{M}(C^{\ast} (TM))$ to the symbol $f=\widehat{F}$ in $\mathcal{M}(C_0(T^{\ast} M))=C_b(T^{\ast} M)$. Let us describe an idea to show the compactness of $F$ roughly as follows: We show that, when $\xi$ converges to infinity, $f(x,\xi)$ converges to $0$, and thus, it lies in $C_0(T^{\ast} M)$ and then the original cosymbol operator $F$ is a compact multiplier of $C^{\ast}(TM)$. See the commutative diagram below:
$$\begin{tikzcd}
&\mathcal{M}(C^{\ast}(T^{\ast} M)) \arrow[r, "\mathcal{F}"] & C_b(T^{\ast}M) \\
     &C^{\ast}(T^{\ast}M)\arrow[r, "\mathcal{F}"]\arrow[u, phantom, sloped, "\subset"] & C_0(T^{\ast}M)\arrow[u, phantom, sloped, "\subset"].
\end{tikzcd}$$

In case of a differential operator $P$ on the vector bundle $E$, the ordinary symbol can similarly be understood as an element in $C_b(M; p^{\ast}E)\cong C_b(T^{\ast}M)\otimes_{C_0(M)} C_0(M; E)$, with a compact symbol in the subspace $C_0(M;p^{\ast}E)\cong C_0(T^{\ast}M)\otimes_{C_0(M)} C_0(M; E)$.

\subsection*{Main idea to verify the compactness condition}

 For an element $e\in \mathcal{M}(C^{\ast}(\G))$, we show its Fourier transform $\mathcal{F}(e)\in \mathcal{F}(\mathcal{M}(C^{\ast}(\G)))$ lies in $\mathcal{F}(C^{\ast}(\G))$ by using the $C^{\ast}$-structure of the resulting space of Fourier transform. Then, $e\in \mathcal{K}(C^{\ast}(\G))=C^{\ast}(\G)$.
A parallel diagram illustrates the relationship:
 
$$\begin{tikzcd}
&\mathcal{M}(C^{\ast}(\G)) \arrow[r, "\mathcal{F}"] & \mathcal{F}(\mathcal{M}(C^{\ast}(\G))) \\
     &C^{\ast}(\G)\arrow[r, "\mathcal{F}"]\arrow[u, phantom, sloped, "\subset"] & \mathcal{F}(C^{\ast}(\G))\arrow[u, phantom, sloped, "\subset"].
\end{tikzcd}$$

  We can assign $\G$ to be $Gr(H)_x$ for each $x$, which is nilpotent. In this case, we can verify the compactness of a multiplier on $C^{\ast}(\G)$ for a nilpotent Lie group $\G$ by using the theory regarding Fourier transform of nilpotent Lie group $C^{\ast}$-algebra, which has been treated in \cite{beltictua2017fourier}.

\subsection*{Fourier transform of nilpotent Lie group $C^{\ast}$-algebras}

First, we introduce some definitions given in \cite{beltictua2017fourier}.
\begin{defn}\label{defn2}
    Let $A$ be a $C^{\ast}$-algebra with \textit{spectrum} $\widehat{A}$, which consists of the unitary equivalence classes of irreducible representations. We choose for every $\gamma \in \widehat{A}$ a representation $\pi_{\gamma} : A \to B(\mathcal{H}_{\gamma})$ in the equivalence class $\gamma$. 
    
    Define $\mathit{l}^ {\infty}(\widehat{A})$ to be the algebra of all bounded operator fields defined over $\widehat{A}$ by $$\mathit{l}^{\infty}(\widehat{A}):=\big\{ \phi=(\phi({\gamma}))\in \mathcal{B}(\mathcal{H}_{\gamma})_{\gamma\in \widehat{A}}\ |\ \lVert \phi\rVert =\sup_{\gamma}\ \lVert \phi(\gamma)\rVert_{B(\mathcal{H}_{\gamma})}<\infty \big\}.$$
    
    Then we define for $a \in A$ its \textit{Fourier transform} $\mathcal{F}_A(a) = \widehat{a}\in \mathit{l}^ {\infty}(\widehat{A})$ by
    $$\mathcal{F}_A(a)(\gamma)=\widehat{a}(\gamma)=\pi_{\gamma}(a).$$
\end{defn}

\begin{defn}
\begin{itemize}
    \item[(i)] Let $S$ be a topological space. We say that $S$ is \textit{locally compact of step $\leq d$} if there exists a finite increasing family
  $$\emptyset \neq S_{d} \subset S_{d-1} \subset \cdots \subset S_0 = S $$ of closed subsets of $S$, such that the subsets $\Gamma_d=S_d$, $\Gamma_i=S_{i-1}\setminus S_{i}$ for $1\leq i\leq d-1$ are locally compact and Hausdorff in their relative topologies.
  \item[(ii)] Let $S$ be locally compact of step $ \leq d$, and let $\{\mathcal{H}_i\}_{{i=1,\dots,d}}$ be Hilbert spaces. For a closed subset $M \subset S$, denote by $CB(M)$ the unital $C^{\ast}$-algebra of all uniformly bounded operator fields
  $(\psi(\gamma)\in \mathcal{B}(\mathcal{H}_i))_{\gamma\in S\cap\Gamma_i,i=1,\dots, d}$, which are operator norm continuous on the
subsets $\Gamma_i \cap M$ for every $i \in \{1, \dots ,d\}$ with $\Gamma_i \cap M \neq \emptyset$. We provide the algebra $CB(M)$ with the infinity-norm 
$$\lVert \phi \rVert_M := \sup\big\{\lVert\phi(\gamma)\rVert_{\mathcal{B}(\mathcal{H}_i)}\ |\ M\cap \Gamma_i\neq 0,\ \gamma\in M\cap\Gamma_i   \big\}.$$

\end{itemize}

\end{defn}

\begin{defn}
    We say that the $C^{\ast}$-algebra $A$ \textit{has norm-controlled dual limits} if the spectrum $\widehat{A}$ of $A$ is a locally compact space of step $\leq d$, and for every $a \in A$, we have 
    \begin{itemize}
        \item[(1)] The mappings $\gamma \to \mathcal{F}(a)(\gamma )$ are norm continuous on the difference sets $\Gamma_i=S_{i-1}\setminus S_{i}$;
        \item[(2)]  For any $i = 0, \dots ,d + 1$ and any converging sequence contained in $\Gamma_i$ with limit set outside $\Gamma_i$, thus in $S_i$, there exists a properly converging subsequence $\overline{\gamma} = (\gamma_k)$, $k\in \N$, a constant $C > 0$ and for every $k \in N$ an involutive linear mapping $\sigma_{\overline{\gamma} ,k} : CB(S_i) \to B(\mathcal{H}_i)$, bounded by $C \lVert \cdot\rVert_{S_i}$, such that for arbitrary $a\in A$,
$$\lim_{k\to\infty} \lVert  F(a)(\gamma_k) - \sigma_{\overline{\gamma} ,k}(\mathcal{F}(a)|_{S_i})\rVert_{B(\mathcal{H}_i)} = 0.$$
    \end{itemize}
\end{defn}
\begin{thm}[\cite{beltictua2017fourier}]\label{thm11}
    For a connected, simply connected nilpotent Lie group $\G$, the $C^{\ast}$-algebra $C^{\ast}(\G)$ has norm-controlled dual limits.
\end{thm}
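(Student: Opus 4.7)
My plan is to proceed via Kirillov's orbit method (Section \ref{sec1.2}), which identifies $\widehat{\G}$ with the coadjoint orbit space $\g^{\ast}/\G$. The proof has three layers: a stratification of $\widehat{\G}$, norm continuity of Fourier transforms on each stratum, and control of limits as a sequence in one stratum approaches a boundary point lying in a lower stratum. Induction on $\dim \G$ will run through the whole argument, with the base case being abelian $\G$, for which $C^{\ast}(\G) \cong C_0(\widehat{\G})$ and the conclusion is immediate.

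First I would construct the stratification $\widehat{\G} = S_0 \supset S_1 \supset \cdots \supset S_d \supset \emptyset$. Fix a Jordan--H\"older basis $\{X_1,\dots,X_n\}$ of $\g$ adapted to the ascending central series, and for each coadjoint orbit $\Omega$ record the associated Pukánszky jump indices $e(\Omega) \subseteq \{1,\dots,n\}$. Only finitely many index sets occur, and the set of orbits with prescribed $e$ is locally closed in $\g^{\ast}/\G$; ordering the index sets lexicographically (so that the generic, maximal-dimensional orbits form the top stratum) yields the required filtration. Each difference $\Gamma_i = S_{i-1}\setminus S_i$ is locally compact Hausdorff, and the representations $\pi_{\gamma}$ with $\gamma \in \Gamma_i$ act on a common Hilbert space $\mathcal{H}_i$ modeled on an $L^2$-space whose dimension is determined by the layer type.

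The second step is norm continuity of $\gamma \mapsto \pi_{\gamma}(a)$ on each $\Gamma_i$. On such a stratum one can produce a smooth section $F:\Gamma_i \to \g^{\ast}$ of the orbit projection, together with a smoothly varying Pukánszky polarization $\h(F(\gamma)) \subseteq \g$. Inducing the character $\exp X \mapsto e^{2\pi i\langle F(\gamma), X\rangle}$ from $\exp \h(F(\gamma))$ to $\G$ realizes $\pi_{\gamma}$ on $\mathcal{H}_i$ with an integral kernel depending smoothly on $\gamma$; applying this to $a \in C_c^{\infty}(\G)$ yields a norm-continuous family of operators, and the general $a \in C^{\ast}(\G)$ follows by density and the uniform bound $\lVert \pi_{\gamma}(a)\rVert \leq \lVert a \rVert$.

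The hard part will be condition (2): for a properly converging sequence $(\gamma_k) \subset \Gamma_i$ whose cluster points lie in $S_i$, producing bounded involutive maps $\sigma_{\overline{\gamma},k}: CB(S_i) \to B(\mathcal{H}_i)$ such that $\lVert \pi_{\gamma_k}(a) - \sigma_{\overline{\gamma},k}(\mathcal{F}(a)|_{S_i})\rVert \to 0$ for every $a \in C^{\ast}(\G)$. The idea, available for nilpotent groups by work of Leptin and Ludwig, is that the weak limit of $\pi_{\gamma_k}$ in the Fell topology decomposes as a direct integral $\int^{\oplus}_{S_i} \pi_{\beta}\, d\mu_k(\beta)$, with probability measures $\mu_k$ determined explicitly by how the orbits $\Omega_{\gamma_k}$ collapse onto $S_i$. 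One then sets $\sigma_{\overline{\gamma},k}(\psi) := U_k\bigl(\int^{\oplus}_{S_i} \psi(\beta)\, d\mu_k(\beta)\bigr)U_k^{\ast}$ for a unitary $U_k$ identifying the direct integral space with $\mathcal{H}_i$; the uniform bound by $C \lVert \psi\rVert_{S_i}$ is then immediate because $\mu_k$ is a probability measure. The real technical obstacle is showing that this formula actually approximates $\pi_{\gamma_k}(a)$ in norm. I would pass to a central descent: quotient $\G$ by a one-parameter central subgroup $Z$ contained in the kernel of the limit representations to obtain a nilpotent group $\G/Z$ of strictly smaller dimension, verify that the inductive hypothesis applies to $C^{\ast}(\G/Z)$, and then lift the estimate back to $\G$ by combining Kirillov's character formula with a Schwartz-type decay estimate for $\mathcal{F}(a)$ on $\g^{\ast}$, the decay being what forces the remainder to vanish in the operator norm as $k \to \infty$.
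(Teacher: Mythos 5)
The paper does not prove Theorem~\ref{thm11}; it is imported verbatim from \cite{beltictua2017fourier}, so there is no internal argument to compare against. Your sketch does follow the broad shape of the argument in that reference: Pukánszky-type stratification of $\widehat{\G}$ via jump indices along a Jordan--Hölder basis, smooth orbit sections and polarizations realizing norm continuity on each stratum, and induction on dimension to handle the boundary-limit condition. The scaffolding is right.

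The genuine gap is in your treatment of condition~(2). You propose quotienting $\G$ by a one-parameter central subgroup $Z$ ``contained in the kernel of the limit representations'' and then invoking the inductive hypothesis on $\G/Z$. But the representations $\pi_{\gamma_k}$ along the sequence do \emph{not} kill $Z$; each $\pi_{\gamma_k}$ restricts to a nontrivial central character $\chi_k$ on $Z$, and only in the limit does $\chi_k$ degenerate. Since $\pi_{\gamma_k}$ does not factor through $\G/Z$, the operators $\pi_{\gamma_k}(a)$ are not in the image of any representation of $C^{\ast}(\G/Z)$, and the inductive hypothesis says nothing about them. The actual proof has to keep the nonvanishing $\chi_k$ in play and control $\pi_{\gamma_k}(a)$ in operator norm \emph{uniformly as $\chi_k$ collapses}: for $a$ in a dense set of Schwartz functions one must realize $\pi_{\gamma_k}(a)$ concretely as an integral operator and estimate its kernel against the geometry of the collapsing coadjoint orbit. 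You name ``Kirillov's character formula with a Schwartz-type decay estimate'' as the tool but never derive the estimate. Since this norm control is precisely what separates norm-controlled dual limits from mere Fell-topological continuity of the dual, the decisive step of the theorem is asserted rather than proved in your sketch.
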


\begin{defn}\label{defn3}
     Let $S$ be a locally compact topological space of step $\leq d$. Define \textit{$B^{\ast}(S)$} to be the set of all operator fields $\phi$ defined over $S$ such that
\begin{itemize}
    \item[(a)] $\phi(\gamma) \in K(\mathcal{H}_i)$ for every $\gamma\in  \Gamma_i = S_{i-1}\setminus S_i, i = 1, \dots, d$;
     \item[(b)] The field $\phi$ is uniformly bounded, i.e., 
     $$\lVert \phi\rVert=\max_i \sup_{\gamma\in \Gamma_i} \lVert \phi(\gamma)\rVert_{B(\mathcal{H}_{i})}<\infty ;$$
      \item[(c)] The mappings $\gamma \mapsto \phi(\gamma )$ are norm continuous on the difference sets $\Gamma_i$;
       \item[(d)] For any sequence $(\gamma_k)_{k\in \N} \subset S$ going to infinity, 
       $$\lim_{k\to \infty}\lVert \phi(\gamma_h)\rVert_{\text{op}}= 0;$$
        \item[(e)]  For any $i = 0, \dots ,d + 1$ and any converging sequence contained in $\Gamma_i$ with its limit set outside $\Gamma_i$, in $S_i$, there exists a properly converging subsequence $\overline{\gamma} = (\gamma_k)$, $k\in \N$, a constant $C > 0$ and for every $k \in N$ an involutive linear mapping $\sigma_{\overline{\gamma} ,k} : CB(S_i) \to B(\mathcal{H}_i)$, which is bounded by $C \lVert \cdot\rVert_{S_i}$, such that for arbitrary $a\in A$,
$$\lim_{k\to\infty} \lVert  \phi(\gamma_k) - \sigma_{\overline{\gamma} ,k}(\phi|_{S_i})\rVert_{B(\mathcal{H}_i)} = 0.$$
\end{itemize}
\end{defn}
   
\begin{thm}[\cite{beltictua2017fourier}]\label{thm1}
    Let $A$ be a $C^{\ast}$-algebra with norm-controlled dual limits. Then, the Fourier transform of $A$ can be derived as the $C^{\ast}$-algebra $B^{\ast}(\widehat{A})$.
\end{thm}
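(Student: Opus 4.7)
The plan is to show that the Fourier transform $\mathcal{F}_A : A \to \mathit{l}^\infty(\widehat{A})$ is an injective $\ast$-homomorphism whose image is exactly $B^\ast(\widehat{A})$. Injectivity is immediate since the irreducible representations of $A$ separate points; the multiplicativity and involution-preservation of $\mathcal{F}_A$ are formal. So the real content splits into two containments: $\mathcal{F}_A(A) \subseteq B^\ast(\widehat{A})$ and $B^\ast(\widehat{A}) \subseteq \mathcal{F}_A(A)$.

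For the inclusion $\mathcal{F}_A(A)\subseteq B^\ast(\widehat{A})$, I would verify the five conditions (a)--(e) of Definition~\ref{defn3} for $\phi=\mathcal{F}_A(a)$ one by one. Condition (a) requires each $\pi_\gamma(a)$ to be compact; this is the ``liminal'' property, which holds for $C^\ast$-algebras of connected simply-connected nilpotent Lie groups (the only case we actually need, by Theorem~\ref{thm11}). Condition (b) is the contractivity estimate $\lVert\pi_\gamma(a)\rVert\leq \lVert a\rVert$. Conditions (c) and (e) are literally the two clauses in the definition of norm-controlled dual limits. Condition (d) --- vanishing at infinity across all the strata --- follows by a Glimm-type density argument: it holds on a dense subalgebra of $A$ (for instance, $C_c^\infty$ functions when $A=C^\ast(\G)$), and then extends by the uniform bound in (b).

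For the reverse inclusion $B^\ast(\widehat{A})\subseteq \mathcal{F}_A(A)$, I would proceed by induction on the step $d$ of the stratification $\emptyset\neq S_d\subset S_{d-1}\subset\cdots\subset S_0=\widehat{A}$. The base case $d=0$ reduces $\widehat{A}$ to a locally compact Hausdorff space on which every stratum is open and dense, and the claim becomes a standard Dauns--Hofmann / CCR result identifying $A$ with continuous compact-operator-valued fields vanishing at infinity over $\widehat{A}$. For the inductive step, the closed set $S_1$ corresponds to a closed two-sided ideal $J\subset A$ with $\widehat{J}=\Gamma_1$ and $\widehat{A/J}=S_1$. Given $\phi\in B^\ast(\widehat{A})$, the restriction $\phi|_{S_1}$ lies in $B^\ast(S_1)$, by the inductive hypothesis it is the Fourier transform of some $b\in A/J$; lift $b$ to $\tilde b\in A$. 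The difference $\phi-\mathcal{F}_A(\tilde b)$ vanishes on $S_1$, hence is an operator field supported on the open Hausdorff stratum $\Gamma_1$ with compact-operator values, continuous in norm, and vanishing at the boundary $S_1$ (by condition (e)); therefore it comes from an element of $J$ by the base case applied to $J$.

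The main obstacle will be making the ``vanishing at $S_1$'' property of $\phi - \mathcal{F}_A(\tilde b)$ genuine in operator norm, i.e.\ really showing the remainder lies in $J$ and not just in some larger fibred algebra. This is exactly where the compatibility mappings $\sigma_{\overline{\gamma},k}$ in condition (e) are essential: they encode how operators on $\mathcal{H}_i$ are obtained as limits of operators on higher-dimensional $\mathcal{H}_j$, so that the difference $\phi(\gamma_k)-\sigma_{\overline{\gamma},k}(\mathcal{F}_A(\tilde b)|_{S_1})$ has norm tending to zero along every properly converging sequence. Once this is under control, the conditions (a)--(d) for $\phi - \mathcal{F}_A(\tilde b)$ transfer automatically, and the induction closes.
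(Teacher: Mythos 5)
This theorem is cited in the paper from \cite{beltictua2017fourier} (Beltiță--Beltiță--Ludwig) and is \emph{not} reproved there, so there is no internal proof to compare against. On its own terms, however, your reconstruction matches the structure of the proof in the cited reference: the forward inclusion $\mathcal{F}_A(A)\subseteq B^\ast(\widehat{A})$ is handled by checking conditions (a)--(e) term by term, and the reverse inclusion $B^\ast(\widehat{A})\subseteq \mathcal{F}_A(A)$ is established by induction on the length of the stratification, using the correspondence between the closed stratum $S_1\subseteq\widehat{A}$ and the closed two-sided ideal $J\triangleleft A$ with $\widehat{J}\cong\Gamma_1$, $\widehat{A/J}\cong S_1$, lifting from $A/J$, and identifying the remainder supported on $\Gamma_1$ as an element of $J$ via the compatibility maps of condition (e).

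Two small remarks. First, you are right that compactness of the values (condition (a)) is not literally implied by the definition of ``norm-controlled dual limits'' as transcribed in this paper; the definition in \cite{beltictua2017fourier} builds liminality of $A$ into the hypotheses, and in the intended application (Theorem~\ref{thm11}) this is automatic because $C^\ast(\G)$ is CCR for a simply connected nilpotent $\G$. Second, the ``Dauns--Hofmann'' label in your base case is a misnomer: the statement that a CCR algebra with Hausdorff spectrum and norm-continuous Fourier transform is $C_0(\Gamma_1,\mathcal{K}(\mathcal{H}_1))$ follows rather from Fell's description of algebras with continuous trace together with a Stone--Weierstrass argument; the norm-continuity in the definition of $B^\ast(S)$ already forces the field of elementary $C^\ast$-algebras over $\Gamma_1$ to be trivial, which is what the base case needs. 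Condition (d) does not actually require a density argument either: for any $C^\ast$-algebra and any $a\in A$, the set $\{\gamma:\lVert\pi_\gamma(a)\rVert\geq\epsilon\}$ is quasi-compact in $\widehat{A}$, so vanishing at infinity is automatic. These are cosmetic, not structural, issues; the inductive skeleton and the use of (e) to control the boundary behaviour are the right ingredients.
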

Consequently, combining Theorem \ref{thm11} and Theorem \ref{thm1}, we have
\begin{cor}\label{cor1}
    The Fourier transform of $C^{\ast}(\G)$ is the $C^{\ast}$-algebra $B^{\ast}(\widehat{\G})$.
\end{cor}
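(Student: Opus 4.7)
The plan is to deduce this corollary by assembling the two results cited immediately above, namely Theorem \ref{thm11} and Theorem \ref{thm1}, both drawn from \cite{beltictua2017fourier}. All of the hard analytic content has been packed into those two theorems; the proof is essentially a syllogism, together with the standard identification of the spectrum of a group $C^{\ast}$-algebra with the unitary dual of the group.

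First, I would verify the hypothesis of Theorem \ref{thm1} for $A := C^{\ast}(\mathcal{G})$. Since $\mathcal{G}$ is a connected, simply-connected nilpotent Lie group---the case we care about is $\mathcal{G} = Gr(H)_x$, which is of this type by construction (Section \ref{sec1.1})---Theorem \ref{thm11} applies directly and tells us that $C^{\ast}(\mathcal{G})$ has norm-controlled dual limits in the sense of the preceding definition. In particular, its spectrum is locally compact of some finite step $d$, and for every $a \in C^{\ast}(\mathcal{G})$ the operator field $\mathcal{F}(a) = (\pi_\gamma(a))_\gamma$ satisfies all of the norm-continuity and limiting conditions (a)--(e) appearing in Definition \ref{defn3}.

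With this hypothesis in hand, I would then invoke Theorem \ref{thm1}, which states that for any $C^{\ast}$-algebra with norm-controlled dual limits the image of the Fourier transform is precisely the subalgebra $B^{\ast}$ of the corresponding operator-field algebra. Applied to $A = C^{\ast}(\mathcal{G})$, this gives $\mathcal{F}_{C^{\ast}(\mathcal{G})}(C^{\ast}(\mathcal{G})) = B^{\ast}(\widehat{C^{\ast}(\mathcal{G})})$ as $C^{\ast}$-subalgebras of $\mathit{l}^{\infty}(\widehat{C^{\ast}(\mathcal{G})})$. To match the statement of the corollary as written, it remains only to identify the $C^{\ast}$-algebraic spectrum $\widehat{C^{\ast}(\mathcal{G})}$ with the unitary dual $\widehat{\mathcal{G}}$ of the group itself: every irreducible unitary representation of $\mathcal{G}$ integrates uniquely to an irreducible non-degenerate $\ast$-representation of $C^{\ast}(\mathcal{G})$, and this correspondence is a homeomorphism between $\widehat{\mathcal{G}}$ (with the Fell topology) and $\widehat{C^{\ast}(\mathcal{G})}$ (with the hull-kernel topology). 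Under this identification, $B^{\ast}(\widehat{C^{\ast}(\mathcal{G})})$ becomes $B^{\ast}(\widehat{\mathcal{G}})$ as required.

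There is no genuine obstacle, since the corollary is by design an immediate consequence of the two quoted results. The only place where one needs to be a little careful is in making sure the finite filtration $S_d \subset \cdots \subset S_0 = \widehat{\mathcal{G}}$ referenced in the definition of ``locally compact of step $\leq d$'' is consistent with the stratification of $\widehat{\mathcal{G}}$ by coadjoint orbit type coming from Kirillov's orbit method (Section \ref{sec1.2}); in the nilpotent case this compatibility is already built into the framework of \cite{beltictua2017fourier} and amounts to bookkeeping rather than to any new analysis.
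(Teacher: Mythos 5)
Your proposal is correct and matches the paper exactly: the corollary is obtained by feeding Theorem \ref{thm11} (norm-controlled dual limits for nilpotent group $C^{\ast}$-algebras) into Theorem \ref{thm1} (Fourier transform image equals $B^{\ast}$), modulo the routine identification of $\widehat{C^{\ast}(\G)}$ with $\widehat{\G}$. The paper states this in one line (``combining Theorem \ref{thm11} and Theorem \ref{thm1}''); you have simply spelled out the same syllogism.
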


\begin{exmp}
    The Heisenberg group $C^{\ast}$-algebra $C^{\ast}(\mathbb{H}_n)$ is isomorphic to $D_{\nu}(\widehat{\mathbb{H}_n})$ by Fourier transform, where $D_{\nu}(\widehat{\mathbb{H}_n})$ is defined to be the algebra comprising all operator fields $(F = F(\lambda))_{\lambda\in\R}$ satisfying the following conditions:
    \begin{itemize}
        \item $F(\lambda)$ denotes a compact operator on $L^2(\R^2)$;
        \item $F(0)\in C^{\ast}(\R^2)$;
        \item the mapping $\R^{\ast} \to B(L^2(\R^n))): \lambda \to F(\lambda)$ is norm continuous;
        \item  $\lim_{\lambda\to \infty} \lVert F(\lambda)\rVert_{op} = 0$;
        \item $\lim_{\lambda\to 0} \lVert F(\lambda) - \nu_{\lambda}(F(0))\rVert_{op} = 0$, where
        $$\nu_{\lambda}(h):=\int_{\R^{2n}}\widehat{h}(u) P_{\eta(\lambda,u)}du/|\lambda|^n,$$
        where $\eta$ denotes a Schwartz-function in $S(\R^n)$ with $L^2$-norm equal to $1$, $P_{\eta}$ is the projection onto the one-dimensional subspace $\C \eta$.

    \end{itemize}

    In this case,  $C^{\ast}(\mathbb{H}_n)$ of $\mathbb{H}_n$ is an extension of an ideal $J$ isomorphic to $C_0(\R^{\ast}, \mathcal{K})$ with the quotient algebra isomorphic to $C^{\ast}(\R^{2n})$.
\end{exmp}

\section{Application to the multiplier algebra}
\label{sec6.2}
By the main idea at the beginning of Section \ref{sec6.1} and Corollary \ref{cor1}, it suffices to analyze the multiplier algebra of $B^{\ast}(\widehat{\G})$. Then we can give an answer for the Question \ref{que1}.

The multiplier algebra of $C_0$ functions on a topological space $M$ with values in a certain $C^{\ast}$-algebra can be calculated as follows.
\begin{defn}
    Let $A$ be a $C^{\ast}$-algebra. Let $\mathcal{M}(A)$ be its multiplier $C^{\ast}$-algebra. The \textit{strict topology} on $\mathcal{M}(A)$ is given by the convergence condition:
    $$x_{\lambda}\to x \Longleftrightarrow \forall a\in A: (\lVert xa_{\lambda}-xa\rVert + \lVert a_{\lambda}x- ax\rVert\to 0 ).$$

\end{defn}

\begin{lem}[\cite{akemann1973multipliers}  Corollary 3.4] \label{lem4} Let $A$ be a $C^{\ast}$-algebra, then
    $${M}(C_0(M,A))=C_b(M,\mathcal{M}(A)_{\beta}),$$
    where $\mathcal{M}(A)_{\beta}$ is the multiplier algebra of $A$ with the strict topology.
\end{lem}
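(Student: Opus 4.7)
The plan is to prove that $\mathcal{M}(C_0(M,A))$ is canonically $\ast$-isomorphic to $C_b(M,\mathcal{M}(A)_\beta)$ by constructing mutually inverse maps between the two algebras. The key preliminary observation is that $C_0(M)$ embeds centrally into $\mathcal{M}(C_0(M,A))$ via pointwise multiplication $(\phi\cdot g)(x)=\phi(x)g(x)$, since each scalar $\phi(x)$ commutes with every element of $A$; consequently any $T\in\mathcal{M}(C_0(M,A))$ is automatically $C_0(M)$-linear in the sense that $T(\phi g)=\phi\,(Tg)$.

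For the forward direction, given $f\in C_b(M,\mathcal{M}(A)_\beta)$ I would define the left and right actions on $C_0(M,A)$ by pointwise multiplication $(L_f g)(x):=f(x)g(x)$ and similarly $R_f$. To see $L_f g\in C_0(M,A)$, one estimates
\[
\lVert f(x)g(x)-f(x_0)g(x_0)\rVert \leq \lVert f\rVert_\infty\, \lVert g(x)-g(x_0)\rVert + \lVert (f(x)-f(x_0))g(x_0)\rVert,
\]
where the second term vanishes as $x\to x_0$ thanks to the strict continuity of $f$ applied to the fixed element $g(x_0)\in A$. Vanishing at infinity follows from $\lVert f\rVert_\infty<\infty$ together with $g\in C_0(M,A)$, and the multiplier axioms for $(L_f,R_f)$ are then routine, with $\lVert L_f\rVert\leq\lVert f\rVert_\infty$.

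For the reverse direction, given $T\in\mathcal{M}(C_0(M,A))$ and a point $x_0\in M$, I would define $f(x_0)\in\mathcal{M}(A)$ by $f(x_0)a:=(Tg)(x_0)$ for any $g\in C_0(M,A)$ with $g(x_0)=a$; such a $g$ always exists, e.g.\ $g=\psi\cdot a$ for a cutoff $\psi\in C_c(M)$ equal to $1$ near $x_0$. Well-definedness is the technical crux: if $h\in C_0(M,A)$ satisfies $h(x_0)=0$, then for each $\epsilon>0$ one chooses $\phi\in C_0(M)$ with $\phi(x_0)=0$ and $\lVert h-\phi h\rVert<\epsilon$, after which the $C_0(M)$-linearity $T(\phi h)=\phi\,(Th)$ evaluated at $x_0$ yields $T(\phi h)(x_0)=\phi(x_0)(Th)(x_0)=0$; since $\lVert Th-T(\phi h)\rVert\leq\lVert T\rVert\epsilon$ and $\epsilon$ is arbitrary, we conclude $(Th)(x_0)=0$. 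That $f(x_0)$ is genuinely a multiplier of $A$ follows from $T(g_1 g_2)=(Tg_1)g_2$ evaluated at $x_0$, and strict continuity of $x\mapsto f(x)$ at $x_0$ is obtained by fixing $g=\psi\cdot a$ and noting that $f(x)a=(Tg)(x)$ for all $x$ in the neighborhood $\{\psi=1\}$; the norm continuity of $Tg$ then gives $f(x)a\to f(x_0)a$, while the uniform bound $\lVert f\rVert_\infty\leq\lVert T\rVert$ is immediate.

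The main obstacle is the reverse direction, which hinges entirely on the $C_0(M)$-linearity used to decouple the behaviour of $T$ at different points of $M$; without this step one cannot extract well-defined pointwise multipliers from a global operator, nor recover a function of the right regularity class. Once this is in place, checking that the two constructions are mutually inverse and compatible with the $\ast$-structure reduces to routine verifications on the dense subset of elementary tensors $\psi\cdot a$ with $\psi\in C_c(M)$ and $a\in A$.
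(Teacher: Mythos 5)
The paper does not prove this lemma; it simply cites \cite{akemann1973multipliers} (Corollary~3.4), so there is no in-text argument to compare yours against. Your direct construction is correct and follows the standard route for computing multiplier algebras of trivial $C_0(X)$-algebras, with the right decomposition into a forward pointwise-multiplication map and a reverse localization map.

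Two places deserve an explicit sentence rather than an assertion. First, $T(\phi g)=\phi(Tg)$ is not automatic for an arbitrary multiplier: you should say that pointwise scalar multiplication by $\phi\in C_0(M)$ commutes with every element of the essential ideal $C_0(M,A)$ inside $\mathcal{M}(C_0(M,A))$, hence lies in the centre of $\mathcal{M}(C_0(M,A))$ (because essentiality forces the commutant of $C_0(M,A)$ to coincide with the centre), and therefore commutes with $T$. Second, to conclude $f(x_0)\in\mathcal{M}(A)$ you need the full double-centralizer structure: define the right action $a\mapsto af(x_0):=(gT)(x_0)$ for $g(x_0)=a$, verify $a\,(f(x_0)b)=(af(x_0))\,b$ using the centralizer identity $g_a(Tg_b)=(g_aT)g_b$ for $T$, and check $af(x)\to af(x_0)$ by the same localization argument you used on the left. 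With these spelled out the proof is complete; the remaining claims about mutual inversion and the $\ast$-structure (the adjoint sends $f$ to $x\mapsto f(x)^{\ast}$) are, as you say, routine on the dense span of cut-off sections $\psi\cdot a$.
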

\begin{remark}\label{rmk6}
    By Lemma \ref{lem4}, we see that the multiplier algebra for continuous functions valued in compact operator space consists of continuous elements valued in the bounded operator space with the strict topology.
\end{remark}
  Applying Remark \ref{rmk6} to the conditions in Definition \ref{defn3}, we get
\begin{cor}\label{cor6}
    A multiplier of $\mathcal{M}(B^{\ast}(S))$ is an element of the operator field $\psi$ over $S$ that satisfies:
    \begin{itemize}
    \item[$(a')$] $\psi(\gamma) \in B(\mathcal{H}_i)$ for every $\gamma\in  \Gamma_i = S_{i-1}\setminus S_i, i = 1, \dots, d$;
     \item[$(b')$] The field $\psi$ is uniformly bounded;
      \item[$(c')$] The mappings $\gamma \mapsto \psi(\gamma )$ are strictly continuous on the difference sets $\Gamma_i$;
        \item[$(d')$]  For any $i = 0, \dots ,d + 1$ and any converging sequence contained in $\Gamma_i$ with limit set outside $\Gamma_i$, thus in $S_i$, there exists a properly converging subsequence $\overline{\gamma} = (\gamma_k)$, $k\in \N$, a constant $C > 0$ and for every $k \in N$ an involutive linear mapping $\sigma_{\overline{\gamma} ,k} : CB(S_i) \to B(\mathcal{H}_i)$, which is bounded by $C \lVert \cdot\rVert_{S_i}$, such that for arbitrary $a\in A$,
$$\lim_{k\to\infty}  \psi(\gamma_k) - \sigma_{\overline{\gamma} ,k}(\psi|_{S_i}) = 0 $$ in the strict topology of $B(\mathcal{H}_i)$.
    \end{itemize}
    
\end{cor}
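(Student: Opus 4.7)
The plan is to reduce the statement to the scalar case via the matrix structure of $\text{End}(E_x)$, and then invoke the fundamental Fourier transform isomorphism provided by Corollary~\ref{cor1}.

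First I would observe that since $E_x$ is a finite-dimensional fiber, $\text{End}(E_x)$ is a matrix algebra, so multipliers and compacts factor as
$$\mathcal{M}(E_x\otimes C^{\ast}(Gr(H)_x))\cong \text{End}(E_x)\otimes \mathcal{M}(C^{\ast}(Gr(H)_x)),\quad \mathcal{K}(E_x\otimes C^{\ast}(Gr(H)_x))=\text{End}(E_x)\otimes C^{\ast}(Gr(H)_x).$$
Writing $Q$ as a matrix with entries in $\mathcal{M}(C^{\ast}(Gr(H)_x))$, the theorem reduces entry-by-entry to the scalar claim: for $q\in \mathcal{M}(C^{\ast}(Gr(H)_x))$, $q\in C^{\ast}(Gr(H)_x)$ if and only if the operator field $\{\pi_\gamma(q)\}_{\gamma\in \widehat{Gr(H)}_x}$ lies in $B^{\ast}(\widehat{Gr(H)}_x)$.

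Next, since $Gr(H)_x$ is a connected, simply connected nilpotent Lie group, Theorem~\ref{thm11} applies, and Corollary~\ref{cor1} then asserts that the Fourier transform $\mathcal{F}$ of Definition~\ref{defn2} is a $\ast$-isomorphism $C^{\ast}(Gr(H)_x)\xrightarrow{\sim} B^{\ast}(\widehat{Gr(H)}_x)$. Because each irreducible representation $\pi_\gamma$ admits a unique strictly continuous extension to $\mathcal{M}(C^{\ast}(Gr(H)_x))$, the Fourier transform extends canonically to a $\ast$-homomorphism $\widetilde{\mathcal{F}}:\mathcal{M}(C^{\ast}(Gr(H)_x))\to \mathcal{M}(B^{\ast}(\widehat{Gr(H)}_x))$, and by functoriality of the multiplier construction with respect to non-degenerate $\ast$-isomorphisms, this extension is itself an isomorphism compatible with the inclusions $C^{\ast}\subseteq \mathcal{M}(C^{\ast})$ and $B^{\ast}\subseteq \mathcal{M}(B^{\ast})$.

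The forward direction is then immediate: if $Q\in \text{End}(E_x)\otimes C^{\ast}(Gr(H)_x)$, each scalar entry lies in $C^{\ast}(Gr(H)_x)$ and its Fourier transform lies in $B^{\ast}(\widehat{Gr(H)}_x)$ by Corollary~\ref{cor1}. For the converse, suppose $\{\pi_\gamma(Q)\}\in \text{End}(E_x)\otimes B^{\ast}(\widehat{Gr(H)}_x)$; then each scalar entry $q$ of $Q$ satisfies $\widetilde{\mathcal{F}}(q)\in B^{\ast}(\widehat{Gr(H)}_x)\subseteq \mathcal{M}(B^{\ast}(\widehat{Gr(H)}_x))$. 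Since $\widetilde{\mathcal{F}}$ restricts to the isomorphism $\mathcal{F}$ between the ideals $C^{\ast}(Gr(H)_x)$ and $B^{\ast}(\widehat{Gr(H)}_x)$ of the respective multiplier algebras, the preimage $q=\widetilde{\mathcal{F}}^{-1}(\widetilde{\mathcal{F}}(q))$ must lie in $C^{\ast}(Gr(H)_x)$. Reassembling entries gives $Q\in \text{End}(E_x)\otimes C^{\ast}(Gr(H)_x)$. The main obstacle I anticipate is rigorously setting up the extension of $\mathcal{F}$ to multipliers in a way that respects the defining conditions (a)--(e) of Definition~\ref{defn3} and their multiplier-level analogues in Corollary~\ref{cor6}; in particular one needs that evaluation at each $\pi_\gamma$ commutes with passage to multipliers, which is precisely what makes the scalar bijection lift cleanly to the multiplier level.
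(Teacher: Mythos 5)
Your proposal addresses the wrong statement. The statement under review, Corollary~\ref{cor6}, makes no reference to $E_x$, $Q$, $Gr(H)_x$, or the Fourier transform $\mathcal{F}$ at all: it is a purely structural description of the multiplier algebra $\mathcal{M}(B^{\ast}(S))$ for a locally compact space $S$ of step $\leq d$, obtained by modifying the defining conditions $(a)$--$(e)$ of $B^{\ast}(S)$ in Definition~\ref{defn3}. What you have written out instead is a proof of the later Corollary~\ref{cor4} (equivalently Theorem~\ref{thm4} of the introduction), namely the equivalence between compactness of a multiplier $Q\in\mathcal{M}(E_x\otimes C^{\ast}(Gr(H)_x))$ and membership of its operator field $\{\pi_{\gamma}(Q)\}$ in $\text{End}(E_x)\otimes B^{\ast}(\widehat{Gr(H)}_x)$. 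That argument does closely track the paper's proof of~\ref{cor4} (matrix reduction plus Corollary~\ref{cor1}), but it does not establish~\ref{cor6}.

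The actual content of Corollary~\ref{cor6} is this: since $B^{\ast}(S)$ is by Definition~\ref{defn3} an algebra of compact-operator-valued fields vanishing at infinity and norm-continuous on each stratum $\Gamma_i$, it behaves like a $C_0$-family of compact operators. Applying Lemma~\ref{lem4} (the Akemann--Pedersen identification $\mathcal{M}(C_0(M,A))=C_b(M,\mathcal{M}(A)_{\beta})$), together with Remark~\ref{rmk6} (for $A=\mathcal{K}(\mathcal{H})$, $\mathcal{M}(A)_{\beta}=B(\mathcal{H})$ with the strict topology), one replaces, condition by condition, ``compact'' with ``bounded,'' norm-continuity with strict continuity, and drops the vanishing-at-infinity condition $(d)$ (absorbed into uniform boundedness). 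This yields exactly $(a')$--$(d')$. Your proof never invokes Lemma~\ref{lem4} or the strict topology, which are the mechanisms actually needed here. Corollary~\ref{cor6} is then used as an ingredient in the proof of Corollary~\ref{cor4}, not the other way around, so the logical order in your proposal is also reversed: one cannot deduce the structure of $\mathcal{M}(B^{\ast}(S))$ from the compactness criterion~\ref{cor4}, since~\ref{cor4} presupposes the description you are asked to prove.
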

Take a multiplier $F\in \mathcal{M}(C^{\ast}(\G))$, $F:C^{\ast}(\G)\to C^{\ast}(\G)$. For each representation $\pi\in \widehat{\G}$, $\pi$ extends to a representation of $C^{\ast}(\G)$; it induces a map $\pi(F):=F\otimes_{\pi} 1$ from $C^{\ast}(\G)\otimes_{\pi} \mathcal{H}_{\pi}\cong \mathcal{H_\pi}$ to itself. Then by Definition \ref{defn2}, we find that $\{\pi(F)\}_{\pi\in \widehat{\G}}$ is the Fourier transform of an element $F$ in the multiplier algebra $\mathcal{M}(C^{\ast}(\G))$.

\begin{cor}\label{cor4}
          A multiplier $Q\in \mathcal{M}(E_x\otimes C^{\ast}(Gr(H)_x))$ is compact, i.e., $Q\in \text{End}(E_x)\otimes C^{\ast}(Gr(H))_x$ if and only if the operator field
    $$\phi=\{\pi_{\gamma}(Q)\}_{\gamma\in \widehat{Gr(H)}_x} \subseteq \text{End}(E_x)\otimes \mathit{l}^{\infty}(\widehat{Gr(H)}_x)$$
    lies in the subspace $\text{End}(E_x)\otimes B^{\ast}(\widehat{Gr(H)}_x)$,
    i.e., each term $\phi_{ij}$ of the $\mathit{l}^{\infty}(\widehat{Gr(H)}_x)$-valued matrix $\phi$ in $\text{End}(E_x)$ satisfies the conditions $(a)-(e)$ in Definition \ref{defn3}. 
\end{cor}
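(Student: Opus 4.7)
The plan is to apply the Fourier transform theory for nilpotent Lie group $C^{\ast}$-algebras developed in Section \ref{sec6.1} and lift it to the multiplier level. Since the osculating group $Gr(H)_x$ is connected, simply-connected, and nilpotent, Theorem \ref{thm11} ensures that $C^{\ast}(Gr(H)_x)$ has norm-controlled dual limits, and Corollary \ref{cor1} provides a $\ast$-isomorphism of $C^{\ast}$-algebras
$$\mathcal{F}: C^{\ast}(Gr(H)_x) \xrightarrow{\sim} B^{\ast}(\widehat{Gr(H)}_x), \qquad a \mapsto \{\pi_{\gamma}(a)\}_{\gamma}.$$
Tensoring with the finite-dimensional unital algebra $\text{End}(E_x)$ yields an analogous $\ast$-isomorphism $\mathcal{F}\otimes \mathrm{id}$ between $\text{End}(E_x) \otimes C^{\ast}(Gr(H)_x) = \mathcal{K}(E_x \otimes C^{\ast}(Gr(H)_x))$ and $\text{End}(E_x) \otimes B^{\ast}(\widehat{Gr(H)}_x)$.

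Next I would extend $\mathcal{F}\otimes\mathrm{id}$ to a $\ast$-isomorphism $\tilde{\mathcal{F}}$ between the corresponding multiplier algebras, which exists uniquely and strict-continuously for any $C^{\ast}$-algebra isomorphism. The crucial identification is that $\tilde{\mathcal{F}}(Q)$ coincides with the operator-field assignment $\{\bar{\pi}_{\gamma}(Q)\}_{\gamma}$, where $\bar{\pi}_{\gamma}$ denotes the canonical extension of the non-degenerate representation $\pi_{\gamma}$ to $\mathcal{M}(C^{\ast}(Gr(H)_x))$. This can be verified by checking that both multipliers induce the same left action on the underlying $C^{\ast}$-algebra: for every $a \in C^{\ast}(Gr(H)_x)$,
$$\tilde{\mathcal{F}}(Q)\cdot \mathcal{F}(a) = \mathcal{F}(Qa) = \{\pi_{\gamma}(Qa)\}_{\gamma} = \{\bar{\pi}_{\gamma}(Q)\pi_{\gamma}(a)\}_{\gamma} = \{\bar{\pi}_{\gamma}(Q)\}_{\gamma}\cdot \mathcal{F}(a),$$
with the analogous statement for the right action. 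By the defining property of multipliers the two elements must coincide.

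Given this identification, the theorem reduces to an ideal-bijection statement: $Q$ lies in $\text{End}(E_x) \otimes C^{\ast}(Gr(H)_x)$ if and only if $\tilde{\mathcal{F}}(Q) = \{\pi_{\gamma}(Q)\}_{\gamma}$ lies in the image subalgebra $\text{End}(E_x) \otimes B^{\ast}(\widehat{Gr(H)}_x)$, since $\mathcal{F}\otimes\mathrm{id}$ is a $\ast$-isomorphism between these two ideals inside the respective multiplier algebras. Both directions are then immediate.

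The main obstacle will be confirming that the pointwise field $\{\bar{\pi}_{\gamma}(Q)\}_{\gamma}$ actually defines an element of $\mathcal{M}(B^{\ast}(\widehat{Gr(H)}_x))$, i.e., that it satisfies the strict-continuity and stratum-limit conditions $(a')$--$(d')$ of Corollary \ref{cor6}, and reconciling this with the abstract multiplier extension. These conditions should be extracted by invoking the norm-controlled dual limits axioms of Definition \ref{defn3} for each product $Qa$ and $aQ$ (which belong to $C^{\ast}(Gr(H)_x)$), passing through an approximate identity in $C^{\ast}(Gr(H)_x)$, and translating the resulting norm convergences for $\pi_{\gamma}(Qa)$ into strict convergence statements for $\bar{\pi}_{\gamma}(Q)$ on each stratum $\Gamma_i$ of $\widehat{Gr(H)}_x$.
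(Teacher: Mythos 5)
Your proof is correct and follows essentially the same route as the paper's: invoke Theorem \ref{thm11} and Corollary \ref{cor1} to identify $\mathcal{F}\colon C^{\ast}(Gr(H)_x)\xrightarrow{\sim}B^{\ast}(\widehat{Gr(H)}_x)$, tensor with $\mathrm{End}(E_x)$, and deduce that $Q$ is compact iff its Fourier transform lands in $\text{End}(E_x)\otimes B^{\ast}(\widehat{Gr(H)}_x)$. You go somewhat further than the paper in spelling out the multiplier-extension step — explicitly producing $\tilde{\mathcal{F}}$ and verifying $\tilde{\mathcal{F}}(Q)=\{\bar{\pi}_\gamma(Q)\}_\gamma$ via the double-centralizer action on $\mathcal{F}(a)$ — whereas the paper folds this into the brief remark preceding the corollary that $\pi$ extends to multipliers and hence $\{\pi(F)\}_\pi$ is the Fourier transform of $F\in\mathcal{M}(C^{\ast}(\G))$; your closing paragraph about re-deriving the conditions of Corollary \ref{cor6} is not actually needed once the identification with $\tilde{\mathcal{F}}(Q)$ is in hand, since any $\ast$-isomorphism extends uniquely and strictly continuously to multiplier algebras.
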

\begin{proof}
      Take $S=C^{\ast}(\G)$ and where $G=Gr(H)_x$. The multiplier $Q$ lies in $\text{End}(E_x)\otimes C^{\ast}(Gr(H))_x$ if and only if its Fourier transform $\phi=\{\pi_{\gamma}(Q)\}_{\gamma\in \widehat{Gr(H)}_x}$ lies in $\text{End}(E_x)\otimes \mathcal{F}(C^{\ast}(Gr(H)_x))$ which is equal to $\text{End}(E_x)\otimes B^{\ast}(\widehat{Gr(H)}_x)$ by Corollary \ref{cor1}. By Definition \ref{defn3}, it is equivalent to the conditions $(a)-(e)$ are satisfied for each term $\phi_{ij}$ of the $\mathit{l}^{\infty}(\widehat{Gr(H)}_x)$-valued matrix $\phi$ in $\text{End}(E_x)$.
\end{proof}

\begin{exmp}
    For instance, consider the $\R^2$ action on $\mathbb{H}_3\cong \R^5$ on the first two $x_1, y_1$ directions. In addition, consider the operators of the form $P_{\gamma}=-X_2^2-Y_2^2+i\gamma T$, which is induced from an operator on $\mathbb{H}_2\cong \R^3$ to $\mathbb{H}_3\cong \R^5$. $P$ is normalized to be self-adjoint, and $\phi$ is calculated as the diagonal matrix with 
    $$\phi_{11}=(1+\sigma_H(P^{\ast},x)\sigma_H(P,x))^{-1}(1+d/d x_1^2+d/d y_1^2)^{-1},$$
    $$\phi_{22}=(1+\sigma_H(P,x)\sigma_H(P^{\ast},x))^{-1}(1+d/d x_1^2+d/dy_1^2)^{-1}.
    $$
    Suppose $P_{\gamma}$ is an H-elliptic operator on $\R^3$, i.e., $\gamma$ is not an odd integer, then $(b)(c)(e)$ will be evident by the condition that each representation of $\mathbb{H}_3$ can be pulled back to the representation of $\R^2$ and $\mathbb{H}_2\cong \R^3$---both of which satisfies these conditions. We verify the conditions $(a)$ and $(d)$. Based on Example 2.2, the representations of $\mathbb{H}_3$ contain two classes. For $(a)$, using the coadjoint orbit, we can derive:
    \begin{itemize}
        \item   for a Schr\"{o}dinger representation $\pi_{\lambda}$ of $\mathbb{H}_3$ which can be represented by $\lambda T$, the restriction to $\mathbb{H}_2$ will be Schr\"{o}dinger representations $\pi'_{\lambda}$ of $\mathbb{H}_2$. Based on the Fourier transform of the sub-Lie group $\mathbb{H}_2\subset \mathbb{H}_3$, $\pi'_{\lambda}(1+\sigma_H(P^{\ast},x)\sigma_H(P,x))^{-1})$ and $\pi'_{\lambda}(1+\sigma_H(P,x)\sigma_H(P^{\ast},x))^{-1})$ will be compact on $L^2(\R^1)$ regarding as a representation of an element of the liminary $C^{\ast}$-algebra $C^{\ast}(\mathbb{H}_2)$, generated by $x_2,y_2,t$ coefficient. $\pi_{\lambda}((1+d/d x_1^2+d/dy_1^2)^{-1})$ will be compact on the first variable $L^2(\R^1)$, as it can be regarded as a representation of an element of the liminary $C^{\ast}$-algebra $C^{\ast}(\mathbb{H}_2)$, generated by $x_1,y_1,t$ coefficient. Upon combining these two parts, we observe that $\pi_{\lambda}(\phi)$ is compact for all Schr\"{o}dinger representations $\pi_{\lambda}$. 
    \item  for one-dimensional representations, if $\pi=\pi_{x_1,x_2,y_1,y_2}$ is presumed to be nonzero for the $x_1,x_2$ components, then $\pi((1+d/d x_1^2+d/d y_1^2)^{-1})$ will be compact on $\R$. Otherwise, if it is nonzero for the $y_1,y_2$ part, then $\pi(1+\sigma_H(P^{\ast},x)\sigma_H(P,x))^{-1})$ and $\pi(1+\sigma_H(P,x)\sigma_H(P^{\ast},x))^{-1})$ will be compact on $\R$. Consequently, the product will be a compact operator in all cases.
    \end{itemize}
       $(d)$ can also be shown by the same argument. As a result, $P_{\gamma}$ is a transversally H-elliptic operator on $\mathbb{H}_3\cong \R^5$ if $\gamma$ doesn't take values in odd integers.
\end{exmp}
\begin{remark}
\begin{itemize}
    \item[(i)]  In general cases, the direct decomposition of representations (aligned with coadjoint orbits) into transverse and leaf-wise components, similar to the partitioning of cotangent vectors, presents significant challenges. This complexity leads to difficulties in addressing transversal H-ellipticity. Nevertheless, the application of Fourier transform methodologies facilitates the transformation of operators to actions on Hilbert spaces, thereby simplifying the task to prove the compactness of these operators within each Hilbert space. This approach allows for the partitioning of representing Hilbert spaces into distinct segments, facilitating a separate examination of the transverse and leaf-wise directional elements.

\item[(ii)] If we focus on the symbol terms of differential operators and the inverses of Rockland terms within the cosymbol space, it is possible that the continuity conditions $(c)$ and $(e)$ in Definition \ref{defn3} are redundant. However, the absence of a definitive proof leaves this aspect open to further research. 
\end{itemize}
     
\end{remark}

\appendix
\chapter{Appendix}

\section{Other ideas for the definition of transversal H-ellipticity}

Alternative definitions of transversal H-ellipticity can also be considered. All the definitions are motivated by some characteristics of H-ellipticity or transversal ellipticity. 
However, these definitions are not sufficient to calculate in $KK$-theory and it's hard to show the equivalent or inclusion relations between any two of these different conditions, because the $C^{\ast}$-algebra $C^{\ast}(Gr(H))$ is non-commutative and we cannot regularly decompose $gr(H)_x$ elements into transversal and leaf-wise directions. These definitions only provide other possible directions to analyze.

We will briefly show the possible directions for analyzing operators in these different definitions.

\subsection{The main definition}
Recall that our main definition is 
\begin{defx}\label{defA}
     Let $M$ be a complete filtered Riemannian manifold and $G$ be a Lie group acting on $M$ isometrically and preserving the filtered structure. We call a pseudodifferential operator of $0$-order $P$ is transversally H-elliptic if at each point $x\in M$, the multiplier term 
    $$(1-\sigma_H^2(P,x))(1+\sigma_H(\Delta_{G},x))^{-1}\in \mathcal{M}(E_x\otimes C^{\ast}(Gr(H)_x))$$
    is compact as operators on $C^{\ast}(Gr(H)_x)$-modules.
\end{defx}
The main definition (\textit{Definition} \ref{defA}) is motivated by the Rockland condition characteristic of H-elliptic operators and the tangent Clifford symbol for transversally elliptic operators. It facilitates the definition of index and symbol classes in $KK$-theory for operator $P$, enabling the application of Kasparov's method for index theorem analysis.

\subsection{Definition from Atiyah's approach}
The second possible definition is stated as follows: 
\begin{defx}\label{defB}
    $P$ is called transversally H-elliptic if for each $x\in M$, for all representation $\pi$ in $Gr(H)_x$, both $\pi(\sigma_H((P,\Delta_G),x))$ and $\pi(\sigma_H((P^{\ast},\Delta_G),x))$ are injective.
\end{defx}
Inspired by Atiyah's approach \cite{atiyah2006elliptic}, the second definition (\textit{Definition} \ref{defB}) focuses on the injectivity of symbols for the operator $P$ and its adjoint $P^{\ast}$ when combined with the leaf-wise elliptic Laplacian operator. This perspective allows for the analytical index to be defined as a distribution over the group $G$, employing Atiyah's method. 

More explicitly, let $$H^s(E)_{\lambda}=\ker(\Delta_G-\lambda):H^s(E)\to H^{s-2}(E),$$
where $H^s(E)$ are ordinary Sobolev spaces and $\Delta_G$ is the orbital Laplacian operator we defined in Definition \ref{defn4}. Since $P$ commutes with the $G$-action, it commutes with $\Delta_G$, and especially, it commutes the $\lambda$-eigenspaces $\mathcal{D}(E)_{\lambda} \subset\mathcal{D}(E)$ and $\mathcal{D}(F)_{\lambda} \subset\mathcal{D}(F)$, where $\mathcal{D}(E)=C^{\infty}_c(E)$. We obtain a linear map $P_{\lambda}:\mathcal{D}(E)_{\lambda}\to\mathcal{D}(F)_{\lambda}$ and it extends to $$P_{\lambda}:H^s(E)_{\lambda}\to H^{s-k}(F)_{\lambda}.$$
    Similarly, we can construct the operator $P_{\lambda}^{\ast}$ from $P^{\ast}$. Based on the condition in (\textit{Definition} \ref{defB}), such $P_{\lambda}$ and $P^{\ast}_{\lambda}$ are Fredholm for each $\lambda$. Its index is defined as 
    $${Ind}(P_{\lambda}) = \chi_{\ker(P_{\lambda})}-\chi_{\ker(P^{\ast}_{\lambda})},$$
    where $\chi_{\ker(P_{\lambda})}$ and $\chi_{\ker(P^{\ast}_{\lambda})}$ are defined to be distributions over $G$. The analytical index of $P$ is then defined as
    $${Ind}(P)=\sum_{\lambda}{Ind}(P_{\lambda})$$
    as an equality of distributions on $G$. It will be possible that for operators in Definition \ref{defB}, we can define the cohomological index and prove the index theorem that the cohomological index is equal to the analytical index.
    
    However, the $\Delta_G$ operator cannot be killed by certain special $\pi_{\xi}$. So for an operator $P$ satisfying the condition in Definition \ref{defB}, it is difficult to separate the $(P,\Delta_G)$ or $(P^{\ast},\Delta_G)$ in the condition to obtain the operator $P$ itself. Therefore, it is hard to define the symbol class and index class without $\Delta_G$ in $KK$-homology. The analysis of operators in this definition in $KK$-theory seems extremely difficult.
\subsection{An intuitive definition}
In Chapter \ref{chap4}, we have defined the bundle isomorphism $\psi: TM\to gr(H)$, which maps a vector $v\in T_x M$ decomposed as $v=\sum_i v^i$  to $\psi(v)=\sum_i [v^i]\in gr(H)_x$, with each $v^i\in H^i_x$ and all $v^i$'s are orthogonal to each other. We obtain the adjoint map $\psi^{\ast}:gr(H)^{\ast}\to T^{\ast}M$. The image of transversal part $T^{\ast}_{G,x} M$ by $(\psi^{\ast})^{-1}$ is denoted as $U^{\ast}_{G,x}\subseteq gr(H)_x^{\ast}$. Then, we define $$O^{\ast}_{G,x}=\{\xi\in gr(H)^{\ast}_x\ |\text{ The coadjoint orbit containing } \xi \text{ passing through } U^{\ast}_{G,x}\}.$$
It can be understood as the combination of all the orbits containing a point in $U_{G,x}^{\ast}$.

The third possible definition is stated as follows:
\begin{defx}\label{defC}
     $P$ is called transversally H-elliptic if for all $x\in M$, $\sigma(P,x,\xi)$ is bijective for $\pi\in O^{\ast}_{G,x}$, i.e., $\sigma(P,x,\pi_{\xi})$ is bijective if the coadjoint orbit corresponding to $\pi_{\xi}$ contains at least one point of $T^{\ast}_G M$.
\end{defx}

 The third definition (\textit{Definition} \ref{defC}) is the most straightforward one, directly aligning with the foundational concept of transversal ellipticity. However, its practical application within $KK$-theory is restricted by the lack of information on the representations out of $O^{\ast}_{G,x}$. There is also a problem that we cannot decompose coadjoint orbits of the nilpotent Lie group $Gr(H)_x$, similar to the decomposition of $T_x M$ to tangent part and transversal part, because $C^{\ast}(Gr(H))$ is non-commutative.

\printbibliography

\end{document}